\patchcmd{\@makechapterhead}{50\p@}{\chapheadtopskip}{}{}
\patchcmd{\@makeschapterhead}{50\p@}{\chapheadtopskip}{}{}
\newlength{\chapheadtopskip}\setlength{\chapheadtopskip}{-2pt}
\DeclareMathAlphabet{\mathpzc}{OT1}{pzc}{m}{it}
\newcommand{\mylabel}[2]{#2\def\@currentlabel{#2}\label{#1}}
\newtheorem{theorem}{Theorem}[section]
\newtheorem{lemma}[theorem]{Lemma}
\newtheorem{obs}[theorem]{Observation}
\newtheorem{defn}[theorem]{Definition}
\newtheorem{prop}[theorem]{Proposition}
\newtheorem{cor}[theorem]{Corollary}
\newtheorem{Claim}[theorem]{Claim}
\newtheorem{subclaim}{Subclaim}[theorem]
\newcounter{claimlevel}[theorem]
\NewDocumentEnvironment{claim}{O{=}}
 {
  \str_case:nn { #1 }
   {
    {=}  { }
    {+}  { \stepcounter{claimlevel} }
    {-}  { \addtocounter{claimlevel}{-1} }
   }
  \begin{ Claim \int_to_Roman:n { \value{claimlevel} } }
 }
 {
  \end{ Claim \int_to_Roman:n { \value{claimlevel} } }
 }
\newenvironment{claimproof}[1]{\par\noindent\underline{Proof:}\space#1}{\hspace{1mm}$\blacksquare$}
\newlength\FHoffset
\newlength\FHright
 \newtheoremstyle{TheoremNum}
        {\topsep}{\topsep}              
        {\itshape}                      
        {}                              
        {\bfseries}                     
        {.}                             
        { }                             
        {\thmname{#1}\thmnote{ \bfseries #3}}
    \theoremstyle{TheoremNum}
    \newtheorem{thmn}{Theorem}
\newtheoremstyle{PropNum}
        {\topsep}{\topsep}              
        {\itshape}                      
        {}                              
        {\bfseries}                     
        {.}                             
        { }                             
        {\thmname{#1}\thmnote{ \bfseries #3}}
    \theoremstyle{PropNum}
\newtheoremstyle{LemmaNum}
        {\topsep}{\topsep}              
        {\itshape}                      
        {}                              
        {\bfseries}                     
        {.}                             
        { }                             
        {\thmname{#1}\thmnote{ \bfseries #3}}
    \theoremstyle{LemmaNum}
\renewcommand\subitem{\@idxitem\nobreak\hspace*{20\p@}}
\renewcommand\subsubitem{\@idxitem\nobreak\hspace*{20\p@}}
\date{}
\titleformat{\subsection}{\normalfont\large\bfseries}{Subsection \thesubsection}{1em}{}
\titleformat{\section}{\normalfont\Large\bfseries}{Section \thesection}{1em}{}
\title{Some Extensions of Thomassen's Theorem to Longer Paths}
\author{Joshua Nevin}
\begin{document}
\maketitle

\begin{center}\textbf{Abstract}\end{center} Let $G$ be a planar embedding with list-assignment $L$ and outer cycle $C$, and let $P$ be a path of length at most four on $C$, where each vertex of $G\setminus C$ has a list of size at least five and each vertex of $C\setminus P$ has a list of size at least three. In this paper, we prove some results about partial $L$-colorings $\phi$ of $C$ with the property that any extension of $\phi$ to an $L$-coloring of $\textnormal{dom}(\phi)\cup V(P)$ extends to $L$-color all of $G$. We use these results in a later sequence of papers to prove some results about list-colorings of high-representativity embeddings on surfaces. 

\section{Introduction and Motivation}\label{IntroMotivSec}

All graphs in this paper have a finite number of vertices. Given a graph $G$, a \emph{list-assignment} for $G$ is a family of sets $\{L(v): v\in V(G)\}$, where each $L(v)$ is a finite subset of $\mathbb{N}$. The elements of $L(v)$ are called \emph{colors}. A function $\phi:V(G)\rightarrow\bigcup_{v\in V(G)}L(v)$ is called an \emph{$L$-coloring of} $G$ if $\phi(v)\in L(v)$ for each $v\in V(G)$, and $\phi(x)\neq\phi(y)$ for any adjacent vertices $x,y$. Given an $S\subseteq V(G)$ and a function $\phi: S\rightarrow\bigcup_{v\in S}L(v)$, we call $\phi$ an \emph{ $L$-coloring of $S$} if $\phi$ is an $L$-coloring of the induced graph $G[S]$. A \emph{partial} $L$-coloring of $G$ is an $L$-coloring of an induced subgraph of $G$. Likewise, given an $S\subseteq V(G)$, a \emph{partial $L$-coloring} of $S$ is a function $\phi:S'\rightarrow\bigcup_{v\in S'}L(v)$, where $S'\subseteq S$ and $\phi$ is an $L$-coloring of $S'$. Given an integer $k\geq 1$, $G$ is called \emph{$k$-choosable} if it is $L$-colorable for every list-assignment $L$ for $G$ such that $|L(v)|\geq k$ for all $v\in V(G)$. The motivation for the work of this paper is as follows. In a subsequent series of papers, we prove that, given a graph $G$ embedded on a surface of genus $g$, $G$ can be $L$-colored, where $L$ is a list-assignment for $G$ in which every vertex has a 5-list except for a collection of pairwise far-apart components, each precolored with an ordinary 2-coloring, as long as the face-width of $G$ is at leat $2^{\Omega(g)}$ and the precolored components are of distance at least $2^{\Omega(g)}$ apart. This provides an affirmative answer to a generalized version of a conjecture of Thomassen and also generalizes the results of  \cite{DistPrecVertChoosePap} of Dvo\v{r}\'ak, Lidick\'y, Mohar, and Postle about distant precolored vertices. It can also be viewed as a ``high-representativity" analogue to the result of \cite{HyperbolicFamilyColorSurfPap} about distant precolored vertices on embeddings of high edge-width and as a generalization of Theorem \ref{thomassen5ChooseThm} in which a) arbitrarily many faces, rather than just one face, are permitted to have 3-lists  and b) the embedding $G$ is not planar but rather locally planar in the sense that it has high representativity. We prove the above result by a minimal counterexample argument in which we color and delete a subgraph of a minimal counterexample $G$, where this subgraph consists mostly of a path between two of the faces of $G$ which have 3-lists. The trickiest part of this is dealing with the region near the boundaries, i.e near these faces with 3-lists. This requires some results about planar graphs, which we prove in this paper. 

Given a graph $G$ with list-assignment $L$ and a partial $L$-coloring $\phi$ of $G$, we frequently analyze the situation where we delete the vertices of $\textnormal{dom}(\phi)$ and remove the colors of the deleted vertices from the lists of their remaining neighbors. We thus make the following definition. 
\begin{defn}\label{ListDefnSInNotV}\emph{Let $G$ be a graph, with list-assignment $L$. Let $\phi$ be a partial $L$-coloring of $G$. We define a list-assignment $L_{\phi}$ for $G\setminus\textnormal{dom}(\phi)$, where $L{\phi}(v):=L(v)\setminus\{\phi(w): w\in N(v)\cap\textnormal{dom}(\phi)\}$ for each $v\in V(G)\setminus \textnormal{dom}(\phi)$.} \end{defn}

 Our main result for this paper is Theorem \ref{MainHolepunchPaperResulThm} below. 

\begin{theorem}\label{MainHolepunchPaperResulThm} Let $G$ be a planar embedding with outer cycle $C$, $L$ be a list-assignment for $V(G)$, and $P:=p_0q_0zq_1p_1$ be a subpath of $C$, where $q_0, q_1$ have no common neighbor in $C\setminus P$, and
\begin{enumerate}[label=\arabic*)] 
\itemsep-0.1em
\item Each of $L(p_0)$ and $L(p_1)$ is nonempty and one of them has a list of size at least three; AND
\item For each $v\in V(C\setminus P)$, $|L(v)|\geq 3$, and, for each $v\in\{q_0, z, q_1\}\cup V(G\setminus C)$, $|L(v)|\geq 5$.
\end{enumerate}
Then there is a partial $L$-coloring $\phi$ of $V(C)\setminus\{q_0, q_1\}$, where $p_0, p_1, z\in\textnormal{dom}(\phi)$, such that each of $q_0, q_1$ has an $L_{\phi}$-list of size at least three, and any extension of $\phi$ to an $L$-coloring of $\textnormal{dom}(\phi)\cup\{q_0, q_1\}$ extends to $L$-color all of $G$. 
 \end{theorem}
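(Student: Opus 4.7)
I would proceed by induction on $|V(G)|$, aiming to reduce to a structurally clean case (induced $C$, no bad interior configurations) and then exhibit $\phi$ directly while verifying its robustness to arbitrary extensions on $q_0, q_1$.

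The first step is to handle chords of $C$. If $C$ has a chord $e = xy$, then $e$ together with each of the two $xy$-subpaths of $C$ bounds a disk, splitting $G$ into two subgraphs $G_1, G_2$. A case analysis based on the position of $\{x, y\}$ relative to $P$ reduces to the chord-free situation: the hypothesis that $q_0$ and $q_1$ share no neighbor in $C \setminus P$ rules out certain triangular chord configurations, and in each remaining subcase one applies the inductive hypothesis to the side containing $P$ while coloring the other side either by induction or by Thomassen's 5-choosability theorem, stitching the partial colorings together along the chord. After these reductions, $C$ may be assumed to be induced.

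With $C$ induced, I would take $\textnormal{dom}(\phi) := V(C) \setminus \{q_0, q_1\}$, which is the disjoint union of $\{z\}$ and a path $Q$ from $p_0$ to $p_1$ through $C \setminus P$. First I would pick any color for $z$ from $L(z)$, then pick colors for $p_0$ and $p_1$ (feasible since one of these lists has size at least three and the other is nonempty), and finally extend along $Q$ by a standard Thomassen-type argument on 3-lists. Since each of $q_0, q_1$ has only two $C$-neighbors in the induced case (namely $z$ and a $p_i$), their $L_\phi$-lists retain size at least $5-2 = 3$. Given any extension of $\phi$ to $q_0$ and $q_1$, the interior $H := G \setminus V(C)$ carries a list-assignment in which each vertex starts with a 5-list and loses at most two colors to $C$-neighbors --- provided no interior vertex has three or more neighbors on $C$, a condition one enforces via extra reductions (e.g.\ splitting along separating near-triangles). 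Thomassen's theorem then $L$-colors $H$, completing the extension to all of $G$.

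The main obstacle is the ``for any extension'' quantifier in the conclusion: a single $\phi$ must permit a full $L$-coloring of $G$ for every valid pair $(c_0, c_1) \in L_\phi(q_0) \times L_\phi(q_1)$ simultaneously. This rules out a naive greedy construction of $\phi$ and demands control over how the colors of $q_0, q_1$ propagate into $H$. The buffering role of $z$ and the no-common-neighbor hypothesis on $q_0, q_1$ are precisely what confine this interaction enough for a uniformly good $\phi$ to exist; I expect the hardest step is strengthening the inductive hypothesis so that this robustness is preserved through the chord reductions --- possibly by tracking entire families of permissible colorings rather than a single one, or by pre-reserving ``safe'' colors along a canonical interior path joining a neighborhood of $q_0$ to one of $q_1$.
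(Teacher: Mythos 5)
The central difficulty your proposal does not overcome is the interior structure of $G$. Your plan after the chord reductions is to pick colors for $V(C) \setminus \{q_0, q_1\}$ essentially freely --- starting from $z, p_0, p_1$ and extending along the boundary path by a greedy pass --- and then color $G \setminus V(C)$ by Thomassen's theorem. This only works ``provided no interior vertex has three or more neighbors on $C$,'' and you suggest this can be enforced ``via extra reductions.'' It cannot. In a near-triangulated planar graph, interior vertices with three, four, or five neighbors on the outer cycle are the generic case, and no reduction eliminates them without changing the problem. Theorem \ref{BohmePaper5CycleCorList} in the paper is the concrete warning: already for $|V(C)| = 5$ or $6$ there are interior configurations (a vertex adjacent to all of $C$, or an interior edge or triangle) for which a given $L$-coloring of $V(C)$ simply fails to extend, and whether it fails depends on the colors chosen on $C$. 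So $\phi$ cannot be produced independently of the interior; it must be chosen to thread through the interior obstructions, and the problem is to show a single $\phi$ can do this simultaneously for every completion on $q_0, q_1$.

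Relatedly, your claim that ``after these reductions, $C$ may be assumed to be induced'' does not hold. The paper's Subsection \ref{CInduCycleSubS} rules out chords of $C$ incident to $z$, but chords incident to $q_0$ or $q_1$ persist in the minimal counterexample (the vertices $x_0 \neq p_0$ and $x_1 \neq p_1$ on $C \setminus \mathring{P}$ encode exactly this), and much of the remaining argument manages the interaction between those chords and interior vertices adjacent to several of $q_0, z, q_1, x_0, x_1$. You correctly identify the ``for any extension'' quantifier as the hard part, but the paper does not resolve it by strengthening the induction on Theorem \ref{MainHolepunchPaperResulThm} itself. Instead it first proves an independent suite of results about 2-paths and 3-paths (Theorem \ref{SumTo4For2PathColorEnds}, Theorem \ref{CombinedT1T4ThreePathFactListThm}, the notions of $(P,G)$-obstruction and $\mathcal{G}$-base-coloring), develops fine control over broken wheels and wheels (Theorem \ref{BWheelMainRevListThm2}, Proposition \ref{CorMainEitherBWheelAtM1ColCor}), and only then runs a minimal-counterexample argument that systematically rules out configurations: common neighbors of $q_0, z, q_1$; common neighbors of $x_0, x_1$ in $G\setminus C$; and finally the vertices in $\textnormal{Ob}(x_i)$. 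Your outline has none of this machinery, and the gap between a free-standing coloring of $V(C)\setminus\{q_0,q_1\}$ and one that actually extends for every completion on $q_0, q_1$ is precisely where the real content of the theorem lives.
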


In the setting above, it is not necessarily true that there is an $L$-coloring $\phi$ of $\{p_0, z, p_1\}$ such that any extension of $\phi$ to an $L$-coloring of $V(P)$ extends to $L$-color $G$, but Theorem \ref{MainHolepunchPaperResulThm} tells us that something almost as good is true: There is a partial $L$-coloring $\phi$ of $C\setminus\{q_0, q_1\}$ containing $\{p_0, z, p_1\}$ in its domain where, for each $k=0,1$, $\phi$ uses at most two colors of $L(q_k)$ among the neighbors of $q_k$. To prove Theorem \ref{MainHolepunchPaperResulThm}, we need some intermediate facts about paths of lengths 2 and 3 in facial cycles of planar graphs. In particular, we need Theorems \ref{SumTo4For2PathColorEnds} and \ref{CombinedT1T4ThreePathFactListThm}. We prove Theorem \ref{SumTo4For2PathColorEnds} in Section \ref{ExtCol2PathAugColSec} and prove Theorem \ref{CombinedT1T4ThreePathFactListThm} over the course of Sections \ref{PrFPart1T1T4}-\ref{CorColSecRes}. We then prove Theorem \ref{MainHolepunchPaperResulThm} in Section \ref{MainResHolepunchSec}. In Section \ref{ConseqFinSec}, the final section of this paper, we prove two results about 5-paths that are analogues of Theorem \ref{MainHolepunchPaperResulThm}. These are Theorems \ref{ModifiedRes5ChordCaseDegen} and \ref{RainbowNonEqualEndpointColorThm}. Roughly speaking, in a later paper, we use Theorems \ref{MainHolepunchPaperResulThm}, \ref{ModifiedRes5ChordCaseDegen}, and \ref{RainbowNonEqualEndpointColorThm} as black boxes to show the following: If we have a graph $G$ embedded on a surface $\Sigma$, a family $\mathcal{C}$ of pairwise far-apart null-homotopic facial cycles and a list-assignment $L$ for $V(G)$ where all the vertices of $\bigcup\mathcal{C}$ have lists of size at least three and all the other vertices have lists of size at least five, then, under certain conditions, for any $\mathcal{C}'\subseteq\mathcal{C}$ with $|\mathcal{C}'|\geq 2$, we can color and delete a connected subgraph $H$ of $G$ with an $L$-coloring $\phi$ of $V(H)$ such that $H$ has nonempty intersection with all the elements of $\mathcal{C}'$ and is far away from all the elements of $\mathcal{C}\setminus\mathcal{C}'$, and almost all the vertices of $D_1(H)$ have $L_{\phi}$-lists of size at least three. We then use this to generalize the result of \cite{DistPrecVertChoosePap} to distant 2-colored components on embeddings. We now introduce our main object of study for this paper. 

\begin{defn} \emph{A \emph{rainbow} is a tuple $(G, C, P, L)$, where $G$ is a planar graph with outer cycle $C$, $P$ is a path on $C$ with $|E(P)|\geq 1$, and $L$ is a list-assignment for $V(G)$ such that each endpoint of $P$ has a nonempty list and furthermore, $|L(v)|\geq 3$ for each $v\in V(C\setminus P)$ and $|L(v)|\geq 5$ for each $v\in V(G\setminus C)$. Letting $p, p^*$ be the endpoints of $P$, we say the rainbow is \emph{end-linked} if $|L(p)|+|L(p^*)|\geq 4$.}
 \end{defn}

Given the statement of Theorem \ref{MainHolepunchPaperResulThm}, it is also natural to introduce the following terminology.

\begin{defn}\label{GeneralAugCrownNotForLink}  \emph{Given a graph $G$ with list-assignment $L$, we define the following. }
\begin{enumerate}[label=\emph{\arabic*)}]
\itemsep-0.1em
\item \emph{For any subgraph $H$ of $G$ and  partial $L$-coloring $\phi$ of $G$, we say that $\phi$ is \emph{$(H,G)$-sufficient} if any extension of $\phi$ to an $L$-coloring of $\textnormal{dom}(\phi)\cup V(H)$ extends to $L$-color all of $G$.}
\item\emph{Given a path $P\subseteq C$ of length at least two, where $P$ has endpoints $p, p'$ and terminal edges $pq, p'q'$, we let $\textnormal{Crown}_{L}(P, G)$ be the set of $(P,G)$-sufficient partial $L$-colorings $\phi$ of $V(C)\setminus\{q, q'\}$ such that}
\begin{enumerate}[label=\emph{\alph*)}]
\itemsep-0.1em
\item\emph{$p, p'\in\textnormal{dom}(\phi)$ and, if $|E(P)|>3$, then at least one vertex of $\mathring{P}\setminus\{q, q'\}$ lies in $\textnormal{dom}(\phi)$}; AND
\item\emph{For each $x\in V(P)\setminus\textnormal{dom}(\phi)$, $|L_{\phi}(x)|\geq 3$.} 
\end{enumerate}
\end{enumerate}
 \end{defn}

We usually drop the subscript $L$ or the coordinate $G$ from the notation above (or both) if these are clear from the context. With the definitions above in hand, we have the following compact restatement of Theorem \ref{MainHolepunchPaperResulThm}.

\begin{thmn}[\ref{MainHolepunchPaperResulThm}] Let $(G, C, P, L)$ be a rainbow, where $P$ has length four, the endpoints of $\mathring{P}$ have no common neighbor in $C\setminus P$, each internal vertex of $P$ has an $L$-list of size at least five, and at least one endpoint of $P$ has a list of size at least three. Then $\textnormal{Crown}_{L}(P, G)\neq\varnothing$. \end{thmn}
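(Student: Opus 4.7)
The plan is to use Theorem \ref{SumTo4For2PathColorEnds} (on $2$-paths) and Theorem \ref{CombinedT1T4ThreePathFactListThm} (on $3$-paths) as the main tools, with $z$ serving as the natural pivot to decompose the $4$-path $P$. Without loss of generality I assume $|L(p_0)| \geq 3$; the other case is symmetric. After standard reductions---taking $G$ to be $2$-connected, using chord-splitting to dispose of chords of $C$ with both endpoints in $P$, and handling directly the degenerate case $C = P$---the argument proceeds as follows.

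Fix a color $\alpha \in L(z)$, chosen via a pigeonhole argument on $L(q_0) \cap L(q_1)$ (possible since $|L(z)| \geq 5$) so that at most one of $q_0, q_1$ has $\alpha$ in its list. Replace $L(z)$ by the singleton $\{\alpha\}$. The $2$-subpath $p_0 q_0 z$ now has endpoint-list-sizes summing to at least $3+1=4$, so Theorem \ref{SumTo4For2PathColorEnds} produces a crown-type partial coloring $\phi_0$ on the $p_0$-side with $p_0, z \in \textnormal{dom}(\phi_0)$, $\phi_0(z) = \alpha$, and $|L_{\phi_0}(q_0)| \geq 3$. The $p_1$-side is handled analogously: when $|L(p_1)| \geq 3$, a symmetric application of Theorem \ref{SumTo4For2PathColorEnds} to $z q_1 p_1$ gives a compatible coloring $\phi_1$ with $|L_{\phi_1}(q_1)| \geq 3$; when $|L(p_1)| \in \{1,2\}$, I would instead invoke the $3$-path result Theorem \ref{CombinedT1T4ThreePathFactListThm} on $p_0 q_0 z q_1$ (which is end-linked since $|L(p_0)| + |L(q_1)| \geq 3 + 5 \geq 4$, with both interior vertices of size $\geq 5$) and then recover a coloring with $z$ (rather than $q_1$) in the domain via a post-processing step using the slack $|L(\cdot)| \geq 3$ guaranteed at $z$.

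The principal obstacle is to merge the two side-pieces into a single partial coloring $\phi$ of $V(C) \setminus \{q_0, q_1\}$ which is both $(P,G)$-sufficient and simultaneously satisfies $|L_\phi(q_0)|, |L_\phi(q_1)| \geq 3$. The hypothesis that $q_0, q_1$ have no common neighbor in $C \setminus P$ is essential here: it guarantees that any single vertex of $C \setminus P$ colored by one of the applications is adjacent to at most one of $q_0, q_1$, preventing a simultaneous depletion of both lists below three. The hardest sub-case is $|L(p_1)| < 3$, where the two sides are coupled more tightly by the $3$-path application and the domain-swap at $z$ must be performed carefully so as to preserve the sufficiency property against arbitrary later extensions at $q_0$ and $q_1$; this is where I expect the bulk of the technical work to lie.
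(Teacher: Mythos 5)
Your opening move does not get off the ground, and the difficulty is not in the merging step you flag but in the decomposition itself.

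First, the pigeonhole claim fails: there is no guarantee of an $\alpha\in L(z)$ lying in at most one of $L(q_0),L(q_1)$. All three vertices may have $5$-lists, and one may have $L(z)\subseteq L(q_0)\cap L(q_1)$ (indeed $L(z)=L(q_0)=L(q_1)$ is perfectly possible). So the color $\alpha$ you want to pin on $z$ need not exist.

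Second, and more seriously, treating $z$ as a pivot presupposes a decomposition of $G$ into a ``$p_0$-side'' and a ``$p_1$-side'' that simply isn't there. That decomposition would require $z$ to have a neighbor on $C\setminus\mathring{P}$, i.e.\ a chord of $C$ through $z$ (compare Definition \ref{ContractNatCQPartChordDefn} and the definition of $G^Q$); but the paper's proof spends all of Subsection \ref{CInduCycleSubS} showing that, in a minimal counterexample, \emph{every chord of $C$ is incident to $q_0$ or $q_1$ and none is incident to $z$} (Claim \ref{CycCIndMTh}). Consequently $p_0q_0z$ is not a valid $P$-coordinate of any sub-rainbow; $G^{p_0q_0z}$ as defined in the paper degenerates to the path itself, so Theorem \ref{SumTo4For2PathColorEnds} gives you nothing. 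Even if you apply Theorem \ref{SumTo4For2PathColorEnds} to the rainbow $(G,C,p_0q_0z,L)$ with $L(z)$ shrunk to $\{\alpha\}$, what you get back is an $L$-coloring of $\{p_0,z\}$ whose extension at $q_0$ extends to $L$-color \emph{all} of $G$; it says nothing about being able to later prescribe the color of $p_1$ or control the list at $q_1$, and it is not a ``crown-type'' object with vertices of $C\setminus\mathring{P}$ in its domain. Sufficiency for the $2$-path $p_0q_0z$ and sufficiency for the $4$-path $P$ are genuinely different predicates, and the former does not localize to a $p_0$-side because there is no cut.

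The paper's proof is consequently of an entirely different character: it takes a vertex-minimal, edge-maximal counterexample, triangulates interior faces, and then proves a sequence of increasingly restrictive structural facts — $P$ is induced with $x_0\neq x_1$ (Claim \ref{PInducedNoPGSuff1}), no chord through $z$ (Claim \ref{CycCIndMTh}), strong constraints on vertices in $N(z)\cap N(q_i)$ (Claim \ref{IfwAdj10zq1ThenAdjCAllInt}), $q_0,q_1$ have no common neighbor except $z$ (Claim \ref{NoCommToBothQ0Q1ExcZ}), and $x_0,x_1$ have no common interior neighbor (Claim \ref{X0X1NoCommonNbrG-C}) — before cornering the counterexample into a fixed small configuration with two obstruction vertices $y_0,y_1$ and deriving a contradiction via Theorem \ref{CombinedT1T4ThreePathFactListThm}. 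None of this resembles a decomposition at $z$. Your intuition that the ``no common neighbor of $q_0,q_1$ in $C\setminus P$'' hypothesis matters is correct, and the $3$-path machinery in Theorem \ref{CombinedT1T4ThreePathFactListThm} is indeed the engine — but it is deployed on paths like $x_0q_0q_1x_1$, $x_0q_0zq_1p_1$, and various chord-derived paths in the interior, not on the non-existent $p_0$-side and $p_1$-side.
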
 

We now provide some background and state some results that we use later. In \cite{AllPlanar5ThomPap}, Thomassen showed that all planar graphs are 5-choosable. Actually, Thomassen proved something stronger. 

\begin{theorem}\label{thomassen5ChooseThm}
Let $G$ be a planar graph with facial cycle $C$ and $xy\in E(C)$. Let $L$ be a list assignment for $G$, where each vertex of $G\setminus C$ has a list of size at least five, each vertex of $C\setminus\{x,y\}$ has a list of size at least three, and $xy$ is $L$-colorable. Then $G$ is $L$-colorable.
\end{theorem}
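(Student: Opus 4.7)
The plan is to prove the theorem by induction on $|V(G)|$, with the standard simplifying assumption that $G$ is a near-triangulation whose outer face is bounded by $C$ (triangulating internal faces only strengthens the hypothesis, and if the outer face is not a cycle one may split $G$ at cut vertices lying on $C$ and handle each block separately). Write $\phi$ for the given $L$-coloring of $xy$. The base case of small $G$ is immediate.

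For the inductive step I split into two cases. First, suppose $C$ has a chord $uv$. Then $uv$ separates $G$ into two subgraphs $G_1,G_2$ sharing only the edge $uv$, where $G_1$ contains $xy$. The outer cycle of $G_1$ consists of the subarc of $C$ containing $xy$ together with $uv$, and applying induction to $G_1$ yields an $L$-coloring of $G_1$ extending $\phi$. Its restriction to $\{u,v\}$ serves as a precoloring for $G_2$, whose outer cycle consists of the other subarc of $C$ together with $uv$, and a second invocation of induction completes an $L$-coloring of all of $G$.

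Second, suppose $C$ is chordless. Let $v_0$ be the neighbor of $x$ on $C$ other than $y$, and enumerate the neighbors of $v_0$ in cyclic order around $v_0$ as $x=u_0,u_1,\ldots,u_t,w$, where $w$ is the vertex of $C$ following $v_0$. Chordlessness of $C$ forces $u_1,\ldots,u_t\in V(G)\setminus V(C)$, so each satisfies $|L(u_i)|\geq 5$. Since $|L(v_0)|\geq 3$, I may choose distinct $a,b\in L(v_0)\setminus\{\phi(x)\}$. Define a list-assignment $L'$ on $G\setminus\{v_0\}$ by $L'(u_i):=L(u_i)\setminus\{a,b\}$ for $1\leq i\leq t$ and $L'(v):=L(v)$ otherwise. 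Because $G$ is a near-triangulation, $x u_1,\ u_i u_{i+1},\ u_t w$ are all edges, so the outer face of $G\setminus\{v_0\}$ is bounded by the cycle $C':=(C-v_0)\cup u_1 u_2\cdots u_t$; the edge $xy$ is still $L'$-precolored, every vertex of $C'\setminus\{x,y\}$ has an $L'$-list of size at least three, and every interior vertex still has an $L'$-list of size at least five. Induction applied to $G\setminus\{v_0\}$ with $L'$ yields an $L'$-coloring $\psi$, which extends to an $L$-coloring of $G$ by assigning $v_0$ whichever of $a,b$ differs from $\psi(w)$.

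The main obstacle here is not conceptual but the careful choice in Case 2: two spare colors at $v_0$ that avoid $\phi(x)$, can simultaneously be subtracted from every interior neighbor while preserving the three-list condition along the new outer cycle, and still leave a valid option at $v_0$ against $\psi(w)$. The chord case is a routine divide-and-conquer, and the near-triangulation reduction is standard.
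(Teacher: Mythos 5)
The paper does not prove Theorem \ref{thomassen5ChooseThm}; it simply cites Thomassen's original 1994 result \cite{AllPlanar5ThomPap}, so there is no internal proof to compare against. Your argument is the standard Thomassen proof, and it is correct: the chord case splits $G$ along the chord and applies induction twice, and the chordless case deletes a neighbor $v_0$ of $x$ on $C$, reserving two colors $a,b\in L(v_0)\setminus\{\phi(x)\}$ and removing them from the interior neighbors of $v_0$, which become the new outer boundary. Since $C$ is chordless, the only $C$-neighbors of $v_0$ are $x$ and $w$, so after inductively coloring $G\setminus\{v_0\}$ one of $a,b$ is free for $v_0$. The degenerate configuration where $C$ is a triangle and $w=y$ is also handled correctly by your final color choice, and the case $t=0$ forces $|V(G)|=3$ in a near-triangulation, which is already the base case.
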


Theorem \ref{thomassen5ChooseThm} has the following useful corollary, which we use frequently. 

\begin{cor}\label{CycleLen4CorToThom} Let $G$ be a planar graph with outer cycle $C$ and let $L$ be a list-assignment for $G$ where each vertex of $G\setminus C$ has a list of size at least five.  If $|V(C)|\leq 4$, then any $L$-coloring of $V(C)$ extends to an $L$-coloring of $G$. \end{cor}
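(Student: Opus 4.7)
The plan is to apply Theorem \ref{thomassen5ChooseThm} to $(G, L')$ with a carefully modified list-assignment $L'$ designed so that any $L'$-coloring of $G$ is forced to coincide with $\psi$ on all of $V(C)$.

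First I would handle the case $|V(C)|=3$, which is the main case. Writing $C=v_1v_2v_3$, I observe that $\psi(v_1),\psi(v_2),\psi(v_3)$ are pairwise distinct because $\psi$ properly colors the triangle $C$. I define $L'(v_3):=\{\psi(v_1),\psi(v_2),\psi(v_3)\}$ and $L'(v):=L(v)$ for $v\ne v_3$. Then $|L'(v_3)|=3$ while the lists of vertices of $V(G)\setminus V(C)$ remain of size at least $5$, so $(G,L')$ satisfies the hypotheses of Theorem \ref{thomassen5ChooseThm} with outer edge $v_1v_2$ $L'$-colored by $\psi(v_1),\psi(v_2)$. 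The resulting $L'$-coloring $\phi$ must have $\phi(v_3)\in L'(v_3)\setminus\{\psi(v_1),\psi(v_2)\}=\{\psi(v_3)\}$, so $\phi(v_3)=\psi(v_3)$; since $\phi(v)\in L(v)$ for every $v$ (using $\psi(v_3)\in L(v_3)$), $\phi$ is the desired $L$-extension of $\psi$.

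Next I would handle the case $|V(C)|=4$ by reducing to the previous case, proceeding by strong induction on $|V(G)|$. Write $C=v_1v_2v_3v_4$. Without loss of generality, $G$ is a near-triangulation, since adding interior edges only restricts the set of valid $L$-colorings. If $G$ contains a chord of $C$, say $v_1v_3$, then $\psi(v_1)\ne\psi(v_3)$ because this chord is an edge of $G[V(C)]$; the chord splits $G$ into two planar subgraphs each bounded by a triangle, and applying the $|V(C)|=3$ case to each side yields extensions of $\psi$ that agree on $v_1,v_3$ and glue to an $L$-coloring of all of $G$. If $C$ has no chord, then the near-triangulation assumption forces the inner face incident to the edge $v_1v_2$ to be a triangle $v_1v_2u$ with $u\in V(G)\setminus V(C)$; I would precolor $u$ with some $\phi(u)\in L(u)\setminus\{\psi(v_1),\psi(v_2)\}$ (nonempty since $|L(u)|\ge 5$) and then either delete a vertex of $C$ to reduce to a smaller planar instance with outer cycle of length at most four (so the inductive hypothesis applies) or introduce a chord in the reduced graph, iterating until the chord case is reached.

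The main obstacle I anticipate is the no-chord subcase of $|V(C)|=4$, especially when $\psi$ uses only two colors on $C$ (i.e.\ $\psi(v_1)=\psi(v_3)$ and $\psi(v_2)=\psi(v_4)$), because then neither diagonal can be added as a chord and the direct list-modification trick of the $|V(C)|=3$ case does not immediately force the correct extension. The inductive reduction via precoloring interior vertices should resolve this, but verifying that each reduction step preserves the outer cycle length of at most four (and that the inductive hypothesis still applies) is the most delicate step in the argument.
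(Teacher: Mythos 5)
Your proof of the $|V(C)|=3$ case is correct, and is the standard argument: set $L'(v_1):=\{\psi(v_1)\}$, $L'(v_2):=\{\psi(v_2)\}$ (you omit this restriction but clearly intend it), and $L'(v_3):=\{\psi(v_1),\psi(v_2),\psi(v_3)\}$; Theorem \ref{thomassen5ChooseThm} then forces $\phi(v_3)=\psi(v_3)$. Your treatment of the $|V(C)|=4$ chord case is also correct. The paper itself gives no proof of this corollary, so only the correctness of your argument is at issue.

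The no-chord $|V(C)|=4$ case contains a genuine gap, which you yourself flag as ``the most delicate step.'' The step that fails is ``delete a vertex of $C$ to reduce to a smaller planar instance with outer cycle of length at most four.'' In a near-triangulation, if you delete $v_1$ from the $4$-cycle $v_1v_2v_3v_4$, the new outer cycle is $v_2\,u_1\,u_2\cdots u_k\,v_4\,v_3$ where $u_1,\ldots,u_k$ are the interior neighbours of $v_1$; its length is $3+k$, which exceeds $4$ whenever $\deg(v_1)>3$, so the inductive hypothesis does not apply. The fallback ``introduce a chord in the reduced graph, iterating until the chord case is reached'' is not concretized: you do not say which chord is introduced, why it is planar-drawable, or why the iteration terminates. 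The crux, as you note, is the precoloring with $\psi(v_1)=\psi(v_3)$ and $\psi(v_2)=\psi(v_4)$, where neither diagonal can be added: here the list-forcing trick of the $|V(C)|=3$ case has no analogue (any natural choice of $3$-lists for $v_3,v_4$ admits a ``swapped'' $L'$-coloring of $C$), and the vertex $u$ on the face $v_1v_2u$ need not be adjacent to $v_3$ or $v_4$ (so precoloring it does not create a short $2$-chord to split along). A correct treatment typically first reduces to the case where $G$ has no separating cycle of length $\le 4$ (by applying the corollary inductively to $\textnormal{Ext}(D)$ and then $\textnormal{Int}(D)$ for any such $D$) and also deletes any interior vertex of degree at most four (coloring it last); only after these reductions does the structure become simple enough to locate an interior vertex adjacent to three consecutive vertices of $C$, precolor it, and split along the resulting $2$-chord into two strictly smaller $4$-cycle instances.
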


Because of Corollary \ref{CycleLen4CorToThom}, planar embeddings which have no separating cycles of length 3 or 4 play a special role in our analysis, so we give them a name.

\begin{defn} \emph{Given a planar graph $G$, we say that $G$ is \emph{short-inseparable} if, for any cycle $F\subseteq G$ with $|V(F)|\leq 4$, either $V(\textnormal{Int}_G(F))=V(F)$ or $V(\textnormal{Ext}_G(F))=V(F)$.} \end{defn}

We also rely on the following very result from Postle and Thomas, which is an analogue of Theorem \ref{thomassen5ChooseThm} where the precolored edge has been replaced by two lists of size two. 

\begin{theorem}\label{Two2ListTheorem} Let $G$ be a planar graph with outer face $F$, and let $v,w\in V(F)$. Let $L$ be a list-assignment for $V(G)$ where $|L(v)|\geq 2$, $|L(w)|\geq 2$, and furthermore, for each $x\in V(F)\setminus\{v,w\}$, $|L(x)|\geq 3$, and, for each $x\in V(G\setminus F)$, $|L(x)|\geq 5$. Then $G$ is $L$-colorable. \end{theorem}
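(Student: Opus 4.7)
The plan is to prove Theorem~\ref{Two2ListTheorem} by induction on $|V(G)|$, modelled on Thomassen's proof of Theorem~\ref{thomassen5ChooseThm} but accommodating two boundary vertices of list size $2$ in place of a precolored edge. By the usual reductions (shrinking $L(v), L(w)$ to size exactly $2$, assuming $G$ is $2$-connected, and adding interior edges to make $G$ a near-triangulation), I may assume that the outer face $F$ is a cycle and every internal face is a triangle.

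If $vw \in E(G)$, I properly $2$-color $\{v,w\}$ (possible since $|L(w)| \geq 2$) and apply Theorem~\ref{thomassen5ChooseThm} to the resulting colored edge; if $vw$ is a chord of $F$, I split $G$ along $vw$ and apply Theorem~\ref{thomassen5ChooseThm} on each side. If $F$ has a chord $xy$ disjoint from $\{v,w\}$ with $v,w$ on the same side (say contained in $G_1$), induction $L$-colors $G_1$ and Theorem~\ref{thomassen5ChooseThm} extends the forced coloring of $xy$ across $G_2$. The harder sub-case is when $v,w$ lie on opposite sides of the chord: here I would bootstrap using an auxiliary strengthening of Theorem~\ref{Two2ListTheorem} that permits one of the $2$-list vertices to be replaced by a precolored vertex (this strengthening itself follows from the main statement via a Thomassen-style one-vertex reduction), apply it to the $v$-side with pair $\{v,x\}$ after shrinking $L(x)$ to a $2$-subset, record the forced color at $y$, and apply the strengthened version to the $w$-side with $y$ precolored and $w$ a $2$-list vertex.

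The substantive remaining case, and the \textbf{main obstacle}, is when $v,w$ are non-adjacent and $F$ has no chord. Following Thomassen, let $v$ have outer-face neighbors $u_1, u_2 \in V(F)$ and interior neighbors $x_1, \ldots, x_k$ in facial order. I pick two distinct colors $\alpha, \beta \in L(v)$, remove them from each $L(x_i)$ (so each $x_i$ retains a list of size $\geq 3$), delete $v$, and apply induction to the resulting near-triangulation whose outer cycle is $F$ with the segment around $v$ replaced by the path $u_1 x_1 \cdots x_k u_2$. The delicate step is to choose $\alpha, \beta$ so that the smaller graph has exactly the two $2$-list boundary vertices required by the induction hypothesis---namely $w$ together with one of $u_1, u_2$. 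The annoying sub-case is $L(v) \subseteq L(u_2)$ (or the symmetric $\subseteq L(u_1)$), in which the naive choice drops $u_2$'s list to size $1$; here I would instead pivot to the interior triangle $v x_k u_2$ guaranteed by near-triangulation, selecting a color at $x_k$ or $u_2$ that absorbs the forced restriction, exactly as in Thomassen's original configuration analysis but with the extra $2$-list vertex $w$ tracked through the reduction.
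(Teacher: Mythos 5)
The paper does not prove Theorem~\ref{Two2ListTheorem}; it is quoted from Postle and Thomas \cite{2ListSize2PaperSeriesI}, so there is no ``paper proof'' to compare against. Assessed on its own terms, your argument is structured along the right lines but has a genuine gap at exactly the point you flag as the ``annoying sub-case,'' and the gesture you make there does not close it.

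Concretely: in the chordless, $vw\notin E(G)$ case you delete the $2$-list vertex $v$ with outer-face neighbours $u_1,u_2$ and interior neighbours $x_1,\ldots,x_k$, remove $L(v)=\{\alpha,\beta\}$ from the $L(x_i)$, and color $G-v$ by induction. For $v$ to be colorable afterwards you must prevent $c(u_1)$ and $c(u_2)$ from jointly covering $\{\alpha,\beta\}$; your plan is to also remove $\alpha,\beta$ from one of $L(u_1),L(u_2)$ so that vertex becomes the second $2$-list vertex in the smaller instance. This fails whenever $L(v)\subseteq L(u_1)\cap L(u_2)$ and $|L(u_1)|=|L(u_2)|=3$: removing both colors from either list leaves a singleton, and removing only one color still allows that vertex to take the other. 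This configuration is perfectly possible. Your proposed rescue --- ``pivot to the interior triangle $vx_ku_2$\ldots exactly as in Thomassen's original configuration analysis'' --- does not correspond to anything in Thomassen's proof. Thomassen never faces this dilemma because the vertex he deletes is adjacent to a \emph{precolored} vertex $y$; he reserves $\alpha,\beta\in L(v)\setminus\{c(y)\}$, so after induction the only neighbour of $v$ that could clash is the single vertex $u$, and one of $\alpha,\beta$ always avoids $c(u)$. With two free-floating $2$-lists there is no analogous reserved color, and no local look at the face $vx_ku_2$ supplies one; this is precisely why the two-$2$-list theorem is a genuine strengthening requiring a substantially more involved case analysis in Postle--Thomas rather than a one-page Thomassen-style induction.

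A secondary issue: in the ``chord $xy$ separating $v$ from $w$'' sub-case, after coloring the $v$-side \emph{both} endpoints $x$ and $y$ of the chord acquire colors, so the instance you must solve on the $w$-side has a precolored \emph{edge} $xy$ together with a $2$-list vertex $w$, not merely a single precolored vertex plus a $2$-list. That is yet another strengthening, and deriving it ``via a Thomassen-style one-vertex reduction'' runs into the same list-budget problem (deleting $x$ turns a $3$-list boundary neighbour into a $2$-list, producing three special vertices). These auxiliary lemmas can likely be set up, but they need their own careful inductions and cannot be waved through.
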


Lastly, in this paper, we rely on the following simple but useful result, which is a consequence of a characterization in \cite{lKthForGoBoHm6} of obstructions to extending a precoloring of a cycle of length at most six in a planar graph. 

\begin{theorem}\label{BohmePaper5CycleCorList} Let $G$ be a short-inseparable planar embedding with facial cycle $C$, where $5\leq |V(C)|\leq 6$ . Let $G'=G\setminus C$ and $L$ be a list-assignment for $G$, where $|L(v)|\geq 5$ for all $v\in V(G')$. Let $\phi$ be an $L$-coloring of $V(C)$ which does not extend to $L$-color $G$. If $|V(C)|=5$, then $G'$ consists of a lone vertex $v$ adjacent to all five vertices of $C$, where $L_{\phi}(v)=\varnothing$. On the other hand, if $|V(C)|=6$, then $G'$ consists of one of the following.
\begin{enumerate}[label=\roman*)]
\itemsep-0.1em
\item A lone vertex $v$ adjacent to at least five vertices of $C$, where $L_{\phi}(v)=\varnothing$; OR
\item An edge where, for each $v\in V(G')$, $L_{\phi}(v)$ is the same 1-list and $G[N(v)\cap V(C)]$ is a length-three path; OR
\item A triangle where, for each $v\in V(G')$, $L_{\phi}(v)$ is the same 2-list and $G[N(v)\cap V(C)]$ is a length-two path.
\end{enumerate}
\end{theorem}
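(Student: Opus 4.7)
The plan is to invoke the general characterization in \cite{lKthForGoBoHm6} of minimal obstructions to extending a precoloring of a facial cycle of length at most six in a planar graph where interior vertices carry 5-lists, and then use the short-inseparability hypothesis to trim the list of possible configurations down to exactly (i)--(iii). Recall that the cited result enumerates (up to isomorphism) the possible ``critical cores" of an obstruction; in our range of cycle lengths these cores are either a single vertex, an edge, or a triangle in $G'$, each attached to $C$ in a very restricted way.

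First I would handle the case $|V(C)|=5$. The cited characterization implies that the only possible critical core is a single ``wheel-hub'' vertex $v$ in $G'$ whose $L$-list is exhausted by the colors $\phi$ assigns to its neighbors on $C$. Short-inseparability immediately rules out the existence of any further vertex in $G'$: such a vertex would have to lie inside a region of $G[V(C)\cup\{v\}]$ bounded by at most four vertices, producing a separating $3$- or $4$-cycle, contrary to hypothesis. The same reasoning forces $v$ to be adjacent to all five vertices of $C$, for otherwise $v$ together with two consecutive vertices of $C$ would cut off the remaining portion of $G'$ by a short separating cycle. Since $|L(v)|\geq 5$, we conclude $L_{\phi}(v)=\varnothing$, giving (i).

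Next I would handle $|V(C)|=6$, splitting into the three possibilities for the critical core given by the cited characterization. The single-vertex subcase is identical to the $|V(C)|=5$ argument except that $v$ now only needs at least five neighbors on $C$ to exhaust a 5-list, yielding (i). The edge subcase gives a critical edge $uw\subseteq G'$; the characterization forces $L_{\phi}(u)=L_{\phi}(w)$ to be a common 1-list and each of $u,w$ to be adjacent to a length-three subpath of $C$, and short-inseparability forces $V(G')=\{u,w\}$, yielding (ii). The triangle subcase gives a critical triangle $T\subseteq G'$; the characterization forces all three vertices of $T$ to share a common 2-list under $L_{\phi}$ and each to be adjacent to a length-two subpath of $C$, and short-inseparability again forces $V(G')=V(T)$, yielding (iii).

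The main obstacle is the structural bookkeeping in the edge and triangle subcases of $|V(C)|=6$: one must verify that the relevant $L_{\phi}$-lists truly coincide (rather than merely overlap) across the core vertices, that the $C$-neighborhoods have exactly the stated lengths, and that no further vertex of $G'$ can be tucked between the core and $C$ without producing a forbidden short separating cycle. Each of these conclusions is already implicit in the cited characterization together with a careful application of short-inseparability, so the proof reduces to unpacking that characterization and checking the three configurations case-by-case.
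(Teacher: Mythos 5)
Your proposal follows the same route as the paper: the paper states Theorem~\ref{BohmePaper5CycleCorList} without a detailed proof, introducing it as ``a consequence of a characterization in \cite{lKthForGoBoHm6},'' which is exactly the Böhme--Mohar--Stiebitz enumeration of critical cores you invoke, combined with short-inseparability to ensure $G'$ has no vertices beyond the core. Your case analysis and the observation that any extra vertex of $G'$ would be trapped inside a separating $3$- or $4$-cycle are the right way to unpack that citation, so this is consistent with the paper's approach.
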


\section{Extending Colorings of 2-Paths}\label{2PathBWheelCaseSec}

In this section, we take note of some facts that we need for the proof of Theorem \ref{SumTo4For2PathColorEnds} and elsewhere. 

\begin{defn}\emph{A \emph{broken wheel} is a graph $G$ with a vertex $p$ such that $G-p$ is a path $q_1\ldots q_n$ with $n\geq 2$, where $N(p)=\{q_1, \ldots, q_n\}$. The subpath $q_1pq_n$ of $G$ is called the \emph{principal path} of $G$. On the other hand, a \emph{wheel} is a graph $G$ with a vertex $p$ (called the \emph{central vertex}) such that $G-p$ is a cycle and $p$ is adjacent to all the vertices of $G-p$.} \end{defn}

Note that, if $|V(G)|\leq 4$, then the above definition does not uniquely specify the principal path, although in practice, whenever we deal with broken wheels, we specify the principal path beforehand so that there is no ambiguity. Furthermore, since we frequently deal with colorings of paths, we introduce the following natural notation. 

\begin{defn} \emph{Let $G$ be a graph with list-assignment $L$. Let $P:=p_1\ldots p_n$ be path. Given a partial $L$-coloring $\phi$ of $G$ with $V(P)\subseteq\textnormal{dom}(\phi)$, we denote $\phi|_P$ by  $(\phi(p_1), \phi(p_2),\ldots, \phi(p_n))$. If $n=3$, then, for each $(c, c')\in L(p_1)\times L(p_3)$, we let $\Lambda_{G,L}^P(c, \bullet, c')$ be the set of $d\in L(p_2)$ such that there is an $L$-coloring of $G$ using $c,d,c'$ on $p_1, p_2, p_3$ respectively. Likewise, given a pair $(c, c')$ in $L(p_1)\times L(p_2)$ or $L(p_2)\times L(p_3)$ respectively, we define the sets $\Lambda_{G,L}^P(c, c', \bullet)$  and $\Lambda_{G,L}^P(\bullet, c, c')$  analogously.}  \end{defn}

The notation above always requires us to specify an ordering of the vertices of the 2-path. Sometimes we make this explicit by writing $\Lambda^{p_1p_2p_3}_{G,L}(\cdot, \cdot, \cdot)$. Whenever any of $P, G, L$ are clear from the context, we drop the respective super- or subscripts from the notation above. 

\begin{defn}\label{GUniversalDefinition} \emph{Let $G$ be a graph with list-assignment $L$ and $P:= p_0qp_1$ be a 2-path in $G$. Given an $a\in L(p_0)$, we say that $a$ is \emph{$(P, G)$-universal} if, for each $b\in L(q)\setminus\{a\}$ and $c\in L(p_1)\setminus\{b\}$, the $L$-coloring $(a,b,c)$ of $p_0qp_1$ extends to $L$-color $G$. We say that $a$ is \emph{almost $(P, G)$-universal}, if, for each $b\in L(q)\setminus\{a\}$, the $L$-coloring $(a,b)$ of $p_0p_1$ extends to a family of at least $|L(p_1)|-1$ different $L$-colorings of $G$, each using a different color on $p_1$. Note that, if $p_0p_1\in E(G)$ and $L(p_0)\subseteq L(p_1)$, then no color of $L(p_0)$ is $(P, G)$-universal.  }
 \end{defn}

\begin{theorem}\label{BWheelMainRevListThm2} Let $G$ be a broken wheel with principal path $P=pp'p''$ and let $L$ be a list-assignment for $V(G)$ in which each vertex of $V(G)\setminus\{p, p'\}$ has a list of size at least three. Let $G-p'=pu_1\ldots u_tp''$ for some $t\geq 0$. 
\begin{enumerate}[label=\arabic*)]
\item\label{BWheel1Lb} Let $\phi_0, \phi_1$ be a pair of distinct $L$-colorings of $pp'$. For each $i=0,1$, let $S_i:=\Lambda_G(\phi_i(p), \phi_i(p'), \bullet)$, and suppose that $|S_0|=|S_1|=1$. Then the following hold. 
\begin{enumerate}[label=\Alph*)]
\itemsep-0.1em
\item\label{BWheel1A} If $\phi_0(p)=\phi_1(p)$ and $S_0=S_1$, then $|E(G-p')|$ is even; AND
\item\label{BWheel1B} If $\phi_0(p)=\phi_1(p)$ and $S_0\neq S_1$, then $|E(G-p')|$ is odd and, for each $i=0,1$, $S_i=\{\phi_{1-i}(p')\}$; AND
\item\label{BWheel1C} If $\phi_0(p)\neq\phi_1(p)$ and $S_0=S_1$, then $|E(G-p')|$ is odd and $(\phi_0(p), \phi_0(p'))=(\phi_1(p'), \phi_1(p))$
\end{enumerate}
\item\label{BWheel2Lb} Let $q\in\{p, p'\}$ and $\mathcal{F}$ be a family of $L$-colorings of $pp'$, where $|\mathcal{F}|\geq 3$ and $\mathcal{F}$ is constant on $q$. Then there is a $\phi\in\mathcal{F}$ such that $|\Lambda_G(\phi(p), \phi(p'), \bullet)|\geq 2$.
\item\label{BWheel3Lb} Suppose $|V(G)|\geq 4$, let $x$ be the unique vertex of distance two from $p$ on the path $G-p'$, and let $a\in L(p)$ with $L(u_1)\setminus\{a\}\not\subseteq L(x)$. Then $a$ is almost $G$-universal and furthermore, if $|V(G)|>4$, then $a$ is $G$-universal. 
\end{enumerate} 
 \end{theorem}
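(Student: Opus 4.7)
The common engine for all three parts is a forward-propagation tracking, for a fixed coloring $\phi$ of $pp'$ with $\phi(p)=a$, $\phi(p')=b$, the set $T_i\subseteq L(u_i)$ of colors that $u_i$ can take in some extension of $\phi$ to $pp'u_1\ldots u_i$. Explicitly, set $T_0:=\{a\}$ and $T_i:=\bigcup_{c\in T_{i-1}}(L(u_i)\setminus\{b,c\})$ for $1\le i\le t$. A short induction shows that $|T_i|\ge 2$ implies $T_j=L(u_j)\setminus\{b\}$ of size $\ge 2$ for every $j>i$, and in that case $|\Lambda_G(a,b,\bullet)|\ge 2$. Consequently $|\Lambda_G(a,b,\bullet)|=1$ forces $|T_i|=1$ for every $i$, and combined with $|L(u_i)|\ge 3$ this pins down $L(u_1)=\{a,b,\gamma_1\}$ and $L(u_i)=\{b,\gamma_{i-1},\gamma_i\}$ for $i\ge 2$ (where $T_i=\{\gamma_i\}$), together with $|L(p'')|=3$ and $\{b,\gamma_t\}\subseteq L(p'')$ (or $\{a,b\}\subseteq L(p'')$ when $t=0$).

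Part \ref{BWheel2Lb} follows immediately: if three colorings of $\mathcal{F}$ are constant on $q=p$ with common value $a$ and pairwise-distinct values $b_1,b_2,b_3$ on $p'$, each forces $\{a,b_i\}\subseteq L(u_1)$ (or $L(p'')$ when $t=0$), contradicting the size-$3$ constraint with four distinct elements $a,b_1,b_2,b_3$. The case $q=p'$ is symmetric.

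For Part \ref{BWheel1Lb}, write $\phi_j(p)=a_j$ and $\phi_j(p')=b_j$, and iterate the engine jointly for $j=0,1$. When $a_0=a_1=a$ (Cases \ref{BWheel1A} and \ref{BWheel1B}), one gets $L(u_1)=\{a,b_0,b_1\}$, so $T_1^{(0)}=\{b_1\}$ and $T_1^{(1)}=\{b_0\}$; a short induction then yields $L(u_i)\supseteq\{b_0,b_1\}$ at every level $i\le t$, with $T_i^{(0)}=T_i^{(1)}$ precisely when $i$ is even. Combining this with $S_j=L(p'')\setminus(\{b_j\}\cup T_t^{(j)})$ and $|S_j|=1$ forces $L(p'')=\{b_0,b_1,c\}$ for some $c$, from which $S_0=S_1$ iff $t$ is odd (Case \ref{BWheel1A}, $|E(G-p')|=t+1$ even) and otherwise $S_j=\{b_{1-j}\}=\{\phi_{1-j}(p')\}$ with $t$ even (Case \ref{BWheel1B}). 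When $a_0\ne a_1$ (Case \ref{BWheel1C}), the constraint $\{a_j,b_j\}\subseteq L(u_1)$ for both $j$ together with $|L(u_1)|=3$ forces at least one coincidence among $a_0,b_0,a_1,b_1$, giving four shapes; a level-by-level check shows the shapes with $b_0=a_1\ne b_1$, with $b_1=a_0\ne b_0$, and with $b_0=b_1$ each produce $S_0\ne S_1$ for every $t$, while the remaining shape $b_0=a_1$, $b_1=a_0$ yields $S_0=S_1$ exactly when $t$ is even, which is precisely the stated conclusion.

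For Part \ref{BWheel3Lb}, fix $\alpha\in L(u_1)\setminus(\{a\}\cup L(x))$, nonempty by hypothesis, and let $b\in L(p')\setminus\{a\}$. If $\alpha\ne b$, color $u_1$ by $\alpha$: when $t=1$ the value $c$ is unrestricted because $\alpha\notin L(p'')=L(x)$, and when $t\ge 2$ the path $u_2\ldots u_t$ is colored backward greedily in lists of size $\ge 2$ with $u_t\ne c$, the constraint $u_2\ne\alpha$ being vacuous since $\alpha\notin L(u_2)=L(x)$. If instead $\alpha=b$, pick $\beta\in L(u_1)\setminus\{a,b\}$: when $t=1$ this covers every $c\ne\beta$, accounting for the one potentially bad $c$ which reduces the conclusion to almost-universal, while when $t\ge 2$ the equality $\alpha=b$ forces $b\notin L(u_2)$, so $L(u_2)\setminus\{b,\beta\}$ has size $\ge 2$ and the backward greedy coloring succeeds for every $c$, yielding the stronger universal conclusion. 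The main technical obstacle lies in Part \ref{BWheel1Lb}, namely running the joint $(T_i^{(0)},T_i^{(1)})$ recursion carefully enough to eliminate all shape-parity combinations except the ones listed in Cases \ref{BWheel1A}, \ref{BWheel1B}, \ref{BWheel1C}.
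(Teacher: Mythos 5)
Your proposal is correct. The paper declares Parts~\ref{BWheel1Lb} and~\ref{BWheel2Lb} ``straightforward to check'' and proves only Part~\ref{BWheel3Lb}; your forward-propagation $T_i$-engine is exactly the expected way to make those omitted verifications explicit, and your Part~\ref{BWheel3Lb} argument is the same idea as the paper's (color $u_1$ with a color $\notin L(x)$ and fill in the rest of the path), except that you do the fill-in by a backward greedy coloring where the paper invokes Thomassen's theorem as a black box on the subgraph $G\setminus\{p,u_1\}$.
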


\begin{proof}  Let $z$ be the unique common neighbor of $p, p'$. Both 1) and 2) are straightforward to check, so now we prove 3). Suppose first that $|V(G)|>4$, but $a$ is not $G$-universal. Note that $x=u_2$ and $pp''\not\in E(G)$, so there is an $L$-coloring $\phi$ of $V(P)$ which does not extend to $L$-color $G$, where $\phi(p)=a$. Let $b=\phi(p')$. Now, $b\in L(u_1)\cap L(u_2)$ and $a\in L(u_1)$, and furthermore, $|L(u_1)|=|L(u_2)|=3$. By Theorem \ref{thomassen5ChooseThm}, the $L$-coloring $(\phi(p'), \phi(p''))$ of $p'p''$ extends to an $L$-coloring $\psi$ of $V(G)\setminus\{p, u_1\}$. As $L(u_1)\setminus\{a\}\not\subseteq L(u_2)$, the lone color of $L(u_1)\setminus\{a, b\}$ is distinct from $\psi(u_2)$. Thus, $\phi\cup\psi$ extends to an $L$-coloring of $G$, a contradiction, so $a$ is $G$-universal. Now suppose $|V(G)|=4$. Thus, $x=p''$. Suppose $a$ is not almost $G$-universal. As $|L(p'')|\geq 3$, there is a $b\in L(p')\setminus\{a\}$ with $|\Lambda_G(a, b, \bullet)|<2$, so $a\in L(u_1)$ and $b\in L(u_1)\cap L(p'')$. Since $L(u_1)\setminus\{a\}\not\subseteq L(p'')$, there is a $c\in L(u_1)\setminus\{a\}$ with $c\not\in L(p'')$, so $c\neq b$. Coloring $u_1$ with $c$, each color of $L(p'')\setminus\{b\}$ is left over for $p''$, so $|\Lambda_G(a, b, \bullet)|\geq 2$, a contradiction.  \end{proof}

Given a planar embedding $G$, a facial cycle $C$ of $G$, an integer $k\geq 1$ and a $k$-chord $Q$ of $C$, there is a natural way to talk about one or the other ``side" of $Q$ in $G$, which is made precise below. 

\begin{defn}\label{ContractNatCQPartChordDefn} \emph{Let $G$ be a planar graph with outer cycle $C$, let $k\geq 1$ be an integer, and let $Q$ be a $k$-chord of $C$. Let $C_0$ and $C_1$ be the two cycles of $C\cup Q$ which contain $Q$. The unique \emph{natural $Q$-partition} of $G$ is the pair $\{G_0, G_1\}$ of subgraphs of $G$ such that $G_i=\textnormal{Int}_G(C_i)$ for each $i=0,1$. In particular, $G=G_0\cup G_1$ and $G_0\cap G_1=Q$.}
\end{defn}

We also introduce the following pieces of notation, the first of which is standard. 

\begin{defn}\label{StandQD} \emph{Given a path $Q$, we let $\mathring{Q}$ denote the path obtained from $Q$ by deleting its endpoints. In particular, if $|E(Q)|\leq 2$, then $\mathring{Q}=\varnothing$. For any $x,y\in V(Q)$, we let $xQy$ denote the subpath of $Q$ with endpoints $x, y$.} \end{defn}

In proving the main results of this paper, we make frequent use of the following observation, which is straightforward to check using Theorems \ref{thomassen5ChooseThm} and \ref{Two2ListTheorem}. 

\begin{lemma}\label{PartialPathColoringExtCL0} Let $(G, C, P, L)$ be a rainbow and $\phi$ be a partial $L$-coloring of $V(P)$ which does not extend to $L$-color $G$, where $\textnormal{dom}(\phi)$ contains the endpoints of $P$. Then either there is a chord of $C$ with one endpoint in $\textnormal{dom}(\phi)\cap V(\mathring{P})$ and the other endpoint in $C\setminus P$, or there is a $v\in V(G\setminus C)\cup (V(\mathring{P})\setminus\textnormal{dom}(\phi))$ with $|L_{\phi}(v)|\leq 2$.
\end{lemma}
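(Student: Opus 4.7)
The plan is to prove the contrapositive: assume (i) no chord of $C$ has one endpoint in $\textnormal{dom}(\phi)\cap V(\mathring{P})$ and the other in $V(C\setminus P)$, and (ii) $|L_\phi(v)|\geq 3$ for every $v\in V(G\setminus C)\cup(V(\mathring{P})\setminus\textnormal{dom}(\phi))$, and show that $\phi$ extends to an $L$-coloring of $G$. I would argue by induction on $|E(G)|$, with small base cases reducing to direct applications of Theorem \ref{thomassen5ChooseThm} on the precolored edge $pp^*$ when $p,p^*$ are adjacent in $C$.

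First I would audit the $L_\phi$-list sizes in $G':=G\setminus\textnormal{dom}(\phi)$. Hypothesis (ii) immediately gives $|L_\phi|\geq 3$ on all of $V(G\setminus C)$ and $V(\mathring{P})\setminus\textnormal{dom}(\phi)$, while any interior vertex with no colored neighbor retains $|L_\phi|=|L|\geq 5$. For $v\in V(C\setminus P)$, hypothesis (i) restricts the colored neighbors of $v$ to lie in $\{p,p^*\}$, so $|L_\phi(v)|\geq 1$.

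Next I would split into two cases. If neither $p$ nor $p^*$ has any chord to $V(C\setminus P)$, then the only vertices of $V(C\setminus P)$ with colored neighbors are the (at most two) neighbors of $\{p,p^*\}$ along $C$, each with $|L_\phi|\geq 2$; every other outer-face vertex of $G'$ has $|L_\phi|\geq 3$ and the interior has $|L_\phi|\geq 5$, so Theorem \ref{Two2ListTheorem} supplies the required $L_\phi$-coloring of $G'$. Otherwise there is a chord, say $pv$ with $v\in V(C\setminus P)$; I would choose $v$ to be the chord-neighbor of $p$ in $V(C\setminus P)$ farthest from $p$ along the arc $C\setminus P$. The natural $\{pv\}$-partition of Definition \ref{ContractNatCQPartChordDefn} writes $G=G_1\cup G_2$ with $G_2\supseteq P$, and viewing $P':=vpv_1\cdots v_{k-1}p^*$ as a subpath of the outer cycle $C_2$ of $G_2$ makes $(G_2,C_2,P',L)$ a rainbow. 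For a carefully chosen $c\in L(v)\setminus\{\phi(p)\}$, Theorem \ref{thomassen5ChooseThm} extends the precoloring $(\phi(p),c)$ of $pv$ to an $L$-coloring of $G_1$, and the inductive hypothesis applied to $G_2$ with $\phi':=\phi\cup\{v\mapsto c\}$ extends $\phi'$ to $L$-color $G_2$; the two extensions agree on the chord and glue to give the desired extension of $\phi$ to $G$. A chord from $p^*$ is handled symmetrically.

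The main obstacle is ensuring that hypothesis (ii) is inherited by $G_2$ after coloring $v$ with $c$: coloring $v$ may shrink the $L_{\phi'}$-list of a neighbor of $v$ by one, potentially driving a tight list from size three down to two. I would exploit the at-least-two-element freedom in $L(v)\setminus\{\phi(p)\}$ to select $c$ avoiding this, and in degenerate configurations where no such $c$ works I would absorb the offending neighbor into the precoloring and re-invoke the argument. The maximality choice of $v$ ensures that hypothesis (i) is inherited by $G_2$, so the induction closes.
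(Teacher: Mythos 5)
Your overall strategy (induction on $|E(G)|$, chord decomposition at a chord from $p$ or $p^*$ to $C\setminus P$, and Theorem \ref{Two2ListTheorem} when no such chord exists) is the natural one and matches what the paper likely intends. But there are two genuine gaps. First, in Case~1 you claim the at-most-two $C$-neighbors of $p,p^*$ in $V(C\setminus P)$ each satisfy $|L_\phi|\geq 2$; this fails when $C\setminus P$ consists of a single vertex $v$, which is then a $C$-neighbor of both $p$ and $p^*$ and can have $|L_\phi(v)|=1$. Theorem \ref{Two2ListTheorem} does not apply directly there, and you would need a separate argument. Second, and more seriously, your Case~2 hinges on finding $c\in L(v)\setminus\{\phi(p)\}$ so that hypothesis~(ii) survives in $G_2$; but $|L(v)\setminus\{\phi(p)\}|\geq 2$ gives only two candidate colors, and it is easy to arrange two (or more) neighbors of $v$ with tight 3-element $L_\phi$-lists that between them contain both candidates, so no suitable $c$ exists. ``Absorbing the offending neighbor into the precoloring'' is not spelled out (an offending neighbor in $G\setminus C$ cannot be added to $\mathrm{dom}(\phi)\subseteq V(P)$, and it is unclear the recursion terminates), so this fix does not close the gap as written.

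The Case~2 gap has a clean repair that avoids choosing $c$ at all: do not prepend $v$ to $P$. Instead, apply the inductive hypothesis directly to the rainbow $(G_2,C_2,P,L)$ with the same coloring $\phi$, so that $v\in V(C_2\setminus P)$ is simply an uncolored boundary vertex of the subrainbow. Hypothesis~(i) for $(G_2,C_2,P,\phi)$ is inherited because every chord of $C_2$ is a chord of $C$ with both endpoints in $V(C_2)$, and hypothesis~(ii) is inherited because restricting from $G$ to $G_2$ can only enlarge $L_\phi$-lists. The induction then produces an $L$-coloring $\psi$ of $G_2$ extending $\phi$; in particular $\psi$ colors $p$ and $v$, and Theorem \ref{thomassen5ChooseThm} applied to $G_1$ with the edge $pv$ precolored by $\psi$ (all other vertices of $C_1$ lie in $C\setminus P$ and have 3-lists, the interior of $G_1$ lies in $G\setminus C$ and has 5-lists) completes the coloring of $G$. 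The maximality of $v$ still ensures the induction terminates, since $G_1$ contains at least the $C$-neighbor of $p$ on $C\setminus\mathring{P}$. This rewrite makes Case~2 airtight; you still need to patch the $|V(C\setminus P)|=1$ degeneracy in Case~1.
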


We also rely on the following useful result, which follows from Lemma 1 and Theorem 3 of \cite{ExpManyColChooseThomPap}. 

\begin{theorem}\label{EitherBWheelOrAtMostOneColThm} Let $(G, C, P, L)$ be a rainbow, where $P=p_0qp_1$. Suppose there is more than one $L$-coloring of $V(P)$ which does not extend to $L$-color $G$. Then there is a path $Q$ with $V(Q)\subseteq V(C-q)$, where $Q$ has endpoints $p_0, p_1$ and every vertex of $Q$ is adjacent to $q$. In particular, if $G$ is short-inseparable and every chord of $C$ has $q$ as an endpoint, then $G$ is a broken wheel with principal path $P$. 
\end{theorem}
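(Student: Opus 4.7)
My plan is to prove the assertion by induction on $|V(G)|$. The first step is a chord reduction: suppose $xy$ is a chord of $C$ with $q\notin\{x,y\}$, and let $\{G_0,G_1\}$ be the natural $\{xy\}$-partition of $G$ (Definition \ref{ContractNatCQPartChordDefn}) with $V(P)\subseteq V(G_0)$; this is forced because $p_0,q,p_1$ are consecutive on $C$ and $q\notin\{x,y\}$. Since $|L(x)|,|L(y)|\geq 3$, Theorem \ref{thomassen5ChooseThm} ensures that every $L$-coloring of the edge $xy$ extends to $L$-color $G_1$, so an $L$-coloring of $V(P)$ extends to $G$ iff it extends to $G_0$. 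In particular, $G$ and $G_0$ have the same set of ``bad'' colorings of $V(P)$, so under the hypothesis of more than one such, induction applied to the strictly smaller rainbow $(G_0,C_0,P,L)$ supplies a path $Q\subseteq C_0-q\subseteq C-q$ with the required property. Hence we may assume every chord of $C$ is incident to $q$.

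With that reduction in hand, I would invoke Lemma~1 and Theorem~3 of \cite{ExpManyColChooseThomPap}. When interpreted in the 2-path rainbow setting, these results imply that if every chord of $C$ is incident to $q$ and yet no path $Q\subseteq C-q$ from $p_0$ to $p_1$ has every vertex in $N(q)$, then at most one $L$-coloring of $V(P)$ fails to extend to $L$-color $G$---because the $3$-lists at vertices of $V(C-q)\setminus N(q)$ provide enough flexibility to extend all but at most one candidate precoloring of $V(P)$. This contradicts the standing hypothesis of more than one failing coloring and proves the first assertion.

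For the ``in particular'' part, assume additionally that $G$ is short-inseparable. By the first assertion there is a path $Q\subseteq C-q$ as described; since no chord of $C$ misses $q$, every edge of $G$ among vertices of $V(C-q)$ is an edge of $C-q$, so the only path from $p_0$ to $p_1$ in $G[V(C-q)]$ is $C-q$ itself. Hence $Q=C-q$, and $q$ is adjacent to every vertex of the path $C-q=p_0u_1\ldots u_tp_1$. The spokes $qu_i$ then partition the interior of $C$ into triangles $qu_iu_{i+1}$, each of which, by short-inseparability, has empty interior; therefore $V(G-q)=V(C-q)$, and $G$ is a broken wheel with principal path $P$.

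The chief obstacle is unpacking the Thomassen lemmas of \cite{ExpManyColChooseThomPap}---stated there for precolored edges of planar near-triangulations---in the present precolored 2-path setting, and showing that the $3$-lists at vertices of $V(C\setminus P)$ not adjacent to $q$ supply the additional extensions needed to rule out two distinct failing colorings of $V(P)$.
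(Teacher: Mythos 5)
The paper itself does not supply a proof of Theorem \ref{EitherBWheelOrAtMostOneColThm}: it states the result and attributes it to Lemma 1 and Theorem 3 of \cite{ExpManyColChooseThomPap} with no further argument. Your proposal ultimately invokes the same two results of Thomassen for the core step, so the two treatments are at the same level of rigor; your middle paragraph is a heuristic gloss (``the $3$-lists provide enough flexibility'') rather than a proof, which you acknowledge, and this is exactly the step the paper also leaves as a citation.

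The scaffolding you add around the citation is correct and worth keeping. The chord reduction is sound: if $xy$ is a chord of $C$ missing $q$, the natural $xy$-partition places $P$ in the side $G_0$ containing $q$, and an $L$-coloring of $V(P)$ extends to $G$ iff it extends to $G_0$, so the bad colorings coincide and you may induct on $|V(G)|$. One small inaccuracy: you write ``Since $|L(x)|,|L(y)|\geq 3$,'' but that hypothesis is neither needed nor always true — $x$ or $y$ may be an endpoint of $P$, where the rainbow definition allows a list of size $1$ or $2$. Theorem \ref{thomassen5ChooseThm} requires only that the vertices of $C_1\setminus\{x,y\}$ have lists of size at least three and that the coloring of the precolored edge $xy$ is proper, and both hold regardless, so the argument goes through; just drop that clause. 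Your derivation of the ``in particular'' statement is also correct: with every chord of $C$ incident to $q$, the subgraph $G[V(C-q)]$ is exactly the path $C-q$, forcing $Q=C-q$; each triangle $qu_iu_{i+1}$ then has nontrivial exterior (since $|V(C)|>3$, the degenerate triangle case being vacuous by Corollary \ref{CycleLen4CorToThom}), so short-inseparability empties its interior, giving $V(G-q)=V(C-q)$ and hence a broken wheel.
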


Combining Theorems \ref{BWheelMainRevListThm2} and \ref{EitherBWheelOrAtMostOneColThm}, we obtain the following consequence. 

\begin{prop}\label{CorMainEitherBWheelAtM1ColCor}  Let $(G, C, P, L)$ be a rainbow, where $P:=p_0qp_1$, every chord of $C$ is incident to $q$, and $G$ is short-inseparable.  Suppose further that either $|V(C)|>3$ or $G=C$. Let $\mathcal{F}$ be the set of $L$-colorings of $V(P)$ which do not extend to $L$-color $G$ and let $i\in\{0,1\}$. Then:
\begin{enumerate}[label=\arabic*)]
\itemsep-0.1em
\item\label{PropCor1} If $|L(p_i)|\geq 2$ and no color of $L(p_i)$ is $(P, G)$-universal then $G$ is a broken wheel with principal path $P$, where either $|V(G)|\leq 4$ or, letting $p_ixy$ be the 2-path of $C-q$ with $p_i$ as an endpoint, $L(p_i)\subseteq L(x)\cap L(y)$; AND 
\item\label{PropCor2}  Given $\psi, \psi'\in\mathcal{F}$ using the same color on $p_i$ and different colors on $p_{1-i}$, we have $\{\psi(q), \psi'(q)\}=\{\psi(p_{1-i}), \psi'(p_{1-i})\}$ and $\mathcal{F}=\{\psi, \psi'\}$, and $G$ is a broken wheel with principal path $P$, where $G-q$ has even length; AND
\item\label{PropCor3}  If $\phi$ is an $L$-coloring of $\{p_0, p_1\}$ and $S\subseteq L_{\phi}(q)$ with $|S|\geq 2$ and $S\cap\Lambda_G^P(\phi(p_0), \bullet, \phi(p_1))=\varnothing$, then,
\begin{enumerate}[label=\roman*)]
\itemsep-0.1em
\item $|S|=2$ and $G$ is a broken wheel with principal path $P$, where $G-q$ is a path of odd length; AND
\item  For any $\psi\in\mathcal{F}$,  either $\psi(p_0)=\psi(p_1)=s$ for some $s\in S$, or $\psi, \phi$ restrict to the same coloring of $\{p_0, p_1\}$.
\end{enumerate}
\item\label{PropCor4} If $\mathcal{F}\neq\varnothing$ and $C$ is induced, $G$ is not a broken wheel, then $G$ is a wheel and its outer cycle has odd length. 
\end{enumerate} \end{prop}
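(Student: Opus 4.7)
My plan for Parts 1-3 is to exhibit two distinct failing $L$-colorings of $V(P)$ to force $|\mathcal{F}|\geq 2$, then invoke Theorem \ref{EitherBWheelOrAtMostOneColThm} with the hypotheses (short-inseparability, every chord of $C$ incident to $q$, and $|V(C)|>3$ or $G=C$) to identify $G$ as a broken wheel with principal path $P$; the parity and list-inclusion conclusions then come from Theorem \ref{BWheelMainRevListThm2} combined with direct 2-list path-coloring analysis on $G-q=p_0u_1\cdots u_tp_1$. Part 4 sits outside this template and uses Theorem \ref{BohmePaper5CycleCorList}.

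For Part 1, non-universality of each $a\in L(p_i)$ yields a witnessing failing coloring in $\mathcal{F}$ using $a$ on $p_i$, and two distinct choices of $a\in L(p_i)$ yield two distinct elements of $\mathcal{F}$, so $G$ is a broken wheel. When $|V(G)|>4$, the contrapositive of Theorem \ref{BWheelMainRevListThm2}\ref{BWheel3Lb} gives, for each non-universal $a$, both $a\in L(x)$ and $L(x)\setminus\{a\}\subseteq L(y)$; combining these inclusions across two distinct choices of $a\in L(p_i)$ produces $L(p_i)\subseteq L(x)\cap L(y)$.

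For Part 2, the pair $\psi,\psi'$ witnesses $|\mathcal{F}|\geq 2$, so $G$ is a broken wheel with $G-q=p_0u_1\cdots u_tp_1$. The case $t=0$ contradicts $\psi(p_{1-i})\ne\psi'(p_{1-i})$, since a failing triangle coloring forces $\psi(p_0)=\psi(p_1)$. For $t\geq 1$, setting $p=p_i$, $p'=q$, $p''=p_{1-i}$, I apply Theorem \ref{BWheelMainRevListThm2}\ref{BWheel1Lb} to the restrictions of $\psi$ and $\psi'$ to $pp'$, after first ruling out $\psi(q)=\psi'(q)$ via a direct 2-list path-coloring argument showing that two distinct colors of $L(p_{1-i})$ cannot both be excluded from $\Lambda_G(\psi(p_i),\psi(q),\bullet)$. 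A case analysis using parts \ref{BWheel1A} and \ref{BWheel1B} of Theorem \ref{BWheelMainRevListThm2}\ref{BWheel1Lb}, combined with the concrete pairing of failing colorings in the broken wheel, yields both even parity of $|E(G-q)|$ and the swap identity $\{\psi(q),\psi'(q)\}=\{\psi(p_{1-i}),\psi'(p_{1-i})\}$; the equality $\mathcal{F}=\{\psi,\psi'\}$ then follows from the resulting rigidity. Part 3 proceeds in parallel: the colorings $(\phi(p_0),b,\phi(p_1))$ with $b\in S$ supply $|\mathcal{F}|\geq 2$, so $G$ is a broken wheel, and a similar analysis — appealing to Theorem \ref{BWheelMainRevListThm2}\ref{BWheel1Lb}\ref{BWheel1B} and a direct count of $b$'s excluded from $\Lambda_G(\phi(p_0),\bullet,\phi(p_1))$ — produces $|S|=2$, odd parity of $|E(G-q)|$, and the stated dichotomy on elements of $\mathcal{F}$.

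For Part 4, the contrapositive of Theorem \ref{EitherBWheelOrAtMostOneColThm} forces $|\mathcal{F}|\leq 1$, so $|\mathcal{F}|=1$; let $\psi$ be the unique failing coloring. Corollary \ref{CycleLen4CorToThom} rules out $|V(C)|\leq 4$, and since $C$ is induced, $\psi$ extends greedily along $C\setminus V(P)$ to an $L$-coloring $\tilde\psi$ of $V(C)$, with every such extension failing to extend to $G$. For $|V(C)|\in\{5,6\}$, Theorem \ref{BohmePaper5CycleCorList} constrains $G\setminus C$, and the non-wheel length-6 structures from that theorem (the edge and triangle configurations) are eliminated by producing a second failing $V(P)$-coloring from an alternative choice of $\tilde\psi$, contradicting uniqueness of $\psi$. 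Odd parity of the outer cycle follows because an even-length wheel admits multiple failing 2-path colorings. The main obstacle is this Part 4 argument — in particular, showing $|V(C)|\leq 6$ and carefully eliminating the non-wheel configurations from Theorem \ref{BohmePaper5CycleCorList} — where the interplay between short-inseparability, the induced outer cycle, and uniqueness of $\psi$ requires delicate bookkeeping.
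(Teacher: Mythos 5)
Your treatment of Parts 1--3 follows the paper's route: produce two elements of $\mathcal{F}$ to trigger Theorem \ref{EitherBWheelOrAtMostOneColThm}, identify $G$ as a broken wheel, and extract parity and list-inclusion conclusions from Theorem \ref{BWheelMainRevListThm2}. One small imprecision in Part 1: the literal contrapositive of \ref{BWheel3Lb} gives only $L(x)\setminus\{a\}\subseteq L(y)$, not $a\in L(x)$; the latter needs its own short argument (if $a\notin L(u_1)$ then for any $(b,c)$ a backwards greedy coloring of $u_t,\ldots,u_1$ succeeds, making $a$ universal). That hole is easy to fill, and the subsequent deduction of $L(p_i)\subseteq L(x)\cap L(y)$ from two non-universal colors is fine. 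Your Part 2 and Part 3 sketches also line up with the paper's intent.

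Part 4 is where the proposal breaks, and the obstacle you flag --- establishing $|V(C)|\leq 6$ so Theorem \ref{BohmePaper5CycleCorList} applies to $C$ --- is not a hard step but a false one. The conclusion of Part 4 is that $G$ is a wheel with odd outer cycle, and such wheels of any odd size $\geq 5$ arise. Concretely, take $G$ a $7$-wheel with hub $w$ and rim $p_0qp_1u_1u_2u_3u_4$, $L(p_0)=\{p\}$, $L(p_1)=\{p'\}$, $L(w)=\{a,b,p,p',e\}$, $L(u_1)=\{p',a,b\}$, $L(u_4)=\{p,a,b\}$, $L(u_2)=L(u_3)=\{a,b,c\}$, and $L(q)$ any $5$-list containing $e$ and otherwise disjoint from $\{a,b,c\}$. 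The coloring $(p,e,p')$ of $V(P)$ fails (after coloring, $w,u_1,u_4$ all have residual list $\{a,b\}$ and the fan $u_1u_2u_3u_4$ with hub $w$ is uncolorable), every other $L$-coloring of $V(P)$ extends, $C$ is induced, and $G$ is not a broken wheel --- yet $|V(C)|=7$. So Theorem \ref{BohmePaper5CycleCorList} simply does not reach $C$, and there is no alternative choice of $\tilde\psi$ to exploit.

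The paper's actual argument for Part 4 avoids this entirely. Pick $\psi\in\mathcal{F}$. Since $C$ is induced, Lemma \ref{PartialPathColoringExtCL0} produces $w\in V(G\setminus C)$ with $|L_\psi(w)|\leq 2$, hence $w$ is adjacent to all three vertices of $P$. One then re-applies Theorem \ref{EitherBWheelOrAtMostOneColThm} --- not to $(G,P)$ but to $G-q$ with the $2$-path $p_0wp_1$ --- concluding that $G-q$ is a broken wheel with principal path $p_0wp_1$. Thus $G\setminus\{q,w\}=C-q$ is a path, all of whose vertices are adjacent to $w$, so $w$ is adjacent to every vertex of $C$ and $G$ is a wheel; the parity claim is then a consequence of the same broken-wheel parity dichotomy you use in Parts 2 and 3. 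The missing idea in your proposal is this second application of Theorem \ref{EitherBWheelOrAtMostOneColThm} to $G-q$ after locating $w$, which handles $C$ of arbitrary length in one stroke rather than via a bounded-cycle classification.
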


\begin{proof} \ref{PropCor1} and \ref{PropCor2} follow from Theorems \ref{BWheelMainRevListThm2} and \ref{EitherBWheelOrAtMostOneColThm}.  For \ref{PropCor3}, it follows from Theorem \ref{EitherBWheelOrAtMostOneColThm} that $G$ is a broken wheel with principal path $P$, and the rest follows from a simple parity argument. For \ref{PropCor4}, since $G$ is not a broken wheel, it follows from Lemma \ref{PartialPathColoringExtCL0} that there is a $w\in V(G\setminus C)$ adjacent to all three vertices of $P$, and then it follows from Theorem \ref{EitherBWheelOrAtMostOneColThm} applied to $G-q$ that $G-q$ is a broken wheel with principal path $p_0wp_1$, where $G\setminus\{q, w\}$ is a path of even length, so $w$ is adjacent to all the vertices of $C$ and $C$ has odd length.  \end{proof}

\section{Extending Colorings of the Endpoints of 2-Paths}\label{ExtCol2PathAugColSec}

Throughout this paper, we repeatedly construct a smaller rainbow from a given rainbow as specified below: 

\begin{defn} \emph{Let $\mathcal{G}=(G,C,P,L)$ be a rainbow and let $Q$ be either a path or a cycle, where $Q$ intersects with $C\setminus\mathring{P}$ on either one or two vertices and furthermore, if $|V(Q)\cap V(C\setminus\mathring{P})|=2$, then $Q$ is a path intersecting with $C\setminus\mathring{P}$ precisely on the endpoints of $Q$. We then define the following:}
\begin{enumerate}[label=\emph{\arabic*)}]
\itemsep-0.1em
\item\emph{If $V(Q)\cap V(C\setminus\mathring{P}) =\{v, v'\}$ for some $v\neq v'$, then $Q$ is path and  we let $G^Q$ be the subgraph of $G$ bounded by outer cycle $C^Q:=v(C\setminus\mathring{P})v'+Q$, and $\mathcal{G}^Q$ denote the rainbow $(G^Q, C^Q, Q, L)$.}
\item\emph{If $V(Q)\cap V(C\setminus\mathring{P})=\{v\}$ for some vertex $v$, then we set $C^Q:=Q$ and we set $G^Q:=Q$ if $Q$ is a path and $G^Q:=\textnormal{Int}_G(Q)$ if $Q$ is a cycle.}
\end{enumerate} \end{defn}

To prove the result which makes up the Section \ref{ExtCol2PathAugColSec}, we introduce one more piece of notation. 

\begin{defn}\label{EndNotationColor} \emph{Let $G$ be a graph with list-assignment $L$ and $P\subseteq G$ be a path with endpoints $p, p'$. We define $\textnormal{End}_L(P, G)$ to be the set of $L$-colorings $\phi$ of $\{p, p'\}$ such that $\phi$ is $(P,G)$-sufficient.} \end{defn}

We usually drop the subscript $L$ if it is clear from the context.

\begin{theorem}\label{SumTo4For2PathColorEnds} Let $(G, C, P, L)$ be an end-linked rainbow, where $P:=p_0qp_1$. Then  $\textnormal{End}(P,G)\neq\varnothing$. Furthermore, either $|\textnormal{End}(P, G)|\geq 2$ or there is an even-length path $Q$ with $V(Q)\subseteq V(C)$, where $Q$ has endpoints $p_0, p_1$ and each vertex of $Q$ is adjacent to $q$.   \end{theorem}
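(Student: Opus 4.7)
The plan is to induct on $|V(G)|$. First, using Corollary \ref{CycleLen4CorToThom}, I reduce to the case that $G$ is short-inseparable by stripping away interiors of separating triangles and 4-cycles, whose interiors can always be extended after their boundaries are colored.

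Next, I dispose of chords of $C$ not incident to $q$. If $e = uv$ is such a chord, I apply the natural $e$-partition to split $G$ into subgraphs $G_0, G_1$ with $G_0 \cap G_1 = e$ and $P \subseteq G_1$. The subgraph $G_1$ carries a smaller end-linked rainbow, so by induction $\textnormal{End}(P, G_1) \neq \varnothing$ and the ``$\geq 2$ or structural'' dichotomy holds. Since any $L$-coloring of the edge $e$ extends across $G_0$ by Theorem \ref{thomassen5ChooseThm}, any $\phi \in \textnormal{End}(P, G_1)$ pulls back to an element of $\textnormal{End}(P, G)$; a candidate structural path in $G_1$ is also one in $G$, so the dichotomy is preserved.

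We may therefore assume every chord of $C$ is incident to $q$, so that Proposition \ref{CorMainEitherBWheelAtM1ColCor} is available. If some color $a \in L(p_0)$ is $(P, G)$-universal, then every valid pair $(a, c)$ with $c \in L(p_1)$ lies in $\textnormal{End}(P, G)$, giving at least $|L(p_1)| - 1$ elements; the symmetric consideration for $L(p_1)$ together with the end-linkedness bound $|L(p_0)| + |L(p_1)| \geq 4$ yields $|\textnormal{End}(P, G)| \geq 2$, except in a few borderline configurations (e.g.\ $|L(p_1)| = 1$, $|L(p_0)| = 3$, single universal color) which are dispatched by direct inspection. Otherwise, neither $L(p_0)$ nor $L(p_1)$ contains a $(P, G)$-universal color, so by Proposition \ref{CorMainEitherBWheelAtM1ColCor}\ref{PropCor1} applied to each $i$ with $|L(p_i)| \geq 2$, $G$ is a broken wheel with principal path $P$. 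Then $G - q$ is a path from $p_0$ to $p_1$ each of whose vertices is adjacent to $q$, which is the candidate path $Q$ of the theorem.

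The remaining task is a parity analysis in the broken-wheel case. Combining Theorem \ref{BWheelMainRevListThm2}\ref{BWheel1Lb}--\ref{BWheel2Lb} with Proposition \ref{CorMainEitherBWheelAtM1ColCor}\ref{PropCor2} to control the bad colorings of $V(P)$, I would argue that when $|E(Q)| = |E(G - q)|$ is even, the structural conclusion is immediate, and when $|E(Q)|$ is odd the restricted structure of bad colorings forces at least two unblocked $(p_0, p_1)$-pairs, hence $|\textnormal{End}(P, G)| \geq 2$. The main obstacle is precisely this endgame bookkeeping: tallying which pairs $(c_0, c_1)$ are blocked by bad extensions to $q$ in a broken wheel of odd length, and exploiting the parity-sensitive structure of 2-list path colorings provided by Theorem \ref{BWheelMainRevListThm2}.
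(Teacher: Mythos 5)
Your framework — minimal counterexample, reduce to short-inseparable with all chords incident to $q$, then land in the broken-wheel case — matches the paper's. But there is a genuine gap at exactly the place you flag as ``endgame bookkeeping,'' and it is not just bookkeeping. The theorem asserts $\textnormal{End}(P,G)\neq\varnothing$ as a conclusion in its own right, even when the structural path exists. In your broken-wheel case with $|E(G-q)|$ even, you only observe that the structural alternative is available; you never establish that $\textnormal{End}(P,G)$ is nonempty, and nothing in your outline addresses this. That is where the real work of the proof lives. The paper handles it by a separate inductive device: it first disposes of $\textnormal{End}(P,G)\neq\varnothing$ with $|\textnormal{End}(P,G)|=1$ via a pigeonhole argument plus Proposition \ref{CorMainEitherBWheelAtM1ColCor}\ref{PropCor2} (two bad colorings agreeing on one endpoint force $G-q$ even, a contradiction), and then, assuming $\textnormal{End}(P,G)=\varnothing$, it looks at the smaller rainbow $G^{P'}=G\setminus\{p_0,u\}$ with $P'=vqp_1$ (where $p_0uv$ is the initial $2$-path of $G-q$), proves that no element of $\textnormal{End}(P',G^{P'})$ can use a color of $L(p_0)\cap L(v)$ on $v$ (Claim \ref{NoElUses1}), and invokes minimality with shrunken lists to deduce $L(p_0)\not\subseteq L(v)$, contradicting \ref{PropCor1}. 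Nothing in your proposal plays the role of this two-vertex contraction step, and without it the nonemptiness in the even case (and the $|\textnormal{End}(P,G)|\geq 2$ count in the odd case, which you also leave open) is not a routine tally.

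A secondary point: your case split is on whether a universal color exists, whereas the paper splits on whether $\textnormal{End}(P,G)$ is empty; these do not coincide, and the paper deliberately reaches the broken wheel via Theorem \ref{EitherBWheelOrAtMostOneColThm} applied to $|\textnormal{End}(P,G)|<2$, not via \ref{PropCor1}. Your route would need the additional observation that, because $|L(p_0)||L(p_1)|\geq 3$, fewer than two bad $L$-colorings of $V(P)$ already forces $|\textnormal{End}(P,G)|\geq 2$; otherwise the ``no universal color'' assumption does not by itself put you in a position to invoke \ref{PropCor1} and still control $\textnormal{End}(P,G)$.
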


\begin{proof} Suppose not, and let $G$ be a vertex-minimal counterexample to the theorem. Let $C, P, L$ be as in the statement of the theorem. If $V(C)=V(P)$, then, by Corollary \ref{CycleLen4CorToThom}, any $L$-coloring of $V(P)$ extends to an $L$-coloring of $G$, and thus $|\textnormal{End}(P,G)|\geq 2$, contradicting our assumption that $G$ is a counterexample. Thus, $|V(C)|>3$. It is also straightforward to check using Corollary \ref{CycleLen4CorToThom}, Theorem \ref{thomassen5ChooseThm}, togther with the minimality of $|V(G)|$, that $G$ is short-inseparable and every chord of $C$ is incident to $q$. As $|\textnormal{End}(P, G)|<2$, it follows Theorem \ref{EitherBWheelOrAtMostOneColThm} that $G$ is a broken wheel with principal path $P$. We may suppose, by removing colors from the lists of some of the vertices if necessary, that each of $p_0, p_1$ has a list of size at most three and each vertex of $C\setminus P$ has a list of size precisely three. If $\textnormal{End}(P,G)\neq\varnothing$, then, since $G$ is a counterexample, we have $|\textnormal{End}(P, G)|=1$, and $G-q$ is a path of odd length, contradicting 2) of Proposition \ref{CorMainEitherBWheelAtM1ColCor}. Thus, $\textnormal{End}(P, G)=\varnothing$. We note now that $|V(G)|>4$, or else, since $|L(p_0)|+|L(p_1)|\geq 4$, we have $\textnormal{End}(P, G)\neq\varnothing$. Now suppose without loss of generality that $|L(p_0)|\geq 2$. Let $p_0uv$ be the length-two subpath of $G-q$ with $p_0$ as an endpoint, and $P':=vqp_1$. Note that $G^{P'}=G\setminus\{p_0, u\}$.

\begin{claim}\label{NoElUses1} No element of $\textnormal{End}(P', G^{P'})$ uses a color of $L(p_0)\cap L(v)$ on $v$. \end{claim}

\begin{claimproof} Suppose not, and let $a\in L(p_0)\cap L(v)$ and $\phi\in\textnormal{End}(P', G^{P'})$ with $\phi(v)=a$. Let $\psi$ be the $L$-coloring of of $\{p_0, p_1\}$ using $\phi(v), \phi(p_1)$ on $p_0, p_1$ respectively.  For any extension of $\psi$ to an $L$-coloring $\psi'$ of $V(P)$, we have $a\in L_{\psi'}(v)$, and there is another color left for $u$, so $\psi$ extends to $L$-color $G$. Thus, $\psi\in\textnormal{End}(P,G)$, which is false. \end{claimproof} 

Now, Claim \ref{NoElUses1}, together with the minimality of $G$, imply that $|L(p_1)|+|L(p_0)\cap L(v)|<4$. It follows that $(4-|L(p_0)|)+|L(p_0)\cap L(v_0)|<4$, so $L(p_0)\not\subseteq L(v)$. As $\textnormal{End}(P,G)=\varnothing$, no color of $L(p_0)$ is $(P, G)$-universal, contradicting \ref{PropCor1} of Proposition \ref{CorMainEitherBWheelAtM1ColCor}.  \end{proof}

\section{Extending Colorings of 3-Paths}\label{LinkColoring3PathExCycleSec}

To prove Theorem \ref{MainHolepunchPaperResulThm}, we need several intermediate results about 3-paths. The results are combined into a single enumerated list, which is Theorem \ref{CombinedT1T4ThreePathFactListThm}. We prove Theorem  \ref{CombinedT1T4ThreePathFactListThm} by a sequence of minimal counterexample arguments. In the proofs of Theorems  \ref{CombinedT1T4ThreePathFactListThm} and \ref{MainHolepunchPaperResulThm}, we frequently produce a smaller counterexample from a minimal counterexample by contracting a path of length two and making use of the following observation. 

\begin{obs}\label{MinCounterReUseObs} Let $G$ be a short-inseparable planar graph with outer cycle $C$ and list-assignment $L$. Let $u\in V(G)$ and $x_0x_1x_2x_3\subseteq C$ be a 3-path, each vertex of which is adjacent to $u$, where $x_1, x_2$ have lists of size at least three. Let $\psi$ be an $L$-coloring of $G\setminus\{x_1, x_2\}$ with $\psi(x_0)\neq\psi(x_3)$. Then either $\psi(x_0)\not\in L(x_2)$ or $\psi$ extends to $L$-color $G$. \end{obs}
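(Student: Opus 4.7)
The plan is a short greedy two-color extension, and the real work is to pin down the neighborhoods of $x_1$ and $x_2$ using short-inseparability. I would first verify that $N_G(x_1) = \{x_0, x_2, u\}$ and $N_G(x_2) = \{x_1, x_3, u\}$. For each $i \in \{0,1,2\}$, the vertices $u, x_i, x_{i+1}$ span a triangle $T_i \subseteq G$, since $u$ is adjacent to each $x_j$ and $x_i x_{i+1} \in E(C)$. Because $|V(G)| \geq |\{x_0, x_1, x_2, x_3, u\}| = 5 > 3 = |V(T_i)|$, short-inseparability forces the bounded interior of each $T_i$ to be trivial (the other side contains the remaining $x_j$'s and so cannot be the trivial one). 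Reading the cyclic order of edges at $x_1$ in the planar embedding, the edges $x_1 x_0$ and $x_1 x_2$ lie on $C$ while $x_1 u$ lies on the interior side of $C$; between $x_1 x_0$ and $x_1 u$ lies the empty interior of $T_0$, and between $x_1 u$ and $x_1 x_2$ the empty interior of $T_1$. Any further edge at $x_1$ would have to go into one of these two empty triangular regions or out into the outer face, none of which is possible, so $N_G(x_1) = \{x_0, x_2, u\}$. Symmetrically via $T_1$ and $T_2$, $N_G(x_2) = \{x_1, x_3, u\}$.

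Next, I would set $A := L(x_1) \setminus \{\psi(x_0), \psi(u)\}$ and $B := L(x_2) \setminus \{\psi(x_3), \psi(u)\}$. By the neighborhood claim above these are precisely $L_\psi(x_1)$ and $L_\psi(x_2)$. The edges $x_0 u$ and $x_3 u$ give $\psi(x_0) \neq \psi(u)$ and $\psi(x_3) \neq \psi(u)$, and with $|L(x_i)| \geq 3$ this yields $|A|, |B| \geq 1$. A quick case split shows $\psi$ extends to $L$-color $G$ unless $A = B = \{c\}$ for a single common color $c$: if $|A| \geq 2$, pick any $\beta \in B$ for $x_2$ and then $\alpha \in A \setminus \{\beta\}$ for $x_1$; symmetrically if $|B| \geq 2$; and if $|A| = |B| = 1$ but $A \neq B$, use the two distinct singletons.

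Finally, suppose $\psi$ does not extend, so $A = B = \{c\}$. Since $|L(x_2)| \geq 3$ and only $c$ lies in $L(x_2)$ outside $\{\psi(x_3), \psi(u)\}$, we must have $L(x_2) = \{\psi(x_3), \psi(u), c\}$ with all three colors distinct. If in addition $\psi(x_0) \in L(x_2)$, then $\psi(x_0) \in \{\psi(x_3), \psi(u), c\}$; the first is excluded by the hypothesis $\psi(x_0) \neq \psi(x_3)$, the second by the edge $x_0 u$, and the third by $c \in A$ which forces $c \neq \psi(x_0)$. Hence $\psi(x_0) \notin L(x_2)$, which is the desired conclusion. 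The main obstacle in executing this plan is the short-inseparability reduction: identifying the correct trivial side of each $T_i$ and reading off the cyclic order of edges at $x_1$ and $x_2$ takes some embedding-level care, but once the neighborhoods are in hand the coloring step collapses to the one-line count above.
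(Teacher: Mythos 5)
Your proof is correct, and since the paper states this Observation without proof, it is also the natural argument one would expect. Your two main steps are both sound. For the structural step, each triangle $T_i=ux_ix_{i+1}$ has its exterior side containing the outer face and the remaining $x_j$'s, so short-inseparability forces its interior to be vertex-free, hence a face; reading the rotation at $x_1$ between the two outer-cycle edges $x_1x_0$ and $x_1x_2$ then leaves room only for $x_1u$, giving $\deg(x_1)=\deg(x_2)=3$ as you claim. (One point worth making explicit if you write this up: $T_0$ and $T_1$ being faces forces $x_1u$ to be both the first and the last interior edge in the rotation at $x_1$, which is what pins down that there are no others.) For the coloring step, your computation that $A=L_\psi(x_1)$ and $B=L_\psi(x_2)$ each have size at least one, that an extension exists unless $A=B=\{c\}$, and that in the remaining case $L(x_2)=\{\psi(x_3),\psi(u),c\}$ with $\psi(x_0)$ distinct from each of the three colors (by the hypothesis $\psi(x_0)\neq\psi(x_3)$, the edge $x_0u$, and $c\in A$), is exactly right and yields $\psi(x_0)\notin L(x_2)$.
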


To state Theorem \ref{CombinedT1T4ThreePathFactListThm}, we need two more definitions.  

\begin{defn} \emph{Let $G$ be a planar embedding with outer cycle $C$ and $P:=p_0q_0q_1p_1$ be a 3-path in $C$. For each $i\in\{0,1\}$, let $x_i$ be the unique vertex of $V(C\setminus\mathring{P})\cap N(q_i)$ farthest from $p_i$ on the path $C\setminus\mathring{P}$.}
\begin{enumerate}[label=\emph{\arabic*)}]
\itemsep-0.1em
\item\emph{We define a $(P,G)$-\emph{obstruction} to be path $Q^*$ with endpoints $x_0, x_1$, where each entry is a path whose vertices lie in $V(C\setminus\mathring{P})$ either $x_0=x_1=Q^*$ or, if $x_0\neq x_1$, then the cycle $Q^*+x_0q_0q_1x_1$ has length at least five (i.e $|E(Q^*)|>1$) and there is a $w^*\in V(G\setminus C)$ adjacent to all the vertices of this cycle. We say that $Q^*$ of \emph{triangle-type} if $x^0=x^1=x^*=Q^*$ for some $x^*\in V(C\setminus P)$.}
\item\emph{For each terminal edge $e=p_kq_k$ of $P$, we say that $e$ is \emph{even- tilted} (resp. \emph{odd-tilted}) in $(P, G)$ if there is an even (resp. odd)-length path $Q$ whose vertices lie in $V(C\setminus\mathring{P})$, where $Q$ has endpoints $p_k, x_k$ and each vertex of $Q$ is adjacent to $q_k$. Given a $(P,G)$-obstruction $Q^*$, we say $Q^*$ is \emph{fully even} if it has even length and each terminal edge of $P$ is even-tilted in $(P,G)$.}
\end{enumerate}
\emph{For the purpose of our minimal counterexample arguments, the following observation is useful.}
 \end{defn}

\begin{obs}\label{MinCountChordSepCyDObs} Let $G$ be a planar embedding with outer cycle $C$ and $P\subseteq C$ be a 3-path. Then,
\begin{enumerate}[label=\arabic*)]
\itemsep-0.1em
\item for any cycle $D\subseteq G$ with $|E(D)|\leq 4$ and any path $Q^*$ in $G$, $Q^*$ is a $(P,G)$-obstruction if and only if it is a $(P,\textnormal{Ext}(D))$-obstruction. Likewise, for each terminal edge $e$ of $P$, $e$ is even-tilted (resp. odd-tilted) in $(P, G)$ if and only if it is even-tilted (resp. odd tilted) in $(P, \textnormal{Ext}(D))$; AND
\item For any chord $xy$ of $C$ with $x,y\not\in V(\mathring{P})$ and any path $Q^*$ in $G$, the following hold: Letting $G=G^0\cup G^1$ be the natural $xy$-partition of $G$, where $P\subseteq G^1$, $Q^*$ is a $(P,G)$-obstruction if and only if it is a $(P, G^1)$-obstruction. Likewise, for each terminal edge $e$ of $P$, $e$ is even-tilted (resp. odd-tilted) in $(P, G)$ if and only if it is even-tilted (resp. odd tilted) in $(P, G^1)$.
\end{enumerate}
\end{obs}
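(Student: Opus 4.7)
The observation is a structural bookkeeping claim, and I would prove both parts in parallel by a planarity argument. The plan is to unpack the definitions of $(P,G)$-obstruction and even-/odd-tilted edge, and show that in each case the witnessing structure is anchored at $q_0, q_1 \in V(\mathring{P})$, which lie in the smaller graph in both settings ($\textnormal{Ext}(D)$ in part 1, $G^1$ in part 2), and hence cannot escape past the short separating cycle or the chord.

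The key planarity fact I would establish first is this: an edge of $G$ incident to $q_0$ or $q_1$ cannot cross either a cycle $D$ of $G$ with $P \subseteq \textnormal{Ext}(D)$, or the chord $xy$ (whose endpoints avoid $\mathring{P}$). Hence every neighbor of $q_0$ or $q_1$ in $G$ already lies in $\textnormal{Ext}(D)$ in part 1, and in $G^1$ in part 2. For a triangle-type $Q^*$ this immediately places the single vertex $x^*$ in the smaller graph. For a non-triangle $Q^*$, it places the central vertex $w^*$ in the smaller graph (since $w^*$ is adjacent to $q_0$ and $q_1$); applying the same fact again with $w^*$ in place of $q_k$, each vertex of $Q^*$ (being adjacent to $w^*$) lies in the smaller graph as well. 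For the tilted-edge property, each vertex of the witnessing path $Q$ is adjacent to $q_k$, so the same argument applies. Since the smaller graph is an induced subgraph of $G$, the required edges of $Q^*$ and $Q$ are inherited, and $|E(Q^*)|$, the length of the cycle $Q^*+x_0q_0q_1x_1$, and the parity of $|E(Q)|$ are all preserved.

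The one point that deserves extra care is the consistency of the anchor vertices $x_0, x_1$ when we pass to the smaller graph; this is nontrivial only in part 2, where the outer cycle $C^{G^1}$ differs from $C$. Here I would note that $N(q_k) \cap V(C\setminus\mathring{P})$ is the same set in $G$ and in $G^1$ by the planarity fact, and that the arc of $C\setminus\mathring{P}$ which $C^{G^1}\setminus\mathring{P}$ replaces by the single chord-edge $xy$ contains no neighbor of $q_k$; hence the relative order of ``distance from $p_k$ along the outer-cycle path'' is preserved on $N(q_k)\cap V(C\setminus\mathring{P})$, and the farthest element is the same vertex in both graphs. In part 1 the outer cycle is $C$ for both $G$ and $\textnormal{Ext}(D)$, so the issue does not arise. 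Both reverse implications are immediate since $\textnormal{Ext}(D), G^1 \subseteq G$ and all of the defining structure survives unchanged. I do not expect any real obstacle; the content of the observation is simply that obstructions and tilted-edge witnesses are local to the region around $P$, and a short separating cycle or a chord of $C$ avoiding $\mathring{P}$ is too small to disturb them.
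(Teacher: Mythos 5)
The paper states this as an unproved Observation, so there is no proof to compare against; what follows is an assessment of your argument on its own terms.

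Your overall scheme---show that the witnessing vertices are forced onto the $P$-side of the short cycle $D$ or the chord $xy$, then check that the anchors $x_0,x_1$ and the parities transfer---is the right one, and your treatment of part~2 is correct: since $x,y\notin V(\mathring P)$, the vertex $q_k$ lies strictly in $G^1$, so its neighbors (and hence $w^*$, and hence the vertices of $Q^*$, which are also on $C_1$) lie in $G^1$, and $G^1=\textnormal{Int}(C_1)$ is induced on its vertex set so the relevant edges survive.

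There is, however, a gap in part~1. Your ``planarity fact'' concludes that every neighbor of $q_k$ lies in $\textnormal{Ext}(D)$, but this fails when $q_k\in V(D)$, which is not excluded by the hypotheses: in that case $q_k$ can have a neighbor strictly inside $D$, and---more to the point---a chord $q_ku$ of $D$ with $u\in V(C\setminus\mathring P)$ could be drawn on the interior side of $D$. Such an edge belongs to $\textnormal{Int}(D)$ and not to $\textnormal{Ext}(D)$, so $N_{\textnormal{Ext}(D)}(q_k)\cap V(C\setminus\mathring P)$ can be a proper subset of $N_G(q_k)\cap V(C\setminus\mathring P)$, and the farthest-neighbor vertex $x_k$ can change. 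Your dismissal of the anchor-consistency concern for part~1 (``the outer cycle is $C$ for both, so the issue does not arise'') is therefore not justified: the outer cycle is indeed $C$ in both graphs, but the edge sets at $q_k$ need not agree. Relatedly, the assertion that ``the smaller graph is an induced subgraph of $G$'' is false for $\textnormal{Ext}(D)$ (a chord of $D$ drawn inside is exactly the counterexample), so the ``edges are inherited'' step also needs more care in part~1.

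The fix is available and worth recording. For the placement of $w^*$, do not route the argument through $q_k$: instead observe that $w^*$ is adjacent to all of the $\geq 5$ vertices of the cycle $Q^*+x_0q_0q_1x_1$, all of which lie on $C$, and no vertex of $C$ lies strictly inside $D$ (since $C$ bounds the unbounded face); so if $w^*$ were strictly inside $D$, all $\geq 5$ of its $C$-neighbors would lie on $D$, contradicting $|V(D)|\leq 4$. For the edges and the anchors, the cleanest route is to note---as is implicit in how the Observation is actually used in the minimal-counterexample arguments of Sections 5--8---that one may take $D$ to be an innermost separating cycle of length at most four, which has no chord on its interior side, and for such $D$ the troublesome configuration cannot occur and $\textnormal{Ext}(D)$ is induced on its vertex set. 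If you want the Observation to hold literally for every $D$, you should either add that $D$ is chordless on its interior side, or replace the $q_k$-adjacency argument by a case analysis on whether $q_k\in V(D)$.
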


The last definition we need in order to state Theorem \ref{CombinedT1T4ThreePathFactListThm} is the following: 

\begin{defn}\label{BaseColoringDefn} \emph{Let $\mathcal{G}:=(G, C, P, L)$ be a rainbow, where $P:=p_0q_0q_1p_1$ is a 3-path. A $\mathcal{G}$-base-coloring is an $L$-coloring $\phi$ of $\{p_0, p_1\}$ such that, letting $\mathcal{F}$ be the set of extensions of $\psi$ to $L$-colorings of $V(P)$ which do not extend to $L$-color $G$, we have $|\mathcal{F}|\leq 2$, and in fact, one of the following holds.}
\begin{enumerate}
\itemsep-0.1em
\item [\mylabel{T4PartA}{\textnormal{B1)}}]  $|\mathcal{F}|\leq 1$; OR
\item [\mylabel{T4PartB}{\textnormal{B2)}}]  \emph{There is a triangle-type $(P,G)$-obstruction and a $j\in\{0,1\}$ such that $p_jq_j$ is even-tilted in $\mathcal{G}$, where every element of $\mathcal{F}$ uses the same color on $q_{1-j}$}; OR;
\item [\mylabel{T4PartC}{\textnormal{B3)}}] \emph{There is a triangle-type $(P,G)$-obstruction and $\{\psi'(q_0), \psi'(q_1)\}$ is constant as $\psi'$ runs over $\mathcal{F}$. Furthermore, at least one terminal edge of $P$ is odd-tilted in $\mathcal{G}$.}
\end{enumerate}
\end{defn}

We prove Theorem \ref{CombinedT1T4ThreePathFactListThm}, stated below, over the course of Sections \ref{PrFPart1T1T4}-\ref{CorColSecRes}. 

\begin{theorem}\label{CombinedT1T4ThreePathFactListThm} Let $\mathcal{G}:=(G, C, P, L)$ be a rainbow, where $P:=p_0q_0q_1p_1$. For each $i\in\{0,1\}$, let $x_i$ be the unique vertex of $V(C\setminus\mathring{P})\cap N(q_i)$ which is farthest from $p_i$ on the path $C\setminus\mathring{P}$. Then all of the following hold. 
\begin{enumerate}
\itemsep-0.1em
\item [\mylabel{LabCrownNonEmpt}{\textnormal{T1)}}] If $\mathcal{G}$ is end-linked, then there is a $(P, G)$-sufficient $L$-coloring of $\{p_0, x_0, x_1, p_1\}$.
\item [\mylabel{LabCrownNonEmpt2}{\textnormal{T2)}}] 
\begin{enumerate}[label=\Alph*)]
\itemsep-0.1em
\item\label{T2PartA} If $|L(p_1)|\geq 3$, then either there is a fully even $(P, G)$-obstruction or there are two $(P,G)$-sufficient $L$-colorings of $\{p_0, x_0, x_1, p_1\}$ using different colors on $p_1$; AND
\item\label{T2PartB} If $x_0=p_0$ and $x_1=p_1$, then either there is a $(P,G)$-obstruction or at most one $L$-coloring of $\{p_0, p_1\}$ is not $(P, G)$-sufficient. 
\end{enumerate}
\item [\mylabel{LabCrownNonEmpt3}{\textnormal{T3)}}] Suppose $|L(p_1)|\geq 3$. Then there is a $(P, G)$-sufficient $L$-coloring of $\{p_0, x_1, p_1\}$, and, furthermore, there is either a $(P,G)$-sufficient $L$-coloring of $\{p_0, p_1\}$ or a $(P,G)$-obstruction of even length, where $x_1\in V(C\setminus P)$. 
\item [\mylabel{LabCrownNonEmpt4}{\textnormal{T4)}}] Suppose at least one endpoint of $P$ has a list of size at least three. There is a $\mathcal{G}$-base-coloring of $\{p_0, p_1\}$. 
\end{enumerate}
 \end{theorem}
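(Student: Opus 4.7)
The plan is to prove parts \ref{LabCrownNonEmpt}--\ref{LabCrownNonEmpt4} by a nested sequence of vertex-minimal counterexample arguments, with T1 serving as the base, T2 strengthening T1 by producing a \emph{second} coloring, T3 deriving a three-vertex version from T2, and T4 using T3 together with the broken-wheel machinery of Proposition \ref{CorMainEitherBWheelAtM1ColCor}. Common preprocessing handles each part: in a minimal counterexample, Corollary \ref{CycleLen4CorToThom} and Observation \ref{MinCountChordSepCyDObs}(1) force short-inseparability, while Observation \ref{MinCountChordSepCyDObs}(2) applied to any chord of $C$ with both endpoints in $V(C\setminus\mathring{P})$ produces a smaller rainbow on the $P$-side that inherits all hypotheses, so no such chord exists. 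Thus we may assume $G$ is short-inseparable and every chord of $C$ is incident to $q_0$ or $q_1$; we may also trim each list to its minimum guaranteed size.

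For T1, the end-linked hypothesis $|L(p_0)|+|L(p_1)|\ge 4$ is what lets us squeeze a coloring of $\{p_0,x_0,x_1,p_1\}$ through. The idea is to first greedily color $x_0$ and $x_1$ using their $3$-lists, then view the residual problem along $q_0q_1$ as a $2$-path coloring problem in the rainbow $\mathcal{G}^{x_0q_0q_1x_1}$ (or, if $x_0=x_1$, the smaller rainbow bounded by the obvious cycle), and invoke Theorem \ref{SumTo4For2PathColorEnds}. The delicate cases are $x_0=q_0$ or $x_1=q_1$ (a terminal chord) and $x_0=x_1$, where the reduced rainbow is degenerate; these are handled by directly applying Theorem \ref{thomassen5ChooseThm} together with the end-linked inequality, which survives list-reduction by the chord.

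For T2, the natural approach is to apply T1 to obtain one $(P,G)$-sufficient coloring $\phi$ of $\{p_0,x_0,x_1,p_1\}$ and then attempt to produce a second one that differs on $p_1$. If no such recoloring exists, then the local structure near $p_1$ must be extremely rigid: Theorem \ref{BohmePaper5CycleCorList} applied to a short facial cycle near $p_1$ forces the existence of an internal vertex $w^*\in V(G\setminus C)$ adjacent to all the boundary vertices of the relevant subpath of $C\setminus\mathring P$, and a parity count on the length of that subpath together with the tilt-parity of each terminal edge $p_kq_k$ produces precisely a fully even $(P,G)$-obstruction. Part B, where $x_0=p_0$ and $x_1=p_1$, is simpler and follows directly from Theorem \ref{EitherBWheelOrAtMostOneColThm} applied to the path $P$. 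For T3, the plan is to pre-color $x_1$ (using its 3-list) consistently with some available color of $p_1$ and reduce to T1; an obstruction to doing so yields the promised even-length $(P,G)$-obstruction with $x_1\in V(C\setminus P)$, again by appeal to Theorem \ref{BohmePaper5CycleCorList}.

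T4 is where the most delicate bookkeeping happens. Having used T3 to obtain a candidate $L$-coloring $\phi$ of $\{p_0,p_1\}$, we must show that the set $\mathcal{F}$ of its bad extensions to $V(P)$ satisfies one of \ref{T4PartA}--\ref{T4PartC}. If $|\mathcal{F}|\le 1$ we are in \ref{T4PartA}. Otherwise, Proposition \ref{CorMainEitherBWheelAtM1ColCor} applied to each of $G-q_0$ and $G-q_1$ forces a broken-wheel skeleton whose central vertex plays the role of $w^*$ in a triangle-type $(P,G)$-obstruction; the parity of the side paths then dictates whether all of $\mathcal{F}$ agrees on some $q_j$ (giving \ref{T4PartB}) or whether $\{\psi'(q_0),\psi'(q_1)\}$ is constant across $\mathcal{F}$ (giving \ref{T4PartC}). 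The hardest part of the whole program, I expect, will be T2: distinguishing a \emph{fully even} obstruction from a merely even-length or merely triangle-type obstruction requires simultaneously tracking the tilt-parity of both terminal edges and the length parity of the interior subpath, and this is precisely the information that feeds into the case split in T4.
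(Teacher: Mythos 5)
Your overall skeleton --- four nested minimal-counterexample arguments, reduced to the short-inseparable case with every chord meeting $\mathring{P}$, using Theorem~\ref{SumTo4For2PathColorEnds} and the broken-wheel lemmas as workhorses --- does match the paper's architecture. However, several of the concrete steps you propose would not go through, and they are precisely the steps where the real work lives.

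First, in T1, you propose to ``greedily color $x_0$ and $x_1$ using their $3$-lists, then view the residual problem along $q_0q_1$ as a $2$-path coloring problem in the rainbow $\mathcal{G}^{x_0q_0q_1x_1}$.'' This does not work as stated: the colors chosen on $x_0, x_1$ must simultaneously be compatible with a $(P_0, G^{P_0})$-sufficient coloring, a $(P_1,G^{P_1})$-sufficient coloring, and a sufficient coloring inward along $q_0q_1$, and these constraints interact. The paper handles this recursively (Claim~\ref{NoChorBothEndPClaimFirst3Path}): it applies the induction hypothesis to $G^{R_1}$ to get a family of $|L(p_1)|$ sufficient colorings of $\{x_0,x_1,p_1\}$ varying over $x_0$, then intersects with the $\textnormal{End}(P_0, G^{P_0})$ family --- the end-linked inequality $|L(p_0)|+|L(p_1)|\ge 4$ is what guarantees a match by counting, not a greedy choice. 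Also note that $x_0=q_0$ is impossible by definition of $x_0\in V(C\setminus\mathring{P})$, so the ``delicate cases'' you list are not the right ones; the degenerate cases are $x_i=p_i$, $x_0=x_1$, or a chord from $q_i$.

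Second, your proposed use of Theorem~\ref{BohmePaper5CycleCorList} in T2 and T3 is a genuine misstep. That theorem only classifies obstructions to extending a precoloring of a facial cycle of length $5$ or $6$, but the structures that arise in T2 and T3 (the wheel $G^{P_+}$ with central vertex $w^+$, or the broken wheels $G^{P_i}$) can have outer cycles of arbitrary length. The paper never invokes Theorem~\ref{BohmePaper5CycleCorList} in Sections~6--8; instead it gets the wheel/broken-wheel structure from Theorem~\ref{EitherBWheelOrAtMostOneColThm} and Proposition~\ref{CorMainEitherBWheelAtM1ColCor}, then extracts parity from Theorem~\ref{BWheelMainRevListThm2} via detailed case splits (see Claims~\ref{KWheelCenVerHww'}--\ref{SubCLUnionComi=0Case} for T2, and Claims~\ref{EvChorCQ0EndPx1P1}--\ref{ForEachdLamG'} for T3). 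Similarly, in T2~\ref{T2PartB} you claim the result ``follows directly from Theorem~\ref{EitherBWheelOrAtMostOneColThm} applied to the path $P$,'' but $P$ is a $3$-path while that theorem concerns $2$-paths; the paper's argument there is to locate a $w\in V(G\setminus C)$ with at least three neighbors on $P$ (via Lemma~\ref{PartialPathColoringExtCL0}), prove $w$ is adjacent to at most one of $p_0,p_1$ (Claim~\ref{WAdjAtMoOneP0P1TwoSideEndChord}), and then partition $G$ along the paths $P^-, P^+$ through $w$ --- none of which follows ``directly.''

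Third, in T4, applying Proposition~\ref{CorMainEitherBWheelAtM1ColCor} to $G-q_0$ and $G-q_1$ is not what the paper does nor what one would want: the proposition requires a rainbow whose principal path is a $2$-path, and $G-q_i$ has the $3$-path $p_0q_{1-i}\cdots$ on its boundary. The paper instead peels off $G^{P_0}$, $G^{P_1}$, and $G^M$ (or $G^{P^*}$) separately and applies the proposition to those pieces, then combines colorings. The classification of $\mathcal{F}$ into \ref{T4PartA}--\ref{T4PartC} really does require the careful tracking you anticipate, but a correct proof must also account for whether $x_0=x_1$ (the triangle-type case), a distinction your outline does not make and which drives most of the case split in Section~\ref{CorColSecRes} (Claims~\ref{IfX0X1SameObsK1NotDel}--\ref{X0X1DistinctNoPGObsT4}).
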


We prove \ref{LabCrownNonEmpt}-\ref{LabCrownNonEmpt4} with four separate minimal counterexample arguments. To avoid repeatedly re-introducing the same paths in $G$, we define the following, which we retain throughout the proof of Theorem \ref{CombinedT1T4ThreePathFactListThm}. 

\begin{defn} \emph{We let $P_0:=p_0q_0x_0$ and $P_1:=x_1q_1p_1$ (possibly, one or both of $P_0, P_1$ is an edge). Likewise, we let  $M:=x_0q_0q_1x_1$ (possibly $M$ is a triangle) and we let $R_0:=p_0q_0q_1x_1$ and $R_1:=x_0q_0q_1p_1$ (possibly one, but not both, of $R_0, R_1$ is a triangle).}\end{defn} 

\section{The Proof of \ref{LabCrownNonEmpt}}\label{PrFPart1T1T4}

\begin{proof} Suppose toward a contradiction that $\mathcal{G}$ does not satisfy \ref{LabCrownNonEmpt}, where $G$ is vertex-minimal with respect to this property. By removing colors from the lists of some vertices if necessary, we suppose for convenience that each vertex of $C\setminus P$ has a list of size precisely three, and $|L(p_1)|+|L(p_4)|=4$. It is immediate from Corollary \ref{CycleLen4CorToThom}, together with the minimality of $G$, that $G$ is short-inseparable. Likewise, it is immediate from Theorem \ref{thomassen5ChooseThm}, together with the minimality of $G$, that any chord of $C$ has an endpoint in $\mathring{P}$. 

\begin{Claim}\label{NoChorBothEndPClaimFirst3Path} $C$ is induced. \end{Claim}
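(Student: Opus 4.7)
\begin{claimproof}{}
The plan is to show that any chord of $C$ leads to a contradiction with $G$ being a minimal counterexample. Since we have already established that every chord of $C$ has an endpoint in $\mathring{P}=\{q_0,q_1\}$, by the $p_0\leftrightarrow p_1,\,q_0\leftrightarrow q_1$ symmetry I may assume a chord $q_0y$ exists with $y\in V(C)\setminus\{p_0,q_1\}$, and split into the two cases $y=p_1$ and $y\in V(C\setminus P)$.

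In the case $y=p_1$, the triangle $q_0q_1p_1$ has $p_0$ in its exterior, so short-inseparability forces its interior to contain only the three triangle vertices; together with planarity this shows $q_1$'s only neighbors in $G$ are $q_0$ and $p_1$. Hence $x_0=x_1=p_1$, so the target coloring is on $\{p_0,p_1\}$. I would apply Theorem \ref{SumTo4For2PathColorEnds} to the end-linked 2-path rainbow $(G-q_1,\,(C-q_1)+q_0p_1,\,p_0q_0p_1,\,L)$ to produce some $\phi\in\textnormal{End}(p_0q_0p_1,G-q_1)$, and verify that $\phi$, viewed as an $L$-coloring of $\{p_0,p_1\}=\{p_0,x_0,x_1,p_1\}$, is $(P,G)$-sufficient: any extension $\psi$ of $\phi$ to $V(P)$ restricts on $\{p_0,q_0,p_1\}$ to an extension of $\phi$ which $L$-colors $G-q_1$, and $\psi(q_1)$ is automatically compatible with $q_1$'s only neighbors $q_0,p_1$. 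This contradicts $G$'s minimality.

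In the case $y=v_j\in V(C\setminus P)$, I would choose $v_j$ farthest from $p_0$ along $C\setminus\mathring{P}$ among $q_0$'s chord neighbors; by the failure of the previous case $v_j\in V(C\setminus P)$ and $x_0=v_j$. Let $G=G^0\cup G^1$ be the natural $q_0v_j$-partition, with $p_0\in V(G^0)$ and $p_1\in V(G^1)$. By planarity, $q_1$ cannot have chord neighbors on the $p_0$-side of $q_0v_j$, so $x_1$ lies in $V(G^1)$ and equals the analogous $x_1^{G^1}$; by the maximal choice of $v_j$ we also have $x_0^{G^1}=v_j$. Then I would apply \ref{LabCrownNonEmpt} inductively to the end-linked rainbow $(G^1,C^1,v_jq_0q_1p_1,L)$ (strictly smaller than $G$) to obtain a $(v_jq_0q_1p_1,G^1)$-sufficient $L$-coloring $\phi^1$ of $\{v_j,x_1,p_1\}$, and Theorem \ref{SumTo4For2PathColorEnds} to the end-linked 2-path rainbow $(G^0,C^0,p_0q_0v_j,L)$ to obtain $\textnormal{End}(p_0q_0v_j,G^0)\neq\varnothing$.

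The final step is to glue: find some $\phi^0\in\textnormal{End}(p_0q_0v_j,G^0)$ with $\phi^0(v_j)=\phi^1(v_j)$, so that $\phi^0\cup\phi^1$ is an $L$-coloring of $\{p_0,v_j,x_1,p_1\}=\{p_0,x_0,x_1,p_1\}$ which, by splitting any extension to $V(P)$ into its restrictions to $G^0$ and $G^1$ and invoking the two colorings' sufficiency properties, is $(P,G)$-sufficient, contradicting $G$'s minimality. The main obstacle is this matching on the $v_j$-color. Since every chord of $C^0$ is incident to $q_0$, Proposition \ref{CorMainEitherBWheelAtM1ColCor} applies to $(G^0,C^0,p_0q_0v_j,L)$; I would split on its dichotomy. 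Either $L(p_0)$ has a $(p_0q_0v_j,G^0)$-universal color $a$, in which case $\textnormal{End}(p_0q_0v_j,G^0)$ covers every $v_j$-color in $L(v_j)\setminus\{a\}$ and the match succeeds unless $\phi^1(v_j)=a$---a subcase to be handled by appealing to inductive flexibility in the choice of $\phi^1$; or $G^0$ is a broken wheel with principal path $p_0q_0v_j$, in which case an explicit combinatorial analysis via Theorem \ref{BWheelMainRevListThm2} together with the end-linked constraint $|L(p_0)|+|L(p_1)|=4$ should yield either the desired match or a direct structural contradiction.
\end{claimproof}
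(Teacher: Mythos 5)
Your decomposition is the same as the paper's: rule out $q_0p_1, p_1q_0 \in E(G)$ via Theorem~\ref{SumTo4For2PathColorEnds}, take the chord $q_0x_0$ with $x_0\in V(C\setminus P)$, split into $G^{P_0}$ and $G^{R_1}$, apply Theorem~\ref{SumTo4For2PathColorEnds} on the $p_0$-side and \ref{LabCrownNonEmpt} inductively on the $p_1$-side, and try to glue along the color on $x_0$. The $y=p_1$ case and the setup of the partition are fine.

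The genuine gap is exactly the step you flag as ``the main obstacle'': you never actually produce a matched pair $(\phi^0,\phi^1)$. The dichotomy you propose from Proposition~\ref{CorMainEitherBWheelAtM1ColCor}\ref{PropCor1} only applies when $|L(p_0)|\geq 2$, so it does not cover the normalization $|L(p_0)|=1$, $|L(p_1)|=3$; and in the broken-wheel branch you defer to an analysis you do not carry out and whose outcome you only conjecture (``should yield either the desired match or a direct structural contradiction''). The ``inductive flexibility in the choice of $\phi^1$'' is the right instinct, but as written it is not a proof.

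The paper closes this gap cleanly with a pigeonhole argument that you should keep in mind, since it recurs throughout the paper: since $|L(x_0)|=3$, by temporarily shrinking $L(x_0)$ to subsets of size $4-|L(p_1)|$ (this keeps the $R_1$-rainbow end-linked because the endpoints of $R_1$ are $x_0$ and $p_1$) and re-applying the minimality of $G$, one obtains $|L(p_1)|$ different $(R_1,G^{R_1})$-sufficient colorings of $\{x_0,x_1,p_1\}$ that use pairwise distinct colors on $x_0$; symmetrically, shrinking $L(x_0)$ to subsets of size $4-|L(p_0)|$ and invoking Theorem~\ref{SumTo4For2PathColorEnds} gives $|L(p_0)|$ elements of $\textnormal{End}(P_0,G^{P_0})$ with pairwise distinct colors on $x_0$. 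Since $|L(p_0)|+|L(p_1)|=4>3=|L(x_0)|$, some color of $L(x_0)$ is hit on both sides, and the two colorings glue. (Shrinking $L(x_0)$ is harmless for sufficiency because $x_0$ lies in the domain of the resulting colorings, so any later extension never revisits $L(x_0)$.) This is both more elementary and more uniform than a case split on whether $G^{P_0}$ is a broken wheel, and it avoids the list-size hypothesis of Proposition~\ref{CorMainEitherBWheelAtM1ColCor}\ref{PropCor1} entirely.
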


\begin{claimproof} If $G$ contains either of the edges  $p_0q_1, p_1q_0$, then Theorem \ref{SumTo4For2PathColorEnds} implies there is a $(P, G)$-sufficient $L$-coloring of $\{p_0, x_0, x_1, p_1\}$. Likewise, if $|V(C)|\leq 4$, then Corollary \ref{CycleLen4CorToThom} implies that any $L$-coloring of $\{p_0, x_0, x_1, p_1\}$ is $(P, G)$-sufficient. Thus, $|V(C)|>4$ and every chord of $C$ has precisely one endpoint in $\{q_0, q_1\}$ and the other endpoint in $C\setminus P$. Now suppose $C$ is not induced, and suppose without loss of generality that there is a chord of $C$ incident to $q_0$. Thus, $|V(G^{R_1})|<|V(G)|$ and $x_0\in V(C\setminus P)$. As $|L(x_0)|=3$, it follows from the minimality of $G$ that there is a family of $|L(p_1)|$ different $(R_1, G^{R_1})$-sufficient $L$-colorings of $\{x_0, x_1, p_1\}$, each using a different color on $x_0$. Likewise, it follows from it follows from Theorem \ref{SumTo4For2PathColorEnds} that there is a family of $|L(p_0)|$ different elements of $\textnormal{End}(P_0, G^{P_0})$, each using a different color on $x_0$, so there is a $(P_0, G^{P_0})$-sufficient $L$-coloring $\phi$ of $\{p_0, x_0\}$ and a $(R_1, G^{R_1})$-sufficient $L$-coloring $\psi$ of $\{x_0, x_1, p_1\}$ with $\phi(x_0)=\psi(x_0)$, and the union is a $(P, G)$-sufficient $L$-coloring of $\{p_0, x_0, x_1, p_1\}$, contradicting our assumption on $G$. \end{claimproof}

\begin{Claim}\label{First3PathNeighborShareClMCL} $N(p_0)\cap N(q_1)=\{q_0\}$, and likewise, $N(p_1)\cap N(q_0)=\{q_1\}$. \end{Claim}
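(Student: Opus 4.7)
The plan is to prove both statements by contradiction; the two statements are symmetric, so I treat only the first. Suppose, toward a contradiction, that there exists $v\in (N(p_0)\cap N(q_1))\setminus\{q_0\}$. Since $C$ is induced by Claim \ref{NoChorBothEndPClaimFirst3Path}, any common neighbor of $p_0$ and $q_1$ in $V(C)$ must be a $C$-neighbor of both, and hence equal $q_0$. Thus $v\in V(G\setminus C)$.

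The edges $p_0v$ and $vq_1$, together with the subpath $p_0q_0q_1$ of $C$, form a 4-cycle $C_*:=p_0q_0q_1v$ in $G$, distinct from $C$ since $|V(C)|>4$. By short-inseparability, one of the two sides of $C_*$ contains only the four vertices $V(C_*)$; the side containing the rest of $V(C)\setminus V(C_*)$ is nontrivial, so the ``bubble'' side of $C_*$ (the side bounded on $C$ by $p_0q_0q_1$) contains no additional vertices. In particular, $N(q_0)\subseteq\{p_0, q_1, v\}$. Combined with $C$ being induced, this yields $N(q_0)\cap V(C\setminus\mathring{P})=\{p_0\}$ and $N(q_1)\cap V(C\setminus\mathring{P})=\{p_1\}$, so $x_0=p_0$ and $x_1=p_1$; hence $\{p_0, x_0, x_1, p_1\}=\{p_0, p_1\}$.

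To derive the contradiction, I would exhibit a $(P, G)$-sufficient $L$-coloring of $\{p_0, p_1\}$, contradicting the minimal counterexample hypothesis. Define $C_{\mathrm{big}}$ by replacing the subpath $p_0q_0q_1$ of $C$ with the path $p_0vq_1$, and let $G':=\textnormal{Int}_G(C_{\mathrm{big}})$; since the bubble side of $C_*$ has no additional vertices, $V(G')=V(G)\setminus\{q_0\}$. The 3-path $P':=p_0vq_1p_1$ lies on $C_{\mathrm{big}}$, and $\mathcal{G}':=(G', C_{\mathrm{big}}, P', L)$ is an end-linked rainbow with $|V(G')|<|V(G)|$. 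By minimality of $G$, T1 applied to $\mathcal{G}'$ yields a $(P', G')$-sufficient $L$-coloring $\psi$ of $\{p_0, x'_0, x'_1, p_1\}$, where the same argument using $C$ induced gives $x'_1=p_1$; hence $\psi$ colors $\{p_0, x'_0, p_1\}$ for some $x'_0\in V(C_{\mathrm{big}}\setminus\mathring{P}')\cap N(v)$.

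Set $\phi:=\psi|_{\{p_0, p_1\}}$. For any extension $\Phi$ of $\phi$ to $V(P)$, I would choose $\Phi(v)\in L(v)\setminus\{\Phi(p_0), \Phi(q_0), \Phi(q_1), \psi(x'_0)\}$, which is possible since $|L(v)|\geq 5$; then $\Phi|_{V(P')}\cup\{(x'_0,\psi(x'_0))\}$ is a valid extension of $\psi$ to $V(P')\cup\textnormal{dom}(\psi)$, which by the sufficiency of $\psi$ extends to $L$-color $G'$, and adjoining $\Phi(q_0)$ completes an $L$-coloring of $G$. Hence $\phi$ is $(P, G)$-sufficient, the desired contradiction. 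The main obstacle will be this final step: carefully verifying that the restriction $\phi:=\psi|_{\{p_0,p_1\}}$ remains $(P,G)$-sufficient despite $\psi$ fixing the color of the possibly-distinct vertex $x'_0$, which is exactly where the slack $|L(v)|\geq 5$ is used to accommodate the four simultaneous constraints on $v$.
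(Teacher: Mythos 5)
Your proof is correct and is essentially the paper's own argument viewed through the $(p_0,q_0)\leftrightarrow(p_1,q_1)$ symmetry: the paper takes $w\in N(q_0)\cap N(p_1)$, sets $P_*:=p_0q_0wp_1$, observes $G^{P_*}=G-q_1$ via short-inseparability, applies \ref{LabCrownNonEmpt} by minimality, and lifts the resulting coloring to $G$ using the slack in $L(w)$, which mirrors your removal of $q_0$ and use of the slack in $L(v)$. The one step worth making explicit, which you correctly sidestep, is that your four-element exclusion set for $\Phi(v)$ suffices even when $v\in N(p_1)$, since then $x_0'=p_1$ and $\psi(x_0')=\Phi(p_1)$, so $\Phi(p_1)$ is already among the excluded colors.
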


\begin{claimproof} Suppose not. As $C$ is induced, we suppose without loss of generality that there is a $w\in V(G\setminus C)$ adjacent to each of $q_0, p_1$. Now, $\{q_0, p_1\}\subseteq N(q_1)\subseteq \{q_0, w, p_1\}$. Let $P_*=p_0q_0wp_1$. Note that $G^{P_*}=G-q_1$. By minimality, there is a $(P_*, G^{P_*})$-sufficient partial $L$-coloring $\psi$ of $V(C^{P_*})\setminus\{q_0, w\}$ with $p_0, p_1\in\textnormal{dom}(\psi)$, where $|L_{\psi}(w)|\geq 3$. Let $a=\psi(p_0)$ and $b=\psi(p_1)$. Since $G$ is a counterexample, there is an $L$-coloring $\phi$ of $V(P)$ using $a,b$ on $p_0, p_1$ respectively, where $\phi$ does not extend to $L$-color $G$. As $C$ is induced, $\phi\cup\psi$ is a proper $L$-coloring of its domain. Note that $|L_{\phi\cup\psi}(w)|\geq 1$, so $\phi\cup\psi$ extends to $L$-color $G$, a contradiction.  \end{claimproof}

Now let $\phi$ be an arbitrary $L$-coloring of $\{p_0, p_1\}$. As above, $\phi$ extends to an $L$-coloring $\psi$ of $V(P)$, where $\psi$ does not extend to $L$-color $G$. Since $C$ is induced, it follows from Lemma \ref{PartialPathColoringExtCL0} that there exists a $w\in V(G\setminus C)$ with at least three neighbors in $C$, contradicting Claim \ref{First3PathNeighborShareClMCL}. This proves \ref{LabCrownNonEmpt} of Theorem \ref{CombinedT1T4ThreePathFactListThm}. \end{proof}

\section{The proof of \ref{LabCrownNonEmpt2}}

\begin{proof} Suppose \ref{LabCrownNonEmpt2} does not hold, and let $(G, C, P, L)$ be a counterexample to \ref{LabCrownNonEmpt2}, where $G$ is vertex-minimal with respect to this property. We may suppose, by removing colors from the lists of some of the vertices if necessary, that each vertex of $C\setminus P$ has a list of size precisely three, although, due to the statement of \ref{T2PartB}, we cannot remove colors from the lists of $V(P)$, even from the endpoints of $P$. However, we may suppose that $|L(p_1)|\geq 3$. To see this, note that, if $|L(p_1)|<3$, then \ref{T2PartB} is violated, and if we add colors to $L(p_1)$, then \ref{T2PartB} is still violated, so we suppose that $|L(p_1)|\geq 3$. Recalling Observation \ref{MinCountChordSepCyDObs}, it follows from the minimality of $|V(G)|$, together with Corollary \ref{CycleLen4CorToThom} and Theorem \ref{thomassen5ChooseThm}, that $G$ is short-inseparable and every chord of $C$ has an endpoint in $\mathring{P}$. 

\begin{claim}\label{ChordsAcrossForImpCrown} $p_0q_1\not\in E(G)$ and $p_1q_0\not\in E(G)$. Furthermore, $x_0=p_0$.  \end{claim}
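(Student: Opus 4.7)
The plan is to dispose of the three assertions in two stages. Stage~1 excludes the chords $p_0q_1$ and $p_1q_0$ by passing to a 2-path problem in $G-q_0$ or $G-q_1$ and invoking Theorem~\ref{SumTo4For2PathColorEnds}. Stage~2 rules out $x_0\neq p_0$ by taking the natural $q_0x_0$-partition of $G$ and applying the inductive hypothesis for \ref{LabCrownNonEmpt2} on the side containing the 3-path, together with Theorem~\ref{SumTo4For2PathColorEnds} on the side containing $p_0$. In both stages the contradiction comes from the counterexample hypothesis that $G$ admits neither a fully even $(P,G)$-obstruction nor two $(P,G)$-sufficient $L$-colorings of $\{p_0,x_0,x_1,p_1\}$ differing on $p_1$.

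For Stage~1, suppose $p_0q_1\in E(G)$. Short-inseparability applied to the triangle $p_0q_0q_1$ forces $N(q_0)=\{p_0,q_1\}$, whence $x_0=p_0$; since $p_0\in N(q_1)\cap V(C\setminus\mathring{P})$ is farthest from $p_1$ on $C\setminus\mathring{P}$, also $x_1=p_0$, collapsing $\{p_0,x_0,x_1,p_1\}$ to $\{p_0,p_1\}$. Let $G'=G-q_0$ with outer cycle $C'$ using the edge $p_0q_1$ and $P'=p_0q_1p_1$; since $q_0$'s neighbors lie in $V(P')$ and $|L(q_0)|\geq 5$, one verifies $\textnormal{End}(P,G)=\textnormal{End}(P',G')$. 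Theorem~\ref{SumTo4For2PathColorEnds} applied to $(G',C',P',L)$ gives either an even-length path $Q$ from $p_0$ to $p_1$ with each vertex adjacent to $q_1$, which (using $V(Q)\subseteq V(C\setminus\mathring{P})$) witnesses a fully even $(P,G)$-obstruction with the degenerate $Q^*=p_0=x_0=x_1$, or $|\textnormal{End}(P',G')|\geq 2$. In the latter case, either the two colorings already differ on $p_1$, or they share a $p_1$-value $b$ and differ on $p_0$ (forcing $|L(p_0)|\geq 2$), in which case a second application of Theorem~\ref{SumTo4For2PathColorEnds} with $L^*(p_1):=L(p_1)\setminus\{b\}$ supplies a sufficient coloring using a different $p_1$-color. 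Either outcome contradicts the counterexample, and a symmetric argument rules out $p_1q_0\in E(G)$.

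For Stage~2, suppose $x_0\neq p_0$, so $q_0x_0$ is a chord and $G=H_0\cup H_1$ is the natural $q_0x_0$-partition, with $p_0\in V(H_0)\setminus V(H_1)$ and $|V(H_1)|<|V(G)|$. Planarity forces every chord from $q_1$ to $V(C\setminus P)$ onto the $H_1$-side, and maximality of $x_0$ makes it $q_0$'s only neighbor on $C_{H_1}\setminus\mathring{R_1}$; hence $x_0^{R_1}=x_0$ and $x_1^{R_1}=x_1$. I apply the inductive hypothesis for \ref{LabCrownNonEmpt2} to $(H_1,C_{H_1},R_1,L)$ and Theorem~\ref{SumTo4For2PathColorEnds} to the 2-path $p_0q_0x_0$ in $H_0$. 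When the first yields a fully even $(R_1,H_1)$-obstruction and the second yields an even-length $p_0\to x_0$ path adjacent to $q_0$ in $H_0$, these glue to a fully even $(P,G)$-obstruction in $G$ (the witness $w^*$ lies in $V(H_1\setminus C_{H_1})\subseteq V(G\setminus C)$). When the first instead yields two $(R_1,H_1)$-sufficient $L$-colorings $\psi_1,\psi_2$ differing on $p_1$, I pair each $\psi_i$ with an element of $\textnormal{End}(p_0q_0x_0,H_0)$ agreeing on $x_0$: any such pair glues to a $(P,G)$-sufficient $L$-coloring of $\{p_0,x_0,x_1,p_1\}$, and two pairs with different $p_1$-colors contradict the counterexample.

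The main obstacle is the compatibility-matching in this last sub-case: $\psi_1(x_0)$ and $\psi_2(x_0)$ need not be realized as $x_0$-colors of any $\textnormal{End}(p_0q_0x_0,H_0)$ element. To handle this I will first show, by a case analysis on $|L(p_0)|$ combined with repeated applications of Theorem~\ref{SumTo4For2PathColorEnds} to $H_0$ under restricted lists $L^a(p_0):=\{a\}$ (end-linkedness preserved since $|L(x_0)|=3$), that whenever no even-length $p_0\to x_0$ path adjacent to $q_0$ exists in $H_0$, at least two of the three colors in $L(x_0)$ are realized by $H_0$-sufficient colorings. Then I iterate the inductive hypothesis for \ref{LabCrownNonEmpt2} on $(H_1,C_{H_1},R_1,L^c)$ with $L^c(x_0):=\{c\}$ for each such realized $c$ (end-linked since $1+|L(p_1)|\geq 4$), exploiting the list-independence of the fully even obstruction structure to conclude that either some iteration produces two $L^c$-sufficient colorings differing on $p_1$ (which combine with a matching $H_0$-coloring), or each iteration detects the fully even obstruction, which then glues with an even-path witness in $H_0$ to give a fully even $(P,G)$-obstruction in $G$.
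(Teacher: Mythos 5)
Stage~1 (ruling out the chords $p_0q_1$ and $p_1q_0$) is essentially the paper's argument, filled in with more detail (the $\textnormal{End}(P,G)=\textnormal{End}(P',G')$ reduction and the restricted-list re-application of Theorem~\ref{SumTo4For2PathColorEnds} to force a difference on $p_1$ are both correct), and the preliminary steps of Stage~2 (the $q_0x_0$-partition, identification of $\tilde x_0,\tilde x_1$ for $R_1$) are also correct.

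Stage~2 has a genuine gap. You split on whether there is an even-length $p_0\to x_0$ path adjacent to $q_0$ in $H_0=G^{P_0}$. In the ``no even path'' branch you prove (correctly) that at least two colours $b,b'$ of $L(x_0)$ are realized by $\textnormal{End}(P_0,H_0)$-colourings, and you then iterate \ref{LabCrownNonEmpt2} on $(H_1,C_{H_1},R_1,L^c)$ with $L^c(x_0)=\{c\}$. But \ref{LabCrownNonEmpt2} may output the \emph{fully even $(R_1,H_1)$-obstruction} alternative, and that alternative is list-independent, so it will come back on every iteration. Your concluding sentence asserts that this obstruction ``glues with an even-path witness in $H_0$'' --- but you are in precisely the branch where no even-path witness in $H_0$ exists, so $p_0q_0$ is \emph{not} even-tilted in $(P,G)$ and the gluing is impossible. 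This sub-case (no even path in $H_0$ together with a fully even $(R_1,H_1)$-obstruction) is left unresolved.

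The paper sidesteps this by first proving Subclaim~\ref{H0BWheelEvenLen}: $G^{P_0}$ is a broken wheel and $|V(G^{P_0})|$ is even --- that is, the even-tilting path \emph{does} exist --- so your problematic branch never arises. The crucial point in the proof of that subclaim is that it invokes \ref{LabCrownNonEmpt} (T1), not \ref{LabCrownNonEmpt2} (T2), on $G^{R_1}$: T1 always returns a sufficient colouring and has no obstruction escape clause. Concretely, under ``no even path'' one has $\phi,\phi'\in\textnormal{End}(P_0,G^{P_0})$ with $\{\phi(x_0),\phi'(x_0)\}=\{b,b'\}$; applying \ref{LabCrownNonEmpt} to $(G^{R_1},C^{R_1},R_1,L')$ with $L'(x_0)=\{b,b'\}$ (end-linked since $2+|L(p_1)|\geq 5$), and then again with $L(p_1)$ reduced by one colour, produces two $(R_1,G^{R_1})$-sufficient colourings of $\{x_0,x_1,p_1\}$ with $x_0$-colour in $\{b,b'\}$ that differ on $p_1$, each of which glues with whichever of $\phi,\phi'$ matches on $x_0$ --- two $(P,G)$-sufficient colourings differing on $p_1$, a contradiction. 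Once the subclaim is in hand, the even-tilting of $p_0q_0$ is available, the obstruction alternative of \ref{LabCrownNonEmpt2} on $G^{R_1}$ (with $L'(x_0)=\{b\}$) really does glue, and the two-colourings alternative glues with the single $\phi$. So the remedy is to prove the broken-wheel subclaim first via \ref{LabCrownNonEmpt} rather than to iterate \ref{LabCrownNonEmpt2} with singleton lists on $x_0$.
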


\begin{claimproof} Suppose the claim does not hold. Thus, \ref{T2PartA} is violated. Suppose first that $E(G)$ contains one of $p_0q_1, p_1q_0$. Now, there is an $i\in\{0,1\}$ with $x_i=x_{1-i}=p_{1-i}$, and, by Theorem \ref{SumTo4For2PathColorEnds} applied to $G-q_{1-i}$, there are two elements of $\textnormal{End}(p_iq_ip_{1-i}, G-q_{1-i})$ differing precisely on $p_1$. Each such element is also a $(P,G)$-sufficient $L$-coloring of $\{p_0, p_1\}$, which is false. Thus,  $p_0q_1, p_1q_0\not\in E(G)$.  Now suppose $x_0\neq p_0$. As shown above, $x_0\in V(C\setminus P)$. 

\vspace*{-8mm}
\begin{addmargin}[2em]{0em} 
\begin{subclaim}\label{H0BWheelEvenLen} $G^{P_0}$ is a broken wheel, and $|V(G^{P_0})|$ is even. \end{subclaim}

\begin{claimproof} Suppose not. By Theorem \ref{SumTo4For2PathColorEnds} that there exist $\phi, \phi'\in\textnormal{End}(P_0, G^{P_0})$ where $\phi(x)\neq\phi'(x)$. Let $b=\phi(x)$ and $b'=\phi'(x)$. Since $b\neq b'$ and $|L(p_1)|\geq 3$, it follows from \ref{LabCrownNonEmpt} that there exist $(R_1, G^{R_1})$-sufficient $L$-colorings $\psi, \psi^*$ of $\{x_0, x_1, p_1\}$, where $\psi(x_1)\neq\psi^*(x_1)$ and $\psi(x), \psi^*(x_0)\in\{b, b'\}$. Possibly $\psi(x_0)=\psi^*(x_0)$, but, in any case, taking appropriate unions, we produce two $(P, G)$-sufficient $L$-colorings of $\{p_0, x_0, x_1, p_1\}$ using different colors on $p_1$, which is false. \end{claimproof} \end{addmargin}

Applying Theorem \ref{SumTo4For2PathColorEnds}, we fix a $\phi\in\textnormal{End}(P_0, G^{P_0})$. Let $b=\phi(x_0)$. It follows from Subclaim \ref{H0BWheelEvenLen} that there is no fully even $(R_1, G_{R_1})$-obstruction, or else there is a fully even $(P, G)$-obstruction. By minimality, there are two distinct $(R_1, G^{R_1})$-sufficient $L$-colorings $\psi, \psi'$ of $\{x_0, x_1, p_1\}$ with $\psi(x_0)=\psi'(x_0)=b$, where $\psi, \psi'$ differ on $p_1$. Each of $\phi\cup\psi$ and $\phi\cup\psi'$ is $(P, G)$-sufficient, which is false. \end{claimproof}

\begin{claim} $C$ is induced. \end{claim}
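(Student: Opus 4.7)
The plan is to suppose for contradiction that $C$ is not induced. By the preceding Claim \ref{ChordsAcrossForImpCrown}, every chord of $C$ must take the form $q_1y$ for some $y\in V(C\setminus P)$: chords incident to $q_0$ are ruled out because $x_0=p_0$, while $p_0q_1$ and $p_1q_0$ are excluded directly. Since $x_1$ is the farthest such neighbor of $q_1$ from $p_1$ along $C\setminus\mathring{P}$, the existence of any chord at $q_1$ forces $x_1\in V(C\setminus P)$, and the chord $q_1x_1$ determines a natural $q_1x_1$-partition $G=G^0\cup G^1$ with $G^0\cap G^1=q_1x_1$. Here $G^0$ contains $p_0,q_0$ and carries the 3-path $R_0:=p_0q_0q_1x_1$ on its outer cycle $C^0$, while $G^1$ contains $p_1$ and carries the 2-path $P':=x_1q_1p_1$ on its outer cycle $C^1$; both are proper subgraphs of $G$.

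For $(G^0,C^0,R_0,L)$ the analogs of $x_0$ and $x_1$ with respect to $R_0$ are again $p_0$ and $x_1$ (since $q_0$ has no chords in $G$ and $x_1$ is maximal by choice), so the hypothesis of \ref{LabCrownNonEmpt2}\ref{T2PartB} is satisfied, and since $|L(x_1)|\geq 3$, the hypothesis of \ref{LabCrownNonEmpt2}\ref{T2PartA} is as well. By minimality of $G$, both conclusions of \ref{LabCrownNonEmpt2} hold for $G^0$. Since $x_1\neq p_1$ in $G$, the hypothesis of \ref{T2PartB} fails for $(G,C,P,L)$ and \ref{T2PartB} is vacuously true there, so any failure of \ref{LabCrownNonEmpt2} by $G$ must stem from \ref{T2PartA}; accordingly, the goal is to derive a contradiction by establishing \ref{T2PartA} for $G$, that is, producing either a fully even $(P,G)$-obstruction or two $(P,G)$-sufficient $L$-colorings of $\{p_0,x_0,x_1,p_1\}=\{p_0,x_1,p_1\}$ differing on $p_1$.

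The core mechanism is to combine $(R_0,G^0)$-sufficient colorings of $\{p_0,x_1\}$ with $(P',G^1)$-sufficient colorings of $\{x_1,p_1\}$, matched on $x_1$, so that their union is a $(P,G)$-sufficient coloring of $\{p_0,x_1,p_1\}$. For each $a\in L(x_1)$, the rainbow obtained by restricting $L(x_1)$ to $\{a\}$ is still end-linked on $G^1$, so Theorem \ref{SumTo4For2PathColorEnds} provides either two such sufficient colorings on $\{x_1,p_1\}$ using $a$ and differing on $p_1$, or an even-length path from $x_1$ to $p_1$ in $V(C^1)$ with each vertex adjacent to $q_1$, which witnesses $p_1q_1$ as even-tilted in $(P,G)$. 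I would then case-split on \ref{T2PartA} applied to $G^0$: if it yields two $(R_0,G^0)$-sufficient colorings differing on $x_1$, match each with a corresponding $(P',G^1)$-sufficient coloring to produce two $(P,G)$-sufficient colorings, and if these fail to differ on $p_1$, fall back on the structural alternative on $G^1$ to produce one; if \ref{T2PartA} applied to $G^0$ yields a fully even $(R_0,G^0)$-obstruction $Q^*$, then $Q^*$ transfers to a $(P,G)$-obstruction with $V(Q^*)\subseteq V(C\setminus\mathring{P})$ in which $p_0q_0$ is trivially even-tilted (since $x_0=p_0$), and the $G^1$-analysis certifies $p_1q_1$ as even-tilted in $(P,G)$, making $Q^*$ fully even in $(P,G)$.

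The main obstacle is the delicate sub-case in which every matched sufficient coloring of $\{p_0,x_1,p_1\}$ happens to use a common color $b$ on $p_1$: here one must couple the forced structural alternative on $G^1$ with the sufficient-colorings dichotomy of \ref{T2PartB} applied to $G^0$, and track parity carefully via Proposition \ref{CorMainEitherBWheelAtM1ColCor}, in order to extract either a second sufficient coloring of $\{x_1,p_1\}$ on $G^1$ (using a different $p_1$-color) or an $(R_0,G^0)$-obstruction of even length that lifts to a fully even $(P,G)$-obstruction. Verifying that the various sufficient-colorings alternatives on $G^0$ are incompatible with every matched coloring sharing a common $p_1$-color is the most technically intricate component of the argument.
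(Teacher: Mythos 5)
Your decomposition is the paper's: split $G$ at the chord $q_1x_1$ into $G^{R_0}$ (carrying $R_0=p_0q_0q_1x_1$) and $G^{P_1}$ (carrying $P_1=x_1q_1p_1$), invoke \ref{LabCrownNonEmpt2} on the smaller $G^{R_0}$ by minimality and Theorem \ref{SumTo4For2PathColorEnds} on $G^{P_1}$, then glue along $q_1x_1$. But the paper does not launch directly into a case-split on the alternatives of \ref{T2PartA}; it first proves (Subclaim \ref{H1BWheelEvenLenSubCL2}) that $G^{P_1}$ is a broken wheel with an even number of vertices. That single structural fact forces $p_1q_1$ to be even-tilted in $(P,G)$, and together with the trivial even-tiltedness of $p_0q_0$ (since $x_0=p_0$) it implies that any fully even $(R_0,G^{R_0})$-obstruction would lift to a fully even $(P,G)$-obstruction, hence cannot exist. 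This eliminates the obstruction branch of \ref{T2PartA} on $G^{R_0}$ outright, so one always lands in the two-colorings-differing-on-$x_1$ branch, and the rest is routine.

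Without that subclaim, your sketch has a gap in Case B. You write that ``the $G^1$-analysis certifies $p_1q_1$ as even-tilted in $(P,G)$,'' but Theorem \ref{SumTo4For2PathColorEnds} applied to $G^1$ only promises one of two outcomes: two sufficient colorings of $\{x_1,p_1\}$ sharing an $x_1$-value, or the even-length path. Only the second witnesses even-tiltedness; if $G^1$ is in the first case you would need to argue separately (e.g.\ glue the two $G^1$-colorings with a single coloring supplied by \ref{LabCrownNonEmpt} on $G^{R_0}$), which your sketch does not do. Conversely, your ``delicate sub-case'' in Case A dissolves once Theorem \ref{SumTo4For2PathColorEnds} is applied with $L(x_1)$ cut down to $\{b,b'\}$ rather than singletons: you first obtain some $\psi\in\textnormal{End}(P_1,G^{P_1})$ with $\psi(x_1)\in\{b,b'\}$, and since $|L(p_1)|\geq3$, a second application with $L(p_1)$ further reduced by $\psi(p_1)$ (sizes $2+2\geq4$, still end-linked) produces $\psi^*$ with $\psi^*(p_1)\neq\psi(p_1)$. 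This is exactly what the paper does; the parity-tracking detour through \ref{T2PartB} and Proposition \ref{CorMainEitherBWheelAtM1ColCor} you envision is unnecessary and would be harder than what it replaces.
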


\begin{claimproof} Suppose not. Each chord of $C$ is incident to $q_1$, so $x_1\in V(C\setminus P)$ and $x_0\neq x_1$. Thus, again, \ref{T2PartA} is violated. 

\vspace*{-8mm}
\begin{addmargin}[2em]{0em} 
\begin{subclaim}\label{H1BWheelEvenLenSubCL2} $G^{P_1}$ is a broken wheel, where $|V(G^{P_1})|$ is even..\end{subclaim}

\begin{claimproof} Suppose not. By \ref{LabCrownNonEmpt}, there is a $(R_0, G^{R_0})$-sufficient $L$-coloring $\phi$ of $\{p_0, x_0\}$. By Theorem \ref{SumTo4For2PathColorEnds}, there are two $(P_1, G^{P_1})$-sufficient $L$-colorings of $\{x_1, p_1\}$, each using $\phi(x_1)$ on $x_1$. Taking appropriate unions, we produce two $(P, G)$-sufficient $L$-colorings of $\{p_0, x_1, p_1\}$, contradicting our assumption on $G$. \end{claimproof}\end{addmargin}

It follows from Subclaim \ref{H1BWheelEvenLenSubCL2} that there is no fully even $(R_0, G^{R_0})$-obstruction. Thus, by minimality, there exist two $(R_0, G^{R_0})$-sufficient $L$-colorings $\phi, \phi'$ of $\{p_0, x_1\}$, where $\phi(x_1)\neq\phi'(x_1)$. Let $b=\phi(x_1)$ and $b'=\phi'(x_1)$. As $|L(p_1)|\geq 3$, it follows from Theorem \ref{SumTo4For2PathColorEnds} that there exist $\psi, \psi^*\in\textnormal{End}(P_1, G^{P_1})$ with $\psi(x_1), \psi^*(x_1)\in\{b, b'\}$ and $\psi(p_1)\neq\psi^*(p_1)$. Possibly $\psi(x_1)=\psi^*(x_1)$, but, in any case, taking appropriate unions, we produce two $(P, G)$-sufficient $L$-colorings of $\{p_0, x_0, x_1, p_1\}$ which differ on $p_1$, which is false, as $G$ is a counterexample.  \end{claimproof}

Since $C$ is induced, $x_0=p_0$ and $x_1=p_1$. As $G$ is a counterexample, there are at least two $L$-colorings of $\{p_0, p_1\}$ which are not $(P,G)$-sufficient, or else both a) and b) are satisfied. By Lemma \ref{PartialPathColoringExtCL0}, there is a $w\in V(G\setminus C)$ with at least three neighbors in $P$, or else any $L$-coloring of $\{p_0, p_1\}$ is $(P, G)$-sufficient, contradicting our assumption.

\begin{claim}\label{WAdjAtMoOneP0P1TwoSideEndChord} $w$ is adjacent to at most one of $p_0, p_1$ \end{claim}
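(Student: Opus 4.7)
The plan is to argue by contradiction: suppose $w$ is adjacent to both $p_0$ and $p_1$. Since $w$ has at least three neighbors in $V(P)$ and already contains $p_0, p_1$ in its neighborhood, $w$ must also be adjacent to at least one of $q_0, q_1$; by symmetry assume $w q_0 \in E(G)$.

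First I would exploit the 2-chord $p_0 w p_1$ of $C$ by considering the natural $p_0 w p_1$-partition $G = G^0 \cup G^1$, where $G^0$ is bounded by the 5-cycle $p_0 q_0 q_1 p_1 w$ (containing $V(P)$) and $G^1$ is bounded by the cycle $C^1 := (C \setminus \mathring P) \cup \{p_0 w, w p_1\}$. By applying short-inseparability of $G$ to the triangle $w p_0 q_0$ and to either the 4-cycle $w q_0 q_1 p_1$ (when $w q_1 \notin E(G)$) or the two further triangles $w q_0 q_1, w q_1 p_1$ (when $w q_1 \in E(G)$), I conclude that $V(G^0) = \{p_0, q_0, q_1, p_1, w\}$, so $G^0$ has no interior vertices.

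Next, I view $(G^1, C^1, P' := p_0 w p_1, L)$ as a 2-path rainbow; it is end-linked since $|L(p_1)| \geq 3$ and $L(p_0)$ is nonempty, so Theorem \ref{SumTo4For2PathColorEnds} applies and yields $\textnormal{End}(P', G^1) \neq \varnothing$ together with the dichotomy that either $|\textnormal{End}(P', G^1)| \geq 2$, or there is an even-length path $Q \subseteq C^1$ from $p_0$ to $p_1$ with every vertex of $Q$ adjacent to $w$. The key auxiliary claim to verify is that every $\phi \in \textnormal{End}(P', G^1)$ is a $(P, G)$-sufficient $L$-coloring of $\{p_0, p_1\}$: given any $L$-coloring $\psi$ of $V(P)$ extending $\phi$, we have $|L_\psi(w)| \geq |L(w)| - 4 \geq 1$, so any choice of $c \in L_\psi(w)$ extends $\phi$ to $V(P')$, which by $\phi$'s sufficiency extends to $G^1$ and combined with $\psi$ yields an $L$-coloring of all of $G$. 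Combining this auxiliary claim with our standing assumption that at least two $L$-colorings of $\{p_0, p_1\}$ are not $(P, G)$-sufficient restricts the size of $\textnormal{End}(P', G^1)$ and drives the analysis toward the even-length path alternative.

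In the even-length path case, $Q$ cannot contain $w$ (as $w$ is not adjacent to itself), so necessarily $Q = C \setminus \mathring P$, giving $w$ adjacent to every vertex of $V(C \setminus \mathring P)$; if additionally $w q_1 \in E(G)$, then $w$ is adjacent to every vertex of $V(C)$, and $Q^* := C \setminus \mathring P$ with witness $w^* := w$ is a $(P, G)$-obstruction, contradicting our standing assumption. The main obstacle will be the two residual situations: (i) the case $|\textnormal{End}(P', G^1)| \geq 2$, where I would combine the resulting $(P, G)$-sufficient colorings with the two non-sufficient ones to derive structural constraints on the broken-wheel substructure of $G^1$ via Theorem \ref{BWheelMainRevListThm2}, and (ii) the sub-case $w q_1 \notin E(G)$, in which $q_1$ has degree $2$ in $G$ with neighbors $q_0, p_1$ and $V(G) = V(C) \cup \{w\}$; this sub-case I anticipate handling by applying T2 inductively, via the minimality of $G$, to the smaller graph $G - q_1$ with the modified 3-path $p_0 q_0 w p_1$ on its new outer cycle, and transferring the resulting structure back to $G$ to contradict the bound on $|\textnormal{End}(P', G^1)|$.
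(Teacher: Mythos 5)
Your opening moves match the paper's: take the $p_0wp_1$-partition, use short-inseparability to see that the side containing $\mathring{P}$ has no interior vertices, and observe $\textnormal{End}(p_0wp_1, G^1)\subseteq\textnormal{End}(P,G)$ via the list size of $w$. From there, however, the argument does not close, and there is a genuine conceptual gap.

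The gap is treating the existence of a $(P,G)$-obstruction as a standalone contradiction (``contradicting our standing assumption''). There is no such standing assumption in force at this point in the proof; the only facts established are $x_0=p_0$, $x_1=p_1$, $C$ induced, and that at least two $L$-colorings of $\{p_0,p_1\}$ are not $(P,G)$-sufficient. Producing an obstruction only shows that \ref{T2PartB} is \emph{satisfied}; since $G$ is a counterexample to \ref{LabCrownNonEmpt2}, it then must be \ref{T2PartA} that fails. That, plus the fact that $p_0q_0$ and $p_1q_1$ are trivially even-tilted once $x_0=p_0$ and $x_1=p_1$, is what forces the obstruction to not be fully even and hence pins the parity of $|V(G^1)|$ --- exactly what is needed to make Theorem~\ref{SumTo4For2PathColorEnds} bite at the end. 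Your proposal skips this entire chain.

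Your two ``residual situations'' are the heart of the claim, not deferrable details. Case~(i), $|\textnormal{End}(p_0wp_1,G^1)|\geq 2$, is in fact the paper's terminal contradiction, but it only works after one knows $G^1$ is a broken wheel (hence an obstruction, hence \ref{T2PartA} fails and $|V(G^1)|$ is odd). The paper gets there directly: it pushes the $\geq 2$ non-$(P,G)$-sufficient colorings of $\{p_0,p_1\}$ through the $\textnormal{End}$ inclusion to obtain $\geq 2$ non-extending $L$-colorings of $V(p_0wp_1)$, and then invokes Theorem~\ref{EitherBWheelOrAtMostOneColThm} --- not Theorem~\ref{SumTo4For2PathColorEnds} --- to conclude $G^1$ is a broken wheel with principal path $p_0wp_1$. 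That is a strictly stronger structural conclusion than the dichotomy you extract from Theorem~\ref{SumTo4For2PathColorEnds}, and it dissolves your case split. Your case~(ii), $wq_1\notin E(G)$, likewise remains unresolved: the proposed inductive detour through $G-q_1$ is only a sketch, with no concrete mechanism for transferring the conclusion back to $G$.
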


\begin{claimproof} Suppose $w$ is adjacent to both of $p_0, p_1$. Let $G=G'\cup G''$ be the $p_0wp_1$-partition of $G$, where $C\setminus\mathring{P}\subseteq G'$ and $\mathring{P}\subseteq G''$. Note that $V(G'')=V(P)\cup\{w\}$. For any $L$-coloring $\phi$ of $V(P)$, $\phi$ leaves at least one color for $w$, i.e $|L_{\phi}(w)|\geq 1$, so $\textnormal{End}(p_0wp_1, G')\subseteq\textnormal{End}(P, G)$. Since $|\textnormal{End}(P, G)|\leq 1$, it follows from Theorem \ref{EitherBWheelOrAtMostOneColThm} that $G'$ is a broken wheel with principal path $p_0wp_1$, so there is a $(P, G)$-obstruction. Thus, \ref{T2PartA} is violated, the $(P,G)$-obstruction is not fully even, and $|V(G')|$ is odd. As $\textnormal{End}(p_0wp_1, G')\subseteq\textnormal{End}(P, G)$, it follows from Theorem \ref{SumTo4For2PathColorEnds} that there are two elements of $\textnormal{End}(P, G)$ using different colors on $p_1$, contradicting the fact that \ref{T2PartA} is violated.  \end{claimproof}

Now, there exist two distinct $L$-colorings $\pi^0, \pi^1$ of $V(P)$ which do not extend to $L$-color $G$, where $\pi^0$ and $\pi^1$ do not restrict to the same $L$-coloring of $\{p_0, p_1\}$. Possibly \ref{T2PartB}, but not \ref{T2PartA}, is violated, in which case $\pi^0(p_0)\neq\pi^1(p_0)$. In any case, for each $k=0,1$, we let $a^k:=\pi^k(p_i)$ and $b_k:=\pi^k(p_{1-i})$. By Claim \ref{WAdjAtMoOneP0P1TwoSideEndChord}, there is an $i\in\{0,1\}$ such that $N(w)\cap V(P)=\{p_i, q_0, q_1\}$. We now define the following:
\begin{enumerate}[label=\arabic*)]
\itemsep-0.1em
\item Let $v$ be the vertex of $N(w)\cap V(C\setminus\mathring{P})$ which is farthest from $p_i$ on the path $C\setminus\mathring{P}$. By Claim \ref{WAdjAtMoOneP0P1TwoSideEndChord}, $v\neq p_{1-i}$. Let $C\setminus\mathring{P}$ be denoted by $p_iu_1\cdots u_tp_{1-i}$ for some $t\geq 1$. 
\item  Let $P^-:=p_iwv$ and $P^+:=vwq_{1-i}p_{1-i}$. Finally, for each $k=0,1$, let $S^k$ be the set of $s\in L(v)$ such that there is a $(P^+, G^{P_+})$-sufficient $L$-colorings of of $\{v, p_{1-i}\}$ using $s, b^k$ on $v, p_{1-i}$ respectively. 
\end{enumerate}

This is illustrated in Figure \ref{DropHKPartGPDagger}, where $P^+$ is indicated in bold. Note that $G-q_i=G^{P_-}\cup G^{P_+}$. By \ref{LabCrownNonEmpt}, each of $S^0, S^1$ is nonempty. 

\begin{center}\begin{tikzpicture}
\node[shape=circle,draw=black] [label={[xshift=-1.2cm, yshift=-0.7cm]\textcolor{red}{$\{a^0, a^1\}$}}] (p0) at (-2,0) {$p_i$};
\node[shape=circle,draw=white] (p0+) at (-0.5,0) {$\cdots$};
\node[shape=circle,draw=black] (q0) at (-2,2) {$q_i$};
\node[shape=circle,draw=black] (v) at (1,0) {$v$};
\node[shape=circle,draw=white] (p1+) at (3,0) {$\cdots$};
\node[shape=circle,draw=black] (q1) at (5,2) {\small $q_{1-i}$};
\node[shape=circle,draw=black] [label={[xshift=1.2cm, yshift=-0.7cm]\textcolor{red}{$\{b^0, b^1\}$}}]  (p1) at (5,0) {\small $p_{1-i}$};
\node[shape=circle,draw=white] (H) at (-0.6,0.35) {$G^{P_-}$};
\node[shape=circle,draw=black] (w) at (-0.5,1.1) {$w$};
\node[shape=circle,draw=white] (w') at (3,1) {\small $G^{P_+}$};
\node[shape=circle,draw=white] (wL) at (4,2.9) {};

 \draw[-] (p0) to (p0+) to (v) to (p1+) to (p1) to (q1) to (q0) to (p0);
 \draw[-] (w) to (p0);
 \draw[-] (w) to (q0);
 \draw[-] (w) to (q1);
 \draw[-] (w) to (v);
\draw[-, line width=1.8pt] (v) to (w) to (q1) to (p1);
\end{tikzpicture}\captionof{figure}{}\label{DropHKPartGPDagger}\end{center}

Since $w$ only has four neighbors in $V(P)\cup\{w\}$, we immediately have the following for each $k=0,1$:
\begin{equation}\label{EqDagg1M}\tag{$\dagger$}\textnormal{If $v=p_i$ then $a^k\not\in S^k$ and, if $v\neq p_i$, then $\Lambda_{G^{P_-}}(a^k, c, \bullet)\cap S_k=\varnothing$ for each $c\in L_{\pi^k}(w)$} \end{equation}

\begin{claim}\label{HNotEdgeAlmostKUni} If $G^{P_-}$ is not an edge and $a^0\neq a^1$, then either there is a $k\in\{0,1\}$ such that $a^k$ is an almost $(P_-, G^{P_-})$-universal  color of $L(p_i)$ or $G^{P_-}$ is a broken wheel, where $3\leq |V(G^{P_-})|\leq 4$ and $\{a^0, a^1\}\subseteq\bigcap (L(u): u\in V(G^{P_-})\setminus\{w\})$. \end{claim}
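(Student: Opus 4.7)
The plan is to apply part \ref{PropCor1} of Proposition \ref{CorMainEitherBWheelAtM1ColCor} to the rainbow $(G^{P_-},C^{P_-},P^-,L')$, where $L'$ agrees with $L$ except that $L'(p_i)=\{a^0,a^1\}$. First I verify its hypotheses: short-inseparability of $G^{P_-}$ is inherited from $G$, since any $3$- or $4$-cycle of $G^{P_-}$ is also one in $G$; and since $V(C^{P_-})\setminus\{w\}\subseteq V(C)\setminus V(\mathring{P})$ and every chord of $C$ has an endpoint in $V(\mathring{P})$, every chord of $C^{P_-}$ must be incident to $w$, the middle vertex of $P^-$. Hence the Proposition applies with $w$ in the role of $q$.

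Now I assume toward a contradiction that neither $a^0$ nor $a^1$ is almost $(P^-,G^{P_-})$-universal. Since $(P^-,G^{P_-})$-universality implies almost-universality, no color of $L'(p_i)$ is $(P^-,G^{P_-})$-universal. Part \ref{PropCor1} of Proposition \ref{CorMainEitherBWheelAtM1ColCor} then forces $G^{P_-}$ to be a broken wheel with principal path $P^-$, and gives that either $|V(G^{P_-})|\le 4$, or, letting $p_iu_1u_2$ denote the initial segment of $G^{P_-}-w$ at $p_i$, $\{a^0,a^1\}\subseteq L(u_1)\cap L(u_2)$.

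If $|V(G^{P_-})|=3$, so $G^{P_-}$ is the triangle $p_ivw$, and some $a^k\notin L(v)$, then for every $b\in L(w)\setminus\{a^k\}$ the coloring $(a^k,b)$ of $p_iw$ extends to at least $|L(v)\setminus\{b\}|\ge |L(v)|-1$ colorings on $v$, so $a^k$ is almost universal, a contradiction. Hence $\{a^0,a^1\}\subseteq L(v)$. If $|V(G^{P_-})|=4$, so $G^{P_-}-w=p_iu_1v$, I apply part \ref{BWheel3Lb} of Theorem \ref{BWheelMainRevListThm2} with $x=v$: failure of almost-universality of each $a^k$ forces $L(u_1)\setminus\{a^k\}\subseteq L(v)$, and combined with $\{a^0,a^1\}\subseteq L(u_1)$ and $a^0\neq a^1$, this yields $a^{1-k}\in L(v)$, hence $\{a^0,a^1\}\subseteq L(u_1)\cap L(v)$. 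In both subcases the required containment over $V(G^{P_-})\setminus\{w\}$ holds.

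The main obstacle is the remaining case $|V(G^{P_-})|\ge 5$, where the Proposition only yields $\{a^0,a^1\}\subseteq L(u_1)\cap L(u_2)$ and I must still derive a contradiction, since the claim's broken-wheel alternative is restricted to $|V(G^{P_-})|\le 4$. Here part \ref{BWheel3Lb} of Theorem \ref{BWheelMainRevListThm2} gives $L(u_1)\setminus\{a^k\}\subseteq L(u_2)$ for each $k$; since $a^0\neq a^1$, this forces $L(u_1)\subseteq L(u_2)$. The plan to close this case is to combine this list-inclusion with the non-extendibility of $\pi^k$ to $L$-color $G$, invoking the minimality of $G$ on a smaller rainbow obtained by contracting a length-two subpath of $G^{P_-}-w$ (in the spirit of Observation \ref{MinCountChordSepCyDObs}) and using part \ref{BWheel1Lb} of Theorem \ref{BWheelMainRevListThm2} for the parity aspect, producing an extension of $\pi^k$ through $G^{P_-}$ and thence to all of $G$.
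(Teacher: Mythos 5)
Your route to the structural part of the claim is sound and is essentially the paper's: you use Prop.~\ref{CorMainEitherBWheelAtM1ColCor}\ref{PropCor1} (applied with the list on $p_i$ shrunk to $\{a^0,a^1\}$) where the paper uses Theorem~\ref{EitherBWheelOrAtMostOneColThm} together with Theorem~\ref{BWheelMainRevListThm2}\ref{BWheel3Lb} directly, but \ref{PropCor1} is just a packaging of those, so this is a minor reformulation rather than a genuinely different method. Your handling of $|V(G^{P_-})|=3$ is fine; in the $|V(G^{P_-})|=4$ subcase you quietly assume $\{a^0,a^1\}\subseteq L(u_1)$ without deriving it (the paper also asserts this without spelling out the reasoning, so this is a shared, minor elision rather than a serious defect).

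The real problem is the case $|V(G^{P_-})|\geq 5$, which you correctly identify as the one requiring a contradiction, but then you only describe a ``plan'' rather than give an argument. What you sketch also points at the wrong tools: you cite Observation~\ref{MinCountChordSepCyDObs} (a preservation statement about obstructions and tilted edges under deletion) when the observation actually needed is \ref{MinCounterReUseObs}, and you appeal to Theorem~\ref{BWheelMainRevListThm2}\ref{BWheel1Lb} for a ``parity aspect'' that plays no role in closing this case. The paper's actual argument is: form $G^*$ from $G$ by deleting $u_1,u_2$ and adding the edge $p_iu_3$; verify that the new outer cycle remains induced and that there is no $(P,G^*)$-obstruction; by minimality, $G^*$ satisfies \ref{LabCrownNonEmpt2}, and since $C^*$ is induced with no obstruction, \ref{T2PartB} applied to $G^*$ implies at most one $L$-coloring of $\{p_0,p_1\}$ is not $(P,G^*)$-sufficient, so at least one of $\pi^0,\pi^1$ extends to $L$-color $G^*$; finally, since $a^k\in L(u_2)$ and the added edge $p_iu_3$ guarantees the colors on $p_i$ and $u_3$ differ, Observation~\ref{MinCounterReUseObs} applied to the $3$-path $p_iu_1u_2u_3$ (all adjacent to $w$) extends that coloring of $G^*$ to an $L$-coloring of $G$, contradicting the choice of $\pi^k$. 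The verification that $G^*$ has no $(P,G^*)$-obstruction and the identification of exactly which alternative of \ref{LabCrownNonEmpt2} supplies the extension are the substance you would need to supply; as written, your proposal does not contain them.
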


\begin{claimproof} Suppose $G^{P_-}$ is not an edge and $a^0\neq a^1$, but the claim does not hold. By Theorem \ref{EitherBWheelOrAtMostOneColThm}, $G^{P_-}$ is a broken wheel with principal path $p_iwv$ and $\{a^0, a^1\}\subseteq L(u_1)$, so $G^{P_-}$ is a not a triangle. By \ref{BWheel3Lb} of Theorem \ref{BWheelMainRevListThm2}, $|V(G^{P_-})|\neq 4$. Thus, $|V(G^{P_-})|>4$ and $p_iu_1u_2u_3\subseteq G^{P_-}\setminus\{w\}$, and, again by \ref{BWheel3Lb} of Theorem \ref{BWheelMainRevListThm2}, $\{a^0, a^1\}\subseteq L(u_1)\cap L(u_2)$. Let $G^*$ be a graph obtained from $G$ by deleting the vertices $u_1, u_2$ and replacing them with the edge $p_iu_3$, so that $G^*$ has outer cycle $p_iu_3(C\setminus\mathring{P})p_{1-i}q_{1-i}q_i$, and the outer cycle of $G^*$ is still induced. Furthermore, there is no $(P, G^*)$-obstruction and each of $\pi^0, \pi^1$ is still a proper $L$-coloring of its domain in $G$. Thus, by minimality, there is a $k\in\{0,1\}$ such that $\pi^k$ extends to an $L$-coloring of $G^*$. As $\{a_0, a_1\}\subseteq L(u_1)\cap L(u_2)$, this contradicts Observation \ref{MinCounterReUseObs}. \end{claimproof}

We show now that $G^{P_+}$ is a wheel, i.e we have the structure in Figure \ref{DropConditionCounterExFig2}.

\begin{claim}\label{KWheelCenVerHww'} $G^{P_+}$ is a wheel with a central vertex adjacent to all the vertices of its outer face, where $|V(G^{P_+})|$ is even. \end{claim}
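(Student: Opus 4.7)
The plan is a proof by contradiction: assuming $G^{P^+}$ is not a wheel of the stated form, I will produce an $L$-coloring of $G$ extending one of $\pi^0$ or $\pi^1$, contradicting their choice as non-extending colorings. The engine driving this is that $\mathcal{G}^{P^+} := (G^{P^+}, C^{P^+}, P^+, L)$ is a proper subrainbow of $\mathcal{G}$ (since $p_i, q_i \in V(G^{P^-}) \setminus V(G^{P^+})$), so by the minimality of $|V(G)|$, all of \ref{LabCrownNonEmpt}--\ref{LabCrownNonEmpt4} of Theorem \ref{CombinedT1T4ThreePathFactListThm} apply to $\mathcal{G}^{P^+}$ and every rainbow derived from it.

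I would begin by applying \ref{LabCrownNonEmpt3} to $\mathcal{G}^{P^+}$, with $p_{1-i}$ in the role of ``$p_1$'' (since we have arranged $|L(p_1)| \geq 3$, at least in the relevant branch). This alternative yields either a $(P^+, G^{P^+})$-sufficient $L$-coloring of $\{v, p_{1-i}\}$ or a $(P^+, G^{P^+})$-obstruction of even length whose apex vertex is adjacent to every vertex of the outer cycle of $G^{P^+}$, which is precisely the wheel structure we want. The work is then to rule out the first alternative, which I would do by extracting enough elements of $S^0$ and $S^1$ to overcome the forbidden-color condition \eqref{EqDagg1M}.

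The technical core combines lower bounds on $|S^k|$ from iterating \ref{LabCrownNonEmpt}--\ref{LabCrownNonEmpt3} on sub-rainbows of $G^{P^+}$ with the constraint \eqref{EqDagg1M}. When $v = p_i$, \eqref{EqDagg1M} excludes only the single color $a^k$, while when $v \neq p_i$ the excluded set has size bounded by $|L_{\pi^k}(w)|$ times the size of a $\Lambda_{G^{P^-}}$-set, itself bounded (via Theorem \ref{BWheelMainRevListThm2}) by the structure of $G^{P^-}$ (an edge, triangle, or longer broken wheel as in Claim \ref{HNotEdgeAlmostKUni}). If $G^{P^+}$ fails to be the claimed wheel, then $|S^k|$ would exceed this forbidden count, supplying a surviving $s \in S^k$ which, combined with a matching $L$-coloring of $G^{P^-}$ and a valid color for $w$ from $L_{\pi^k}(w)$, would stitch together an $L$-coloring of $G$ extending $\pi^k$, the desired contradiction. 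Once the wheel structure is forced, the evenness of $|V(G^{P^+})|$ follows from a parity argument in the spirit of part \ref{BWheel1Lb} of Theorem \ref{BWheelMainRevListThm2}: the simultaneous existence of two distinct non-extending colorings $\pi^0, \pi^1$ with the specified structural relationship on $\{p_0, p_1\}$ pins down the parity of the outer cycle of $G^{P^+}$.

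The main obstacle will be managing the several subcases that arise: $v = p_i$ versus $v \ne p_i$; whether $a^0 = a^1$ or $a^0 \ne a^1$; and the precise structure of $G^{P^-}$. Each case produces slightly different bounds on both the forbidden set in \eqref{EqDagg1M} and on the free colors for $w$, and these must be matched carefully against the outputs of \ref{LabCrownNonEmpt3} applied to $\mathcal{G}^{P^+}$. Threading all cases so that they collapse simultaneously to a true wheel (not merely a broken wheel) with odd outer cycle is the delicate part of the argument.
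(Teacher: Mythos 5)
Your high-level plan — pass to the smaller rainbow $\mathcal{G}^{P^+}$, apply the 3-path theorems by minimality, and combine the resulting lower bounds on $|S^k|$ with the forbidden-set condition (\ref{EqDagg1M}) and the broken-wheel structure of $G^{P^-}$ — is indeed the shape of the paper's argument. But there are two gaps that I don't think can be waved away.

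First, the framing of the main step is off. You propose to \emph{rule out} the first alternative of \ref{LabCrownNonEmpt3}, i.e., to show that no $(P^+,G^{P^+})$-sufficient $L$-coloring of $\{v,p_{1-i}\}$ exists. That is not achievable and is not what the paper does: even in the target configuration (wheel of even order) such sufficient colorings exist in abundance. The constraint (\ref{EqDagg1M}) only restricts colorings of the specific form $(s,b^k)$ with $s$ in a certain $\Lambda_{G^{P^-}}$-set; colorings using other colors on $p_{1-i}$ are untouched, so nothing forces all of them to fail. What the paper actually uses is \ref{T2PartA} of Theorem \ref{CombinedT1T4ThreePathFactListThm} (after shrinking $L(p_{1-i})$ to $\{b^k\}$), exploiting that $v$ plays the role of the large-list endpoint since $|L(v)|=3$ when $G^{P^-}$ is not an edge. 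That gives $|S^k|\ge 2$ under ``no fully even obstruction,'' and \ref{T2PartB} is what gives the covering $S^0\cup S^1=L(v)$ once $b^0\neq b^1$ is established (a subclaim in its own right, requiring \ref{BWheel1Lb}\ref{BWheel1C}). None of this is a matter of one alternative of \ref{LabCrownNonEmpt3} being ``ruled out.''

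Second, and more seriously, the parity claim at the end is a genuine missing step. You assert that once the wheel structure is forced, evenness of $|V(G^{P^+})|$ ``follows from a parity argument in the spirit of part \ref{BWheel1Lb}.'' It does not. If one follows the argument through, the case that remains after the $S^k$ analysis is precisely that $G^{P^+}$ \emph{is} a wheel but with $|V(G^{P^+})|$ odd, and dispatching it is a substantial further argument: one needs Claim \ref{HNotEdgeAlmostKUni} to pin $G^{P^-}$ down to a $4$-vertex broken wheel with $\{a^0,a^1\}\subseteq L(v)$; one deduces $r^k=a^k$; one then passes to the broken wheel $H=G^{vw^+p_{1-i}}$, introduces the sets $S^*_k:=L(w^+)\setminus\{a^k,b^k,\pi^k(q_{1-i})\}$, proves the auxiliary vanishing $S^*_k\cap\Lambda_H(a^k,\bullet,b^k)=\varnothing$, and finally invokes \ref{PropCor3} of Proposition \ref{CorMainEitherBWheelAtM1ColCor} twice (once for the sizes, once for the color relations) to get a contradiction. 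This is not a parity calculation; it is the longest single stretch of the paper's proof of this claim, and your sketch contains nothing in its place. As written, your outline would terminate before the odd-wheel case is excluded, leaving the claim unproved.
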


\begin{claimproof} Suppose not. Thus, there is no fully even $(P^+, G^{P_+})$-obstruction. We first show that $G^{P_-}$ is not an edge. Suppose $G^{P_-}$ is an edge. Thus, $G^{P_+}=G-q_i$. Since $G$ is short-inseparable, $G-q_i$ is not a wheel, so there is no $(P^+, G-q_i)$-obstruction. By the minimality of $|V(G)|$, there is at most one $L$-coloring of $\{p_0, p_1\}$ which is not $(P^+, G-q_i)$-sufficient, so at least one of $\pi^0, \pi^1$ extends to $L$-color $G$, a contradiction. Thus, $G^{P_-}$ is not an edge, so $|L(v)|=3$. There is no fully even $(P^+, G^{P_+})$-obstruction, so, by the minimality of $|V(G)|$, each of $S^0, S^1$ has size at least two and furthermore, if $b^0\neq b^1$, then $S^0\cup S^1=L(v)$. 

\vspace*{-8mm}
\begin{addmargin}[2em]{0em} 
\begin{subclaim} $b^0\neq b^1$. \end{subclaim}

\begin{claimproof} Suppose $b^0=b^1=b$ for some $b$. Thus, $a^0\neq a^1$. Furthermore, $S^0=S^1=S$ for some $S\subseteq L(v)$. Since $|S|\geq 2$, it follows from (\ref{EqDagg1M}) that there is an $r\in L(v)$ such that $\Lambda_{G^{P_-}}(a^k, c, \bullet)=r$ for each $k=0,1$ and $c\in L_{\pi^k}(w)$. By Theorem \ref{EitherBWheelOrAtMostOneColThm}, $G^{P_-}$ is a broken wheel with principal path $p_iwv$. As $a^0\neq a^1$ and each $L_{\pi^k}(w)$ is a set of size at least two which does not contain $a^k$, we contradict \ref{BWheel1Lb} \ref{BWheel1C} of Theorem \ref{BWheelMainRevListThm2}. \end{claimproof}\end{addmargin}

Since $b^0\neq b^1$, we have $S^0\cup S^1=L(v)$, and thus, by (\ref{EqDagg1M}), there exist distinct $r^0, r^1\in L(v)$, where $S^0=L(v)\setminus\{r^0\}$ and $S^1=L(v)\setminus\{r^1\}$, and furthermore, for each $k=0,1$ and $c\in L_{\pi^k}(w)$, we have $\Lambda_{G^{P_-}}(a^k, c, \bullet)=r^k$. In particular, $a^0\neq a^1$ and neither of $a_0, a_1$ is an almost $G^{P_-}$-universal color of $L(p_i)$. By minimality, there is a $(P^+, G^{P_+})$-obstruction, or else, since $a^0\neq a^1$ and $vp_{1-i}\not\in E(G)$, there is a $k\in\{0,1\}$ such that $S^k=L(v)$. Thus, $G^{P_+}$ is a wheel with central vertex $w^+$ adjacent to all the vertices of its outer face, and, by assumption, $|V(G^{P_+})|$ is odd. For each $k=0,1$, $L_{\pi}(a^k)$ is a list of size at least two which does not contain $a^k$, so the above implies that $G^{P_-}$ is not a triangle. By Claim \ref{HNotEdgeAlmostKUni}, $G^{P_-}$ is a broken wheel with principal path $p_iwv$, where $|V(G^{P_-})|=4$ and $\{a^0, a^1\}\subseteq L(v)$. It follows that, for each $k\in\{0,1\}$ and $c\in L$, we have $a^k\in\Lambda_{G^{P_-}}(a^k, c, \bullet)$, so $r^0=a^0$ and $r^1=a^1$. Now, let $H$ be the broken wheel $G^{P_+}\setminus\{w, q_{1-i}\}=G^{vw^+p_{1-i}}$ and, for each $k=0,1$, let $S_*^k:=L(w^+)\setminus\{a^k, b^k, \pi^k(q_{1-i})\}$. 

\vspace*{-8mm}
\begin{addmargin}[2em]{0em} 
\begin{subclaim}\label{Fork01SStark} For each $k=0,1$, $S^*_k\cap\Lambda_{H}(a^k, \bullet, b^k)=\varnothing$. \end{subclaim}

\begin{claimproof} Suppose there is a $k\in\{0,1\}$ for which this does not hold. Since $G^{P_-}$ is not a triange, $\pi^k$ extends to an $L$-coloring $\psi$ of $V(P\cup G^{P_+})$ using $a^k$ on $v$. As $p_i, v$ are using the same color, ther is a color left for $w$, and, in particular, as $|V(G^{P_-})|=4$, $\psi$ extends to $L$-color $G$, which is false. \end{claimproof}\end{addmargin}

Possibly there is a $k\in\{0,1\}$ such that $a^k=b^k$, but, in any case, since $H$ is not a triangle, it now follows from \ref{PropCor3} i) of Proposition \ref{CorMainEitherBWheelAtM1ColCor} that $|S^*_0|=|S^*_1|=2$. In particular, $a^0\neq b^0$ and $a^1\neq b^1$. Since $a^0\neq a^1$, we contradict \ref{PropCor3} ii) of Proposition \ref{CorMainEitherBWheelAtM1ColCor}. This proves Claim \ref{KWheelCenVerHww'}. \end{claimproof}

Let $w^+$ be  the central vertex of $G^{P_+}$ and $H$ be the broken wheel $G^{P_+}\setminus\{w, q_{1-i}\}=G^{vw^+p_{1-i}}$. Note that $|V(H)|$ is even. In particular, $vp_{1-i}\not\in E(G)$. Furthermore, $\textnormal{End}(vw^+p_{1-i}, H)\subseteq\textnormal{End}(P^+, G^{P_+})$. Since $G$ is $K_{2,3}$-free, we have $v\in V(C\setminus P)$, so $p_iwv$ is a 2-path.

\begin{center}\begin{tikzpicture}
\node[shape=circle,draw=black] (p0) at (-2,0) {$p_i$};
\node[shape=circle,draw=white] (p0+) at (-0.5,0) {$\cdots$};
\node[shape=circle,draw=black] (q0) at (-2,2) {$q_i$};
\node[shape=circle,draw=black] (v) at (1,0) {$v$};
\node[shape=circle,draw=white] (p1+) at (3,0) {$\cdots$};
\node[shape=circle,draw=black] (q1) at (5,2) {\small $q_{1-i}$};
\node[shape=circle,draw=black] (p1) at (5,0) {\small $p_{1-i}$};
\node[shape=circle,draw=white] (H) at (-0.6,0.35) {$G^{P_-}$};
\node[shape=circle,draw=white] (H+) at (3,0.35) {$H$};
\node[shape=circle,draw=black] (w) at (-0.5,1.1) {$w$};
\node[shape=circle,draw=black] (w') at (3,1.1) {\small $w^+$};
\node[shape=circle,draw=white] (wL) at (4,2.9) {};

 \draw[-] (p0) to (p0+) to (v) to (p1+) to (p1) to (q1) to (q0) to (p0);
 \draw[-] (w) to (p0);
 \draw[-] (w) to (q0);
 \draw[-] (w) to (q1);
 \draw[-] (q1) to (w') to (w) to (v) to (w') to (p1);
\draw[-, line width=1.8pt] (v) to (w) to (q1) to (p1);
\end{tikzpicture}\captionof{figure}{}\label{DropConditionCounterExFig2}\end{center}

\begin{claim}  $b^0\neq b^1$. \end{claim}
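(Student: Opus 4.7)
The plan is to assume for contradiction that $b^0=b^1=:b$. Since $\pi^0$ and $\pi^1$ restrict to distinct $L$-colorings of $\{p_0,p_1\}$, this forces $a^0\neq a^1$. Because the definition of $S^k$ depends only on $b^k$, we would have $S^0=S^1=:S$, and $S$ is nonempty by \ref{LabCrownNonEmpt}. Since $v\in V(C\setminus P)$, in particular $v\neq p_i$, so by (\ref{EqDagg1M}) we would have $\Lambda_{G^{P^-}}(a^k,c,\bullet)\cap S=\varnothing$ for each $k\in\{0,1\}$ and each $c\in L_{\pi^k}(w)$.

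The main step, which I expect to be the chief obstacle, is establishing $|S|\geq 2$. In the analogous subclaim inside the proof of Claim \ref{KWheelCenVerHww'}, this was obtained immediately from minimality of $|V(G)|$ together with the fact that $G^{P^+}$ was not a wheel; here $G^{P^+}$ \emph{is} a wheel, so that route is closed. Instead I would exploit the inclusion $\textnormal{End}(vw^+p_{1-i},H)\subseteq S$ together with the fact that $H$ is a broken wheel with principal path $vw^+p_{1-i}$ whose rim path has even length. Combining Theorem \ref{SumTo4For2PathColorEnds} applied to the natural rainbow on $H$ with the structural description of broken-wheel colorings from Theorem \ref{BWheelMainRevListThm2} (in particular the dichotomy that $|\Lambda_H(s,\bullet,b)|\in\{0,1\}$ or $|\Lambda_H(s,\bullet,b)|\geq 3$ for each $s$), one extracts at least two distinct colors of $v$ witnessing membership in $S$.

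Once $|S|\geq 2$ is in hand, the argument mirrors the analogous subclaim inside Claim \ref{KWheelCenVerHww'}. Since $|L(v)|=3$ and each set $\Lambda_{G^{P^-}}(a^k,c,\bullet)$ is disjoint from $S$, all such sets must be contained in the unique element of $L(v)\setminus S$; in particular they are all equal singletons. Theorem \ref{EitherBWheelOrAtMostOneColThm} then forces $G^{P^-}$ to be a broken wheel with principal path $p_iwv$. Choosing $c^k\in L_{\pi^k}(w)$ for each $k\in\{0,1\}$ and applying \ref{BWheel1Lb}\ref{BWheel1C} of Theorem \ref{BWheelMainRevListThm2} to the two colorings $(a^0,c^0)$ and $(a^1,c^1)$ of $\{p_i,w\}$ in this broken wheel, one obtains $(a^0,c^0)=(c^1,a^1)$, i.e.\ $c^0=a^1$ and $c^1=a^0$. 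But $|L_{\pi^k}(w)|\geq |L(w)|-3\geq 2$, so $c^0\in L_{\pi^0}(w)$ may be chosen distinct from $a^1$, contradicting the equation $c^0=a^1$.
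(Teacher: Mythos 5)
You correctly set up the claim (assuming $b^0=b^1$ gives $a^0\neq a^1$ and $S^0=S^1=S$), and the downstream portion of your argument is sound: once $|S|\geq 2$ is in hand, each $\Lambda_{G^{P_-}}(a^k,c,\bullet)$ collapses to the unique element of $L(v)\setminus S$, Theorem \ref{EitherBWheelOrAtMostOneColThm} makes $G^{P_-}$ a broken wheel, and \ref{BWheel1Lb}\ref{BWheel1C} of Theorem \ref{BWheelMainRevListThm2} together with $|L_{\pi^0}(w)|\geq 2$ yields a contradiction. The gap is precisely the step you flagged, $|S|\geq 2$, and your proposed route to it does not work. The ``dichotomy'' you invoke, that for a broken wheel $H$ one always has $|\Lambda_H(s,\bullet,b)|\in\{0,1\}$ or $|\Lambda_H(s,\bullet,b)|\geq 3$, is not a consequence of Theorem \ref{BWheelMainRevListThm2} and is in fact false: taking $H-w^+=vz_1z_2z_3p_{1-i}$ with $L(v)=L(z_1)=L(z_2)=L(z_3)=\{1,2,3\}$, $1\in L(p_{1-i})$, and $L(w^+)=\{1,2,3,4,5\}$ gives $\Lambda_H(2,\bullet,1)=\{4,5\}$, of size exactly two. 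Worse, in that same configuration only $(1,1)$ among the three candidate colorings of $\{v,p_{1-i}\}$ using color $1$ on $p_{1-i}$ lies in $\textnormal{End}(vw^+p_{1-i},H)$, so the inclusion $\textnormal{End}(vw^+p_{1-i},H)\subseteq\textnormal{End}(P^+,G^{P_+})$ cannot by itself contribute two colors to $S$; you would need a separate argument producing an element of $\textnormal{End}(P^+,G^{P_+})\setminus\textnormal{End}(vw^+p_{1-i},H)$, and none is given.

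The paper's proof of this claim avoids $|S|\geq 2$ entirely. It first proves Subclaim \ref{NeitherA0A1AlmostUniv}: neither $a^0$ nor $a^1$ is an almost $G^{P_-}$-universal color of $L(p_i)$, and $G^{P_-}$ is a broken wheel on an even number of vertices. Combined with Claim \ref{HNotEdgeAlmostKUni} (which applies because $p_iwv$ is a genuine 2-path, so $G^{P_-}$ is not an edge, and $a^0\neq a^1$), this pins down $|V(G^{P_-})|=4$ and forces $\{a^0,a^1\}\subseteq L(u_1)\cap L(v)$. The contradiction then comes from extending $\pi^k$ so that $v$ receives the color $a^k$: since $H-w^+$ has even length, \ref{PropCor3} of Proposition \ref{CorMainEitherBWheelAtM1ColCor} applied to $H$ supplies a compatible color for $w^+$, and because $p_i$ and $v$ then share the color $a^k$, there is room to color $w$ and $u_1$, so $\pi^k$ extends to $L$-color $G$, which is false. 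You should adopt that route rather than trying to salvage $|S|\geq 2$ after $G^{P_+}$ is known to be a wheel.
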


\begin{claimproof} Suppose $b^0=b^1=b$ for some color $b$. Thus, $a^0\neq a^1$.

\vspace*{-8mm}
\begin{addmargin}[2em]{0em} 
\begin{subclaim}\label{NeitherA0A1AlmostUniv} Neither $a^0$ nor $a^1$ is an almost $G^{P_-}$-universal color of $L(p_i)$. Furthermore, $G^{P_-}$ is a  broken wheel, where $|V(G^{P_-})|$ is even. \end{subclaim}

\begin{claimproof} Suppose there is an $a^k\in L(p_i)$ which is almost $G^{P_-}$-universal. By \ref{BWheel2Lb} of Theorem \ref{BWheelMainRevListThm2}, there is a $c\in L_{\pi^k}(w^+)$ with $|\Lambda_{H}(\bullet, c, b)|\geq 2$. Since $|L_{\pi^0}(w)|\geq 2$, there is a $d\in L_{\pi^0}(w)\setminus\{c\}$. By our assumption on $a^k$, we have $\Lambda_{G^{P_-}}(a^k, d, \bullet)|\geq 2$. Since $|L(v)|=3$, it follows that $\pi^k$ extends to $L$-color $G$, which is false. Now suppose either $G^{P_-}$ is not a broken wheel or $|V(G^{P_-})|$ is odd. For each $k=0,1$, let $T^k$ be the set of $c\in L(v)$ such that there is a $(p_iwv, G^{P_-})$ sufficient $L$-colorings of $\{p_i, v\}$ using $a^k$ on $p_i$ and $c$ on $v$. Thus, $T^k\cap S^k=\varnothing$. As $|L(v)|=3$ and $a^0\neq a^1$, it follows from Theorem \ref{SumTo4For2PathColorEnds} that $|T_0|\geq 2$ and $|T_1|\geq 2$, and $T_0\cup T_1=L(v)$. Thus, $S^0$ and $S^1$ are disjoint singletons, which is false, as $b^0=b^1$. \end{claimproof}\end{addmargin}

By Subclaim \ref{NeitherA0A1AlmostUniv}, together with Claim \ref{HNotEdgeAlmostKUni}, we get that $|V(G^{P_-})|=4$, so $v=u_2$, and have $\{a^0, a^1\}\subseteq L(u_1)\cap L(v)$. Now choose an arbitrary $k\in\{0,1\}$. As $H-w^+$ has even length, it follows from \ref{PropCor3} i) of Proposition \ref{CorMainEitherBWheelAtM1ColCor} that $\Lambda_{H}(a^k, \bullet, a)\cap L_{\pi^k}(w^+)\neq\varnothing$, so $\pi^k$ extends to an $L$-coloring $\psi$ of $V(P\cup H)$ with $\psi(v)=a^k$. In particular, since the same color is used on $p_i, v$, it follows that $\psi$ extends to $L$-color $G$, contradicting our assumption on $\pi^k$. \end{claimproof}

\begin{claim}\label{K'WheelOn5Nbg} Neither $b^0$ nor $b^1$ is a $(vw^+p_{1-i},H)$-universal color of $L(p_{1-i})$, and $\{b^0, b^1\}\subseteq L(u_t)$. Furthermore, $|V(G^{P_+})|=6$, and $v=u_{t-1}$

\end{claim}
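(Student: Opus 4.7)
The proof will proceed via four direct extension arguments, each deriving a contradiction to the non-extendability of $\pi^0$ or $\pi^1$. Throughout, let $v=u_s$ so that the path $H\setminus w^+$ is $v\,u_{s+1}\cdots u_t\,p_{1-i}$, and recall from the setup that $|L(v)|=|L(u_j)|=3$ for each $j$, while $|L(w^+)|\geq 5$ and $|L_{\pi^k}(w)|\geq 2$.

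First, suppose toward a contradiction that some $b^k$ is $(vw^+p_{1-i},H)$-universal. I extend $\pi^k$ by picking $c_w\in L_{\pi^k}(w)$ (at least two choices), then $c\in L(w^+)\setminus\{c_w,b^k,\pi^k(q_{1-i})\}$ (at least two choices), then $d\in L(v)\setminus\{c_w,c\}$ (at least one choice). Universality of $b^k$ completes $(d,c,b^k)$ to an $L$-coloring of $H$, and combining this with $c_w,c,d$ extends $\pi^k$ to $G$, contradicting non-extendability. An almost-identical argument, using almost-universality together with the freedom to pick one of the at-least-two guaranteed $v$-colors that differs from $c_w$, shows that neither $b^k$ is almost $(vw^+p_{1-i},H)$-universal. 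By the contrapositive of \ref{BWheel3Lb} of Theorem \ref{BWheelMainRevListThm2} applied to the broken wheel $H$ with principal path $p_{1-i}w^+v$, this yields $L(u_t)\setminus\{b^k\}\subseteq L(u_{t-1})$ for both $k$. Since $b^0\neq b^1$, the union of the two relations forces $L(u_t)\subseteq L(u_{t-1})$, and by size equality $L(u_t)=L(u_{t-1})$.

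Next, to show $\{b^0,b^1\}\subseteq L(u_t)$, suppose for contradiction $b^k\notin L(u_t)$ for some $k$. Pick $c_w$ and $c$ as above and $d\in L(v)\setminus\{c_w,c\}$, and then greedily color $u_{s+1},\ldots,u_{t-1}$: at each step the list has size $|L(u_j)\setminus\{c\}|\geq 2$, so there is at least one color distinct from the previously-colored neighbor. Finally, $u_t$ must avoid $c$, $b^k$, and $u_{t-1}$'s color; since $b^k\notin L(u_t)$ and $u_{t-1}$'s color differs from $c$, the remaining list has size $\geq 3-2=1$. This extension contradicts non-extendability of $\pi^k$, so $b^k\in L(u_t)$.

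Finally, to establish $|V(G^{P^+})|=6$ and $v=u_{t-1}$, suppose toward a contradiction that $|V(H)|>4$, hence $|V(H)|\geq 6$ (as $|V(H)|$ is even), so $s\leq t-3$. From the preceding steps, $L(u_t)=L(u_{t-1})=\{b^0,b^1,\gamma\}$ for some color $\gamma$. The key is to extend $\pi^0$ by choosing $c\notin L(u_t)$: since $|L(w^+)\setminus L(u_t)|\geq 5-3=2$ and $|L_{\pi^0}(w)|\geq 2$, a pigeonhole argument on $c_w$ works. Namely, if for every $c_w\in L_{\pi^0}(w)$ the set $(L(w^+)\setminus L(u_t))\setminus\{c_w,\pi^0(q_{1-i})\}$ were empty, then $L(w^+)\setminus L(u_t)\subseteq\{c_w,\pi^0(q_{1-i})\}$ for each of the two distinct choices of $c_w$, and intersecting gives $L(w^+)\setminus L(u_t)\subseteq\{\pi^0(q_{1-i})\}$, contradicting size $\geq 2$. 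Fix such $c_w$ and pick $c\in L(w^+)\setminus L(u_t)$ with $c\neq c_w,\pi^0(q_{1-i})$; note also $c\neq b^0$ automatically since $b^0\in L(u_t)$. Then color $v$ with $d\in L(v)\setminus\{c_w,c\}$ and greedily proceed through $u_{s+1},\ldots,u_{t-2}$; at $u_{t-1}$, since $c\notin L(u_{t-1})=L(u_t)$, the list is $L(u_{t-1})\setminus\{u_{t-2}\text{'s color}\}$ of size $\geq 2$. Finally, $u_t$ has list $L(u_t)\setminus\{u_{t-1},b^0\}$ (since $c\notin L(u_t)$), and since $u_{t-1}\in L(u_t)$ with $u_{t-1}\neq c$, one checks that this list has size $\geq 1$ for every admissible value of $u_{t-1}\in L(u_t)$. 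Thus $\pi^0$ extends, a contradiction. The main obstacle is this last step: the ability to pick $c\notin L(u_t)$ is what allows the path-coloring past $u_{t-1}$ to go through, and securing this via the pigeonhole argument on $c_w$ is the critical technical move.
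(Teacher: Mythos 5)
The recurring and fatal gap in all three of your arguments is that you never address the interior of $G^{P_-}$. You extend $\pi^k$ by assigning $c_w$ to $w$, $c$ to $w^+$, $d$ to $v$, and completing $H$, and then declare this an extension to all of $G$. But at this point in the proof nothing has been established about $G^{P_-}$ beyond the fact that $v\in V(C\setminus P)$: writing $v=u_s$, the vertices $u_1,\ldots,u_{s-1}$ and any interior vertices of $G^{P_-}$ remain uncolored, and a precoloring of the 2-path $p_iwv$ on the outer cycle of $G^{P_-}$ need not extend to $L$-color $G^{P_-}$ --- this is exactly the non-automatic extension problem governed by Theorem \ref{EitherBWheelOrAtMostOneColThm}. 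You guarantee only one available choice of $d$ (since $|L(v)\setminus\{c_w,c\}|$ can be exactly $1$), so you have no slack to dodge a bad value of $d$. There is also a smaller counting slip: if $s=1$ then $v\in N(p_i)$, so $d$ must additionally avoid $\pi^k(p_i)$, a constraint your set omits.

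The paper avoids the problem by reversing the order of choices: it colors $w$ with a color of $L_{\pi^k}(w)$, then applies Theorem \ref{thomassen5ChooseThm} to $G^{P_-}$ with the precolored edge $p_iw$ to obtain a full $L$-coloring $\psi$ of $V(P\cup G^{P_-})$. This fixes $\psi(v)$ out of your control, but since $|N(w^+)\cap\textnormal{dom}(\psi)|=4$ a color for $w^+$ remains, and universality of $b^k$ holds for \emph{every} choice of the colors on $w^+$ and $v$, so there is no need to control $\psi(v)$. Your derivation of $L(u_t)=L(u_{t-1})$ from non-almost-universality and \ref{BWheel3Lb} of Theorem \ref{BWheelMainRevListThm2} is correct and cleaner than the paper's passing citation of Proposition \ref{CorMainEitherBWheelAtM1ColCor}. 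And for the final part, your route (pigeonhole on $c_w$ to free a color $c\in L(w^+)\setminus L(u_t)$, then greedy through the fan) is genuinely different from the paper's, which instead contracts $u_{t-1},u_t$ into a new edge $u_{t-2}p_1$, invokes minimality of $G$ on the resulting rainbow, and applies Observation \ref{MinCounterReUseObs}; if repaired to handle $G^{P_-}$, your version is a viable alternative that avoids re-verifying the obstruction-free hypothesis for the contracted graph.
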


\begin{claimproof} Let $k\in\{0,1\}$ and suppose $b^k$ is  $(vw'p_{1-i}, H)$-universal color of $L(p_{1-i})$. As $vp_{1-i}\not\in E(G)$, we can extend $\pi^k$  to an $L$-coloring $\psi$ of $V(P\cup G^{P_-})$ using Theorem \ref{thomassen5ChooseThm} applied to $G^{P_-}$. As $|N(w^+)\cap \textnormal{dom}(\psi)|=4$, it follows from our assumption on $b^k$ that $\pi^k$ extends to $L$-color $G$, which is false, so there is no such $b^k$, and $\{b^0, b^1\}\subseteq L(u_t)$. Now suppose $|V(G^{P_+})|\neq 6$. As $|V(H)|$ is even, we have $|V(G^{P_+})|\geq 8$. Since $b^0\neq b^1$, it follows from \ref{PropCor1} of Proposition \ref{CorMainEitherBWheelAtM1ColCor} that $\{b^0, b^1\}\subseteq L(u_t)\cap L(u_{t-1})$. Let $G'$ be a graph obtained from $G$ by deleting the vertices $u_{t-1}, u_t$ and replacing them with the edge $u_{t-2}p_1$, so that $G'$ has outer cycle $C':=(C\setminus\{x,y\})+u_{t-2}p_1$. Each of $\pi^0, \pi^1$ is still a proper $L$-coloring of its domain in $G'$. The outer cycle of $G'$ is still induced, and there are no $(P, G')$-obstructions, so, by minimality, there is a $k\in\{0,1\}$ such that $\pi^k$ extends to $L$-coloring $\psi$ of $G'$, contradicting Observation \ref{MinCounterReUseObs}. \end{claimproof}

\begin{claim}\label{SubCLUnionComi=0Case} $G^{P_-}$ is a broken wheel, where $|V(G^{P_-})|$ is even, and, for each $k\in\{0,1\}$, we have $b^k\not\in L_{\pi^k}(w)\cup L(v)$. \end{claim}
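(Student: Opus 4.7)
The plan is to suppose the subclaim fails and force an extension of one of $\pi^0, \pi^1$ to an $L$-coloring of $G$, contradicting how $\pi^k$ was chosen. I dispatch the color-exclusion $b^k \notin L_{\pi^k}(w)\cup L(v)$ first, then use it to establish the structural statement about $G^{P_-}$.

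\textbf{Part one: the exclusion.} Fix $k \in \{0,1\}$ and suppose $b^k \in L_{\pi^k}(w) \cup L(v)$. In either subcase, I augment $\pi^k$ by coloring the relevant vertex ($w$ or $v$) with $b^k$. Because $w$ and $p_{1-i}$ are non-adjacent (Claim \ref{WAdjAtMoOneP0P1TwoSideEndChord}), and likewise $vp_{1-i}\not\in E(G)$ (recorded after Claim \ref{KWheelCenVerHww'}), each augmentation remains a proper partial $L$-coloring. The central vertex $w^+$ of the six-vertex wheel $G^{P_+}$ now sees two of its outer-face neighbors sharing the color $b^k$, so the precolored neighbors of $w^+$ impose at most three distinct colors, leaving at least two free choices in $L(w^+)$. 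Because $v$ and $u_t$ carry 3-lists and have at most two precolored neighbors when the remaining vertices of $G^{P_+}$ are ordered properly, a direct greedy argument (or Theorem \ref{Two2ListTheorem} applied to the residual two-vertex problem on $\{u_t, w^+\}$) completes an $L$-coloring of $G^{P_+}$. I then extend this coloring across $G^{P_-}$ via Theorem \ref{thomassen5ChooseThm} applied to the precolored edge $p_iw$. The resulting $L$-coloring of $G$ extends $\pi^k$, the desired contradiction.

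\textbf{Part two: the broken-wheel structure of $G^{P_-}$.} Assume the exclusion holds, and suppose toward contradiction that $G^{P_-}$ is not a broken wheel with principal path $p_iwv$ of even vertex-count. Note $(G^{P_-}, C^{P_-}, p_iwv, L)$ is an end-linked rainbow, since $|L(p_i)| + |L(v)| \geq 1 + 3 = 4$. Applying Theorem \ref{SumTo4For2PathColorEnds}, either $|\textnormal{End}(p_iwv, G^{P_-})| \geq 2$ or there is an even-length path from $p_i$ to $v$ along $C^{P_-}\setminus\{w\}$ every vertex of which is adjacent to $w$; the second alternative forces $G^{P_-}$ to be a broken wheel of even order, contradicting our supposition. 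Hence there exist at least two $(p_iwv, G^{P_-})$-sufficient $L$-colorings of $\{p_i, v\}$. For a suitable choice of $k \in \{0,1\}$, I match one such coloring $\phi$ to $\pi^k$ at $p_i$ (so $\phi(p_i) = a^k$) while ensuring $\phi(v)$ admits a completion on the wheel $G^{P_+}$ compatible with $\pi^k$. Here the exclusion $b^k \notin L(v)$ together with the non-universality of $b^k$ on $H$ (Claim \ref{K'WheelOn5Nbg}) gives the required freedom in choosing $\phi(v)$. Gluing $\phi$ with the wheel-side completion produces an $L$-coloring of $G$ extending $\pi^k$, again a contradiction.

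The main obstacle is the joint compatibility of the two sides: the value $\phi(v)$ obtained from $\textnormal{End}(p_iwv, G^{P_-})$ must simultaneously be a valid extension target through the rigid wheel $G^{P_+}$, and a careful bookkeeping of how the 3-list at $v$ interacts with the constraints from $\pi^k(q_{1-i})$, $b^k$, $a^k$, and the central vertex $w^+$ is the technical heart of the argument.
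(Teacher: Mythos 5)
There is a genuine gap, and it is a logical-dependency problem: your two parts cannot be proved in the order you propose.

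In Part one, the subcase $b^k\in L(v)$ cannot be dispatched without first knowing that $G^{P_-}$ is a broken wheel with $|V(G^{P_-})|$ even. Once you assign $b^k$ to $v$, you must extend across $G^{P_-}$, which means finding some $c\in L_{\pi^k}(w)$ with $c\in\Lambda_{G^{P_-}}(a^k,\bullet,b^k)$. The only tool available to guarantee $L_{\pi^k}(w)\cap\Lambda_{G^{P_-}}(a^k,\bullet,b^k)\neq\varnothing$ is \ref{PropCor3} i) of Proposition \ref{CorMainEitherBWheelAtM1ColCor}, and that conclusion is precisely contingent on $G^{P_-}-w$ having even length, i.e.\ on the structural half of the claim. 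If $G^{P_-}-w$ could have odd length, the intersection can genuinely be empty and no contradiction results. So the exclusion, at least in this subcase, is downstream of the structure, not upstream. The paper proves the structure first (using the set $T$ of $v$-colors that are $(p_iwv,G^{P_-})$-sufficient and the explicit list-size bookkeeping on $G^{P_+}\setminus\{q_{1-i},p_{1-i}\}$ in Figure \ref{For6SplK}), and only then uses the evenness in the $b^0\in L(v)$ subcase. Your ordering is backwards on exactly the point where the two halves interact.

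There is a second, smaller defect in Part one that applies even to the benign subcase $b^k\in L_{\pi^k}(w)$: you propose to complete an $L$-coloring of $G^{P_+}$ first and then extend across $G^{P_-}$ ``via Theorem \ref{thomassen5ChooseThm} applied to the precolored edge $p_iw$.'' But after completing $G^{P_+}$ all three of $p_i,w,v$ are precolored, so you no longer have a single precolored edge on the outer face of $G^{P_-}$, and the fixed triple $(a^k,b^k,\phi(v))$ need not extend to $G^{P_-}$ at all. The correct order (as in the paper) is to extend across $G^{P_-}$ first using Theorem \ref{thomassen5ChooseThm} on $p_iw$, which picks the $v$-color for you, and only then to finish on $G^{P_+}$, where the two $b^k$-colored neighbors of $w^+$ give the needed slack. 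Alternatively one could fix a target $v$-color from $\Lambda_{G^{P_-}}(a^k,b^k,\bullet)$ before coloring $G^{P_+}$; your proof does not do this. Finally, Part two is stated at the level of intent (``for a suitable choice of $k$,'' ``ensuring $\phi(v)$ admits a completion compatible with $\pi^k$'') without supplying the argument; you acknowledge this yourself, but it means the structural half is also not actually established. As written, neither half of the claim is proved.
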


\begin{claimproof} Supppose first that $G^{P_-}$ is either not a broken wheel, or that $|V(G^{P_-})|$ is odd. Choose an arbitrary $k\in\{0,1\}$. Let $T\subseteq L(v)$ be the set of $c\in L(v)$ such that there is a $(p_iwv, G^{P_-})$-sufficient $L$-coloring of $\{p_i, v\}$ using $a^k, c$ on $p_i, v$ respectively. By Theorem \ref{SumTo4For2PathColorEnds}, $|T|\geq 2$. Thus, $\pi^k$ does not extend to an $L$-coloring of $V(P\cup H)$ which uses a color of $T$ on $v$. Since $|V(G^{P_+})|=6$, the diagram in Figure \ref{For6SplK} shows the lower bounds on the size of $L_{\pi^k}$-lists of the vertices of $G^{P_+}\setminus\{q_{1-i}, p_{1-i}\}$. As $|T|\geq 2$, the graph in Figure \ref{For6SplK} is indeed $L_{\pi^k}$-colorable using a color of $T$ on $v$, a contradiction. Thus, $G^{P_-}$ is indeed a broken wheel, where $|V(G^{P_-})|$ is even.

\begin{center}\begin{tikzpicture}

\node[shape=circle,draw=black] [label={[xshift=-0.5cm, yshift=-0.6cm]\textcolor{red}{$T$}}]  (v) at (1,0) {$v$};

\node[shape=circle,draw=black] [label={[xshift=0.0cm, yshift=0.0cm]\textcolor{red}{$\geq 2$}}] (w) at (0,1.1) {$w$};

\node[shape=circle,draw=black] [label={[xshift=0.7cm, yshift=-0.6cm]\textcolor{red}{$\geq 2$}}] (ut) at (3, 0) {\small $u_t$};

\node[shape=circle,draw=black] [label={[xshift=0.0cm, yshift=0.0cm]\textcolor{red}{$\geq 3$}}] (w') at (3,1.4) {\small $w^+$};

 \draw[-] (w) to (v);
 \draw[-] (w) to (w') to (v) to (ut);
 \draw[-] (w') to (ut);
\end{tikzpicture}\captionof{figure}{}\label{For6SplK}\end{center}

Now suppose there is a $k\in\{0,1\}$ with $b^k\in L_{\pi^k}(w)\cup L(v)$, say $k=0$. Suppose first that $b^0\in L_{\pi^0}(w)$. As $\Lambda_{G^{P_-}}(a^0, b^0, \bullet)\neq\varnothing$, we can extend $\pi^0$ to an $L$-coloring $\psi$ of $V(P\cup G^{P_-})$ using $b^0$ on $w$. Since two neighbors of $w^+$ are using the same color, $\psi$ extends to $L$-color $G$, contradicting our assumption on $\pi^0$. Thus, $b^0\not\in L_{\pi^0}(w)$, so $b^0\in L(v)$. It follows that $\Lambda_{G^{P_-}}(a^0, \bullet, b^0)\cap L_{\pi^0}(w)=\varnothing$, or else, since $w^+$ has only five neighbors, $\pi^0$ extends to $L$-color $G$. As $|V(G^{P_-})|$ is even, we contradict \ref{PropCor3} i) of Proposition \ref{CorMainEitherBWheelAtM1ColCor}.  This proves Claim \ref{SubCLUnionComi=0Case}. \end{claimproof}

As $\{b^0, b^1\}\subseteq L(u_t)$, it follows from Claim \ref{SubCLUnionComi=0Case} that $|L(v)\cap L(u_t)|\leq 1$. Now choose an arbitrary $k\in\{0,1\}$. For each $c\in L_{\pi^k}(w)$, we have $\Lambda_{G^{P_-}}(a^k, c, \bullet)\subseteq L(v)\cap L(u_t)$, or else $\pi^k$ extends to $L$-color $G$. Thus, there is an $r\in L(v)$ such that $\Lambda_{G^{P_-}}(a^k, c, \bullet)=\{r\}$ for each $c\in L_{\pi^k}(w)$. In particular, $r\not\in L_{\pi^k}(w)$. As $|L_{\pi^k}(w)|\geq 2$, we extend $\pi^k$ to an $L$-coloring of $G^{P_+}$ using $r$ on $v$, which is possible as $|V(G^{P_+})|=6$. There is at least one color of $L_{\pi^k}(w)$ not used by $w^+$, so $\pi^k$ extends to $L$-color $G$, which is false. This proves \ref{LabCrownNonEmpt2} of Theorem \ref{CombinedT1T4ThreePathFactListThm}. \end{proof}

\section{The Proof of \ref{LabCrownNonEmpt3}}

\begin{proof}  Suppose \ref{LabCrownNonEmpt3} of Theorem \ref{CombinedT1T4ThreePathFactListThm} does not hold not and let $G$ be a vertex-minimal counterexample to the theorem. By removing colors from the lists of some vertices if necessary, we also suppose for convenience that each vertex of $G\setminus C$ has a list of size precisely five, each vertex of $C\setminus P$ has a list of size precisely three, and that $|L(p_0)|=1$ and $|L(p_1)|=3$. Recalling Observation \ref{MinCountChordSepCyDObs}, it follows from the minimality of $|V(G)|$, together with Corollary \ref{CycleLen4CorToThom} and Theorem \ref{thomassen5ChooseThm}, that $G$ is short-inseparable and every chord of $C$ has an endpoint in $\mathring{P}$. Since $G$ is a counterexample, Corollary \ref{CycleLen4CorToThom} also implies that $|V(C)|>4$.  Let $a$ be the lone color of $L(p_0)$ and let $L(p_1)=\{b^0, b^1, b^2\}$. As $G$ is a counterexample and $p_0p_1\not\in E(G)$, it follows that, for each $k=0,1,2$ (where possibly $b^k=a$), there is an $L$-coloring $\sigma^k$ of $V(P)$, where $\sigma^k$ uses $a,b^k$ on $p_0, p_1$ respectively and does not extend to $L$-color $G$. 

\begin{claim}\label{EvChorCQ0EndPx1P1} Every chord of $C$ has $q_0$ as an endpoint and the other endpoint in $C\setminus P$. In particular, $x_1=p_1$. \end{claim}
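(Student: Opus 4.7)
I argue by contradiction, assuming some chord of $C$ violates the claim. Since we already know every chord has an endpoint in $\mathring{P}$, the forbidden chords are either (A) a chord incident to $q_1$, or (B) the chord $q_0p_1$. In either case, I will construct the $L$-colorings required by the conclusions of \ref{LabCrownNonEmpt3} for $G$, contradicting the counterexample assumption. The ``in particular'' assertion $x_1=p_1$ then follows immediately from the absence of any chord at $q_1$.

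For case (A), let $x_1$ be the farthest vertex of $N(q_1)\cap V(C\setminus\mathring{P})$ from $p_1$; by hypothesis $x_1\neq p_1$. If $x_1\in V(C\setminus P)$, the chord $q_1x_1$ gives the natural partition $G=G^{R_0}\cup G^{P_1}$ along $R_0=p_0q_0q_1x_1$ and $P_1=x_1q_1p_1$. The proper sub-rainbow $\mathcal{G}^{R_0}$ has endpoint $x_1$ with $|L(x_1)|=3$, so by minimality \ref{LabCrownNonEmpt3} applies to $\mathcal{G}^{R_0}$ and yields both a $(R_0,G^{R_0})$-sufficient $L$-coloring of the relevant $3$-element set and its secondary output (either a $(R_0,G^{R_0})$-sufficient $L$-coloring of $\{p_0,x_1\}$ or an even-length $(R_0,G^{R_0})$-obstruction). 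On the other side, $\mathcal{G}^{P_1}$ is an end-linked $2$-path rainbow with $|L(x_1)|+|L(p_1)|\ge 6$, so Theorem \ref{SumTo4For2PathColorEnds} supplies either two distinct $(P_1,G^{P_1})$-sufficient $L$-colorings of $\{x_1,p_1\}$ or the structural alternative of an even-length path from $x_1$ to $p_1$ through $q_1$-neighbors. I then match a common color on $x_1$ across the two sides and glue, producing a $(P,G)$-sufficient $L$-coloring of $\{p_0,x_1,p_1\}$; an analogous matching using the secondary outputs produces either a $(P,G)$-sufficient $L$-coloring of $\{p_0,p_1\}$ or a descended even-length $(P,G)$-obstruction, contradicting the choice of $G$. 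The subcase $x_1=p_0$ (i.e.\ only the chord $q_1p_0$) is handled separately via the triangle $p_0q_0q_1$, whose interior is trivial by short-inseparability: deleting $q_0$ reduces us to the $2$-path $p_0q_1p_1$ on the outer cycle of $G-q_0$, and Theorem \ref{SumTo4For2PathColorEnds} together with a re-attachment argument for $q_0$ gives the required colorings.

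Case (B) is similar in spirit. If $q_0p_1\in E(G)$, then by short-inseparability the triangle $q_0q_1p_1$ has empty interior. Deleting $q_1$, the $2$-path $P':=p_0q_0p_1$ sits on the outer cycle of $G-q_1$, and the resulting rainbow is end-linked ($|L(p_0)|+|L(p_1)|=4$). Theorem \ref{SumTo4For2PathColorEnds} yields a $\phi\in\textnormal{End}(P',G-q_1)$. The coloring $\phi$ on $\{p_0,p_1\}$ lifts to a $(P,G)$-sufficient $L$-coloring of $\{p_0,p_1\}$: any extension of $\phi$ to $V(P)$ restricts to an $L$-coloring of $V(P')$ that, by $(P',G-q_1)$-sufficiency, extends to $L$-color $G-q_1$, and $q_1$ is re-attached using the $5$-list on $q_1$ together with the fact (from case (A) being excluded in parallel) that $q_1$ has no other chord in $C$. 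Combined with the coloring of $\{p_0,x_1,p_1\}$ obtained from the parallel exclusion work, this contradicts the counterexample.

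The main obstacle will be the matching step in case (A): guaranteeing a common color on $x_1$ for the $(R_0,G^{R_0})$-sufficient and $(P_1,G^{P_1})$-sufficient colorings. The plan is a parity argument that splits on whether each of the ``alternative'' structures (the even-length obstruction from \ref{LabCrownNonEmpt3} on $\mathcal{G}^{R_0}$ and the even-length $q_1$-neighbor path from Theorem \ref{SumTo4For2PathColorEnds} on $\mathcal{G}^{P_1}$) is active. The tightest subcase, in which both sides force a single admissible color on $x_1$, should be pinned down via the broken-wheel parity information in Proposition \ref{CorMainEitherBWheelAtM1ColCor}, since in that subcase both $G^{R_0}$ and $G^{P_1}$ must carry rigid broken-wheel structures that are mutually incompatible with the short-inseparability of $G$.
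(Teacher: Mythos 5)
Your initial step — ruling out the chords $p_0q_1$ and $p_1q_0$ via Theorem \ref{SumTo4For2PathColorEnds} and short-inseparability — matches the paper, modulo a small confusion: you speak of ``re-attaching'' $q_1$ with its $5$-list, but no re-attachment is needed (the extension of $\phi$ to $V(P)$ already colors $q_1$, and $q_1$ has no other neighbors once the triangle has empty interior), and you should not appeal to ``case (A) being excluded in parallel'' — the fact that $q_1$ has no chord here follows directly from short-inseparability applied to the triangle $q_0q_1p_1$, not from a parallel case analysis.

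The real problem is your handling of the remaining case, a chord from $q_1$ to $C\setminus P$ (so $x_1\in V(C\setminus P)$). Your plan is to obtain \emph{both} conclusions of \ref{LabCrownNonEmpt3} by matching colors on $x_1$ across $G^{R_0}$ and $G^{P_1}$: one matching for the $(P,G)$-sufficient coloring of $\{p_0,x_1,p_1\}$, and ``an analogous matching using the secondary outputs'' for the second conclusion. This does not work. The gluing across $x_1$ naturally produces colorings whose domain \emph{contains} $x_1$; the second conclusion of T3 asks for a $(P,G)$-sufficient $L$-coloring of $\{p_0,p_1\}$ \emph{without} $x_1$ in its domain (or else an even-length $(P,G)$-obstruction), and there is no way to drop $x_1$ from the domain after gluing: a coloring of $V(P)$ could then clash with $x_1$ on the $G^{R_0}$ side. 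The paper does not try to produce the second conclusion; it obtains the first conclusion exactly as you describe, and then, because $G$ is a counterexample, it \emph{deduces} that the second conclusion must fail (so $\textnormal{End}(P,G)=\varnothing$ and there is no even-length $(P,G)$-obstruction, hence $x_0\neq x_1$), and proceeds to derive a contradiction from this failure. That derivation is the bulk of the claim's proof and it is nothing like ``pin down rigid broken-wheel structures in $G^{R_0}$ and $G^{P_1}$'': $G^{R_0}$ need not be a broken wheel at all. The actual argument needs a sequence of specific structural facts — no color of $L(p_1)$ is almost-$(P_1,G^{P_1})$-universal (proved via a modified list-assignment and \ref{PropCor4} of Proposition \ref{CorMainEitherBWheelAtM1ColCor}), then $G^{P_1}$ is a triangle, then $d(x_0,x_1)>1$, then $G^M$ is a wheel (via \ref{T2PartB}), then a color-set analysis of $T^0,T^1$ and the vertex $x^*$ — none of which appears in your sketch or is implied by ``broken-wheel parity in $G^{R_0}$ versus $G^{P_1}$.'' As written, your proposal has a genuine gap that the paper closes with roughly a page of subclaims.
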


\begin{claimproof} If $E(G)$ contains either $p_0q_1$ or $p_1q_0$, then Theorem \ref{SumTo4For2PathColorEnds} implies that $\textnormal{End}(P, G)\neq\varnothing$, so both parts of \ref{LabCrownNonEmpt3} are satisfied, contradicting our assumption on $G$. Thus, $p_0q_1, p_1q_0\not\in E(G)$. Now suppose Claim \ref{EvChorCQ0EndPx1P1} does not hold. Thus, $x_1\in V(C\setminus P)$. As no chord of the outer cycle of $G^{R_0}$ is incident to $q_1$, it follows from the minimality of $G$ that there is a $(R_0, G^{R_0})$ sufficient $L$-coloring $\tau$ of $\{p_0, x_1\}$.  As $|L(p_1)|=3$, it follows from Theorem \ref{SumTo4For2PathColorEnds} applied to $G^{P_1}$ that $\tau$ extends to a $(P,G)$-sufficient $L$-coloring of $\{p_0, x_1, p_1\}$, so, since $G$ is a counterexample, $\textnormal{End}(P,G)=\varnothing$ and, since $x_1\in V(C\setminus P)$, there is no $(P,G)$-obstruction of even length. In particular, $x_0\neq x_1$. 

\vspace*{-8mm}
\begin{addmargin}[2em]{0em}
\begin{subclaim}\label{NoColSuFo411} No color of $L(p_1)$ is almost-$(P_1, G^{P_1})$-universal. \end{subclaim}

\begin{claimproof} Let $k\in\{0,1,2\}$ and suppose $b^k$ is $(P_1, G^{P_1})$-universal. Let $L'$ be a list-assignment for $G^{P_0}\cup H$ where $L'(x_1)=\{\sigma^k(q_1)\}\cup\Lambda_{G^{P_1}}(\bullet, \sigma^k, b^k)$ and otherwise $L'=L$. Note that $|L'(x_1)|=3$ and the $L'$-coloring $\pi:=(a, \sigma^k(q_0), \sigma^k(q_1))$ of $p_0q_0q_1$ does not extend to $L$-color $G^{R_0}$. On the other hand, $\pi$ does extend to $L'$-coloring $V(G^{P_0})\cup\{q_1\}$, so there is an $L'$-coloring of $x_0q_0q_1$ which does not extend to $L$-color $G^M$. By \ref{PropCor4} of Proposition \ref{CorMainEitherBWheelAtM1ColCor} applied to $G^M$, there is a $(P,G)$-obstruction of even length, which is false. \end{claimproof}\end{addmargin}

It follows from Subclaim \ref{NoColSuFo411} and Theorem \ref{EitherBWheelOrAtMostOneColThm} that $G^{P_1}$ is a broken wheel.

\vspace*{-8mm}
\begin{addmargin}[2em]{0em}
\begin{subclaim} $G^{P_1}$ is a triangle. \end{subclaim}

\begin{claimproof} Suppose not. Let $yzp_1$ be the unique 2-path of $C\setminus\mathring{P}$ with $p_1$ as an endpoint. Possibly $y=x_1$, but, in any case, $x_0\neq y$ and $q_0y\not\in E(G)$. Let $G^{\dagger}:=G\setminus\{z, p_1\}$ and $P^{\dagger}:=p_0q_0q_1y$. As there is no $(P,G)$-obstruction of even length, there is no $(P^{\dagger}, G^{\dagger})$-obstruction of even length. By minimality, there is a $(P^{\dagger}, G^{\dagger})$-sufficient $L$-coloring $\pi$ of $\{p_0, y\}$. As no color of $L(p_1)$ is almost-$(P_1, G^{P_1})$-universal, we have $L(p_1)=L(z)$, and, by \ref{BWheel3Lb} of Theorem \ref{BWheelMainRevListThm2}, $\pi(y)\in L(p_1)$, so $\pi$ extends to an $L$-coloring $\pi'$ of $\{p_0, y, p_1\}$ using the same color on $y, p_1$. But then, the restriction of $\pi'$ to $\{p_0, p_1\}$ lies in $\textnormal{End}(P,G)$, which is false, as $\textnormal{End}(P,G)=\varnothing$. \end{claimproof}\end{addmargin}

Since $G^{P_1}$ is a triangle and no color of $L(p_1)$ is almost-$(P_1, G^{P_1})$-universal, we have $L(p_1)=L(x_1)$. Recall that $\tau$ is an $(R_0, G^{R_0})$-sufficient $L$-coloring of $\{p_0, x_1\}$. Let $c:=\tau(x_1)$. As $|L(p_1)|=3$, we suppose for the sake of definiteness that $b^0, b^1\neq c$. As $q_0x_1\not\in E(G)$, we have $\sigma^0(q_1)=\sigma^1(q_1)=c$, or else one of $\sigma^0, \sigma^1$ extends to $L$-color $G$. For each $k=0,1,2$, let $T^k:=\Lambda_{G^{P_0}}(a, \sigma^k(q_0), \bullet)$. Then, for each $k=0,1$ we immediately have the following.
\vspace*{-2mm}
\begin{equation}\label{sigmakQoCb1-kObs}\tag{$\ast$} \textnormal{The $L$-coloring $(\sigma^k(q_0), c, b_{1-k})$ of $q_0q_1x_1$ does not extend to $L$-color $G^M$ using a color of $T^k$ on $x_1$} \end{equation}

\vspace*{-12mm}
\begin{addmargin}[2em]{0em}
\begin{subclaim} $d(x_0, x_1)>1$. \end{subclaim}

\begin{claimproof} Suppose not. Since $x_0\neq x_1$, the outer cycle of $G^M$ is an induced 4-cyce and $V(G^M)=\{x_0, q_0, q_1, x_1\}$. It follows from (\ref{sigmakQoCb1-kObs}) that, for each $k=0,1$, we have $T^k=\{b_{1-k}\}$. Thus, $G^{P_0}$ is a broken wheel and, by \ref{BWheel1Lb} \ref{BWheel1B} of Theorem \ref{BWheelMainRevListThm2}, we have $\sigma^k(q_0)=b_k$ for each $k=0,1$. Now consider $\sigma^2$. As $\sigma^2$ also does not extend to $L$-color $G$, it follows that $T^2$ and $L(x_1)\setminus\{b^2, \sigma^2(q_1)\}$ are the same singleton. Since $|T^2|=1$, it follows from \ref{BWheel2Lb} of Theorem \ref{BWheelMainRevListThm2} that there is a $k\in\{0,1\}$ with $\sigma^2(q_0)=\sigma^k(q_0)=b^k$, so $T^2=\{b_{1-k}\}$ and $L(x_1)\setminus\{b^2, \sigma^2(q_1)\}=\{b_{1-k}\}$. But then, $\sigma^2(q_0)=\sigma^2(q_1)=b^k$, which is false. \end{claimproof}\end{addmargin}

Possibly there is a $k\in\{0,1\}$ with $T^k=\{b^{1-k}\}$, but, in any case, as $x_0x_1\not\in E(G)$, there are two $L$-colorings of $\{x_0, q_0, q_1, x_1\}$ which use different colors on $x_1$ and do not extend to $L$-color $G^M$. By \ref{LabCrownNonEmpt2} \ref{T2PartB}, $G^M$ is a wheel. Let $w$ be the central vertex of $G^M$. As there is no $(P,G)$-obstruction of even length, $G^M\setminus\{q_0, q_1, w\}$ is an odd-length path.

\vspace*{-8mm}
\begin{addmargin}[2em]{0em}
\begin{subclaim} $|T^0|=|T^1|=1$ and $T^0\neq T^1$. \end{subclaim}

\begin{claimproof} Suppose there is a $k\in\{0,1\}$ with $|T_k|\geq 2$. Let $L'$ be a list-assignment for $V(G^M)$ with $L'(x_0)=T^k\cup\{\sigma^k(q_0)\}$, where otherwise $L'=L$. Now, $|L'(x_0)|\geq 3$ and, by (\ref{sigmakQoCb1-kObs}), the $L'$-coloring $(\sigma^k(q_0), c, b_{1-k})$ of $q_0q_1x_1$ does not extend to an $L'$-coloring of $G^M$. Since $|V(G^{x_0wx_1})|$ is odd, the outer cycle of $G^M$ has even length and we contradict \ref{PropCor4} of Proposition \ref{CorMainEitherBWheelAtM1ColCor}. Thus, $|T^0|=|T^1|=1$. Now suppose $T^0=T^1=\{d\}$ for some $d\in L(x_0)$. As $w$ only has four neighbors in $x_0q_0q_1x_1$, it follows that, for each $k\in\{0,1\}$, there is an $L$-coloring of $x_0wx_1$ which does not extend to $L$-color $G^{x_0wx_1}$, where this $L$-coloring uses $d, b_{1-k}$ on $x_0, x_1$ respectively. But then, by \ref{PropCor2} of \ref{CorMainEitherBWheelAtM1ColCor}, $|V(G^{x_0wx_1})|$ is even, which is false. \end{claimproof}\end{addmargin}

Since $T^0\neq T^1$ and $|T^0|=|T^1|=1$, $G^{P_0}$ is a broken wheel, and it follows from \ref{BWheel1Lb} of Theorem \ref{BWheelMainRevListThm2} that $T^0\cup T^1=\{\sigma^0(q_0), \sigma^1(q_1)\}=S$ for some set $S$ of size two. Let $x^*$ be the unique neighbor of $x_0$ on the path $G^{x_0wx_1}-w$. We jave $S\subseteq L(x^*)$ by (\ref{sigmakQoCb1-kObs}). As $|L(x^*)|=3$ and $|L(w)\setminus (S\cup\{c\})|\geq 2$, there is a $d\in L(w)\setminus (S\cup\{c\})$ with $d\not\in L(x^*)$. At least one of $b^0, b^1$ is distinct from $d$, so, by coloring $w$ with $d$, we contradict (\ref{sigmakQoCb1-kObs}).  This proves Claim \ref{EvChorCQ0EndPx1P1}. \end{claimproof}

Since $x_1=p_1$, we have $\textnormal{End}(P, G)=\varnothing$, and $G^{P_1}$ is an edge. If $x_0=p_0$, the, by \ref{LabCrownNonEmpt}, $\textnormal{End}(P, G)\neq\varnothing$, so $C$ is not induced. By Claim \ref{EvChorCQ0EndPx1P1}, $x_0\in V(C\setminus P)$. 

\begin{claim}\label{G''w''WheelCycle} $G^M$ is a wheel with a central vertex adjacent to all the vertices of its outer cycle, which has odd length. \end{claim}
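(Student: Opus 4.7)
The claim has two parts---that $G^M$ is a wheel with central vertex adjacent to all of $C^M$, and that $|V(C^M)|$ is odd---and my approach is to obtain the wheel structure via \ref{LabCrownNonEmpt2}\ref{T2PartB} applied to the rainbow $(G^M, C^M, M, L)$, and then extract the parity via \ref{LabCrownNonEmpt2}\ref{T2PartA} combined with Theorem \ref{SumTo4For2PathColorEnds} applied to $G^{P_0}$.

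For the wheel structure, I first exhibit at least two non-$(M, G^M)$-sufficient colorings of $\{x_0, p_1\}$. By Theorem \ref{thomassen5ChooseThm} applied to $G^{P_0}$ with precolored edge $p_0q_0$, the set $T^k := \Lambda_{G^{P_0}}(a, \sigma^k(q_0), \bullet)$ is nonempty for each $k \in \{0,1,2\}$; picking $c^k \in T^k$, the non-extendability of $\sigma^k$ forces the coloring $(c^k, \sigma^k(q_0), \sigma^k(q_1), b^k)$ of $V(M)$ not to extend to $G^M$, so $(c^k, b^k)$ is not $(M, G^M)$-sufficient, and since $b^0, b^1, b^2$ are pairwise distinct this yields three distinct non-sufficient colorings. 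In the rainbow $(G^M, C^M, M, L)$ the only $q_0$-neighbor in $V(C^M \setminus \mathring{M})$ is $x_0$ (by the extremal choice of $x_0$) and the only $q_1$-neighbor there is $p_1$ (since $x_1 = p_1$), so the endpoints of $M$ play the roles of $x_0, x_1$ in the hypothesis of \ref{LabCrownNonEmpt2}\ref{T2PartB}. Applying that result produces an $(M, G^M)$-obstruction, which by definition must be the unique path $x_0 C p_1$ in $V(C^M \setminus \mathring{M})$ joining $x_0$ and $p_1$ and hence supplies a vertex $w^* \in V(G^M \setminus C^M)$ adjacent to every vertex of $C^M$. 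Short-inseparability of $G$ applied to each triangle $uvw^*$ with $uv \in E(C^M)$ rules out any further interior vertex of $G^M$, so $G^M$ is the wheel with central vertex $w^*$.

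For the parity, suppose toward contradiction that $|V(C^M)|$ is even. Then $|E(x_0 C p_1)| = |V(C^M)| - 3$ is odd; combined with the fact that both terminal edges of $M$ are trivially even-tilted (via the length-zero paths at $x_0$ and $p_1$), no $(M, G^M)$-obstruction is fully even. Applying \ref{LabCrownNonEmpt2}\ref{T2PartA} with $|L(p_1)| = 3$, I obtain two $(M, G^M)$-sufficient $L$-colorings $\tau^1, \tau^2$ of $\{x_0, p_1\}$ with $\tau^1(p_1) \neq \tau^2(p_1)$. Now apply Theorem \ref{SumTo4For2PathColorEnds} to the end-linked rainbow $(G^{P_0}, C^{P_0}, p_0 q_0 x_0, L)$ (end-linked since $|L(p_0)| + |L(x_0)| = 4$) to obtain a $(p_0 q_0 x_0, G^{P_0})$-sufficient $L$-coloring $\pi$ of $\{p_0, x_0\}$ with $\pi(x_0) = c^{\dagger}$. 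If $c^{\dagger} = \tau^j(x_0)$ for some $j$, then the coloring $\{p_0 \mapsto a,\, p_1 \mapsto \tau^j(p_1)\}$ of $\{p_0, p_1\}$ is $(P, G)$-sufficient, since any extension $(a, d_0, d_1, \tau^j(p_1))$ to $V(P)$ can be completed to $G^{P_0}$ by coloring $x_0$ with $c^{\dagger}$ (using $\pi$-sufficiency) and then to $G^M$ via $\tau^j$-sufficiency, contradicting the counterexample assumption.

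The remaining subcase $c^{\dagger} \notin \{\tau^1(x_0), \tau^2(x_0)\}$ is the main obstacle. I would close it by invoking the second clause of Theorem \ref{SumTo4For2PathColorEnds} (either a second element of $\textnormal{End}(p_0 q_0 x_0, G^{P_0})$ with distinct $x_0$-value, or an even-length path of $q_0$-adjacent vertices from $p_0$ to $x_0$ in $C^{P_0}$) together with a pigeonhole in $|L(x_0)| = 3$ to force $\tau^1(x_0) = \tau^2(x_0)$, and then handle that degenerate configuration by exploiting the wheel structure of $G^M$ (via Proposition \ref{CorMainEitherBWheelAtM1ColCor} applied to the broken wheel $G^M \setminus \{q_0, q_1\}$ with principal path $x_0 w^* p_1$) and the list-size constraints on $T^k$ inherited from the specific bad colorings $\sigma^k$. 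The rest of the argument is a routine application of already-proved parts \ref{LabCrownNonEmpt} and \ref{LabCrownNonEmpt2} and the structural results on broken wheels in Theorem \ref{BWheelMainRevListThm2}.
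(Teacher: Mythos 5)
Your observation that \ref{T2PartB} applied to the rainbow $(G^M, C^M, R_1, L)$ gives the wheel structure, and \ref{T2PartA} the parity, is a natural decomposition, but it misses the unification that drives the paper's proof and it leaves the hard part open. The paper negates the claim in a single stroke: since $C^M$ is induced, a non-triangle-type $(R_1, G^M)$-obstruction is exactly a vertex $w^*$ adjacent to all of $C^M$, and (since both terminal edges of $R_1$ are trivially even-tilted) the obstruction is \emph{fully even} precisely when $|E(Q^*)|$ is even, i.e., $|V(C^M)|$ is odd. So the negation of the claim is exactly ``no fully even $(R_1, G^M)$-obstruction,'' and then \ref{T2PartA}, applied with $L(p_1)$ shrunk to $\{b^k\}$, yields $|S^k|\geq 2$ for each $k$ in one step (where $S^k$ is the set of $s\in L(x_0)$ such that $(s,b^k)$ is $(R_1,G^M)$-sufficient); the contradiction then comes from observing $\Lambda_{G^{P_0}}(a,\sigma^k(q_0),\bullet)\cap S^k=\varnothing$ and invoking \ref{BWheel2Lb} on $G^{P_0}$.

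There are two concrete gaps in your argument. First, in the wheel step, you pick $c^k\in\Lambda_{G^{P_0}}(a,\sigma^k(q_0),\bullet)$ and assert that $(c^k,b^k)$ is a non-$(R_1,G^M)$-sufficient $L$-coloring of $\{x_0,p_1\}$, but this requires $(c^k,\sigma^k(q_0),\sigma^k(q_1),b^k)$ to be a proper $L$-coloring of $R_1$, which fails when $x_0p_1\in E(G)$ and $c^k=b^k$. Indeed, if $|V(C^M)|=4$ (so $G^M=C^M$ by short-inseparability), non-extendability of $\sigma^k$ \emph{forces} $\Lambda_{G^{P_0}}(a,\sigma^k(q_0),\bullet)=\{b^k\}$, so you cannot avoid $c^k=b^k$ for any $k$; your construction then yields no valid colorings and \ref{T2PartB} gives no contradiction. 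The paper sidesteps this by defining $S^k$ directly as sufficient \emph{colors}: the improper pair is never counted. Second, and more seriously, the parity step is incomplete. Even in the case $c^{\dagger}=\tau^j(x_0)$ your gluing argument only covers extensions of $(a,\tau^j(p_1))$ to $V(P)$ with $q_0$-color $d_0\neq c^{\dagger}$; when $d_0=c^{\dagger}$, neither $\pi$- nor $\tau^j$-sufficiency applies, so you have not shown $(a,\tau^j(p_1))\in\textnormal{End}(P,G)$. And the subcase you flag as ``the main obstacle'' ($c^{\dagger}\notin\{\tau^1(x_0),\tau^2(x_0)\}$) is exactly where the content of the paper's argument lives---the analysis of $S^0,S^1,S^2$ together with the singleton $\Lambda$'s and \ref{BWheel2Lb}---and your sketch does not carry it out. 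As written, the proposal is an outline with the central difficulty deferred, not a proof.
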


\begin{claimproof} Suppose not. For each $k\in\{0,1,2\}$, let $S^k$ be the set of $s\in L(x_0)$ such that there is an $(R_1, G^M)$-sufficient $L$-coloring of $\{x_0, p_1\}$ using $s, b^k$ on $x_0, p_1$ respectively. As the outer cycle of $G^M$ is induced, it follows from \ref{LabCrownNonEmpt2} that, for each $k\in\{0,1,2\}$, we have $|S^k|\geq 2$. Since $\Lambda_{G^{P_0}}(a, \sigma^k(q_0), \bullet)\neq\varnothing$, it follows that $|S^k|=2$ and $\Lambda_{G^{P_0}}(a, \sigma^k(q_0), \bullet)$ is the lone color of $L(x_0)\setminus S^k$. Thus, there exist distinct $k, k'\in\{0,1,2\}$ such that $\sigma^k(x_0)=\sigma^{k'}(q_0)$ and $S^k=S^{k'}$, or else we contradict \ref{BWheel2Lb} of Theorem \ref{BWheelMainRevListThm2} applied to $G^{P_0}$. In particular, there is an $r\in L(x_0)\setminus (S^k\cup S^{k'})$. Yet \ref{LabCrownNonEmpt2}, togther with our assumption on $G^M$, also implies that that there is a $(R_1, G^M)$-sufficient $L$-coloring of $\{x_0, p_1\}$ using $r$ on $x_0$ and a color of $\{b^k, b^{k'}\}$ on $p_1$, contradicting our choice of $r$. \end{claimproof}

Let  $w^*$ be the central vertex of $G^M$. Note that $G^M\setminus\{q_0, q_1\}=G^{x_0w^*p_1}$, which is a broken wheel an even number of vertices. Let $zyp_1$ be the unique subpath of $C\setminus\mathring{P}$ of length two with endpoint $p_1$. Since $\textnormal{End}(P, G)=\varnothing$, it is straightforward to check that no color of $L(p_1)$ is $(x_0w^*p_1, G^{x_0w^*p_1})$-universal. 

\begin{claim} $|V(G^M)|=6$. In particular,  $x_0=z$. \end{claim}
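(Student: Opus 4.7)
The plan is to proceed by contradiction and assume $|V(G^M)| > 6$. Since the outer cycle of the wheel $G^M$ has odd length and $G^M\setminus\{q_0,q_1\} = G^{x_0 w^* p_1}$ is a broken wheel with an even number of vertices, this assumption forces $|V(G^{x_0 w^* p_1})| \geq 6$; in particular, the neighbor $z'$ of $z$ on $C\setminus\mathring{P}$ distinct from $y$ exists, lies on the outer path of $G^{x_0 w^* p_1}$, and is distinct from $x_0$.

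The first key step is to apply \ref{BWheel3Lb} of Theorem \ref{BWheelMainRevListThm2} to the broken wheel $G^{x_0 w^* p_1}$, reading its principal path from the $p_1$-end so that $y$ is the neighbor of $p_1$ on the outer path and $z$ is the vertex at distance two from $p_1$. Since no color of $L(p_1)$ is $(x_0 w^* p_1, G^{x_0 w^* p_1})$-universal and $|V(G^{x_0 w^* p_1})| > 4$, the contrapositive of that theorem forces $L(y)\setminus\{a\} \subseteq L(z)$ for every $a \in L(p_1) = \{b^0, b^1, b^2\}$. Since $|L(y)| = |L(z)| = 3$ and $L(p_1)$ consists of three distinct colors, a short case analysis (splitting on whether some element of $L(y)$ is missing from $L(z)$) yields $L(y) = L(z)$.

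The second step is to construct a smaller rainbow $(G', C', P, L)$ by deleting $y$ and $z$ from $G$ and adding the edge $p_1 z'$, keeping $L$ unchanged. Since every chord of $C$ is incident to $q_0$, the vertex $q_1$ still has only $p_1$ as a neighbor in $V(C'\setminus\mathring{P})$, so the analogue of $x_1$ computed in $G'$ is again $p_1$. Consequently the obstruction alternative in \ref{LabCrownNonEmpt3} (which would require $x_1' \in V(C'\setminus P)$) is unavailable for $G'$, so by the minimality of $|V(G)|$ there exists a $(P,G')$-sufficient $L$-coloring $\pi$ of $\{p_0, p_1\}$.

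To finish, I would promote $\pi$ to a $(P,G)$-sufficient $L$-coloring, contradicting the already-established $\textnormal{End}(P,G) = \varnothing$. For any extension $\pi'$ of $\pi$ to $V(P)$, let $\psi'$ be its extension to an $L$-coloring of $G'$; then $\psi'$ is an $L$-coloring of $G\setminus\{y,z\}$ with $\psi'(p_1) \neq \psi'(z')$ since $p_1 z' \in E(G')$. Applying Observation \ref{MinCounterReUseObs} to the $3$-path $p_1 y z z'$ in $G$ (all four vertices share the neighbor $w^*$ inside the wheel $G^M$, and $y, z$ each have lists of size at least three) yields either $\psi'(p_1) \notin L(z)$ or that $\psi'$ extends to an $L$-coloring of $G$. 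In the latter case we are done; in the former case, since $L(y) = L(z)$ we also have $\psi'(p_1) \notin L(y)$, so $z$ and $y$ can be colored greedily from their $3$-lists while avoiding at most two forbidden colors each. Thus $\pi'$ always extends to $L$-color $G$, giving $\pi \in \textnormal{End}(P,G)$, the desired contradiction. The main obstacle is this final promotion: one must check that Observation \ref{MinCounterReUseObs} applies with the correct $3$-path (using that $G^M$ is a wheel with center $w^*$ adjacent to $p_1, y, z, z'$) and that in the exceptional case $\psi'(p_1) \notin L(z)$, the equality $L(y) = L(z)$ from the first step is exactly what is needed to color $y$ and $z$ by hand without running out of colors.
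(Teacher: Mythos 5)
Your proof is correct and takes essentially the same route as the paper's: contract $y, z$ to the edge $z'p_1$, apply minimality of $G$ (the obstruction alternative of \ref{LabCrownNonEmpt3} is blocked because the analogue of $x_1$ in $G'$ is still $p_1$) to get a $(P,G')$-sufficient coloring of $\{p_0, p_1\}$, and lift back to $G$ via Observation \ref{MinCounterReUseObs} on the 3-path $p_1 y z z'$ through $w^*$. The only small difference is that the paper invokes \ref{PropCor1} of Proposition \ref{CorMainEitherBWheelAtM1ColCor} to get $L(p_1)=L(y)=L(z)$ in one step, making the final lift immediate, whereas you derive only $L(y)=L(z)$ from \ref{BWheel3Lb} of Theorem \ref{BWheelMainRevListThm2} and dispatch the extra case $\psi'(p_1)\notin L(z)$ with a short greedy coloring of $z$ then $y$ -- a valid and minor variation.
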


\begin{claimproof} Suppose not. Thus, $|V(G^M)|\geq 8$. Let $z'$ be the unique neighbor of $z$ on the path $C-y$. By \ref{PropCor1} of Proposition \ref{CorMainEitherBWheelAtM1ColCor}, $L(p_1)=L(y)=L(z)$. Let $G^*$ be a graph obtained from $G$ by deleting the vertices $z,y$ and replacing them with the edge $z'p_1$, so that $G^*$ has outer cycle $(C\setminus\{z,y\})+z'p_1$. Every chord of the outer face of $G^*$ is incident to $q_0$ and each $\sigma^k$ is still a proper $L$-coloring of its domain in $G^*$, so, by the minimality of $|V(G)|$, there is a $k\in\{0,1,2\}$ such that $\sigma^k$ extends to an $L$-coloring $\psi$ of $G^*$. Since $b\in L(z)$, this contradicts Observation \ref{MinCounterReUseObs}. \end{claimproof}

\begin{claim}\label{ForEachdLamG'} For each $k=0,1,2$, we have $\Lambda_{G^{P_0}}(a, \sigma^k(q_0), \bullet)\subseteq L(y)\setminus\{b^k\}$, and furthermore, $\sigma^k(q_0)\neq b^k$. \end{claim}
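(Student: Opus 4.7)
The plan is to exploit that $V(G) \setminus (V(P) \cup V(G^{P_0})) = \{y, w^*\}$, so extending $\sigma^k$ to an $L$-coloring of $G$ amounts to extending through $G^{P_0}$ and then properly $L$-coloring the two remaining vertices $y$ and $w^*$. Since $\sigma^k$ gives a valid $L$-coloring of the edge $p_0q_0$ on the outer cycle of $G^{P_0}$, Theorem~\ref{thomassen5ChooseThm} applied to $G^{P_0}$ yields $\Lambda_{G^{P_0}}(a, \sigma^k(q_0), \bullet) \neq \varnothing$, and each $c$ in this set produces a partial $L$-coloring $\phi$ of $V(G) \setminus \{y, w^*\}$ with $\phi(x_0) = c$. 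The non-extendability of $\sigma^k$ then forces the failure to lie in the step of coloring $y$ (whose neighbors are $x_0, p_1, w^*$) and $w^*$ (whose neighbors are $x_0, q_0, q_1, p_1, y$), and the rest of the argument becomes a tight piece of list-counting.

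For the containment $\Lambda_{G^{P_0}}(a, \sigma^k(q_0), \bullet) \subseteq L(y) \setminus \{b^k\}$, I will take $c$ in this set and show that $\phi$ extends to $G$ whenever $c = b^k$ or $c \notin L(y)$. If $c = b^k$, the incidence $x_0q_0 \in E(G)$ forces $\sigma^k(q_0) \neq b^k$, so the colored neighbors of $y$ contribute only $\{b^k\}$, giving $|L_{\phi}(y)| \geq 2$; after picking $y$'s color $d$, the colored neighbors of $w^*$ contribute only the distinct values $\{b^k, \sigma^k(q_0), \sigma^k(q_1), d\}$, leaving $|L_{\phi}(w^*)| \geq 1$. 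If instead $c \notin L(y)$ and $c \neq b^k$, then $L_{\phi}(y) = L(y) \setminus \{b^k\}$ still has at least two choices of $d$; the bad set $\{c, \sigma^k(q_0), \sigma^k(q_1), b^k, d\}$ for $w^*$ can have as many as five distinct elements, but the intersection over two distinct valid $d$'s equals $\{c, \sigma^k(q_0), \sigma^k(q_1), b^k\}$, which has at most four elements, so at least one choice of $d$ leaves $L_{\phi}(w^*)$ nonempty. In either case $\phi$ extends to $G$, contradicting the choice of $\sigma^k$.

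For the furthermore assertion $\sigma^k(q_0) \neq b^k$, I will assume $\sigma^k(q_0) = b^k$ and combine the inclusion just proved with Theorem~\ref{thomassen5ChooseThm}: pick any $c \in \Lambda_{G^{P_0}}(a, b^k, \bullet)$, which by the inclusion lies in $L(y) \setminus \{b^k\}$. The resulting $\phi$ colors both $q_0$ and $p_1$ with $b^k$, so the colored neighbors of $y$ contribute just $\{c, b^k\}$ with $c \in L(y)$ and $c \neq b^k$, leaving $|L_{\phi}(y)| \geq 1$; for any choice $d$ of $y$'s color, $w^*$'s colored neighbors contribute $\{c, b^k, \sigma^k(q_1), d\}$, at most four distinct colors, so $|L_{\phi}(w^*)| \geq 1$. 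Thus $\phi$ extends, once more contradicting the non-extension of $\sigma^k$.

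The principal obstacle is the case $c \notin L(y)$ in the first assertion: naive list counting pits $|L(w^*)| = 5$ against up to five excluded colors, so a direct bound fails, and one must exploit the freedom in choosing $d \in L(y) \setminus \{b^k\}$ (which has at least two elements since $c \notin L(y)$) together with the observation that distinct $d$'s yield bad sets whose intersection has only four elements.
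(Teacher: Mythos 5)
Your proposal is correct and takes essentially the same approach as the paper's own (very terse) two-sentence proof; you have simply filled in the explicit list-counting on $y$ and $w^*$ that the paper leaves implicit. The only nitpick is your phrase ``distinct values $\{b^k, \sigma^k(q_0), \sigma^k(q_1), d\}$'' in the $c=b^k$ case: $d$ need not be distinct from $\sigma^k(q_0)$ or $\sigma^k(q_1)$, but since you only need the set to have at most four elements the bound $|L_\phi(w^*)|\geq 1$ still holds, so this is harmless.
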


\begin{claimproof}  If $\Lambda_{G^{P_0}}(a, \sigma^k(q_0), \bullet)\not\subseteq L(y)\setminus\{b^k\}$, then $\sigma^k$ extends to $L$-color $G$, which is false. Now suppose $\sigma^k(q_0)=b^k$. As two neighbors of $w^*$ are using the same color, we can again extend $\sigma^k$ to an $L$-coloring of $G$. \end{claimproof}

As $L(y)=L(p_1)$, Claim \ref{ForEachdLamG'} implies that $|L(x_0)\cap L(p_1)|\geq 2$.  By Theorem \ref{SumTo4For2PathColorEnds}, there is a $(P_0, G^{P_0})$-sufficient $L$-coloring $\phi$ of $\{p_0, x_0\}$, where $\phi(p_0)=a$. Let $c:=\phi(x_1)$. If $c\in L(p_1)$, then $c=b^k$ for some $k\in\{0,1,2\}$, and then it follows from our choice of $\phi$ that $\sigma^k\cup\phi$ is not a proper $L$-coloring of its domain, so $\sigma^k(q_0)=b^k$, contradicting Claim \ref{ForEachdLamG'}. Thus, $c\not\in L(p_1)$, so $c\not\in L(y)$. By Claim \ref{ForEachdLamG'}, we have $b^k\not\in\Lambda_{G^{P_0}}(a, \sigma^k(q_0), \bullet)$ for each $b^k$. By our choice of $\phi$, it follows that each $\sigma^k$ uses $c$ on $q_0$. Let $S:=L(u)\cap L(p_1)$. As $|S|\geq 2$ and $c\not\in L(p_1)$, it follows from Theorem \ref{thomassen5ChooseThm} that $\Lambda_{G^{P_0}}(a, c, \bullet)\cap S\neq\varnothing$. Letting $k\in\{0,1,2\}$ with $b^k\in\Lambda_{G^{P_0}}(a, c, \bullet)\cap S$, we extend $\sigma^k$ to an $L$-coloring of $G$ by using the same color on $x_0, p_1$, contradicting our assumption on $\sigma^k$. This proves \ref{LabCrownNonEmpt3} of Theorem \ref{CombinedT1T4ThreePathFactListThm}. \end{proof}

\section{The Proof of \ref{LabCrownNonEmpt4}}\label{CorColSecRes}

We first note that \ref{LabCrownNonEmpt4} is false if we drop the requirement that $p_1$ has a list of size at least three, and only require that $(G, C, P, L)$ is end-linked, as the following counterexample illustrates. In Figure \ref{DropConditionCounterExFig3},  for any $L$-coloring $\psi$ of $\{p_0, p_1\}$, there are two extensions of $\psi$ to $L$-colorings of $V(P)$ which do not extend to $L$-color $G$.

\begin{center}
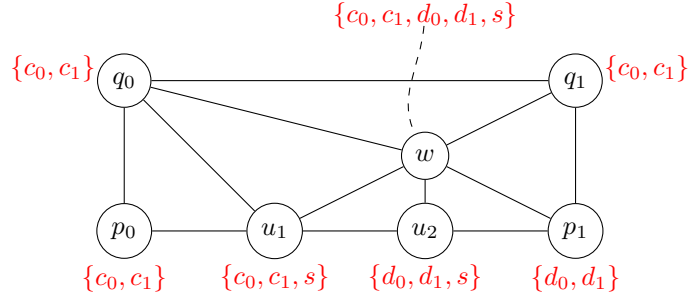
\begin{tikzpicture}
\node[shape=circle,draw=black] [label={[xshift=-0.0cm, yshift=-1.3cm]\textcolor{red}{$\{c_0, c_1\}$}}] (p1) at (0,0) {$p_0$};
\node[shape=circle,draw=black] [label={[xshift=-0.95cm, yshift=-0.5cm]\textcolor{red}{$\{c_0, c_1\}$}}] (p2) at (0,2) {$q_0$};
\node[shape=circle,draw=black] [label={[xshift=-0.0cm, yshift=-1.3cm]\textcolor{red}{$\{c_0, c_1, s\}$}}] (u1) at (2,0) {$u_1$};
\node[shape=circle,draw=black] [label={[xshift=0.0cm, yshift=-1.3cm]\textcolor{red}{$\{d_0, d_1\}$}}] (p4) at (6,0) {$p_1$};
\node[shape=circle,draw=black] [label={[xshift=0.95cm, yshift=-0.5cm]\textcolor{red}{$\{c_0, c_1\}$}}] (p3) at (6,2) {$q_1$};
\node[shape=circle,draw=black] [label={[xshift=0.00cm, yshift=-1.3cm]\textcolor{red}{$\{d_0, d_1, s\}$}}] (u2) at (4,0) {$u_2$};
\node[shape=circle,draw=black] (w) at (4,1) {$w$};
\node[shape=circle,draw=white] [label={[xshift=0.00cm, yshift=-0.50cm]\textcolor{red}{$\{c_0, c_1, d_0, d_1, s\}$}}] (wL) at (4,2.9) {};

 \draw[-] (p1) to (u1) to (u2) to (p4) to (p3) to (p2) to (p1);
 \draw[-] (p2) to (u1);
 \draw[-] (u1) to (w);
 \draw[-] (u2) to (w);
 \draw[-] (p4) to (w);
 \draw[-] (p3) to (w);
 \draw[-] (p2) to (w);
\draw[dashed, -] (wL) to [out=-95, in=115] (w);
\end{tikzpicture}\captionof{figure}{\ref{LabCrownNonEmpt4} of Theorem \ref{CombinedT1T4ThreePathFactListThm} is false if we let $|L(p_0)|=|L(p_1)|=2$}\label{DropConditionCounterExFig3}\end{center}

We now prove \ref{LabCrownNonEmpt4}.

\begin{proof} Suppose \ref{LabCrownNonEmpt4} does not hold and let $G$ be a vertex-minimal counterexample to \ref{LabCrownNonEmpt4}. We suppose without loss of generality that $|L(p_1)|\geq 3$, as Definition \ref{BaseColoringDefn} is symmetric in the endpoints of $P$. By removing colors from some lists if necessary, we suppose further that each vertex of $C\setminus P$ has a list of size precisely three and each vertex of $G\setminus C$ has a list of size precisely five, and furthermore, $|L(p_0)|=1$ and $|L(p_1)|=3$. Applying Theorem \ref{thomassen5ChooseThm} and Corollary \ref{CycleLen4CorToThom}, the minimality of $|V(G)|$ immediately implies that $G$ is short-inseparable and every chord of $C$ is incident to one of $\{q_0, q_1\}$. Since $G$ is a counterexample, we have $\textnormal{End}(P,G)=\varnothing$, or else there is an $L$-coloring of $\{p_0, p_1\}$ satisfying \ref{T4PartA} of Definition \ref{BaseColoringDefn}. In particular, by Corollary \ref{CycleLen4CorToThom}, we have $V(C)\neq V(P)$, so $q_0q_1\not\in E(G)$. It follows now that every chord of $C$ has one endpoint in $\mathring{P}$ and the other endpoint in $C\setminus P$, or else there is a $q\in V(\mathring{P})$ adjacent to each of $p_0, p_1$, and if that holds, then it follows from Theorem \ref{SumTo4For2PathColorEnds} that $\textnormal{End}(P,G)\neq\varnothing$, which is false. We now let $a$ be the lone color of $L(p_0)$. For each $b\in L(p_1)$ we let $\mathcal{F}_b$ denote the set of $L$-colorings of $V(P)$ which use $a,b$ on $p_0, p_1$ respectively and do not extend to $L$-coloring of $G$. Possibly $a\in L(p_1)$ but $p_0p_1\not\in E(G)$ in any case. For each $\pi\in\mathcal{F}_b$ we let $S_{\pi}:=\{\pi(q_0), \pi(q_1)\}$.

\begin{claim}\label{AtMostTwoForEachBLp1} $x_1\in V(C\setminus P)$ and there is a $(P,G)$-obstruction of even length. Furthermore, for each $j\in\{0,1\}$ if $G$ has a triangle-type $(P,G)$-obstruction and $G^{P_j}$ is a broken wheel with an even number of vertices, then, for each $b\in L(p_1)$, there are at most two elements of $\mathcal{F}_b$ using the same color on $q_{1-j}$.
\end{claim}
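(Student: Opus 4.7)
The first two conclusions ($x_1 \in V(C \setminus P)$ and the existence of an even-length $(P,G)$-obstruction) fall out of \ref{LabCrownNonEmpt3} applied to $G$ itself. Since $|L(p_1)| = 3$ and, as already established in the lead-up to this claim, $\textnormal{End}(P, G) = \varnothing$ (because $G$ is a counterexample to \ref{LabCrownNonEmpt4}), the second alternative of \ref{LabCrownNonEmpt3} must hold, giving both conclusions at once.

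For the furthermore, by the symmetric roles of the two sides of $P$ it suffices to treat $j = 0$. So assume $x_0 = x_1 = x^* \in V(C \setminus P)$ (triangle-type obstruction) and that $G^{P_0}$ is a broken wheel with principal path $P_0 = p_0 q_0 x^*$ on an even number of vertices (equivalently, $G^{P_0} - q_0$ has even length). Suppose for contradiction that for some $b \in L(p_1)$ there are three distinct $\pi_1, \pi_2, \pi_3 \in \mathcal{F}_b$ all assigning the same color $c$ to $q_1$, so that the $\pi_i(q_0)$ are three distinct colors. The restrictions $\pi_i|_{\{p_0, q_0\}}$ form a three-element family of $L$-colorings of $\{p_0, q_0\}$ constant with value $a$ on $p_0$, so part \ref{BWheel2Lb} of Theorem \ref{BWheelMainRevListThm2} applied to $G^{P_0}$ yields an $i$ with $|\Lambda_{G^{P_0}}(a, \pi_i(q_0), \bullet)| \geq 2$. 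Since $|L(x^*)| = 3$, at least one color $d$ in this $\Lambda$-set is distinct from $c$, so $(a, \pi_i(q_0), d)$ extends to an $L$-coloring of $V(G^{P_0})$.

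The plan is then to combine this flexibility on the $G^{P_0}$ side with structural control over $G^{R_1}$ to extend $\pi_i$ to $L$-color $G$, contradicting $\pi_i \in \mathcal{F}_b$. The smaller rainbow $\mathcal{G}^{R_1} := (G^{R_1}, C^{R_1}, R_1, L)$ has endpoints $x^*, p_1$ of $R_1$ both carrying 3-lists, so by the minimality of $G$, \ref{LabCrownNonEmpt4} produces a $\mathcal{G}^{R_1}$-base-coloring $(d^*, b^*)$ of $\{x^*, p_1\}$ whose non-extending set $\mathcal{F}^{R_1}_{(d^*, b^*)}$ in $V(R_1)$ has at most two elements, structured per one of \ref{T4PartA}--\ref{T4PartC}. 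In the favorable case $b^* = b$, the three restrictions $\pi_1|_{V(R_1)}, \pi_2|_{V(R_1)}, \pi_3|_{V(R_1)}$ (after setting $x^* = d^*$) are three distinct extensions of $(d^*, b)$, and the bound $|\mathcal{F}^{R_1}_{(d^*, b)}| \leq 2$ forces at least one of them to extend in $G^{R_1}$; provided $d^* \in \Lambda_{G^{P_0}}(a, \pi_k(q_0), \bullet)$ for that extending index $k$, the resulting full coloring of $G$ extends $\pi_k$, contradicting $\pi_k \in \mathcal{F}_b$. In less favorable cases I would feed \ref{LabCrownNonEmpt3} or \ref{LabCrownNonEmpt2} to $\mathcal{G}^{R_1}$ to produce additional sufficient colorings of $\{x^*, p_1\}$ with different $p_1$-colors, eventually forcing a match with $b$.

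The main obstacle will be the case where $b^* \neq b$ and none of the additional sufficient colorings supplied by \ref{LabCrownNonEmpt3} or \ref{LabCrownNonEmpt2} on $\mathcal{G}^{R_1}$ uses color $b$ on $p_1$, combined with the failure set in $G^{R_1}$ being of the B2 or B3 form, where non-extensions share a color on one of $q_0, q_1$. To push through here, I expect to need the parity consequences of part \ref{BWheel1Lb} of Theorem \ref{BWheelMainRevListThm2}, which control how the three $\Lambda_i$-sets on the $G^{P_0}$ side can overlap when $G^{P_0} - q_0$ has even length. The delicate point is to guarantee that $\bigcup_{k=1}^{3} \Lambda_k$ contains every color of $L(x^*)$ needed to dodge the limited blocked configurations in $G^{R_1}$, thereby producing the required pair $(d, k)$ with $d \in \Lambda_k$ and $(d, \pi_k(q_0), c, b)$ extending in $G^{R_1}$.
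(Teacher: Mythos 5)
The first part of your argument (using \ref{LabCrownNonEmpt3} and $\textnormal{End}(P,G)=\varnothing$ to get $x_1\in V(C\setminus P)$ and an even-length $(P,G)$-obstruction) is correct and matches the paper.

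For the \emph{furthermore}, you have a genuine gap, which you actually flag yourself. Your plan is to get a $\mathcal{G}^{R_1}$-base-coloring $(d^*,b^*)$ of $\{x^*,p_1\}$ from \ref{LabCrownNonEmpt4} via minimality and then hope that $d^*$ lands in $\Lambda_{G^{P_0}}(a,\pi_k(q_0),\bullet)$ for the $\pi_k$ whose restriction to $V(R_1)$ extends. But the base-coloring gives you no control over $d^*$: you cannot choose it, and there is no reason for it to lie in any of the three $\Lambda$-sets on the $G^{P_0}$ side. Trying to adaptively supply more sufficient colorings via \ref{LabCrownNonEmpt2}/\ref{LabCrownNonEmpt3} only multiplies the cases, and as you observe, the B2/B3 shapes of the residual failure set don't obviously synchronize with the parity data from \ref{BWheel1Lb} coming from the other side. (Also, your opening remark that the two values of $j$ are symmetric is not quite right here: after normalizing we have $|L(p_0)|=1$ while $|L(p_1)|=3$, so the two sides of $P$ play asymmetric roles and both $j$'s need to be addressed.)

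The paper avoids this coupling problem by reversing the quantifier order. Rather than fixing a coloring of $x^*$ first, fix an $L$-coloring $\psi$ of $G^{P_{1-j}}$ with the prescribed colors on $p_{1-j}$ and $q_{1-j}$ (this exists by Theorem \ref{thomassen5ChooseThm}); this \emph{determines} a concrete color $\psi(x^*)$. With three distinct colors in $T=\{\pi_i(q_j)\}$ available for $q_j$, the set $S:=T\setminus\{\psi(x^*)\}$ has size at least $2$ and sits inside $L_{\psi}(q_j)$. If $S$ were disjoint from $\Lambda_{G^{P_j}}(\psi(p_j),\bullet,\psi(x^*))$, then \ref{PropCor3} of Proposition \ref{CorMainEitherBWheelAtM1ColCor} would force $G^{P_j}-q_j$ to have odd length, contradicting the hypothesis that $|V(G^{P_j})|$ is even. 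So some $s\in S$ lies in the $\Lambda$-set, and the corresponding $\pi\in\mathcal{F}_b$ with $\pi(q_j)=s$ then extends to $L$-color $G$ (since the triangle-type obstruction means $G^M$ is just a triangle). This is direct, uses no minimality at all, and the parity hypothesis on $G^{P_j}$ is used exactly once, in exactly the right place. I would recommend abandoning the $\mathcal{G}^{R_1}$-base-coloring route and adopting this style of argument: color the \emph{other} side first, then use the parity lemma on the side you assume is an even broken wheel.
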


\begin{claimproof} As $\textnormal{End}(P,G)=\varnothing$, it follows from \ref{LabCrownNonEmpt3} that $x_1\in V(C\setminus P)$ and there is a $(P,G)$-obstruction of even length. Now suppose $x_0=x_1$ and let $j\in\{0,1\}$, where $G^{P_j}$ is a broken wheel, where $G^{P_j}-q_j$ has even length. Let $b\in L(p_1)$ and suppose there are three elements of $\mathcal{F}_b$ using the same color $r$ on $q_{1-j}$. Thus, there is a $T\subseteq L(q_{1-j})$ with $|T|\geq 3$, such that, for each $s\in T$, there is an element of $\mathcal{F}_b$ using $s, r$ on the respective vertices $q_j, q_{1-j}$. Now, there is an $L$-coloring $\psi$ of $G^{P_{1-j}}$ using $r$ on $q_{1-j}$, where $\psi(p_{1-j})=a$ if $j=1$ and $\psi(p_{1-j})=b$ if $j=0$. Possibly $r\in\{a,b\}$, but, in any case, since  $|V(G^{P_j})|$ is even, we have $p_jx_j\not\in E(G)$, so $\psi$ extends to an $L$-coloring $\psi^*$ of $V(P\cup G^{P_{1-j}})$ using $a,b$ on the respective vertices $p_0, p_1$. Since $|T\setminus\{\psi^*(x_{1-j})\}|\geq 2$ and $G^{P_j}-q_j$ is a path of even length, it follows from \ref{PropCor3} of Proposition \ref{CorMainEitherBWheelAtM1ColCor} that at least one element of $\mathcal{F}_b$ extends to $L$-color $G$, which is false. \end{claimproof}

Claim \ref{AtMostTwoForEachBLp1} implies that, if at least one of \ref{T4PartA}-\ref{T4PartC} of Definition \ref{BaseColoringDefn} holds, then there is a $b\in L(p_1)$ with $|\mathcal{F}_b|\leq 2$. As $G$ is a counterexample to \ref{LabCrownNonEmpt4}, it follows that $G$ violates all of \ref{T4PartA}-\ref{T4PartC}. Since $|V(C)|>4$, we let $C\setminus\mathring{P}:=p_0u_1\cdots u_tp_1$ for some $t\geq 1$.

\begin{claim}\label{X1EndChordCMinPCL3}  No color of $L(p_1)$ is $(P_1, G^{P_1})$-universal. \end{claim}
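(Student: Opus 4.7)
The plan is to argue by contradiction: suppose some $b^*\in L(p_1)$ is $(P_1,G^{P_1})$-universal, and produce a $\mathcal{G}$-base-coloring of $\{p_0,p_1\}$, contradicting that $G$ is a counterexample to \ref{LabCrownNonEmpt4}. Since Claim \ref{AtMostTwoForEachBLp1} gives $x_1\in V(C\setminus P)$, the edge $q_1x_1$ is a chord of $C$ and $G=G^{R_0}\cup G^{P_1}$ meet only on this edge. Universality then gives the following reformulation: an $L$-coloring $\pi$ of $V(P)$ with $\pi(p_1)=b^*$ extends to $L$-color $G$ if and only if $\pi|_{\{p_0,q_0,q_1\}}$ extends to an $L$-coloring of $G^{R_0}$ via some valid color on $x_1$, since extension through $G^{P_1}$ is then automatic. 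Moreover, if $x_1p_1\in E(G)$, the established chord-structure of $C$ forces $x_1p_1$ to be an edge of $C$, short-inseparability forces $G^{P_1}$ to be the triangle $x_1q_1p_1$, and applying universality to the path-$L$-coloring $(b^*,d,b^*)$ of $P_1$ (for any $d\in L(q_1)\setminus\{b^*\}$) forces $b^*\notin L(x_1)$. Hence in every case the valid $x_1$-colors for such $\pi$ are exactly $L(x_1)\setminus\{\pi(q_1)\}$.

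Next, I apply \ref{LabCrownNonEmpt4} inductively to the strictly smaller rainbow $\mathcal{G}':=(G^{R_0},C^{R_0},R_0,L)$, which is valid since $|V(G^{R_0})|<|V(G)|$ (as $p_1\notin V(G^{R_0})$) and the 3-path $R_0=p_0q_0q_1x_1$ has $|L(x_1)|=3$. The induction yields a $\mathcal{G}'$-base-coloring $\phi_0$ of $\{p_0,x_1\}$ with $\phi_0(p_0)=a$, $c_0:=\phi_0(x_1)$, and a corresponding bad-extension set $\mathcal{F}'_0$ of size at most two satisfying one of \ref{T4PartA}, \ref{T4PartB}, \ref{T4PartC} relative to $\mathcal{G}'$. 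The map $\pi\mapsto \pi|_{\{p_0,q_0,q_1\}}\cup\{x_1\mapsto c_0\}$ then injects $\{\pi\in\mathcal{F}_{b^*}:\pi(q_1)\ne c_0\}$ into $\mathcal{F}'_0$: for such $\pi$, $c_0$ is a valid $x_1$-color, and by the reformulation above $\pi\in\mathcal{F}_{b^*}$ forces the image quadruple not to extend to $L$-color $G^{R_0}$.

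To conclude, I verify that $\phi:=(p_0\mapsto a,\,p_1\mapsto b^*)$ is a $\mathcal{G}$-base-coloring. When $c_0=b^*$, the restriction $\pi(q_1)\ne c_0$ is automatic (since $\pi(q_1)\ne b^*$ always), so the injection covers all of $\mathcal{F}_{b^*}$ and gives $|\mathcal{F}_{b^*}|\le|\mathcal{F}'_0|\le 2$; a triangle-type $(R_0,G^{R_0})$-obstruction witnessing \ref{T4PartB}/\ref{T4PartC} for $\phi_0$ lifts directly to a triangle-type $(P,G)$-obstruction with the same apex vertex; the even-/odd-tiltedness of the terminal edge $p_0q_0$ transfers unchanged since $G^{P_0}$ is common to $\mathcal{G}$ and $\mathcal{G}'$; and the tiltedness of $p_1q_1$ is read off from the triangle or broken-wheel structure of $G^{P_1}$ forced by Theorem \ref{EitherBWheelOrAtMostOneColThm} combined with universality. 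When $c_0\ne b^*$, the extra elements $\pi\in\mathcal{F}_{b^*}$ with $\pi(q_1)=c_0$ are controlled by a separate application of Proposition \ref{CorMainEitherBWheelAtM1ColCor} to the 2-path $p_0q_0q_1$ in $G^{R_0}$ equipped with the modified list $L(x_1)\setminus\{c_0\}$ on $x_1$, using that such $\pi$ fail to extend through $G^{R_0}$ for every $x_1$-color distinct from $c_0$. Either way, $\phi$ is a $\mathcal{G}$-base-coloring, the desired contradiction. The main obstacle is this last step in the case $c_0\ne b^*$: bounding the extra bad extensions and verifying that the structural obstruction of $\mathcal{G}'$ lifts to one for $\mathcal{G}$ with the correct tiltedness of $p_1q_1$ requires careful case analysis.
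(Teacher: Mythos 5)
Your reformulation of ``$\pi\in\mathcal{F}_{b^*}$'' as ``$\pi|_{\{p_0,q_0,q_1\}}$ does not extend to $L$-color $G^{R_0}$'' is correct, and inducting on $\mathcal{G}'=(G^{R_0},C^{R_0},R_0,L)$ is a genuinely different decomposition from the paper's, which instead shows $x_0=x_1$ (so that $G^{P_0}$ is a broken wheel and $\Lambda_{G^{P_0}}(a,\pi(q_0),\bullet)=\{\pi(q_1)\}$ for each $\pi\in\mathcal{F}_{b^*}$) and then derives a parity contradiction from \ref{BWheel1Lb} and \ref{BWheel2Lb} of Theorem~\ref{BWheelMainRevListThm2}. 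However, your argument has a genuine gap in the case $c_0\neq b^*$, which you flag but do not close: the injection into $\mathcal{F}'_0$ misses every $\pi\in\mathcal{F}_{b^*}$ with $\pi(q_1)=c_0$, and nothing in $\phi_0$ bounds this omitted set, which corresponds to the set of $s\in L(q_0)\setminus\{a,c_0\}$ for which $(a,s,c_0)$ fails to extend to $L$-color $G^{R_0}$. The proposed repair---Proposition~\ref{CorMainEitherBWheelAtM1ColCor} applied to the 2-path $p_0q_0q_1$ ``with the modified list $L(x_1)\setminus\{c_0\}$''---is not legal, since $x_1\in V(C^{R_0}\setminus p_0q_0q_1)$ and must carry a list of size at least three for that tuple to be a rainbow. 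Applying \ref{PropCor3} of that proposition with the unmodified $L$ only gives $|\{\pi\in\mathcal{F}_{b^*}:\pi(q_1)=c_0\}|\le 2$, which combined with the one or two injected colorings is still too weak; and a raw bound $|\mathcal{F}_{b^*}|\le 2$ would not finish in any case, since Definition~\ref{BaseColoringDefn} also requires one of \ref{T4PartA}--\ref{T4PartC}.

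There is also a conceptual error in the transfer step. In $\mathcal{G}'$ the analogue of $x_1$ for $q_1$ is $x_1$ itself, because the chord $q_1x_1$ of $C$ becomes an edge of $C^{R_0}$ and $q_1$ has no chords of $C^{R_0}$; since $x_1\in V(R_0)$, a triangle-type $(R_0,G^{R_0})$-obstruction cannot exist, so $\phi_0$ can only satisfy \ref{T4PartA}. Your proposed lift of \ref{T4PartB}/\ref{T4PartC} obstructions, and in particular the claim that the tiltedness of $p_1q_1$ is ``forced by Theorem~\ref{EitherBWheelOrAtMostOneColThm} combined with universality,'' is therefore moot and, moreover, incorrect as stated: universality of $b^*$ asserts that the relevant $L$-colorings of $G^{P_1}$ \emph{do} extend, so it supplies no failing coloring from which Theorem~\ref{EitherBWheelOrAtMostOneColThm} could be invoked to determine the structure of $G^{P_1}$, and the later Claim~\ref{IfNotTriThenIntersecBdCL}, which does pin that structure down, depends on the present claim.
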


\begin{claimproof}  Suppose there is such a $b\in L(p_1)$. 

\vspace*{-8mm}
\begin{addmargin}[2em]{0em}
\begin{subclaim} $x^0=x^1$ and, in particular, $G^{P_0}$ is a broken wheel with principal path $P_0$. \end{subclaim}

\begin{claimproof} Suppose not. Since \ref{T4PartA} is violated, there exist distinct $\pi, \pi'\in\mathcal{F}_b$. As $b$ is $(P_1, G^{P_1})$-universal, each of $(a, \pi(q_0), \pi(q_1))$ and $(a, \pi'(q_0), \pi'(q_1))$ is an $L$-coloring of $P-p_1$ which does not extend to $L$-color $G^{P_0}\cup H$. By Theorem \ref{EitherBWheelOrAtMostOneColThm}, $G^{P_0}\cup H$ is a broken wheel with principal path $P-p_1$, so $x_0=x_1$ and the subclaim holds. \end{claimproof}\end{addmargin}

Let $x_0=x_1=x^*$ for some $x^*\in V(C\setminus P)$. Since $b$ is $(P_1, G^{P_1})$-universal, it follows that, for each $\pi\in\mathcal{F}_b$, we have $\Lambda_{G^{P_0}}(a, \pi(q_0), \bullet)=\{\pi(q_1)\}$. In particular, for any distinct $\pi, \pi'\in\mathcal{F}_b$, we have $\pi(q_0)\neq\pi'(q_0)$. 

\vspace*{-8mm}
\begin{addmargin}[2em]{0em}
\begin{subclaim}\label{DistinctSigmaSigma'Avoid} There exist $\sigma, \sigma'\in\mathcal{F}_b$ with $S_{\sigma}\neq S_{\sigma'}$. \end{subclaim}

\begin{claimproof} Suppose not. Since \ref{T4PartC} is violated, $|V(G^{P_0})|$ is even. Again, since $|\mathcal{F}_b|>1$, there exist distinct $\pi, \pi'\in\mathcal{F}_b$, so $\pi(q_0)=\pi'(q_1)$ and $\pi'(q_0)=\pi(q_1)$. But then, by \ref{BWheel1Lb} \ref{BWheel1B} of Theorem \ref{BWheelMainRevListThm2}, $|V(G^{P_0})|$ is odd. \end{claimproof}\end{addmargin}

Let $\sigma, \sigma'$ be as in Subclaim \ref{DistinctSigmaSigma'Avoid}.  Now, if $\sigma(q_1)\neq\sigma'(q_1)$, then, again, it follows from \ref{BWheel1Lb} \ref{BWheel1B} of Theorem \ref{BWheelMainRevListThm2} applied to $G^{P_0}$ that $S_{\sigma}=S_{\sigma'}$, which is false. Thus, $\sigma(q_1)=\pi'(q_1)=r$ for some $r\in L(q_1)$ and it follows from \ref{BWheel1Lb} \ref{BWheel1A} of Theorem \ref{BWheelMainRevListThm2} that $G^{P_0}-q_0$ is a path of even length. Since  \ref{T4PartB} is violated, there is a $\tau\in\mathcal{F}_b$ with $\tau(q_1)\neq r$. Again, since $b$ is $(P_1, G^{P_1})$-universal, we have $\Lambda_{G^{P_0}}(a, \tau(q_0), \bullet)=\{\tau(q_1)\}$, so $\tau$ restricts to a different $L$-coloring of $p_0q_0$ than either of $\sigma, \sigma'$, contradicting \ref{BWheel2Lb} of Theorem \ref{BWheelMainRevListThm2}. \end{claimproof}

\begin{Claim}\label{IfNotTriThenIntersecBdCL} $G^{P_1}$ is a broken wheel with principal path $P_1$, where $L(p_1)=L(u_t)$ and $G^{P_1}-q_1$ has length at most two. Furthermore, if $G^{P_1}$ has length precisely two, then $|L(u_{t-1})\cap L(p_1)|\geq 2$. \end{Claim}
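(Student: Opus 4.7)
The plan is to begin by applying Proposition~\ref{CorMainEitherBWheelAtM1ColCor}(\ref{PropCor1}) to the sub-rainbow $(G^{P_1}, C^{P_1}, P_1, L)$. The hypotheses hold: $|L(p_1)|=3$; Claim~\ref{X1EndChordCMinPCL3} supplies that no color of $L(p_1)$ is $(P_1, G^{P_1})$-universal; every chord of $C^{P_1}$ is incident to $q_1$, since $q_0\notin V(G^{P_1})$ and every chord of $C$ has an endpoint in $\mathring{P}$; and $G^{P_1}$ inherits short-inseparability from $G$. The proposition then forces $G^{P_1}$ to be a broken wheel with principal path $P_1$, with either $|V(G^{P_1})|\leq 4$ or $L(p_1)\subseteq L(u_t)\cap L(u_{t-1})$; in the latter case $|L(p_1)|=|L(u_t)|=3$ already gives $L(p_1)=L(u_t)$.

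Next I would rule out $|V(G^{P_1})|\geq 5$, which is the main obstacle. Given the resulting $L(p_1)\subseteq L(u_{t-1})\cap L(u_t)$, form $G^{\dagger}$ from $G$ by deleting $u_{t-1}, u_t$ and inserting the edge $u_{t-2}p_1$, so that $C^{\dagger}:=(C\setminus\{u_{t-1}, u_t\})+u_{t-2}p_1$ becomes the new outer cycle (note $u_{t-2}p_1\notin E(G)$, since every chord of $C$ had an endpoint in $\mathring{P}$). The triple $\mathcal{G}^{\dagger}:=(G^{\dagger}, C^{\dagger}, P, L)$ is still a rainbow with $|L(p_1)|\geq 3$, so by the minimality of $G$ as a counterexample to \ref{LabCrownNonEmpt4}, there is a $\mathcal{G}^{\dagger}$-base-coloring $\phi$ of $\{p_0, p_1\}$. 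I would then argue that $\phi$ is also a $\mathcal{G}$-base-coloring, producing the desired contradiction. The key inclusion is $\mathcal{F}_{\phi(p_1)}\subseteq\mathcal{F}^{\dagger}_{\phi(p_1)}$: if some $\pi\in\mathcal{F}_{\phi(p_1)}$ extended to an $L$-coloring $\psi$ of $G^{\dagger}$, then $\psi(p_1)=\phi(p_1)\in L(p_1)=L(u_{t-1})$, and Observation~\ref{MinCounterReUseObs} applied to the $3$-path $p_1 u_t u_{t-1} u_{t-2}$ with $u=q_1$ (each vertex of which is adjacent to $q_1$ in $G$, and with the constraint $\psi(u_{t-2})\neq\psi(p_1)$ supplied by the new edge of $G^\dagger$) would force $\psi$ to extend to $L$-color $G$, contradicting $\pi\in\mathcal{F}_{\phi(p_1)}$. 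Hence $|\mathcal{F}_{\phi(p_1)}|\leq|\mathcal{F}^{\dagger}_{\phi(p_1)}|\leq 2$. The structural alternatives \ref{T4PartA}--\ref{T4PartC} of Definition~\ref{BaseColoringDefn} also transfer from $\mathcal{G}^{\dagger}$ to $\mathcal{G}$: the distinguished vertices $x_0, x_1$ are unchanged, since the deletion sits strictly inside $G^{P_1}$ and does not touch them; being a triangle-type $(P,G)$-obstruction depends only on $x_0, x_1$; and any path witnessing even- or odd-tiltedness of $p_1 q_1$ changes length by exactly two under the deletion and edge insertion, preserving parity.

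Finally, for the case $|V(G^{P_1})|=4$ (spine $x_1 u_t p_1$, so $x_1=u_{t-1}$), I would verify $|L(u_{t-1})\cap L(p_1)|\geq 2$ by a direct list-analysis. For each $a\in L(p_1)$, the non-universality from Claim~\ref{X1EndChordCMinPCL3} yields $b\in L(q_1)\setminus\{a\}$ and $c\in L(x_1)\setminus\{b\}$ such that $(a,b,c)$ on $p_1 q_1 x_1$ does not extend to color $u_t$; since $u_t$ is adjacent to each of $p_1, q_1, x_1$ and $|L(u_t)|=3$, this forces $L(u_t)=\{a,b,c\}$. Hence $c\in L(x_1)\cap L(u_t)=L(x_1)\cap L(p_1)$ (and in passing $a\in L(u_t)$, which as $a$ ranges over $L(p_1)$ gives $L(p_1)=L(u_t)$ in this case as well), and $c\neq a$ (else $|L(u_t)|\leq 2$). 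As this holds for every $a\in L(p_1)$, a pigeonhole over the three elements of $L(p_1)$ forces $|L(x_1)\cap L(p_1)|\geq 2$. The hard part throughout is the structural transfer under the deletion in the middle paragraph, for which the symmetric application of Observation~\ref{MinCounterReUseObs} with $\psi(p_1)\in L(u_{t-1})$ is essential.
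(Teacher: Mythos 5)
Your proof is correct in its main structure but takes a genuinely different reduction from the paper's. To rule out $|V(G^{P_1})|\geq 5$, the paper deletes the two endpoint vertices $p_1, u_t$ and works with the new $3$-path $P^{\dagger}:=p_0q_0q_1u_{t-1}$; you instead contract the interior pair $u_{t-1}, u_t$ to the edge $u_{t-2}p_1$, keeping $P$ fixed. Both exploit $L(p_1)=L(u_{t-1})$ and the fact that a length-two excision preserves the parity of the relevant paths. Your version keeps $x_0, x_1$ and $P$ literally unchanged, which makes the transfer of all three alternatives in Definition~\ref{BaseColoringDefn} transparent, at the cost of adding an edge and proving the inclusion $\mathcal{F}_b\subseteq\mathcal{F}^{\dagger}_b$ via Observation~\ref{MinCounterReUseObs}; the paper's version avoids the edge insertion and instead transfers the color at $u_{t-1}$ to $p_1$ directly. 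Either works, and your final paragraph spells out the ``Furthermore'' clause $|L(u_{t-1})\cap L(p_1)|\geq 2$ in full detail, which the paper's own proof of this claim actually leaves implicit.

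There is one small omission: you establish $L(p_1)=L(u_t)$ when $|V(G^{P_1})|\geq 5$ (from Proposition~\ref{CorMainEitherBWheelAtM1ColCor}~\ref{PropCor1}) and when $|V(G^{P_1})|=4$ (from your closing list analysis), but not when $G^{P_1}$ is a triangle, which is still a live possibility after your reduction. The fix is a one-liner of the same flavor: if $G^{P_1}=x_1q_1p_1$ (so $u_t=x_1$), then any proper $L$-coloring of $p_1q_1x_1$ already colors all of $G^{P_1}$, so a color $a\in L(p_1)\setminus L(u_t)$ would be $(P_1, G^{P_1})$-universal (for any admissible $b, c$ we automatically have $c\neq a$), contradicting Claim~\ref{X1EndChordCMinPCL3}; hence $L(p_1)\subseteq L(u_t)$, and equality follows since both lists have size three. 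Insert that sentence and the argument is complete.
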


\begin{claimproof} As $|L(p_1)|>1$ and no color of $L(p_1)$ is $(P_1, G^{P_1})$-universal, it follows from Theorem \ref{EitherBWheelOrAtMostOneColThm} that $G^{P_1}$ is a broken wheel with principal path $P_1$. Furthermore, $L(p_1)=L(u_t)$. Suppose $|V(G^{P_1})|>4$. By \ref{PropCor1} of Proposition \ref{CorMainEitherBWheelAtM1ColCor},  $L(p_1)=L(u_{t-1})$. Let $G^{\dagger}:=G\setminus\{p_1, u_t\}$ and $P^{\dagger}:=p_0q_0q_1u_{t-1}$. Let $C^{\dagger}$ be the outer face of $G^{\dagger}$. By the minimality of $|V(G)|$, there is an $L$-coloring $\phi$ of $\{p_0, u_{t-1}\}$ satisfying one of \ref{T4PartA}-\ref{T4PartC} applied to the rainbow $(G^{\dagger}, C^{\dagger}, P^{\dagger}, L)$. Let $b=\phi(u_{t-1})$. Then $b\in L(p_1)$ and, for any $\pi\in\mathcal{F}_b$, the union $\pi\cup\phi$ is a proper $L$-coloring of its domain which extends to $L$-color $V(P\cup G^{P_1})$, since $q_0u_{t-1}\not\in E(G)$ and the same color is used on $u_{t-1}, p_1$. As $x_1\not\in\{u_{t-1}, u_t, p_1\}$, there is a triangle-type $(P,G)$-obstruction if and only if there is a triangle-type $(P^{\dagger}, G^{\dagger})$-obstruction. Furthermore, $G^{\dagger}$ is still short-inseparable, and $G^{P_1}$ is a broken wheel with an even number of vertices if and only if $G^{P_1}\setminus\{u_t, p_1\}$ is a broken wheel with an even number of vertices, so, by minimality, there is a $\pi\in\mathcal{F}_b$ such that the $L$-coloring $(a, \pi(q_0), \pi(q_1), b)$ of $P^{\dagger}$ extends to $L$-color $G^{\dagger}$, and $\pi\cup\psi$ extends to $L$-color $G$, a contradiction. \end{claimproof}

Over the course of Claims \ref{IfX0X1SameObsK1NotDel}-\ref{X0X1DistinctNoPGObsT4}, we now rule out the possibility that $x_0=x_1$. 

\begin{claim}\label{IfX0X1SameObsK1NotDel} If $x_0=x_1$,  then both of the following hold. 
\begin{enumerate}[label=\arabic*)]
\itemsep-0.1em
\item $G^{P_0}$ is a broken wheel with an even number of vertices; AND
\item $G^{P_1}$ is broken wheel with four vertices.
\end{enumerate} \end{claim}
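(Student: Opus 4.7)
The plan is to assume $x_0 = x_1 = x^*$, so that $M = x^* q_0 q_1$ is a triangle with no interior vertex (by short-inseparability); hence $G = G^{P_0} \cup M \cup G^{P_1}$ with $G^{P_0} \cap G^{P_1} = \{x^*\}$, and $\{x^*\}$ is a triangle-type $(P, G)$-obstruction. The repeatedly used \emph{separation property} is this: for any $L$-coloring $\pi$ of $V(P)$ with $\pi(p_0)=a$ and $\pi(p_1)=b$, $\pi \in \mathcal{F}_b$ if and only if the sets $\Lambda_{G^{P_0}}(a, \pi(q_0), \bullet)$ and $\Lambda_{G^{P_1}}(\bullet, \pi(q_1), b)$ (both subsets of $L(x^*)$) are disjoint, since any extension of $\pi$ to $L$-color $G$ must pick $\psi(x^*)$ from the common portion of these two $\Lambda$-sets.

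For part 2, I suppose for contradiction that $G^{P_1}$ is the triangle $x^* q_1 p_1$, so that $\Lambda_{G^{P_1}}(\bullet, d, b) = L(x^*) \setminus \{d, b\}$ and the separation property reduces $\pi \in \mathcal{F}_b$ to $\Lambda_{G^{P_0}}(a, \pi(q_0), \bullet) \subseteq \{\pi(q_1), b\}$. Applying the minimality of $|V(G)|$ to the strictly smaller rainbow $(G - p_1, C^\dagger, P^\dagger, L)$, where $P^\dagger := p_0 q_0 q_1 x^*$ is a 3-path of $C^\dagger := (C - p_1) + q_1 x^*$, yields a base-coloring $\phi$ of $\{p_0, x^*\}$ with $\phi(x^*) =: b^*$. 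Each $d \in L(q_0) \setminus \{a\}$ with $b^* \notin \Lambda_{G^{P_0}}(a, d, \bullet)$ contributes at least $|L(q_1)| - 2 \geq 3$ extensions to the failure set of $\phi$ (one per valid color of $q_1$), so the bound $|\mathcal{F}^\dagger| \leq 2$ forces $b^* \in \Lambda_{G^{P_0}}(a, d, \bullet)$ for every $d$. Then Theorem \ref{EitherBWheelOrAtMostOneColThm} applied to $G^{P_0}$ gives two subcases: either at most one $V(P_0)$-coloring fails, in which case a direct count yields $|\mathcal{F}_{b^*}| \leq 1$ and makes $\{p_0 \to a, p_1 \to b^*\}$ a \ref{T4PartA}-base-coloring of $G$; or $G^{P_0}$ is a broken wheel with principal path $P_0$, in which case a parity analysis via Theorem \ref{BWheelMainRevListThm2} \ref{BWheel1Lb} combined with Claim \ref{AtMostTwoForEachBLp1} produces some $b \in L(p_1) \setminus \{b^*\}$ for which $\{p_0 \to a, p_1 \to b\}$ satisfies \ref{T4PartB} with $j = 0$ (every element of $\mathcal{F}_b$ uses $b^*$ on $q_1$ and $p_0 q_0$ is forced to be even-tilted). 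Either subcase contradicts that $G$ is a counterexample, establishing part 2.

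With $G^{P_1}$ now known to have four vertices (so $x_1 = u_{t-1}$ and $L(u_t) = L(p_1)$ by Claim \ref{IfNotTriThenIntersecBdCL}), the same technique establishes part 1. Here $\Lambda_{G^{P_1}}(\bullet, d, b)$ equals $\{b\} \cup (L(x^*) \setminus L(u_t))$ when $d \in L(u_t) \setminus \{b\}$ and equals $L(x^*) \setminus \{d\}$ otherwise. Combining the separation property with Theorem \ref{EitherBWheelOrAtMostOneColThm} on $G^{P_0}$ and the parity statements of Theorem \ref{BWheelMainRevListThm2} \ref{BWheel1Lb}, I conclude that $G^{P_0}$ must be a broken wheel with principal path $P_0$ whose spoke path $G^{P_0} - q_0$ has even length, for otherwise some $b \in L(p_1)$ makes $\{p_0 \to a, p_1 \to b\}$ into a \ref{T4PartA}, \ref{T4PartB}, or \ref{T4PartC} base-coloring of $G$. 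The main technical obstacle throughout is the extensive case analysis of which subsets of $L(x^*)$ are realized as $\Lambda_{G^{P_0}}(a, d, \bullet)$ as $d$ varies over $L(q_0) \setminus \{a\}$, and verifying that no such collection lets $G$ simultaneously avoid all three base-coloring conditions for every $b \in L(p_1)$.
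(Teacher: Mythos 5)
Your approach inverts the paper's order of argument: the paper proves part 1 first (directly via Theorem \ref{SumTo4For2PathColorEnds}, extracting a 2-element set $S$ of $(P_0, G^{P_0})$-sufficient colors on $x^*$ when $G^{P_0}$ is not a broken wheel with an even number of vertices) and then leans heavily on the resulting structure of $G^{P_0}$ when proving part 2, whereas you attempt part 2 first by contracting to $G-p_1$ and invoking minimality. The contraction idea and the derivation that $b^* \in \Lambda_{G^{P_0}}(a, d, \bullet)$ for every $d$ are both sound, and this is a genuinely different route.

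However, your first subcase of part 2 contains a gap. You assert that if at most one $L$-coloring of $V(P_0)$ fails to extend to $G^{P_0}$, then $|\mathcal{F}_{b^*}| \leq 1$, making $\{p_0 \mapsto a, p_1 \mapsto b^*\}$ a \ref{T4PartA}-base-coloring. This does not follow. Suppose the unique failing coloring $(a, d_0, e_0)$ has $d_0, e_0 \in L(x^*)$; then $L(x^*) = \{d_0, e_0, b^*\}$ (since $b^* \in \Lambda_{G^{P_0}}(a, d_0, \bullet) = L(x^*)\setminus\{d_0,e_0\}$) and $\Lambda_{G^{P_0}}(a, d_0, \bullet) = \{b^*\}$. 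Now every $L$-coloring $\pi$ of $V(P)$ with $\pi(q_0) = d_0$ and $\pi(p_1) = b^*$ satisfies $\Lambda_{G^{P_0}}(a, d_0, \bullet) = \{b^*\} \subseteq \{\pi(q_1), b^*\}$, so $\pi \in \mathcal{F}_{b^*}$ for all $|L(q_1)\setminus\{d_0, b^*\}| \geq 3$ choices of $\pi(q_1)$. Thus $|\mathcal{F}_{b^*}| \geq 3$, and $\{p_0\mapsto a, p_1\mapsto b^*\}$ satisfies none of \ref{T4PartA}--\ref{T4PartC} (the colorings use distinct colors on $q_1$, ruling out \ref{T4PartC} and \ref{T4PartB} with $j=0$, while $p_1q_1$ is odd-tilted since $G^{P_1}$ is a triangle, ruling out \ref{T4PartB} with $j=1$). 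The contradiction you want does exist in this scenario — one can check $\{p_0 \mapsto a, p_1 \mapsto e_0\}$ is a \ref{T4PartC}-base-coloring, as $\mathcal{F}_{e_0}$ consists of at most the two colorings with $\{q_0,q_1\}$ mapped to $\{d_0, b^*\}$ — but your stated argument does not reach it, and because you deferred part 1, you cannot (as the paper does) appeal to $G^{P_0}$ being a broken wheel and not a triangle to rule this configuration out. The remainder of part 2 and all of part 1 are stated only as a plan ("a parity analysis \ldots produces some $b$"), which is precisely where the substantive case analysis would have to live.
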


\begin{claimproof} Suppose $x_0=x_1$. We first prove 1). Suppose 1) does not hold. Since $|L(p_0)|=1$, it follows from Theorem \ref{SumTo4For2PathColorEnds} that there is an $S\subseteq L(u_t)$ with $|S|=2$, where any $L$-coloring of $\{p_0, x_0\}$ using a color of $S$ on $x_0$ is $(P_0, G^{P_0})$-sufficient. If $G^{P_1}$ is a triangle, then, for each $b\in  L(p_1)\setminus S$ and $\pi\in\mathcal{F}_b$, we have $S_{\pi}=S$ and \ref{T4PartC} is satisfied, which is false. As $L(p_1)\setminus S\neq\varnothing$, it follows that $G^{P_1}$ is not a triangle. By Claim \ref{IfNotTriThenIntersecBdCL}, $G^{P_1}$ is a broken wheel with four vertices, i.e $x_0=x_1=u_{t-1}$, and furthermore, $S\cap L(p_1)\neq\varnothing$. Let $s\in S\cap L(p_1)$ and consider $\mathcal{F}_s$. Since $G^{P_1}-q_1$ has even length and \ref{T4PartB} is violated, there is a $\pi\in\mathcal{F}_s$ with $\pi(q_0)\neq s$. Since $\pi(q_1)\neq s$ as well, $\pi$ extends to an $L$-coloring of $V(P\cup G^{P_0})$ using $s$ on both of $u_{t-1}, p_1$, so $\pi$ extends to $L$-color $G$, which is false. This proves 1). Now we prove 2). Suppose 2) does not hold. Thus, by Claim \ref{IfNotTriThenIntersecBdCL}, $G^{P_1}$ is a triangle, so $x_0=x_1=u_t$. 

\vspace*{-8mm}
\begin{addmargin}[2em]{0em}
\begin{subclaim}\label{utut-1SameListL} $L(u_t)=L(u_{t-1})$. \end{subclaim}

\begin{claimproof} Suppose not. As both sets have size three, there is a $c\in L(u_t)\setminus L(u_{t-1})$. Choose a $b\in L(p_1)\setminus\{c\}$. As \ref{T4PartB} is violated,  there is a $\pi\in\mathcal{F}_b$ with $\pi(q_1)\neq c$. Thus, either $\pi(q_0)=c$ or $c\in L_{\pi}(u_t)$. In any case, since $G^{P_1}$ is a triangle, it follows that $\pi$ extends to $L$-color $G$, a contradiction. \end{claimproof}\end{addmargin}

Let $L(p_1)=\{b_0, b_1, b_2\}$. Now, since $L(p_1)=L(u_t)$ and $G^{P_1}$ is a triangle, but \ref{T4PartC} is violated, it follows that, for each $k=0,1,2$, there is a $\pi_k\in\mathcal{F}_{b_k}$ such that $S_{\pi_k}\neq L(u_t)\setminus\{b_k\}$. In particular, $L_{\pi_k}(u_t)$ is a nonempty subset of $L(u_t)\setminus\{b_k\}$. For each $k=0,1,2$, let $s_k:=\pi_k(q_0)$. Since $G^{P_1}$ is a triangle, we have $\Lambda_{G^{P_0}}(a, s_k, \bullet)\cap L_{\pi_k}(u_t)=\varnothing$. As $G^{P_0}$ is a broken wheel but not a triangle, we have $a\in L(u_1)$ and $\{s_0, s_1, s_2\}\subseteq L(u_1)$. Thus, we suppose without loss of generality that $s_0=s_1=s$ for some $s\in L(q_1)$. If $|L_{\pi_0}(u_t)\cup L_{\pi_1}(u_t)|\geq 2$, then one of $\pi_0, \pi_1$ extends to $L$-color $G$, so $L_{\pi_0}(u_t)=L_{\pi_1}(u_t)=\{r\}$ for some $r\in L(u_t)\setminus\{s\}$, and thus $\{\pi_0(q_1), b_0\}=\{\pi_1(q_1), b_1\}=\{b_0, b_1\}$. It follows that $r=b_2$ and $\pi_k(q_1)=b_{1-k}$ for each $k=0,1$, so $s\not\in L(u_t)$. But then, by Subclaim \ref{utut-1SameListL}, $s\not\in L(u_{t-1})$, so the $L$-coloring $(a, s, b_2)$ of $P_0$ extends to $L$-color $G^{P_0}$, a contradiction. \end{claimproof}

\begin{claim}\label{X0X1DistinctNoPGObsT4} $x_0\neq x_1$. \end{claim}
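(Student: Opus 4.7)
The plan is to assume, toward contradiction, $x_0 = x_1$ and derive a contradiction from the rigid structure established in Claim \ref{IfX0X1SameObsK1NotDel} together with a classification of $\mathcal{F}_b$. By Claim \ref{IfX0X1SameObsK1NotDel}, $x_0 = x_1 = u_{t-1}$ with $G^{P_0}$ a broken wheel on an even number of vertices (so $t$ is odd and at least $3$) and $G^{P_1}$ the $4$-vertex broken wheel on $u_{t-1}, q_1, u_t, p_1$; by Claim \ref{IfNotTriThenIntersecBdCL}, $L(u_t) = L(p_1)$ and $|L(u_{t-1}) \cap L(p_1)| \geq 2$. Write $L(p_1) = \{b_0, b_1, b_2\}$. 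The geometric data I will use: $u_{t-1}$ is a triangle-type $(P,G)$-obstruction, both $p_0q_0$ (via $p_0u_1\cdots u_{t-1}$ of length $t-1$) and $p_1q_1$ (via $p_1u_tu_{t-1}$) are even-tilted, and neither is odd-tilted, since the only path in $C\setminus\mathring{P}$ from $p_k$ to $x_k$ consisting of neighbors of $q_k$ is the geodesic. Hence \ref{T4PartC} fails automatically; since $G$ is a counterexample, \ref{T4PartA} and \ref{T4PartB} must also fail for every $b \in L(p_1)$, giving $|\mathcal{F}_b|\geq 2$ and $\mathcal{F}_b$ non-constant on both $q_0$ and $q_1$.

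Next I will classify the $\pi \in \mathcal{F}_b$. Setting $A(s) := \Lambda_{G^{P_0}}(a, s, \bullet)$ for $s \in L(q_0)\setminus\{a\}$, a direct check on the $4$-vertex broken wheel $G^{P_1}$ (using $L(u_t) = L(p_1)$ and the edge $u_{t-1}u_t$) shows $\pi = (a, s, r, b) \in \mathcal{F}_b$ iff $|A(s) \setminus \{r\}| \leq 1$ and, when this set equals $\{c\}$, $\{b, r, c\} = L(p_1)$. Letting $S_i := \{s : |A(s)| = i\}$, Theorem \ref{BWheelMainRevListThm2}\ref{BWheel2Lb} gives $|S_1| \leq 2$, and \ref{BWheel1Lb}\ref{BWheel1A} applied using the parity $|E(G^{P_0} - q_0)| = t-1$ even forces $A(s) = \{D\}$ to be the same singleton for all $s \in S_1$. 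A broken-wheel propagation argument (tracking the admissible sets $R_j$ for the rim vertex $u_j$: once $|R_j| \geq 2$ one has $R_i = L(u_i)\setminus\{s\}$ for every $i \geq j$, since $|L(u_i)|=3$) then shows that every $s \in S_2$ satisfies $s \in L(u_{t-1})$ and $A(s) = L(u_{t-1}) \setminus \{s\}$.

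The proof will then finish by a case split on $|L(u_{t-1}) \cap L(p_1)|\in\{2,3\}$. When the intersection has size $2$, writing $L(u_{t-1}) = \{b_0, b_1, \mu\}$ with $\mu \notin L(p_1)$, only $s = \mu$ can contribute to $S_2$ via a non-extending $\pi$, and only to $\mathcal{F}_{b_2}$; examining where $D$ lies ($D \in \{b_0, b_1\}$ forces $|\mathcal{F}_D|=0$; $D = \mu$ forces $\mathcal{F}_{b_0}$ and $\mathcal{F}_{b_1}$ to be constant on $q_1$, satisfying \ref{T4PartB} with $j=0$; $S_1 = \emptyset$ again forces $|\mathcal{F}_{b_0}|=0$) yields a contradiction in every subcase. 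When the intersection has size $3$ (i.e., $L(u_{t-1}) = L(p_1)$), every $s \in S_2$ has $A(s) = L(p_1)\setminus\{s\}$, so each $b_i$ has at most one $s \in S_2$ contributing to $\mathcal{F}_{b_i}$; taking $b = D$ if $S_1\neq\emptyset$ (so $S_1$ contributes zero to $\mathcal{F}_b$) or arbitrary $b$ if $S_1=\emptyset$, we see $|\mathcal{F}_b|\leq 2$ with the two elements (if present) sharing a single $q_0$-value, contradicting non-constancy of $\mathcal{F}_b$ on $q_0$. The main obstacle is the broken-wheel propagation that pins down $A(s)$ for $s \in S_2$, but this is essentially immediate from $|L(u_i)|=3$ together with the $R_j$-observation.
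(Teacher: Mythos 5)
Your approach diverges substantially from the paper's. The paper fixes $\phi\in\textnormal{End}(P_0, G^{P_0})$, derives from Subclaim~\ref{EachbUt-1AvoP1} that $r:=\phi(x_0)\notin L(p_1)$ (immediately pinning $|L(u_{t-1})\cap L(p_1)|=2$ and dispensing with the $=3$ case), and then pins down $\psi_k(q_0)=b_{1-k}$ via Proposition~\ref{CorMainEitherBWheelAtM1ColCor}\,\ref{PropCor2} before closing with a direct extension argument on a chosen $\sigma\in\mathcal{F}_{b_0}$. You instead classify $\mathcal{F}_b$ wholesale through the sets $A(s)=\Lambda_{G^{P_0}}(a,s,\bullet)$, which is a genuinely different and in principle reasonable route.

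However, the central propagation claim contains a gap. The inference ``once $|R_j|\geq 2$ one has $R_i=L(u_i)\setminus\{s\}$ for $i\geq j$'' is correct, but it does not yield the stated conclusion that every $s\in S_2$ satisfies $s\in L(u_{t-1})$ and $A(s)=L(u_{t-1})\setminus\{s\}$. If the propagation stays stuck in singletons all the way, i.e. $|R_1|=\dots=|R_{t-2}|=1$, then $A(s)=L(u_{t-1})\setminus\{c_{t-2},s\}$, which can have size two with $s\notin L(u_{t-1})$. Concretely, take $t=3$, $L(p_0)=\{a\}$, $L(u_1)=\{a,s,c\}$, $L(u_2)=\{c,y,z\}$ with $s\notin L(u_2)$: then $R_1=\{c\}$, $A(s)=\{y,z\}$, so $s\in S_2$ but $s\notin L(u_{t-1})$ and $A(s)\neq L(u_{t-1})\setminus\{s\}$. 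In the $|L(u_{t-1})\cap L(p_1)|=2$ subcase your downstream conclusions happen to survive this (any contributing $s\in S_2$ still forces $A(s)=\{b_0,b_1\}$ and $b=b_2$), but in the $|L(u_{t-1})\cap L(p_1)|=3$ subcase the argument ``each $b_i$ has at most one $s\in S_2$ contributing, hence the two elements of $\mathcal{F}_b$ share a single $q_0$-value'' no longer follows: a stuck singleton chain can supply a second contributing $s'$ with $c_{t-2}(s')=b_i$, yielding colorings of $\mathcal{F}_{b_i}$ that are \emph{not} constant on $q_0$, exactly the wrong conclusion. The paper avoids this entirely by killing the $=3$ case up front via $\textnormal{End}(P_0, G^{P_0})$ and the subclaim; if you want to keep your classification framework, you should first establish $r\notin L(p_1)$ by that route (or equivalently show that some $r\in L(u_{t-1})\setminus L(p_1)$ lies in $A(s)$ for every $s$), and only then run the $=2$ case analysis.
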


\begin{claimproof} Suppose not. By Claim \ref{IfX0X1SameObsK1NotDel} $x_0=x_1=u_{t-1}$ and $G^{P_0}$ is a broken wheel and $G^{P_0}-q_0$ is a path of even length. 

\vspace*{-8mm}
\begin{addmargin}[2em]{0em}
\begin{subclaim}\label{EachbUt-1AvoP1} For each $b\in L(u_{t-1})\cap L(p_1)$, there is an $L$-coloring $\psi$ of $V(P_0)$ with $\psi(q_0)=b$, where $\psi$ does not extend to $L$-color $G$. \end{subclaim}

\begin{claimproof} Since $G^{P_1}-q_1$ has length two and \ref{T4PartB} is violated, there is a $\pi\in\mathcal{F}_b$ with $\pi(q_0)\neq b$. As $\pi(q_1)\neq b$ as well, it follows that the $L$-coloring $(a, \pi(q_1), b)$ of $p_0q_0x_0$ does not extend to $L$-color $G^{P_0}$, or else $\pi$ extends to $L$-color $G$, as the same color is used on $u_{t-1}, p_1$. \end{claimproof}\end{addmargin}

By Theorem \ref{SumTo4For2PathColorEnds}, there is a $\phi\in\textnormal{End}(P_0, G^{P_0})$. Let $r:=\phi(p_0)$. By Subclaim \ref{EachbUt-1AvoP1},  $r\not\in L(p_1)$. Thus, by Claim \ref{IfNotTriThenIntersecBdCL}, $|L(u_{t-1})\cap L(p_1)|=2$. Let $L(p_1)=\{b_0, b_1, b_2\}$, where $b_0, b_1\in L(u_{t-1})$. Thus, for each $k=0,1$, there is an $L$-coloring $\psi_k$ of $V(P_0)$ which does not extend to $L$-color $G^{P_0}$, where $\psi_k(u_{t-1})=b_k$. Let $T:=\{\psi_0(q_0), \psi_1(q_0)\}$. As $G^{P_0}$ is not a triangle, it follows from \ref{PropCor2} of Proposition \ref{CorMainEitherBWheelAtM1ColCor} that $T=\{b_0, b_1\}$. That is, for each $k=0,1$, $\psi_k(q_0)=b_{1-k}$. Consider $\mathcal{F}_{b_0}$. As \ref{T4PartB} is violated, there is a $\sigma\in\mathcal{F}_{b_0}$ with $\sigma(q_1)\neq r$. As $L(p_1)=L(u_t)$, we have $r\not\in L(u_t)$. Now, if $\sigma(q_0)\neq r$, then, by our choice of $r$, we get that $\sigma$ extends to an $L$-coloring of $V(P\cup G^{P_0})$ using $r$ on $u_{t-1}$, and thus, as $r\not\in L(u_t)$, $\sigma$ extends to $L$-color $G$, which is false. Thus, $\sigma(q_0)=r$. Note that $\Lambda_{G^{P_0}}(a, r, \bullet)=T$ by \ref{BWheel2Lb} of Theorem \ref{BWheelMainRevListThm2}. As $\sigma(q_1)\neq b_0$, it follows that $\sigma$ extends to an $L$-coloring of $V(P\cup G^{P_0})$ using $b_0$ on $u_{t-1}$. As the same color is used on $u_{t-1}, p_1$, it follows that $\sigma$ extends to an $L$-coloring of $G$, which is false. This proves Claim \ref{X0X1DistinctNoPGObsT4}. \end{claimproof}

Since $x_0\neq x_1$, it follows from Claim \ref{AtMostTwoForEachBLp1} that $G^M$ is a wheel with central vertex $w$, where $C^M$ has odd length. Let $P^*:=x_0wx_1$, so that $G^M\setminus\{q_0, q_1\}=G^{P^*}$. 

\begin{claim}\label{TwoElSubNotConstLastCL} For any 2-element subset $T$ of $L(p_1)$, the set of colorings $\bigcup_{b\in T}\mathcal{F}_b$ is not constant on $p_1$. \end{claim}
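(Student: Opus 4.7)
The plan is to derive this claim directly from the fact, established earlier in this proof, that $\textnormal{End}(P,G) = \varnothing$. Recall that $\mathcal{F}_b$ consists of the $L$-colorings of $V(P)$ that use $a$ on $p_0$ and $b$ on $p_1$ and fail to extend to $L$-color $G$. Since $L(p_0) = \{a\}$, for each $b \in L(p_1)$ there is a unique $L$-coloring $\phi_b$ of $\{p_0, p_1\}$ with $\phi_b(p_0) = a$ and $\phi_b(p_1) = b$, and $\mathcal{F}_b$ is precisely the set of extensions of $\phi_b$ to $V(P)$ that do not extend to $L$-color $G$.

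Suppose for contradiction that some 2-element subset $T = \{b, b'\} \subseteq L(p_1)$ yields a union $\bigcup_{c \in T} \mathcal{F}_c$ that is constant on $p_1$. Since every element of $\mathcal{F}_c$ takes the value $c$ on $p_1$, this forces one of $\mathcal{F}_b, \mathcal{F}_{b'}$ to be empty, say $\mathcal{F}_b = \varnothing$. By the observation of the previous paragraph, $\mathcal{F}_b = \varnothing$ says exactly that $\phi_b$ is $(P,G)$-sufficient, so $\phi_b \in \textnormal{End}(P,G)$, contradicting $\textnormal{End}(P,G) = \varnothing$.

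There is no substantive obstacle here, because the claim amounts to the bookkeeping observation that each color in $L(p_1)$ is genuinely witnessed by some coloring in $\mathcal{F}_b$. The only subtlety is making sure that $\textnormal{End}(P,G) = \varnothing$ is a legitimately available hypothesis at this stage, which it is: it was deduced from the assumption that $G$ violates \textnormal{B1)}, since any $\phi \in \textnormal{End}(P,G)$ would satisfy \textnormal{B1)} with the empty set $\mathcal{F}$. The claim's role, I expect, is to set up the final contradiction of the proof: with $L(p_1) = \{b_0, b_1, b_2\}$ and all three sets $\mathcal{F}_{b_0}, \mathcal{F}_{b_1}, \mathcal{F}_{b_2}$ non-empty, one can combine the rigid structure of $G^M$ as an odd wheel centered at $w$, the structure of $G^{P_1}$ as a broken wheel with $L(p_1) = L(u_t)$, and the failure of \textnormal{B2)} and \textnormal{B3)} to force inconsistent constraints on the colors used at $q_0, q_1, w$.
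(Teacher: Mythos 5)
The claim's statement contains a typo: it should read ``not constant on $q_1$,'' not ``not constant on $p_1$.'' You can see this both from the paper's own proof (which opens with ``suppose there is a $c\in L(q_1)$ such that each element of $\mathcal{F}_{b_0}\cup\mathcal{F}_{b_1}$ uses $c$ on $q_1$'') and from the downstream use, where the claim is invoked to produce a $b\in L(p_1)\setminus\{c\}$ and a $\psi\in\mathcal{F}_b$ with $\psi(q_1)\neq c$. Your argument proves the literal (typo) version, which, as you correctly observe, reduces to the bookkeeping fact that $\mathcal{F}_b\neq\varnothing$ for every $b\in L(p_1)$ because $\textnormal{End}(P,G)=\varnothing$. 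That reasoning is sound but proves a much weaker statement than what is actually needed.

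The genuine content of the claim is that you cannot have two values of $b$ for which the ``bad'' colorings all agree at $q_1$. Non-emptiness of each $\mathcal{F}_b$ does nothing to rule that out: two colorings with different $p_1$-colors can perfectly well share a $q_1$-color. The paper's proof of the corrected statement is a substantial structural argument. It sets $T_k:=\{\phi(q_0):\phi\in\mathcal{F}_{b_k}\}$, deduces $|T_k|\geq 2$, produces a $(P_0,G^{P_0})$-sufficient coloring $\sigma$ with $\sigma(x_0)=r$, proves a subclaim that $G^{P_0}$ is not an edge, $T_0\cap T_1=\{r\}$, and $\psi_0(x_1)\neq\psi_1(x_1)$, and then uses the fact that $G^{P^*}$ is a broken wheel with an even number of vertices (via Proposition \ref{CorMainEitherBWheelAtM1ColCor}) together with a careful count of the available colors on $w$ and on the neighbor of $x_0$ in $G^{P^*}-w$ to force a contradiction. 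None of this is present in your argument, so the proposal has a real gap: the hard part of the claim is entirely untouched.
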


\begin{claimproof} Let $T:=\{b_0, b_1\}$ and suppose there is a $c\in L(q_1)$ such that each element of $\mathcal{F}_{b_0}\cup\mathcal{F}_{b_1}$ uses $c$ on $q_1$. For each $k=0,1$, let $T_k:=\{\phi(q_0): \phi\in\mathcal{F}_{b_k}\}$. As $G$ is a counterexample, $|T_k|\geq 2$ for each $k=0,1$. Now, there is a $(P_0, G^{P_0})$-sufficient $L$-coloring $\sigma$ of $\{p_0, x_0\}$. This is immediate if $G^{P_0}$ is an edge. Otherwise, it follows from Theorem \ref{SumTo4For2PathColorEnds}. Let $r:=\sigma(x_0)$. Now, for each $k=0,1$, there is an $L$-coloring $\psi_k$ of $G^{P_1}$ using $c, b_k$ on $q_1, p_1$ respectively.  

\vspace*{-8mm}
\begin{addmargin}[2em]{0em}
\begin{subclaim}\label{S0S1IntersectSolelySigmax0} $G^{P_0}$ is not an edge, and $T_0\cap T_1=\{r\}$. Furthermore, $\psi_0(x_1)\neq\psi_1(x_1)$.

\end{subclaim}

\begin{claimproof} Suppose first that there is a $k\in\{0,1\}$ with $r\not\in T_k$. As no element of $\mathcal{F}_{b_k}$ extends to $L$-color $G$, it follows that $\Lambda_{G^{P^*}}(r, \bullet, \psi_k(x_1)\cap (L(w)\setminus\{c, r, \psi_k(x_1)\})=\varnothing$. But since $|V(G^{P^*})|$ is even, this contradicts \ref{PropCor3} of Proposition \ref{CorMainEitherBWheelAtM1ColCor}. Thus, $r\in T_0\cap T_1$. If $G^{P_0}$ is an edge, then $r=a$, which is false, as $a\not\in T_0\cup T_1$, so $G^{P_0}$ is not an edge. Now suppose toward a contradiction that $|T_0\cap T_1|\geq 2$. Every chord of the outer cycle of $G^{P_0}\cup G^M$ is incident to $q_0$, but $G^{P_0}\cup G^M$ is not a broken wheel. Since $a, c\not\in T_0\cap T_1$, it follows from Theorem \ref{EitherBWheelOrAtMostOneColThm} that there is an $L$-coloring $\tau$ of $G^{P_0}\cup G^M$ with $\tau(q_0)\in T_0\cap T_1$ and $\tau(q_1)=c$. Since $\Lambda_{G^{P_1}}(\tau(x_1), c, \bullet)\cap T\neq\varnothing$, at least one element of $\bigcup_{b\in T}\mathcal{F}_b$ extends to $L$-color $G$, which is false, so $T_0\cap T_1=\{r\}$. Finally, suppose $\psi_0(x_1)=\psi_1(x_1)=d$ for some $d\in L(x_1)$. Possibly $d=r$, but $G^{P^*}-w$ has even length, and, by \ref{PropCor3} of Proposition \ref{CorMainEitherBWheelAtM1ColCor}, $\Lambda_{G^{P^*}}(r, \bullet, d)\cap (L(w))\cap (L(w)\setminus\{c, d, r\})\neq\varnothing$. Since $|T_0\cup T_1|\geq 2$, it follows that $\sigma$ extends to an $L$-coloring $\tau$ of $G^{P_0}\cup G^M$ using $c,d$ on the respective vertices $q_1, x_1$ and using a color of $T_0\cup T_1$ on $q_0$. Thus, there is a $k\in\{0,1\}$ such that at least one element of $\mathcal{F}_k$ extends to $L$-color $G$, which is false.  \end{claimproof}\end{addmargin}

Now, it follows from Subclaim \ref{S0S1IntersectSolelySigmax0} that $r\in L(q_0)\setminus\{a\}$, and since $x_0q_1\not\in E(G)$, there is an $L$-coloring $\phi$ of $V(G^{P_0})\cup\{q_1\}$ with $\phi(q_0)=r$ and $\phi(q_1)=c$. Let $W:=L(w)\setminus\{r, \phi(q_0), c\}$. Since $r\in T_0\cap T_1$ and no element of $\bigcup_{b\in T}\mathcal{F}_b$ extends to $L$-color $G$, it follows that  there is no $L$-coloring of $G^{P^*}$ using $\phi(x_0)$ on $x_0$, a color of $W$ on $w$, and a color of $\{\psi_0(x_1), \psi_1(x_1)\}$ on $x_1$. Since $\psi_0(x_1)\neq\psi_1(x_1)$ and $|W|\geq 2$, this implies that $W=\{\psi_0(x_1), \psi_1(x_1)\}$ and furthermore, letting $u$ be the unique neighbor of $x_0$ on the path $G^{P^*}-w$, we have $L(u)=\{\phi(x_0)\}\cup W$. In particular, $r\not\in L(u)$ and $r\in L(w)\setminus W$. We now choose an arbitrary $\tau\in\bigcup_{b\in T}\mathcal{F}_b$ with $\tau(q_0)\neq r$. Say $\tau\in\mathcal{F}_{b_0}$ for the sake of definiteness. We produce a contradiction by showing that $\tau\cup\psi_0$ extends to $L$-color $G$. Choose an arbitrary $r'\in\Lambda_{G^{P_0}}(a, \tau(q_0), \bullet)$. We have either $r'=r$ or $r\in L(w)\setminus\{r', \tau(q_0), c\}$. In any case, since $G^{P^*}-w$ is not a triangle and since $r\not\in L(u)$ and $r\neq\psi_0(x_1)$, it follows that $\tau$ extends to an $L$-coloring of $V(P\cup G^{P_0}\cup G^M)$ using $\psi_0(x_1)$ on $x_1$, so $\tau\cup\psi_0$ extends to $L$-color $G$, a contradiction. This proves Claim \ref{TwoElSubNotConstLastCL}. \end{claimproof}

Now, it follows from \ref{LabCrownNonEmpt3} that there is a $(R_0, G^{R_0})$-sufficient $L$-coloring $\phi$ of $\{p_0, x_1\}$. Let $c:=\phi(x_1)$. Thus, for any $b\in L(p_1)$ and any $\psi\in\mathcal{F}_b$, either the union $\psi\cup\phi$ is not a proper $L$-coloring of $V(P)\cup\{x_1\}$ or the $L$-coloring $(c, \phi(q_1), b)$ of $x_1q_1p_1$ does not extend to $L$-color $G$. By Claim \ref{TwoElSubNotConstLastCL}, there is a $b\in L(p_1)\setminus\{c\}$ and a $\psi\in\mathcal{F}_b$ with $\psi(q_1)\neq c$. Since $q_0x_1\not\in E(G)$, the union $\phi\cup\psi$ is a proper $L$-coloring of its domain, so the $L$-coloring $(c, \psi(x_1), b)$ of $x_1q_1p_1$ does not extend to $L$-color $G^{P_1}$. It follows that $G^{P_1}$ is not a triangle. By Claim \ref{IfNotTriThenIntersecBdCL}, $|V(G^{P_1})|=4$ and $L(u_t)=L(p_1)$, so $x_1=u_{t-1}$ and since $(c, \psi(x_1), b)$ does not extend to $L$-color $G^{P_1}$, we have $c\in L(u_t)$, so $c\in L(p_1)$. Now, for any $\psi\in\mathcal{F}_c$, we have $\psi(q_1)\neq c$ and the union $\phi\cup\psi$ is a proper $L$-coloring of domain using the same color on $u_{t-1}, x_1$, so $\phi\cup\psi$ extends to $L$-color $G^{P_1}$ as well, which is false, as indicated above. This proves \ref{LabCrownNonEmpt4} and completes the proof of Theorem \ref{CombinedT1T4ThreePathFactListThm}. \end{proof}

\section{The proof of Theorem \ref{MainHolepunchPaperResulThm}}\label{MainResHolepunchSec}

With the intermediate results of the previous sections in hand, we now prove Theorem  \ref{MainHolepunchPaperResulThm}. We first note that, in the statement of Theorem \ref{MainHolepunchPaperResulThm}, we cannot drop the condition that $q_0, q_1$ have no common neighbor in $C\setminus P$. If we drop this condition, then we have the counterexample in Figure \ref{DropConditionHolepunchCounterFigM}.

\begin{center}
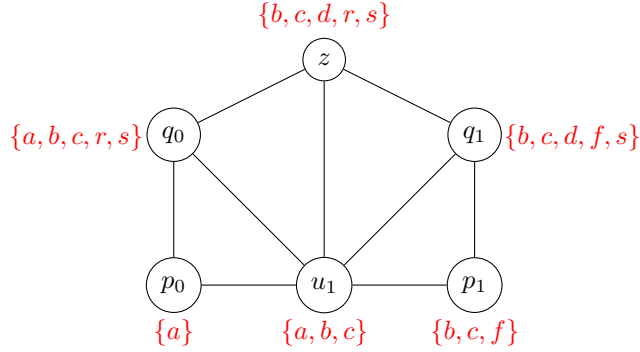
\begin{tikzpicture}
\node[shape=circle,draw=black] [label={[xshift=-0.0cm, yshift=-1.3cm]\textcolor{red}{$\{a\}$}}] (p0) at (0,0) {$p_0$};
\node[shape=circle,draw=black] [label={[xshift=-0.0cm, yshift=-1.3cm]\textcolor{red}{$\{a, b, c\}$}}] (u1) at (2, 0) {$u_1$};
\node[shape=circle,draw=black] [label={[xshift=-0.0cm, yshift=-1.3cm]\textcolor{red}{$\{b, c, f\}$}}] (p1) at (4, 0) {$p_1$};
\node[shape=circle,draw=black] [label={[xshift=-1.3cm, yshift=-0.7cm]\textcolor{red}{$\{a, b, c, r, s\}$}}] (q0) at (0,2) {$q_0$};
\node[shape=circle,draw=black] [label={[xshift=1.3cm, yshift=-0.7cm]\textcolor{red}{$\{b, c, d, f, s\}$}}] (q1) at (4,2) {$q_1$};
\node[shape=circle,draw=black] [label={[xshift=-0.0cm, yshift=0cm]\textcolor{red}{$\{b,c,d,r,s\}$}}] (z) at (2,3) {$z$};

 \draw[-] (p1) to (u1) to (p0) to (q0) to (z) to (q1) to (p1);
\draw[-] (z) to (u1);
\draw[] (q0) to (u1) to (q1);
\end{tikzpicture}\captionof{figure}{Theorem \ref{MainHolepunchPaperResulThm} is false if $q_0, q_1$ are allowed to have a common neighbor in $V(C\setminus\mathring{P})$}\label{DropConditionHolepunchCounterFigM}\end{center}

Letting $G$ be the graph above, it is straightforward to verify that there is no partial $L$-coloring $\phi$ of $G\setminus\{q_0, q_1\}$ whose domain contains $p_0, p_1, z$ such that both of $q_0, q_1$ have at least three leftover colors, where either $u_1$ is also colored or the inertness condition is satisfied. We now prove Theorem \ref{MainHolepunchPaperResulThm}, which we restate below. 

\begin{thmn}[\ref{MainHolepunchPaperResulThm}] Let $(G, C, P, L)$ be a rainbow, where $P$ has length four, the endpoints of $\mathring{P}$ have no common neighbor in $C\setminus P$, each internal vertex of $P$ has an $L$-list of size at least five, and at least one endpoint of $P$ has a list of size at least three. Then $\textnormal{Crown}_{L}(P, G)\neq\varnothing$.  \end{thmn}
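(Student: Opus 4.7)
The plan is to argue by vertex-minimal counterexample following the blueprint of the proofs of Theorem \ref{CombinedT1T4ThreePathFactListThm}. Suppose $(G, C, P, L)$ is a vertex-minimal counterexample. First I would normalize lists so that every vertex of $C \setminus P$ has list of size exactly three, every vertex of $\{q_0, z, q_1\} \cup V(G \setminus C)$ has list of size exactly five, and (WLOG) $|L(p_0)| = 1$ while $|L(p_1)| = 3$. The usual reductions via Corollary \ref{CycleLen4CorToThom}, Theorem \ref{thomassen5ChooseThm}, and Observation \ref{MinCountChordSepCyDObs} give that $G$ is short-inseparable and that every chord of $C$ has an endpoint in $\mathring{P} = \{q_0, z, q_1\}$; in particular $V(C) \neq V(P)$. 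Using Theorem \ref{SumTo4For2PathColorEnds} together with the no-common-neighbor hypothesis, I would also rule out certain chords such as $q_0 q_1$, $p_0 z$, $p_1 z$, $p_0 q_1$, $p_1 q_0$, each of which reduces the problem to a strictly smaller 3-path instance already handled by Theorem \ref{CombinedT1T4ThreePathFactListThm}.

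The main analysis is a case split on the chord structure incident to $z$. If $z$ has a chord to some $v \in V(C \setminus P)$, then the natural $zv$-partition splits $G$ into $G_L$ (containing $p_0, q_0$) and $G_R$ (containing $q_1, p_1$), and I would apply parts \ref{LabCrownNonEmpt3} and \ref{LabCrownNonEmpt4} of Theorem \ref{CombinedT1T4ThreePathFactListThm} to the 3-paths $p_0 q_0 z v \subseteq G_L$ and $v z q_1 p_1 \subseteq G_R$ (each of which satisfies the list hypotheses since $|L(v)| = 3$) to obtain compatible sufficient or base-colorings of $\{p_0, v\}$ and $\{v, p_1\}$ that agree on $v$. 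I would then extend them to a coloring $\phi$ by choosing $\phi(z) \in L(z) \setminus \{\phi(p_0), \phi(v), \phi(p_1)\}$ (a set of size at least two) in a way that leaves $q_0, q_1$ with $L_\phi$-lists of size at least three, with the sufficiency of $\phi$ inherited from that of the two side-colorings. If instead $z$ has no chord to $V(C \setminus P)$, then all chords of $C$ are incident to $q_0$ or $q_1$, and by the no-common-neighbor hypothesis their chord-neighbor sets in $V(C \setminus P)$ are disjoint. In this subcase I would apply part \ref{LabCrownNonEmpt4} to each of the 3-paths $p_0 q_0 z q_1$ and $q_0 z q_1 p_1$ to obtain base-colorings of $\{p_0, q_1\}$ and $\{q_0, p_1\}$ respectively, and exploit the base-coloring structure together with the 5-list freedom at $z$ to construct $\phi$ on $V(C) \setminus \{q_0, q_1\}$.

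The main obstacle I expect is the bookkeeping in the restricted base-coloring subcases \ref{T4PartB} and \ref{T4PartC} of Definition \ref{BaseColoringDefn}, which involve triangle-type $(P,G)$-obstructions and odd-tilted terminal edges. In those subcases, several extensions of the 3-path endpoints fail to extend to $G$, and I would need to show that these failure modes can be avoided by judicious choice of $\phi(z)$ or of $\phi$ on vertices of $V(C \setminus P)$ adjacent to $q_0$ or $q_1$ via chords. The no-common-neighbor hypothesis is essential throughout, as witnessed by Figure \ref{DropConditionHolepunchCounterFigM}: without it, a shared $C \setminus P$-neighbor of $q_0$ and $q_1$ would consume colors from both leftover lists simultaneously, breaking the required count of three leftover colors at each of $q_0$ and $q_1$.
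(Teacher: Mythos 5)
The high-level shell (vertex-minimal counterexample, list normalization, short-inseparability, chords of $C$ incident to $\mathring{P}$, and the easy chord eliminations $p_0q_1, p_1q_0, q_0q_1, p_0z, p_1z$) matches the paper's Subsection~\ref{CInduCycleSubS} and the opening of Section~\ref{MainResHolepunchSec}. Your first branch --- handle a chord $zv$ by splitting at it and gluing a result from~\ref{LabCrownNonEmpt3}/\ref{LabCrownNonEmpt4} on each side --- is essentially what Claims~\ref{zPreciselyOneMPiQiNotSize3}--\ref{CycCIndMTh} do, except that the paper uses that construction to show the case cannot occur (every chord of $C$ is in fact incident to $q_0$ or $q_1$), rather than treating it as a genuine alternative in the final construction. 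So far, close enough.

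The problem is your second branch, which is where virtually all of the paper's work lives. You propose to apply~\ref{LabCrownNonEmpt4} to the 3-paths $p_0q_0zq_1$ and $q_0zq_1p_1$ to get base-colorings of $\{p_0,q_1\}$ and $\{q_0,p_1\}$ and then ``exploit the 5-list freedom at $z$.'' This does not go anywhere: a base-coloring of $\{p_0,q_1\}$ colours $q_1$ --- one of the two vertices that the Crown definition requires be left \emph{uncoloured} with $|L_\phi(q_i)|\geq 3$. These two 3-paths also share the segment $q_0zq_1$, so there is no disjoint decomposition of $G$ along which the two base-colorings could be ``glued,'' and the $L$-sufficiency guarantees you would get from~\ref{LabCrownNonEmpt4} are phrased in terms of extending to those 3-paths plus their own side of $G$, which overlap. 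Concretely, the difficulty after Claim~\ref{CycCIndMTh} is not bookkeeping in subcases~\ref{T4PartB}/\ref{T4PartC}: it is that $G\setminus C$ may contain vertices adjacent to several vertices of $P$ (for instance, a $w\in N(q_0)\cap N(z)\cap N(q_1)$, or a $w\in N(z)\cap N(q_i)$ with extra neighbours in $C\setminus\mathring P$, or a common neighbour of $x_0,x_1$), and each of these configurations defeats the naive ``colour $p_0,z,p_1$ and hope'' strategy. The paper spends Subsections~\ref{CommNZXISubSFin}--\ref{SubsecShowX0X1NoComm} (Claims~\ref{SwitchBaseColorUseTerm}, \ref{IfwAdj10zq1ThenAdjCAllInt}, \ref{NoCommToBothQ0Q1ExcZ}, \ref{X0X1NoCommonNbrG-C}) ruling these out one by one, and then Claim~\ref{EachObViNonempty} and the final argument pin down the residual obstruction vertices $y_0, y_1$. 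Your proposal has no analogue of this structural analysis of $G\setminus C$, nor of the triangulation step (``every face except $C$ is a triangle'') that the paper uses to control it, so it does not constitute a proof.
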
 

\begin{proof} Suppose not and let $G$ be a vertex-minimal counterexample to the theorem, where $P:=p_0q_0zq_1p_1$. By adding edges to $G$ if necessary, we suppose that every face of $G$, except $C$, is bounded by a triangle. This is permissible as we can add edges until this holds without creating a common neighbor to $q_0, q_1$ in $C\setminus P$. By removing colors from some lists if necessary, we suppose that $|L(p_0)|=1$ and $|L(p_1)|=3$ and furthermore, for each $v\in V(C\setminus P)$, $|L(v)|=3$, and, for each $v\in V(G\setminus C)\cup\{z\}$, $|L(v)|=5$. We now introduce the following notation.

\begin{enumerate}[label=\arabic*)]
\item We let $C\setminus\mathring{P}:=p_0u_1\ldots u_tp_1$ for some $t\geq 1$. For each $i\in\{0,1\}$, we let $x_i$ be the unique vertex of $N(q_i)\cap V(C\setminus\mathring{P})$ which is farthest from $p_i$ on the path $C\setminus\mathring{P}$. We let $P_0$ be the path $p_0q_0x_0$ and $P_1$ be the path $x_1q_1p_1$, where, for each $i\in\{0,1\}$, if $x_i=p_i$, then $P_i$ is an edge.
\item For any subgraph $H$ of $G$, we let $\mathcal{X}(H)$ be the set of $L$-colorings of $V(H)$ which do not extend to $L$-color $G$.
\end{enumerate}

We break the proof of Theorem \ref{MainHolepunchPaperResulThm} into several subsections of Section \ref{MainResHolepunchSec}. In particular, the main intermediate steps in the proof of Theorem \ref{MainHolepunchPaperResulThm} are the following: In Subsections \ref{CInduCycleSubS}, we show that every chord of $C$ is incident to precisely one of $q_0, q_1$. In Subsection \ref{CommNZXISubSFin}, we show that, for each $i\in\{0,1\}$, if there is a $w\in N(z)\cap N(q_i)$, then $w$ has no neighbors in $C\setminus\mathring{P}$, except possibly $x_i$. In Subsection \ref{CommNQ1ZQSubS}, we show that $q_0, q_1$ have no common neighbor other than $z$. In Subsection \ref{SubsecShowX0X1NoComm}, we show that $x_0, x_1$ have no common neighbor in $G\setminus C$. We repeatedly make use of the following fact: For each $i\in\{0,1\}$,  endpoint $y$ of $P_i$ and $c\in L(y)$, if either $P_i$ is an edge or the other endpoint of $P_i$ has a list of size at least three, then there is a $(P_i, G_i)$-sufficient $L$-coloring of $\{p_i, x_i\}$ using $c$ on $y$. This is immediate if $P_i$ is an edge, otherwise it follows from Theorem \ref{SumTo4For2PathColorEnds}. 

\subsection{Preliminary Restrictions}

Firstly, applying Theorem \ref{thomassen5ChooseThm} and Corollary \ref{CycleLen4CorToThom}, it immediate follows from the minimality of $|V(G)|$ that $G$ is short-inseparable, and furthermore, every chord of $C$ is incident to one of $\{q_0, z, q_1\}$. 

\begin{Claim}\label{PInducedNoPGSuff1}
Both of the following hold. 
\begin{enumerate}[label=\arabic*)]
\itemsep-0.1em
\item $P$ is an induced path with $x_0\neq x_1$, and any $L$-coloring of $\{p_0, z, p_1\}$ extends to an element of $\mathcal{X}(P)$; AND
\item Let $i\in\{0,1\}$ and $\phi$ be an $L$-coloring of $\{p_i, x_i, z, p_{1-i}\}$ whose restriction to $\{p_i, x_i\}$ is $(P_i, G^{P_i})$-sufficient. Then either $|L_{\phi}(q_i)|=2$ or $\phi$ extends to an element of $\mathcal{X}(G^{P_i}\cup P)$.
\end{enumerate} \end{Claim}

\begin{claimproof} We first show that $p_0q_1, p_1q_0\not\in E(G)$. Suppose there is a $k\in\{0,1\}$ with $p_kq_{1-k}\in E(G)$. Let $G=G'\cup G''$ be the natural $p_kq_{1-k}$-partition of $G$, where $z\in V(G')$ and $p_{1-k}\in V(G'')$. By Theorem \ref{SumTo4For2PathColorEnds}, there is a $(p_kq_{1-k}p_{1-k}, G'')$-sufficient $L$-coloring $\phi$ of $\{p_0, p_1\}$. Possibly $p_kz\in E(G)$ as well, but, in any case, we have $|L_{\psi}(z)|\geq 4$ and $|L_{\psi}(q_k)|\geq 4$, and $|L_{\psi}(q_{1-k})|\geq 3$, so $\psi$ extends to an $L$-coloring $\psi^*$ of $\{p_0, z, p_1\}$ where each of $q_0, q_1$ has an $L_{\psi^*}$-list of size at least three. As $G$ is short-inseparable, $\psi^*$ is $(P,G)$-sufficient, contradicting our assumption that $G$ is a counterexample. Thus, $p_0q_1, p_1q_0\not\in E(G)$, so it follows from our assumption on $G$ that $x_0\neq x_1$, and it also follows that any $L$-coloring of $\{p_0, z, p_1\}$ extends to an element of $\mathcal{X}(P)$.

Now suppose there is a $k\in\{0,1\}$ with $zp_k\in E(G)$. As $|L(z)|=5$, it follows from \ref{LabCrownNonEmpt4} of Theorem \ref{CombinedT1T4ThreePathFactListThm} that there is an $(p_0zq_{1-k}p_{1-k}, G-q_k)$-sufficient $L$-coloring $\psi$ of $\{p_0, z, p_1\}$, and $\psi$ is $(P,G)$-sufficient, which is false, as shown above, so $p_0, p_1\not\in N(z)$. Now suppose $q_0q_1\in E(G)$. Thus, $N(z)=\{q_0, q_1\}$. By \ref{LabCrownNonEmpt3} of of Theorem \ref{CombinedT1T4ThreePathFactListThm}, there is a $(p_0q_0q_1p_1, G-z)$-sufficient $L$-coloring $\psi$ of $\{p_0, x_1, p_1\}$. As $x_0\neq x_1$, we have $|L_{\psi}(q_0)|\geq 4$ and $|L_{\psi}(q_1)|\geq 3$. Thus, $\psi$ extends to an $L$-coloring $\psi^*$ of $\{p_0, x_1, p_1, z\}$ where each of $q_0, q_1$ has an $L_{\psi^*}$-list of size at least three, and $\psi^*$ is $(P,G)$-sufficient, contradicting our assumption that $G$ is a counterexample. To finish, it suffices to show that $p_0p_1\not\in E(G)$. Suppose $p_0p_1\in E(G)$. Thus, $C$ is an induced 5-cycle, and it is straightforward to check using Theorem \ref{BohmePaper5CycleCorList} that there is a $(P,G)$-sufficient $L$-coloring of $\{p_0, z, p_1\}$, which is false, as shown above. This proves 1). Now, 2) follows immediately from 1), together with the fact that $G$ is a counterexample. \end{claimproof}

\begin{claim}\label{IfGP1NotEdgeThen} 
 If $G^{P_1}$ is not an edge, then either there is an almost $(P_1, G^{P_1})$-universal color of $L(p_1)$ or $G^{P_1}$ is a triangle with $L(p_1)=L(v_1)$.
\end{claim}

\begin{claimproof} Suppose not. Thus, $G^{P_1}$ is not an edge or a triangle, and no color of $L(p_1)$ is almost $(P_1, G^{P_1})$-universal. Since $|L(p_1)|=3$, it follows that $G^{P_1}$ is a broken wheel with principal path $P_1$ and $L(p_1)=L(u_t)$, and $|V(G^{P_1})|\geq 4$. Thus, by \ref{BWheel3Lb} of Theorem \ref{BWheelMainRevListThm2}, $L(p_1)=L(u_t)=L(u_{t-1})$. In particular, $u_{t-1}\in N(q_1)$. Let $P^*:=p_0q_0zq_1u_{t-1}$. Note that $q_0, q_1$ have no common neighbor in $C^{P_*}\setminus P^*$, so, by minimality, there is a $\phi\in\textnormal{Crown}(P^*, G^*)$. In particular, $u_{t-1}\in\textnormal{dom}(\phi)$ and, since $|L(u_{t-1})=L(p_1)$, $\phi$ extends to an $L$-coloring $\psi$ of $\textnormal{dom}(\phi)\cup\{p_1\}$ using the same color on $u_{t-1}, p_1$. Thus, $\psi$ is $(P, G)$-sufficient, and $L_{\psi}(q_1)=L_{\phi}(q_1)$, so each of $q_0, q_1$ has an $L_{\psi}$-list of size at least three, which is false, as $G$ is a counterexample. \end{claimproof}

\subsection{All chords of $C$ are incident to one of $q_0, q_1$}\label{CInduCycleSubS}

To show that there is no chord of $C$ incident to $z$, we first temporarily introduce the following notation: If $z$ is incident to a chord of $C$, then we define vertices $v_0, v_1$ and paths $Q_0, Q_1, M, R$ as follows:
\begin{enumerate}[label=\arabic*)] 
\itemsep-0.1em
\item For each $i\in\{0,1\}$, $v_i$ is the unique vertex of $N(z)\cap V(C\setminus\{q_0, q_1\})$ which is closest to $p_i$ on the path $C\setminus\mathring{P}$, and $Q_i$ is the path $p_iq_izv_i$.
\item $M$ is the path $v_0zv_1$ and $R$ is the path $v_0zq_1p_1$. In particular, if $v_0=v_1$, then $M$ is an edge.
\end{enumerate}

Over the course of Claims \ref{zPreciselyOneMPiQiNotSize3}-\ref{CycCIndMTh}, we now rule out any chords of $C$ incident to $z$. 

\begin{Claim}\label{zPreciselyOneMPiQiNotSize3} If $z$ is incident to at least one chord of $C$, then 
\begin{enumerate}[label=\Alph*)]
\itemsep-0.1em
\item\label{AofInterCL} For any $\phi\in\textnormal{End}(Q_0, G^{Q_0})$, there is no element of $\textnormal{End}(R, G^R)$ using $\phi(v_0)$ on $v_0$; AND
\item\label{BofInterCL} $v_0=x_0$.
\end{enumerate} \end{Claim}

\begin{claimproof} Suppose $z$ is incident to at least one chord of $C$, but $v_0\neq x_0$. We first prove \ref{AofInterCL}. Suppose there exist such $\phi, \psi$. Then $\phi\cup\psi$ is a proper $L$-coloring of $\{p_0, v_0, p_1\}$. From our assumption on $P$, and the fact that $P$ is induced, it follows $|L_{\phi\cup\psi}(z)|\geq 4$ and that at most one of $q_0, q_1$ is adjacent to $v_0$, so $\phi\cup\psi$ extends to an $L$-coloring $\pi$ of $\{p_0, v_0, p_1, z\}$ such that $|L_{\pi}(q_0)|\geq 3$ and $|L_{\pi}(q_1)|\geq 3$. Furthermore, $\pi$ is $(P,G)$-sufficient, contradicting our assumption that $G$ is a a counterexample. This proves \ref{AofInterCL}. Now we prove \ref{BofInterCL}. By definition of $v_0$, no chord of the outer cycle of $G^{Q_0}$ is incident to $z$, so, by \ref{LabCrownNonEmpt3} of Theorem \ref{CombinedT1T4ThreePathFactListThm}, there is a $(Q_0, G^{Q_0})$-sufficient $L$-coloring $\phi$ of $\{p_0, v_0\}$. Let $b:=\phi(v_0)$.

\vspace*{-8mm}
\begin{addmargin}[2em]{0em} 
\begin{subclaim}\label{QM1SuffPsiV0p1z} There is a $(R, G^R)$-sufficient $L$-coloring $\psi$ of $\{v_0, p_1, z\}$, where $|L_{\psi}(q_1)|\geq 3$ and $\psi(v_0)=b$. \end{subclaim}

\begin{claimproof} Suppose first that $v_0\not\in N(q_1)$. In that case, we just need to find a $(R, G^R)$-sufficient $L$-coloring of $\{v_0, p_1, z\}$  using $b$ on $v_0$. As $|L(z)|=5$, there is such a coloring by \ref{LabCrownNonEmpt4} of Theorem \ref{CombinedT1T4ThreePathFactListThm} applied to $G^R$. Now suppose that $v_0\in N(q_1)$. Thus, $v_0=v_1=x_1$, so $G^R=G^{Q_1}$. In particular $G^{P_1}$ is not an edge, and there is a $(P_1, G^{P_1})$-sufficient $L$-coloring $\pi$ of $\{x_1, p_1\}$ using $b$ on $x_1$. As $P$ is induced, $|L_{\pi}(z)|\geq 4$, so $\pi$ extends to an $L$-coloring of $\{v_0, p_1, z\}$ which satisfies the subclaim.  \end{claimproof}\end{addmargin}

Let $\psi$ be as in Subclaim \ref{QM1SuffPsiV0p1z}. By assumption, $v_0\neq x_0$, so $|L_{\phi\cup\psi}(q_0)|\geq 3$. Furthermore, $|L_{\phi\cup\psi}(q_1)|\geq 3$ as well, and $\phi\cup\psi$ is $(P,G)$-sufficient, contradicting our assumption that $G$ is a counterexample. This proves Claim \ref{zPreciselyOneMPiQiNotSize3}. \end{claimproof}

The second intermediate result that we need in order to prove the main result of Subsection \ref{CInduCycleSubS} is the following. 

\begin{claim}\label{ifZIncAtleastthenV0X0V1X1} If $z$ is incident to at least one chord of $C$, then, for each $i=0,1$, we have $v_i=x_i$. \end{claim}

\begin{claimproof} Suppose not. By \ref{BofInterCL} of Claim \ref{zPreciselyOneMPiQiNotSize3}, $v_0=x_0$, so $v_1\neq x_1$. As $P$ is induced, $G^{P_0}$ is not an edge. Furthermore, there is a $\phi\in\textnormal{End}(P_0, G^{P_0})$. By \ref{AofInterCL} of Claim \ref{zPreciselyOneMPiQiNotSize3}, no element of $\textnormal{End}(R, G^R)$ uses $\phi(v_0)$ on $v_0$. Thus, it follows from \ref{LabCrownNonEmpt3} of Theorem \ref{CombinedT1T4ThreePathFactListThm} applied to $G^R$ that there is an even-length $(R, G^R)$-obstruction $Q^*$, and that $x_1\neq p_1$, so $G^{P_1}$ is not just an edge. Since $v_1\neq x_1$, $Q^*$ is not a triangle-type $(R, G^R)$-obstruction, so there is a $w\in V(G^R)$ adjacent to all the vertices of the cycle $v_1(C\setminus\mathring{P})x_1q_1z$, where the path $v_1(C\setminus\mathring{P})x_1$ has even length. 

\vspace*{-8mm}
\begin{addmargin}[2em]{0em} 
\begin{subclaim} $v_0=v_1$, i.e $M=G^M$ is an edge. \end{subclaim}

 \begin{claimproof} Suppose not. Let $\mathcal{A}$ be the set of $L$-colorings of $p_0q_0zv_1$ which do not extend to $L$-color $G^{P_0}\cup G^M$. By \ref{LabCrownNonEmpt4} of Theorem \ref{CombinedT1T4ThreePathFactListThm}, there is an $L$-coloring $\phi$ of $\{p_0, v_1\}$ which extends to at most two elements of $\mathcal{A}$. Now, let $c:=\phi(v_1)$ and let $\mathcal{A}'$ be the set of $L$-colorings of $P^{H_1}$ which do not extend to $L$-color $G^{Q_1}$. Since $v_0\neq v_1$, there is no triangle-type $(Q^1, G^{Q_1})$-obstruction, so, again by \ref{LabCrownNonEmpt4} of Theorem \ref{CombinedT1T4ThreePathFactListThm}, since $|L(p_1)|=3$, there is an $L$-coloring $\psi$ of $\{v_1, p_1\}$ using $c$ on $v_1$, where $\psi$ extends to at most one element of $\mathcal{A}'$. As $P$ is induced, we have $|L_{\phi\cup\psi}(z)|\geq 4$. Thus, there is a $d\in L_{\phi\cup\psi}(z)$ such that no element of $\mathcal{A}\cup\mathcal{A}'$ uses $d$ on $z$, and so $\phi\cup\psi$ extends to a $(P, G)$-sufficient $L$-coloring $\pi$ of $\{p_0, v_1, p_1, z\}$. Since $v_0\neq v_1$, and since $v_1q_1\not\in E(G)$ by assumption, each of $q_0, q_1$ has a $L_{\pi}$-list of size at least three, contradicting our assumption that $G$ is a counterexample. \end{claimproof}\end{addmargin}

Now, we have the situation illustrated in Figure \ref{V0SecCase0X1Figure}, where $G^{v_0wx_1}$ is a broken wheel and $|V(G^{v_0wx_1})|$ is even. Let $A$ be the set of $c\in L(z)$ such that there is a $(p_0q_0z, G^{Q_0})$-sufficient $L$-coloring of $\{p_0, z\}$ using $c$ on $z$. Since $|L(z)|=5$, it follows from Theorem \ref{SumTo4For2PathColorEnds} applied to $G^{Q_0}$ that $|A|\geq 3$. 

\vspace*{-8mm}
\begin{addmargin}[2em]{0em} 
\begin{subclaim}\label{AnyLZp1ExtZQ1P1BlSub} For any $L$-coloring $\phi$ of $\{z, p_1\}$ with $\phi(z)\in A$, there is an extension of $\phi$ to an $L$-coloring $\phi^*$ of $zq_1p_1$ such that, for any $d\in\Lambda_{G^{P_1}}(\bullet, \phi^*(q_1), \phi^*(p_1))$, we have $|L(w^*)\setminus\{\phi^*(z), \phi^*(q_1), d\}|=2$. \end{subclaim}

\begin{claimproof} By 1) of Claim \ref{PInducedNoPGSuff1}, $\phi$ extends to an $L$-coloring $\psi$ of $V(P)$ which does not extend to $L$-color $G$. We show that the restriction of $\psi$ to $zq_1p_1$ satisfies the subclaim. Suppose not. Thus, there is a $d\in\Lambda_{G^{P_1}}(\bullet, \psi(q_1), \psi(p_1))$ with $|L(w^*)\setminus\{\psi(z), \psi(q_1), d\}|>2$. As $v_0x_1\not\in E(G)$ and $\psi(z)\in A$, we get that $\psi$ extends to an $L$-coloring $\pi$ of $V(G^{P_0}\cup G^{P_1}\cup P)$ using $d$ on $x_1$. Since $\pi$ does not extend to $L$-color $G$, and $|V(G^{v_0wx_1})|$ is even, it follows from \ref{PropCor3} i) of Proposition \ref{CorMainEitherBWheelAtM1ColCor} applied to $G^{v_0wx_1}$ that $|L_{\pi}(w)|<2$, contradicting our choice of $d$. \end{claimproof}\end{addmargin}

\begin{center}\begin{tikzpicture}

\node[shape=circle,draw=black] (p0) at (-4,0) {$p_0$};
\node[shape=circle,draw=black] (u1) at (-3, 0) {$u_1$};
\node[shape=circle,draw=white] (u1+) at (-2, 0) {$\ldots$};
\node[shape=circle,draw=black] (v0) at (-1, 0) {$v_0$};
\node[shape=circle,draw=white] (mid) at (0, 0) {$\ldots$};
\node[shape=circle,draw=black] (v1) at (1, 0) {$x_1$};
\node[shape=circle,draw=white] (un+) at (2, 0) {$\ldots$};
\node[shape=circle,draw=black] (ut) at (3, 0) {$u_t$};
\node[shape=circle,draw=black] (p1) at (4, 0) {$p_1$};
\node[shape=circle,draw=black] (q0) at (-3,2) {$q_0$};
\node[shape=circle,draw=black] (q1) at (3,2) {$q_1$};
\node[shape=circle,draw=black] (z) at (0,4) {$z$};
\node[shape=circle,draw=white] (K0) at (-2.8, 1) {$G^{P_0}$};
\node[shape=circle,draw=white] (K1) at (2.8, 1) {$G^{P_1}$};
\node[shape=circle,draw=black] (w) at (0, 1.5) {$w$};
\node[shape=circle,draw=white] (H*) at (0, 0.4) {$G^{v_0wx_1}$};
 \draw[-] (p1) to (ut);
 \draw[-] (v0) to (q0) to (z) to (q1) to (v1);
\draw[-] (q1) to (p1);
\draw[-] (p0) to (u1) to (u1+) to (v0) to (mid) to (v1) to (un+) to (ut);
\draw[-] (p0) to (q0);
\draw[-] (v0) to (z) to (w) to (q1);
\draw[-, line width=1.8pt] (v0) to (w) to (v1);

\end{tikzpicture}\captionof{figure}{}\label{V0SecCase0X1Figure}\end{center}

By Subclaim \ref{AnyLZp1ExtZQ1P1BlSub}, we have $A\subseteq L(w)$ and $|L(w)|=5$, and furthermore, $A\cap L(x_1)=\varnothing$. As $|A|\geq 3$, there is a $d\in L(x_1)$ with $d\not\in L(w)$. Again by Subclaim \ref{AnyLZp1ExtZQ1P1BlSub}, there is a set $\mathcal{S}$ of three different $L$-colorings of $q_1p_1$, each using a different color on $p_1$, where no element of $\mathcal{S}$ uses $d$ on $q_1$ and, for each $\pi\in\mathcal{S}$, we have $d\not\in\Lambda_{G^{P_1}}(\bullet, \pi(q_1), \pi(p_1))$. Thus, $G^{P_1}$ is not a triangle, so $x_1p_1\not\in E(G)$, and, for each $\pi\in\mathcal{S}$, $(d, \pi(q_1), \pi(p_1))$ is a proper $L$-coloring of $x_1q_1p_1$ which does not extend to $L$-color $G^{P_1}$, contradicting \ref{PropCor2} of Proposition \ref{CorMainEitherBWheelAtM1ColCor}. This proves Claim \ref{ifZIncAtleastthenV0X0V1X1}. \end{claimproof}

Applying Claims \ref{zPreciselyOneMPiQiNotSize3} and \ref{ifZIncAtleastthenV0X0V1X1}, we now prove the main result of Subsection \ref{CInduCycleSubS}. 

\begin{claim}\label{CycCIndMTh} Every chord of $C$ has precisely one endpoint in $\{q_0, q_1\}$. \end{claim}

\begin{claimproof} Suppose not. Thus, there is at least one chord with $z$ as an endpoint. By Claim \ref{ifZIncAtleastthenV0X0V1X1}, $v_0=x_0$ and $v_1=x_1$. As $P$ is induced, neither $G^{P_0}$ nor $G^{P_1}$ is not an edge. Since $q_0, q_1$ have no common neighbor in $C\setminus\mathring{P}$ by our assumption on $G$, it follows that $v_0\neq v_1$, so $M$ is not just an edge, and we have the structure illustrated in Figure \ref{V0V1=X0X1Figure}. As in Claim \ref{ifZIncAtleastthenV0X0V1X1}, we let $A$ be the set of $c\in L(z)$ such that there is a $(p_0q_0z, G^{Q_0})$-sufficient $L$-coloring of $\{p_0, z\}$ using $c$ on $z$, and we recall that $|A|\geq 3$. Note that, if $G^M$ is not a triangle, then $G^{Q_0}\cup G^{Q_1}$ is induced in $G$. 

\vspace*{-8mm}
\begin{addmargin}[2em]{0em}
\begin{subclaim}\label{ForAnyainAandbinBSubCLphi} If $G^M$ is not a triangle then,
\begin{enumerate}[label=\arabic*)]
\itemsep-0.1em
\item For any $a\in A$, there is an $L$-coloring $\psi$ of $V(G^{Q_0}\cup G^{Q_1})$, where $\psi(z)=a$ and the $L$-coloring $(\psi(v_0), \psi(z), \psi(v_1))$ of $v_0zv_1$ does not extends to $L$-color $G^M$; AND
\item For any $b\in L(p_1)$, there is an $L$-coloring $\psi'$ of $V(G^{Q_0}\cup G^{Q_1})$, where $\psi'(p_1)=b$ and the $L$-coloring $(\psi'(v_0), \psi'(z), \psi'(v_1))$ of $v_0zv_1$ does not extend to $L$-color $G^M$.
\end{enumerate}
\end{subclaim}

\begin{claimproof} By definition, there is a $\pi\in\textnormal{End}(p_0q_0z, G^{Q_0})$ using $a$ on $z$. Since $|L(p_1)|=3$, there is a $\pi^*\in\textnormal{End}(zq_1p_1, G^{Q_1})$ using $a$ on $z$, so $\pi\cup\pi^*$ is a proper $L$-coloring of $\{p_0, z, p_1\}$. By 1) of Claim \ref{PInducedNoPGSuff1}, $\pi\cup\pi^*$ extends to an element of $\mathcal{X}(P)$, so it follows that there is a $\psi$ as in 1). Likewise, since $|A|\geq 3$, it follows from Theorem \ref{SumTo4For2PathColorEnds} that, for each $b\in L(p_1)$, there is a $\sigma\in\textnormal{End}(zq_1p_1, G^{Q_1})$ with $\sigma(p_1)=b$ and $\sigma(z)\in A$, so the same argument shows that there is a $\psi'$ as in 2). \end{claimproof}
\end{addmargin}

\begin{center}\begin{tikzpicture}
\node[shape=circle,draw=black] (p0) at (-4,0) {$p_0$};
\node[shape=circle,draw=black] (u1) at (-3, 0) {$u_1$};
\node[shape=circle,draw=white] (u1+) at (-2, 0) {$\ldots$};
\node[shape=circle,draw=black] (v0) at (-1, 0) {$v_0$};
\node[shape=circle,draw=white] (mid) at (0, 0) {$\ldots$};
\node[shape=circle,draw=black] (v1) at (1, 0) {$v_1$};
\node[shape=circle,draw=white] (un+) at (2, 0) {$\ldots$};
\node[shape=circle,draw=black] (ut) at (3, 0) {$u_t$};
\node[shape=circle,draw=black] (p1) at (4, 0) {$p_1$};
\node[shape=circle,draw=black] (q0) at (-3,2) {$q_0$};
\node[shape=circle,draw=black] (q1) at (3,2) {$q_1$};
\node[shape=circle,draw=black] (z) at (0,4) {$z$};
\node[shape=circle,draw=white] (K0) at (-2.8, 1) {$G^{P_0}$};
\node[shape=circle,draw=white] (K1) at (2.8, 1) {$G^{P_1}$};
\node[shape=circle,draw=white] (Hdown) at (0, 1.6) {$G^M$};
 \draw[-] (p1) to (ut);
 \draw[-] (v0) to (q0) to (z) to (q1) to (v1);
\draw[-] (q1) to (p1);
\draw[-] (p0) to (u1) to (u1+) to (v0) to (mid) to (v1) to (un+) to (ut);
\draw[-] (p0) to (q0);
\draw[-] (v0) to (z) to (v1);

\end{tikzpicture}\captionof{figure}{}\label{V0V1=X0X1Figure}\end{center}

\vspace*{-8mm}
\begin{addmargin}[2em]{0em} 
\begin{subclaim}\label{GMIsTrianglSubH} $G^M$ is a triangle, and $|L(v_0)\cap L(v_1)|>1$.  \end{subclaim}

\begin{claimproof} We first show that $G^M$ is a broken wheel with principal path $v_0zv_1$. Suppose not. As $|L(z)\setminus L(v_0)|\geq 2$, it follows from Theorem \ref{EitherBWheelOrAtMostOneColThm} applied to $G^M$ that there is a $c\in L(z)\setminus L(v_0)$ such that any $L$-coloring of $v_0zv_1$ using $c$ on $z$ extends to $L$-color $G^M$. On the other hand, $L(z)\setminus L(v_0)\subseteq A$, so we contradict Subclaim \ref{ForAnyainAandbinBSubCLphi}. Thus, $G^M$ is a indeed a broken wheel with principal path $v_0zv_1$. Let $u^*$ be the unique neighbor of $v_0$ on the path $G^M-z$.  Now suppose $G^M$ is not a triangle and consider the following cases.

\textbf{Case 1:} $L(z)\not\subseteq L(v_0)\cup L(u_*)$

In this case, by 1) of Claim \ref{PInducedNoPGSuff1}, there is a  $\phi\in\mathcal{X}(P)$, where $\phi(z)\not\in L(v_0)\cup L(u^*)$. As $v_0v_1\not\in E(G)$ and $\phi(z)\not\in L(v_0)$, we get that $\phi$ extends to an $L$-coloring $\phi'$ of $V(G^{P_0}\cup G^{P_1}\cup P)$. But since $\phi'(z)\not\in L(u^*)$, it follows that $\phi'$ extends to $L$-color $G$, a contradiction. 

\textbf{Case 2:} $L(z)\subseteq L(v_0)\cup L(u_*)$

In this case, $|L(v_0)\cap L(u_*)|\leq 1$. By Subclaim \ref{ForAnyainAandbinBSubCLphi}, since $|A|\geq 3$, there is a set $\mathcal{S}$ of three different $L$-colorings of $v_0zv_1$, each using a different color on $z$, none of which extend to $L$-color $G^M$. Thus, there is a color $c$ such that $L(v_0)\cap L(u_*)=\{c\}$, and each element of $\mathcal{S}$ uses $c$ on $v_0$. But at least one element of $\mathcal{S}$ uses a color on $z$ which does not lie in $L(u_*)\setminus\{c\}$, so at least one element of $\mathcal{S}$ extends to $L$-color $G^M$, which is false.

We conclude that $G^M$ is a triangle. Now we show that $|L(v_0)\cap L(v_1)|>1$. Suppose not. Thus, there is an $X\subseteq L(v_1)\setminus L(v_0)$ with $|X|=2$. Since $|A|\geq 3$, there is a $c\in A\setminus X$, so there is a set $\mathcal{T}$ of three different $L$-colorings of $V(G^{Q_0}\cup P)$, each of which uses $c$ on $z$ and a different color on $p_1$, where, for each $\pi\in\mathcal{T}$, we have $\Lambda_{G^{P_1}}(\bullet, \pi(q_1), \pi(p_1))=\{\pi(v_0)\}$. Thus, there is a color $d$ with $L(v_0)\cap L(v_1)=\{d\}$, where $\Lambda_{G^{P_1}}(\bullet, \pi(q_1), \pi(p_1))=\{d\}$ for each $\pi\in\mathcal{T}$. It follows that $G^{P_1}$ is a broken wheel with principal path $P_1$ and, since $|L(p_1)|=3$, we contradict \ref{BWheel1Lb} \ref{BWheel1C} of Theorem \ref{BWheelMainRevListThm2}. This proves Subclaim \ref{GMIsTrianglSubH}. \end{claimproof}\end{addmargin}

\vspace*{-8mm}
\begin{addmargin}[2em]{0em} 
\begin{subclaim}\label{Lp1Lv1SubGP1Tr} No color of $L(p_1)$ is almost $(P_1, G^{P_1})$-universal. \end{subclaim}

\begin{claimproof} Suppose there is such a $b\in L(p_1)$. By Subclaim \ref{GMIsTrianglSubH}, there is a $c\in L(z)\setminus (L(v_0)\cup L(v_1))$. As $c\not\in L(v_0)$, it follows from 1) of Claim \ref{PInducedNoPGSuff1}, together with Theorem \ref{thomassen5ChooseThm}, that there is an $L$-coloring $\pi$ of $V(P\cup G^{P_0})$ which does not extend to $L$-color $G$, where $\pi(z)=c$ and $\pi(p_1)=b$. On the other hand, $c\not\in L(v_1)$, and, since $G^M$ is a triangle and $|\Lambda_{G^{P_1}}(\bullet, \pi(q_1), \pi(p_1))|\geq 2$, it follows that $\pi$ extends to $L$-color $G$, a contradiction. \end{claimproof}\end{addmargin}

Now, it follows from Claim \ref{IfGP1NotEdgeThen} that $G^{P_1}$ is a triangle with $L(p_1)=L(v_1)$. We now fix a $\tau\in\textnormal{End}(P_0, G^{p_0})$ and, since $|L_{\tau}(z)|\geq 4$, we fix a $d\in L_{\tau}(z)$ with $|L_{\tau}(q_0)\setminus\{d\}|\geq 3$. We now show that $d\in L(v_1)$. Suppose $d\not\in L(v_1)$. Since $|L_{\tau}(v_1)|\geq 2$, there is a $d'\in L(p_1)$ with $|L_{\tau}(v_1)\setminus\{d'\}|\geq 2$. By 2) of Claim \ref{PInducedNoPGSuff1}, $\tau$ extends to a $\tau'\in\mathcal{X}(G^{P_0}\cup P)$, where $\tau'(z)=d$ and $\tau'(p_1)=d'$. Since $G^{P_1}$ and $G^M$ are both triangles and $d\not\in L(v_1)$, $\tau'$ extends to $L$-color $G$,  a contradiction. Thus, we indeed have $d\in L(v_1)$. Since $L(v_1)=L(p_1)$, we have $d\in L(p_1)$. Since $|L_{\tau}(v_1)\setminus\{d\}|\geq 1$, it follows that $\tau$ extends to an $L$-coloring $\tau^*$ of $\{p_0, v_0, v_1, p_1, z\}$ using $d$ on both of $z, p_1$, so each of $q_0, q_1$ has an $L_{\tau^*}$-list of size at least three. Since each of $G^{P_1}$ and $G^{M_1}$ is a triangle, it follows from our choice of $\tau$ that $\tau*$ is $(P,G)$-sufficient, contradicting our assumption that $G$ is a counterexample. This proves Claim \ref{CycCIndMTh}. \end{claimproof}

\subsection{Dealing with common neighbors of $z, x_i$}\label{CommNZXISubSFin}

In this subsection, we show that, for each $i\in\{0,1\}$ and any $w\in N(q_i)\cap N(z)$, we have $N(w)\cap V(C)\subseteq\{q_i, z, x_i\}$. We first prove the following intermediate result. 

\begin{claim}\label{SwitchBaseColorUseTerm} For any $i\in\{0,1\}$ and $w\in N(q_i)\cap N(z)$, both of the following hold:
\begin{enumerate}[label=\Alph*)]
\item\label{SwitchA} Either $N(w)\cap V(C)\subseteq\{z, q_i, x_i\}$ or $q_{1-i}\in N(w)$; AND
\item\label{SwitchB} $N(w)\cap V(C)\not\subseteq\{z, q_0, q_1, p_i\}$ and $w$ is adjacent to at most one of $p_0, p_1$. 
\end{enumerate}
 \end{claim}

\begin{claimproof} We first show \ref{SwitchA}. Suppose $q_{1-i}\not\in N(w)$. By assumption, $w$ has a neighbor other than $x_i$ on the path $x_{1-i}(C\setminus\mathring{P})x_i$. Among all such neighbors of $w$, let $v$ be the one which is closest to $x_i$ on this path. If $v=p_{1-i}$, then, since $w\not\in N(q_{1-i})$, it follows from our triangulation conditions that $z\in N(p_{1-i})$, contradicting Claim \ref{CycCIndMTh}. Thus, $v\neq p_{1-i}$, so let $R:=p_{1-i}q_{1-i}zq_iv$ and $Q:=vwq_ip_i$. Note that $v\neq p_i$, or else $v=p_i=x_i$, contradicting our choice of $v$. In particular, $|L(v)|=3$. Now, by minimality, there is a set of $|L(p_{1-i})$ elements of $\textnormal{Crown}(R, G^R)$, each using a different color on $v$. Furthermore, it follows from \ref{LabCrownNonEmpt4} of Theorem \ref{CombinedT1T4ThreePathFactListThm} that there is a set of $|L(p_i)|$ different $\mathcal{G}^Q$-base colorings of $\{v, p_i\}$, each using a different color on $v$, so there is a $\phi\in\textnormal{Crown}(R, G^R)$ and a $\mathcal{G}^Q$-base-coloring $\psi$ of $\{v, q_i\}$ using the same color on $v$. Let $\mathcal{F}$ be the set of extensions of $\psi$ to $L$-colorings of $V(Q)$ which do not extend to $L$-color $G^Q$, and consider $\psi\cup\phi$. Each of $q_0, q_1$ has an $L_{\psi\cup\phi}$-list of size at least three, so, since $G$ is a counterexample, $\psi\cup\phi$ extends to an $L$-coloring $\tau$ of $\textnormal{dom}(\psi\cup\phi)\cup\{q_0, q_1\}$ which does not extend to $L$-color $G$. By Claim \ref{IfGP1NotEdgeThen}, $p_i\not\in N(v)$ , so, since $|L_{\phi}(w)|\geq 3$, we have $|L_{\phi\cup\psi}(w)|\geq 3$. As $wq_{1-i}\not\in E(G)$, we have $|L_{\tau}(w)|\geq 2$. But since $\phi$ is a $\mathcal{G}^Q$-base coloring, we have $|\mathcal{F}|=2$ and each element of $\mathcal{F}$ uses $\tau(q_i)$ on $q_i$, so, by Definition \ref{BaseColoringDefn}, $G^{vwx_i}$ is a broken wheel, where $G^{vwx_i}-w$ has even length, contradicting our choice of $v$. This proves \ref{SwitchA}.

Now we prove \ref{SwitchB}. Suppose $N(w)\cap V(C)\subseteq\{z, q_0, q_1, p_i\}$. Thus, $x_i=p_i$ and $N(q_i)=\{z, p_i\}$, and, letting  $P':=(P-q_i)+zwp_i$, it follows that, for any $\phi\in\textnormal{Crown}(P', G-q_i)$, each of $q_0, q_1, w$ has an $L_{\phi}$-list of size at least three, so it is straightforward to check that $\textnormal{Crown}(P', G-q_i)\subseteq\textnormal{Crown}(P, G)$. Thus, there is no such $\phi$, as $\textnormal{Crown}(P,G)=\varnothing$. Now, by minimality, it follows that $w, q_{1-i}$ have a common neighbor in $C^{P'}\setminus P'=C\setminus P$, contradicting our assumption that $N(w)\cap V(C)\subseteq\{z, q_0, q_1, p_i\}$. To finish, we show that $w$ is adjacent to at most one of $p_0, p_1$. Suppose not. By A), $w$ is adjacent to all the vertices of $P$. Let $G':=G\setminus\mathring{P}$. Then $G'$ has $p_0wp_1$ on its outer cycle and, by Theorem \ref{SumTo4For2PathColorEnds}, there is a $\phi\in\textnormal{End}(p_0wp_1, G')$. Now, there is a $d\in L(z)$ with $|L_{\phi}(z)\setminus\{d\}|\geq 3$, and $\phi$ extends to a $\phi'\in\mathcal{X}(P)$ with $\phi'(z)=d$, which is false, as $|L_{\phi'}(w)|\geq 1$. \end{claimproof}

We now introduce the following definition, which we retain for the remainder of the proof of Theorem \ref{MainHolepunchPaperResulThm}. We let $P_{\pentagon}$ denote the path $x_0q_0zq_1p_1$. As $z$ is incident to no chords of $C$ and $P$ is an induced path, $C^{P_{\pentagon}}$ is induced. 

\begin{claim}\label{IfwAdj10zq1ThenAdjCAllInt} All three of the following hold.
\begin{enumerate}[label=\arabic*)]
\itemsep-0.1em
\item\label{MainCLFac} For each $i\in\{0,1\}$ and any $w\in N(q_i)\cap N(z)$, we have $N(w)\cap V(C)\subseteq\{q_i, z, x_i\}$; AND
\item\label{SeconLabFact} $d(x_0, x_1)>1$, and, in particular, any $L$-coloring of $V(P)$ extends to $L$-color all of $V(P\cup G^{P_0}\cup G^{P_1})$; AND
\item\label{ThirdLabFact}  For each $i\in\{0,1\}$, $N(z)\cap N(q_i)\cap N(p_i)=\varnothing$.
\end{enumerate}
 \end{claim}

\begin{claimproof} We first show that 1) implies both 2) and 3), then we prove 1). Note that 1), together with \ref{SwitchB} of Claim \ref{SwitchBaseColorUseTerm}, immediately implies 3), so we just show that 1) implies 2). Suppose that 1) holds. Suppose first that $d(x_0, x_1)=1$. Thus, $C^{P_{\pentagon}}$ is an induced 5-cycle, and, since 1) holds, it follows from Theorem \ref{BohmePaper5CycleCorList} that any $L$-coloring of $V(C^{P_{\pentagon}})$ extends to $L$-color $G^{P_{\pentagon}}$. As $|L(z)|=5$, there is an $L$-coloring $\phi$ of $\{p_0, x_0, z\}$ with $|L_{\phi}(q_0)|\geq 3$, where the restriction of $\phi$ to $\{p_0, x_0\}$ is $(P^0, G^{P_0})$-sufficient. By 2) of Claim \ref{PInducedNoPGSuff1}, $\phi$ extends to a set of three $L$-colorings $\{\sigma^0, \sigma^1, \sigma^2\}$ of $V(P\cup G^{P_0})$, all using different colors on $p_1$, where none of $\sigma^0, \sigma^1, \sigma^2$ extends to $L$-color $G$. Letting $b:=\phi(x_0)$, at least one $\sigma^k$ uses a color other than $b$ on $p_1$, so $G^{P_1}$ is not an edge, and, in particular, $\Lambda_{G^{P_1}}(\bullet, \sigma^k(q_1), \sigma^k(p_1))=\{b\}$ for each $k\in\{0,1,2\}$, so $G^{P_1}$ is a broken wheel with principal path $P_1$ and we contradict \ref{BWheel1Lb} \ref{BWheel1C} of Theorem \ref{BWheelMainRevListThm2}. We conclude that 1) implies 2) as well. 

Now we prove 1). Suppose toward a contradiction that 1) does not hold. As $x_0\neq x_1$ and $P$ is an induced path, it follows from Claim \ref{SwitchBaseColorUseTerm} that there is a $w\in V(G\setminus C)$ which is adjacent to $q_0, z, q_1$, where $w$ has at least one neighbor in $C\setminus\mathring{P}$. In particular, $N(z)=\{q_0, w, q_1\}$. For each $k=0,1$, let $v_k$ be the unique neighbor of $w$ on the path $x_0(C\setminus\mathring{P})x_1$ which is closest to $x_k$. Let $R_k:=p_{1-k}q_{1-k}wv_k$ and $Q_k:=v_kwq_kp_k$, so that $G^{R_k}\cup G^{Q_k}=G-w$. By \ref{SwitchB} of Claim \ref{SwitchBaseColorUseTerm}, for each $k=0,1$, we have $v_k\neq p_{1-k}$, so the rainbow $\mathcal{G}^{R_k}$ is well-defined. 

\vspace*{-8mm}
\begin{addmargin}[2em]{0em}
\begin{subclaim}\label{ForEachKGluePiP1P0} For each $k\in\{0,1\}$, there is an $L$-coloring $\pi$ of $\{p_{1-k}, v_k, x_k, p_k\}$ whose restriction to $\{p_{1-k}, v_k\}$ is a $\mathcal{G}^{R_k}$-base coloring and whose restriction to $\{v_k, x_k, p_k\}$ is $(Q_k, G^{Q_k})$-sufficient. \end{subclaim}

\begin{claimproof} By \ref{LabCrownNonEmpt4} of Theorem \ref{CombinedT1T4ThreePathFactListThm} that there is a family of $|L(p_{1-k})|$ different $\mathcal{G}^{R_k}$-base colorings, each using a different color on $v_k$. If $Q_k$ is a triangle, then $v_k=x_k=p_k$ and any $L$-coloring of  $\{p_k\}$ is $(Q_k, G^{Q_k})$-sufficient, and then the subclaim follows immediately, so suppose that $Q_k$ is not a triangle, i.e $v_k\neq p_k$. Thus, $Q_k$ is a 3-path and $|L(v_k)|=3$. and, by \ref{LabCrownNonEmpt} of Theorem \ref{CombinedT1T4ThreePathFactListThm} applied to $\mathcal{G}^{Q_k}$, there are $|L(p_k)|$ different $(Q_k, G^{Q_k})$-sufficient $L$-colorings of $\{v_k, x_k p_k\}$, each using a different color on $v_k$, so the subclaim follows. \end{claimproof}\end{addmargin}

\vspace*{-8mm}
\begin{addmargin}[2em]{0em}
\begin{subclaim}\label{EachRKEvenTilted} For each $k\in\{0,1\}$, the following hold:
\begin{enumerate}[label=\alph*)]
\itemsep-0.1em
\item $v_{1-k}=x_{1-k}$ and at least one terminal edge of $R_k$ is even-tilted in $\mathcal{G}^{R_k}$.
\item Either $|V(G^{P_k})|>1$ or $wv_k$ is even-tilted in $\mathcal{G}^{R_k}$
\end{enumerate}
\end{subclaim}

\begin{claimproof} Let $\pi$ be an $L$-coloring of $\{p_{1-k}, v_k, x_k, p_k\}$ as in Subclaim \ref{ForEachKGluePiP1P0} and $\mathcal{F}$ be the set of $L$-colorings of $V(R_k)$ which use $\pi(p_{1-k}), \pi(v_k)$ on $p_{1-k}, v_k$ respectively and do not extend to $L$-color $G^{R_k}$. We first show that $k$ satisfies a). Suppose not. Since a) is violated, no chord of $R_k$ incident to a vertex of $\mathring{R}_k$.  We have $|L_{\pi}(q_k)|\geq 3$, and, as $q_{1-k}\not\in N(v_k)$, we have $|L_{\pi}(q_{1-k})|\geq 4$. Let $S:=\{c\in L(z): |L_{\pi}(q_k)\setminus\{c\}|\geq 3\}$. Thus, $|S|\geq 2$ and, for each $c\in S$, there is an extension of $\pi$ to an $L$-coloring $\pi^c$ of $V(P)\cup\{v_k, x_k\}$, where $\pi^c$ does not extend to $L$-color $G$ and $\pi^c(z)=c$. . As a) is violated, it follows from Definition \ref{BaseColoringDefn} that $\{\psi(q_{1-k}), \psi(w)\}$ is constant as $\psi$ runs over $\mathcal{F}$, where possibly $|\mathcal{F}|\leq 1$, so there is a set $T$ of size two, where each element of $\mathcal{F}$ uses the colors of $T$ on $q_{1-k}w$. Since $w\not\in N(p_{1-k})$, it follows that, for each $c\in S$, we have $\pi^c(q_{1-k})\in T$ and $L(w)\setminus\{\pi(v_k), c, \pi^c(q_k)\}=T$, so we get $|S|=2$ and furthermore, $|L_{\pi}(q_k)|=3$ and $S\subseteq L_{\pi}(q_k)$, contradicting the definition of $S$. Thus, $k$ satisfies a).

Now we show that $k$ satisfies b). Suppose not. Thus, $R_k:=p_{1-k}q_{1-k}wp_k$ and $\textnormal{dom}(\pi)=\{p_0, p_1\}$, but $H$ is not even-tilted in $\mathcal{G}^{R_k}$. By Claim \ref{IfwAdj10zq1ThenAdjCAllInt}, $p_{1-k}\not\in N(w)$, so there is a $d\in L(z)$ with $|L_{\pi}(w)\setminus\{d\}|\geq 4$. As $G$ is a counterexample, $\pi$ extends to a $\pi'\in\mathcal{X}(P)$ with $\pi'(z)=d$. But then, $|L_{\pi'}(w)|\geq 2$, so $|\mathcal{F}|=2$ and $\mathcal{F}$ is constant on $q_{1-k}$, so only \ref{T4PartB} of Definition \ref{BaseColoringDefn} is satisfied and $H$ is even-tilted in $\mathcal{G}^{R_k}$, a contradiction.  \end{claimproof}\end{addmargin}

Let $H:=G^{x_0wx_1}$. As $w$ is adjacent to each of $x_0, x_1$, we have the situation illustrated in Figure \ref{WMidAjdBothX0X1}. Possibly one ore both of $G^{P_0}, G^{P_1}$ is an edge. 

\begin{center}\begin{tikzpicture}

\node[shape=circle,draw=black] (p0) at (-4,0) {$p_0$};
\node[shape=circle,draw=black] (u1) at (-3, 0) {$u_1$};
\node[shape=circle,draw=white] (u1+) at (-2, 0) {$\ldots$};
\node[shape=circle,draw=black] (v0) at (-1, 0) {$x_0$};
\node[shape=circle,draw=white] (mid) at (0, 0) {$\ldots$};
\node[shape=circle,draw=black] (v1) at (1, 0) {$x_1$};
\node[shape=circle,draw=white] (un+) at (2, 0) {$\ldots$};
\node[shape=circle,draw=black] (ut) at (3, 0) {$u_t$};
\node[shape=circle,draw=black] (p1) at (4, 0) {$p_1$};
\node[shape=circle,draw=black] (q0) at (-3,2) {$q_0$};
\node[shape=circle,draw=black] (q1) at (3,2) {$q_1$};
\node[shape=circle,draw=black] (z) at (0,4) {$z$};
\node[shape=circle,draw=white] (K0) at (-2.8, 1) {$G^{P_0}$};
\node[shape=circle,draw=white] (K1) at (2.8, 1) {$G^{P_1}$};
\node[shape=circle,draw=black] (w) at (0, 1.5) {$w$};
\node[shape=circle,draw=white] (H) at (0, 0.5) {$H$};
 \draw[-] (p1) to (ut);
 \draw[-] (v0) to (q0) to (z) to (q1) to (v1);
\draw[-] (q1) to (p1);
\draw[-] (p0) to (u1) to (u1+) to (v0) to (mid) to (v1) to (un+) to (ut);
\draw[-] (p0) to (q0);
\draw[-] (v0) to (q0) to (w) to (q1);
\draw[-] (w) to (z);
\draw[-, line width=1.8pt] (v0) to (w) to (v1);

\end{tikzpicture}\captionof{figure}{}\label{WMidAjdBothX0X1}\end{center}

\vspace*{-8mm}
\begin{addmargin}[2em]{0em}
\begin{subclaim}\label{ReGP1Case}  For each $a\in L(p_1)$, there is a $\sigma^a\in\mathcal{X}(G^{P_0}\cup P)$ with $|L(w)\setminus\{\sigma^a(x_0), \sigma^(z)\}|\geq 2$, and, for each $b\in L(x_1)$, there is a $\tau^b\in\mathcal{X}(P\cup G^{P_1})$ with $b\in\{\tau^b(x_1), \tau^b(q_1)\}$ and $|L(w)\setminus\{\tau^b(x_1), \tau^b(z)\}|\geq 4$. \end{subclaim}

\begin{claimproof} Let $a\in L(p_1)$ and $b\in L(x_1)$. Consider the following cases.

\textbf{Case 1:} $L(z)=L(w)$

Since $|L(p_1)|=|L(x_1)|=3$, there is a $(P_1, G_1)$-sufficient $L$-coloring $\phi$ of $\{x_1, p_1\}$ with $\phi(x_1)=b$. Likewise, since since $|V(H)|$ is even, $d(x_0, x_1)>1$, so there is an $L$-coloring $\psi$ of $\{p_0, x_0, p_1\}$ with $\psi(p_1)=a$  and whose restriction to $\{p_0, x_0\}$ is $(P_0, G_0)$-sufficient. Let $A:=\{c\in L(z): |L_{\psi}(q_0)\setminus\{c\}|\geq 3\}$ and $B:=\{c\in L(z): |L_{\phi}(q_1)\setminus\{c\}|\geq 3\}$. Note that $|A|\geq 2$ and $|B|\geq 2$. If $\psi(x_0)\in L(w)$, then, since $L(w)=L(z)$, it also lies in $A$, and then $\phi'$ extends to a $\sigma^a$ satisfying the above by 2) of Claim \ref{PInducedNoPGSuff1}. If $\psi(x_0)\not\in L(w)$, then we just choose an arbitrary color of $A$, and again, $\psi$ extends to a $\sigma^a$ as above. The same argument applied to $\phi$ shows that $\phi$ extends to a $\tau^b\in\mathcal{X}(P\cup G^{P_1})$ with $|L(w)\setminus\{\tau^b(x_1), \tau^b(z)\}|\geq 2$, and since $\tau^b(x_1)=b$, we are done. 

\textbf{Case 2:} $L(z)\neq L(w)$

In this case, there is a $d\in L(z)\setminus L(w)$. We construct $\sigma^a$ by choosing a $\pi\in\mathcal{X}(P)$ using $a,d$ on $p_1, z$ respectively, and, as $d(x_0, x_1)>1$, we can extend $\pi$ to an element of $\mathcal{X}(P\cup G^{P_0})$. Now we construct a $\tau^b$ satisfying the subclaim. Suppose there is no such $\tau^b$. As $G$ is a counterexample, there is a family $\mathcal{S}\subseteq\mathcal{X}(P)$, where each element of $\mathcal{S}$ uses $d$ on $z$ and a different color on $p_1$, so it follows that, for each $\pi\in\mathcal{S}$, we have $\pi(q_1)\neq b$ and $b\not\in\Lambda_{G^{P_1}}(\bullet, \pi(q_1), \pi(p_1))$. As $|L(p_1)|=3$, $G^{P_1}$ is not a triangle and we contradict \ref{PropCor2} of Proposition \ref{CorMainEitherBWheelAtM1ColCor}. \end{claimproof}\end{addmargin}

For each $a\in L(p_1)$ and $b\in L(x_1)$, let $\sigma^a$ and $\tau^b$ be as in Subclaim \ref{ReGP1Case}. 

\vspace*{-8mm}
\begin{addmargin}[2em]{0em}
\begin{subclaim}\label{NoColX1x0x1Unuv4}  $H$ is a broken wheel with $|V(H)|=4$, and no color of $L(x_1)$ is $(x_0wx_1, H)$-universal. \end{subclaim}

\begin{claimproof} We first show that $H$ is a broken wheel with an even number of vertices. This is immediate from Subclaim \ref{EachRKEvenTilted} if either $G^{P_1}$ is an edge or  $p_1q_1$ is not even-tilted in $\mathcal{G}^{R_0}$, so we suppose that $G^{P_1}$ is  a broken wheel with an even number of vertices. By Claim \ref{IfGP1NotEdgeThen}, there is an $a\in L(p_1)$ which is almost $(P_1, G^{P_1})$-universal. Let $T':=\Lambda_{G^{P_1}}(\bullet, \sigma^a(q_1), a)$, so $|T'|\geq 2$. Since $\sigma^a$ does not extend to $L$-color $G$, there is no $L$-coloring of $H$ which uses $\sigma^a(x_0)$ on $x_0$, a color of $L_{\sigma^a}(w)$ on $w$, and a color of $T'$ on $x_1$. As $|L_{\sigma^a}(w)|\geq 2$, so it follows from \ref{PropCor2} of Proposition \ref{CorMainEitherBWheelAtM1ColCor} that $|V(H)|$ is even, as desired. Furthermore, if there is $b\in L(x_1)$ which is $(x_0wx_1, H)$-universal, then $\tau^b$ extends to $L$-color $G$, which is false, so no color of $L(x_1)$ which is $(x_0wx_1, H)$-universal. To finish, we show that $|V(H)|=4$. Suppose not. Since $|V(H)|$ is even, we have $|V(H)|\geq 6$. Let $yy'y''x_1$ be the unique subpath of $H-w$ with length three and $x_1$ as an endpoint. By \ref{PropCor1} of Proposition \ref{CorMainEitherBWheelAtM1ColCor}, $L(x_1)=L(y'')=L(y')$. Let $G^*:=(G\setminus\{y', y''\})+yx_1$, where $G^*$ has outer cycle $C^*:=(C\setminus\{y', y''\})+yx_1$. Since $H-w$ has length at least four, $q_0, q_1$ have no common neighbor in $C\setminus P$. Thus, by minimality, there is a $\phi\in\textnormal{Crown}_L(P, G^*)$. Since $L(x_1)=L(y')$, it follows from Observation \ref{MinCounterReUseObs} that any $L$-coloring of $G^*$ extends to $L$-color $G$, so $\phi\in\textnormal{Crown}_L(P,G)$ as well, which is false, as $G$ is a counterexample. \end{claimproof}\end{addmargin}

If $G^{P_0}$ is an edge, then, since $|L(p_0)|=1$ and $H$ is not a triangle, it follows that there is an $(x_0wx_1, H)$-universal color of $L(x_1)$, contradicting Subclaim \ref{NoColX1x0x1Unuv4}. Thus, $G^{P_0}$ is not an edge. Let $x^*$ be the lone vertex of $H\setminus\{x_0, w, x_1\}$. Subclaim \ref{NoColX1x0x1Unuv4} also implies that $L(x_1)=L(x^*)$ and there is no $c\in L(x_1)$ with $L(x_0)\cap (L(x^*)\setminus\{c\})=\varnothing$, so $|L(x_0)\cap L(x_1)|\geq 2$. Note that, for each $b\in L(x_0)\cap L(x_1)$, $\tau^b$ does extend to $L$-color $V(G^{P_0}\cup G^{P_1}\cup P)=V(G)\setminus\{w, x^*\}$, as $H$ is not a triangle. Thus, $\tau^b(q_0)\neq b$ and $b\not\in\Lambda_{G^{P_0}}(\tau^b(p_0), \tau^b(q_0), \bullet)$, or else it follows from the definition of $\tau^b$ that it extends to $L$-color $G$, which is false. As $|L(p_0)|=1$, it follows that $G^{P_0}$ is not a triangle and, in particular, for each $b\in L(x_0)cap L(x_1)$, $(\tau^b(p_0), \tau^b(q_0), b)$ is a proper $L$-coloring of $p_0q_0x_0$ which does not extend to $L$-color $G^{P_0}$. Thus, by \ref{PropCor2} of Proposition \ref{CorMainEitherBWheelAtM1ColCor}, $|L(x_0)\cap L(x_1)|=2$ and, letting $c$ be the lone color of $L(x_0)\setminus L(x_1)$, we have $\Lambda_{G^{P_0}}(\tau^b(p_0), \tau^b(q_0), \bullet)=\{c\}$ for each $b\in L(x_0)\cap L(x_1)$. But then, since $L(x_1)=L(x^*)$, we have $c\not\in L(x^*)$, so each $\tau^b$ extends to an $L$-coloring of $G$ using $c$ on $x_0$, which is false. This proves Claim \ref{IfwAdj10zq1ThenAdjCAllInt}. \end{claimproof}

\subsection{Ruling out a common neighbor of $q_0, z, q_1$}\label{CommNQ1ZQSubS}

To prove that there is no common neighbor of $q_0, z, q_1$, we first prove the following intermediate result.

\begin{claim}\label{Rule57OutK0K1EdgeMCase} If there is a $w\in V(G\setminus C)$ adjacent to all three of $q_0, z, q_1$, then neither $G^{P_0}$ nor $G^{P_1}$ is an edge.  \end{claim}

\begin{claimproof} Suppose there is such a $w$ but there is a $j\in\{0,1\}$ such that $G^{P_j}$ is  an edge. Thus, $G^{P_{\pentagon}}=G\setminus (G^{P_{1-j}}\setminus P_{1-j})$. Let $S$ be the set of vertices $x^*$ of $G^{P_{\pentagon}}\setminus C^{P_{\pentagon}}$ such that there is a 2-chord of $C^{P_{\pentagon}}$ with both endpoints in $V(C^{P_{\pentagon}})\setminus\{q_0, z, q_1\}$ which separates $x^*$ from $z$. Let $\mathcal{S}$ be the set of partial $L$-colorings $\phi$ of $(V(C^{P_{\pentagon}})\setminus\{q_0, z, q_1\})\setminus S$ such that $\textnormal{dom}(\phi)$ contains $p_j, x_{1-j}$ and any extension of $\phi$ to an $L$-coloring of $G\setminus S$ extends to $L$-color all of $G$. Now, it follows from our triangulation conditions that, for each edge $e$ of the path $C^{P_{\pentagon}}-\{q_0, z, q_1\}$, there is at least one 2-chord of $C^{P_{\pentagon}}$ whose endpoints lie in $C^{P_{\pentagon}}-\{q_0, z, q_1\}$, where this 2-chord separates $e$ from $z$ (possibly the endpoints of this 2-chord are simply the endpoints of $e$). Now, it follows from repeated application of Theorem \ref{EitherBWheelOrAtMostOneColThm} that there is a family of $|L(p_j)|$ different elements of $\mathcal{S}$, each using a different color on $x_{1-j}$. This is true even if $G^{P_{1-j}}$ is also an edge. In any case, one more application of Theorem \ref{EitherBWheelOrAtMostOneColThm} (in the case where $G^{P_{1-j}}$ is not an edge) shows that there is a $\phi\in\mathcal{S}$ and a $(P_{1-j}, G^{P_{1-j}})$-sufficient $L$-coloring $\psi$ of $\{x_{1-j}, p_{1-j}\}$, where $\phi(x_{1-j})=\psi(x_{1-j})$. 

\vspace*{-8mm}
\begin{addmargin}[2em]{0em}
\begin{subclaim}\label{SubCClCfWrGB4} There is a $c\in L_{\phi\cup\psi}(z)$ such that $|L_{\phi\cup\psi}(q_{1-j})\setminus\{c\}|\geq 3$, where any extension of $\phi$ to an $L$-coloring of $\textnormal{dom}(\phi)\cup\{q_0, z, q_1\}$ using $c$ on $z$ extends to $L$-color all of $H$.\end{subclaim}

\begin{claimproof} Let $H^*:=G^{P_{\pentagon}}\setminus (S\cup\textnormal{dom}(\phi))$ and let $D^*$ be the outer cycle of $H^*$. Now, by \ref{MainCLFac} of Claim \ref{IfwAdj10zq1ThenAdjCAllInt}, $w$ does not lie on the outer cycle of $H^*$, i.e $w\in V(H^*\setminus D^*)$. Note that $S\cup (\textnormal{dom}(\phi)\cap V(C))$ consists of precisely the path $p_j(C\setminus\mathring{P})x_{1-j}$. Furthermore, since $C^{P_{\pentagon}}$ is induced, it follows from the definition of $S$ that each vertex of $G^{P_{\pentagon}}\setminus C^{P_{\pentagon}}$ has  at most two neighbors in $S$. Thus, each vertex of $D^*\setminus\{q_0, z, q_1\}$ has an $L_{\phi}$-list of size at least three. Now, since $|L_{\phi\cup\psi}(z)|\geq 3$ and no chords of $C$ are incident to $z$, there exist distinct $c, d\in L_{\phi\cup\psi}(z)$ where $|L_{\phi\cup\psi}(q_1)\setminus\{c\}|\geq 3$ and $|L_{\phi\cup\psi}(q_1)\setminus\{d\}|\geq 3$. Suppose now that Subclaim \ref{SubCClCfWrGB4} does not hold. By definition of $\mathcal{S}$, any $L_{\phi}$-coloring of $H^*$ extends to $L$-color all of $G^{P_{\pentagon}}$, so there exist two $L_{\phi}$-colorings of the path $q_0zq_1$ which use different colors on $z$, where neither of these two $L$-colorings extends to $L_{\phi}$-color $H^*$. Since $q_0q_1\not\in E(G)$ and $N(z)=\{q_0, w, q_1\}$, it follows from Theorem \ref{EitherBWheelOrAtMostOneColThm} applied to $H_*$ that the outer cycle of $H^*$ contains the path $q_0wq_1$, which is false, as $w\not\in V(D^*)$. \end{claimproof}\end{addmargin}

Let $c$ be as in Subclaim \ref{SubCClCfWrGB4}. Since $q_{1-j}$ is incident to no chords of $C$, we have $|L_{\phi\cup\psi}(q_{1-j})\setminus\{c\}|\geq 3$. It follows from our choice of $\psi$ that the extension of $\phi\cup\psi$ to an $L$-coloring of $\textnormal{dom}(\phi\cup\psi)\cup\{z\}$ obtained by coloring $z$ with $c$ lies in $\textnormal{Crown}(P, G)$, contradicting our assumption that $G$ is a counterexample. \end{claimproof}

\begin{Claim}\label{NoCommToBothQ0Q1ExcZ} There is no vertex of $G$ adajcent to both of $q_0, q_1$, except for $z$. \end{Claim}

\begin{claimproof} Suppose there is a $w\in N(q_0)\cap N(q_1)$ with $w\neq z$. As $P$ is induced, we have $z\in N(w)$ by our triangulation conditions. By \ref{MainCLFac} of Claim \ref{IfwAdj10zq1ThenAdjCAllInt}, $N(w)\cap V(C)=\{q_0, z, q_1\}$, so any chord of the outer cycle of $G-z$ is incident to precisely one of $q_0, q_1$, and the outer cycle of $G^{P_{\pentagon}}-z$ is induced. As $|L(z)|=5$, it follows from \ref{SeconLabFact} of Claim \ref{IfwAdj10zq1ThenAdjCAllInt} that any $L$-coloring of $V(P-z)$ extends to $L$-color all of $V(G^{P_0}\cup G^{P_1})$, and since the outer cycle of $G^{P_{\pentagon}}-z$ is induced, it follows from Lemma \ref{PartialPathColoringExtCL0} that any $L$-coloring of $\{q_0, x_0, x_1, q_1\}$ extends to $L$-color all of $G^{P_{\pentagon}}-z$. Thus, any $L$-coloring of $V(P-z)$ extends to $L$-color all of $V(G-z)$. 

\vspace*{-8mm}
\begin{addmargin}[2em]{0em}
\begin{subclaim}\label{OneSideOrOtherK0K1For1to2} There is a $\phi\in\mathcal{X}(P)$ with either $|\Lambda_{G^{P_0}}(\phi(p_0), \phi(q_0), \bullet)|\geq 2$ or $|\Lambda_{G^{P_1}}(\bullet, \phi(q_1), \phi(p_1))|\geq 2$.
\end{subclaim}

\begin{claimproof} Fix an arbitrary $a\in L(p_0)$ and $b\in L(p_1)$. There is a family $\phi_1, \cdots, \phi_5$ of five elements of $\mathcal{X}(P)$, each of which uses a different color on $z$ and uses $a,b$ on $p_0, p_1$ respectively. For each $1\leq k\leq 5$, let $\phi_k'$ be the restriction of $\phi_k$ to $V(P-z)$ and let $\mathcal{C}(\phi_{k'})$ be the set of $c\in L(w)$ such that $\phi_{k'}$ extends to an $L$-coloring of $G-z$ using $c$ on $w$. As any $L$-coloring of $V(P-z)$ extends to $L$-color $V(G-z)$, it follows that, for each $k$, we have $\mathcal{C}(\phi_{k'})\neq\varnothing$, so $\mathcal{C}(\phi_{k'})=\{\phi_k(z)\}$. Thus, $\phi_1', \cdots, \phi_5'$ are all distinct, so there is an $i\in\{0,1\}$ with $|\{\phi_k'(q_i): k=1, \cdots, 5\}|\geq 3$. If $G^{P_i}$ is not a broken wheel, then the subclaim now follows from Theorem \ref{EitherBWheelOrAtMostOneColThm}, and otherwise it follows from \ref{BWheel2Lb} of Theorem \ref{BWheelMainRevListThm2}.  \end{claimproof}\end{addmargin}

Let $\phi$ be as in Subclaim \ref{OneSideOrOtherK0K1For1to2}. Now, there is a $j\in\{0,1\}$ such that restriction of $\phi$ to $q_jp_j$ extends to two different $L$-colorings of $G^{P_j}$ using different colors on $x_j$. Suppose without loss of generality that $j=1$.  No generality is lost in this assumption as $\phi$ fixes the colors being used on $p_0, p_1$. Now, $\phi$ extends to a $\psi$ of $V(G^{P_0}\cup P)$. Let $\tilde{P}:=x_0q_0wq_1x_1$. It follows from \ref{MainCLFac} of Claim \ref{IfwAdj10zq1ThenAdjCAllInt} that $C^{\tilde{P}}$ is induced. We show now that $G^{\tilde{P}}\setminus C^{\tilde{P}}$ contains an edge $v_0v_1$, where each $v_k$ is the central vertex of a wheel as illustrated in Figure \ref{Neitherk0k1EdgeFiguMainSec}, where $\tilde{P}$ is in bold black.

\begin{center}\begin{tikzpicture}
\node[shape=circle,draw=black] [label={[xshift=-0.0cm, yshift=-1.3cm]\textcolor{red}{$\{\psi(p_0)\}$}}] (p0) at (-4,0) {$p_0$};
\node[shape=circle,draw=white] (u1+) at (-3, 0) {$\ldots$};
\node[shape=circle,draw=black] [label={[xshift=-0.0cm, yshift=-1.3cm]\textcolor{red}{$\{\psi(x_0)\}$}}] (x0) at (-2, 0) {$x_0$};
\node[shape=circle,draw=white] (mid-) at (-1, 0) {$\ldots$};
\node[shape=circle,draw=black] (v) at (0, 0) {$v$};
\node[shape=circle,draw=white] (mid) at (1, 0) {$\ldots$};
\node[shape=circle,draw=black] (x1) at (2, 0) {$x_1$};
\node[shape=circle,draw=white] (un+) at (3, 0) {$\ldots$};
\node[shape=circle,draw=black] [label={[xshift=-0.0cm, yshift=-1.3cm]\textcolor{red}{$\{\psi(p_1)\}$}}] (p1) at (4, 0) {$p_1$};
\node[shape=circle,draw=black] [label={[xshift=-1.1cm, yshift=-0.6cm]\textcolor{red}{$\{\psi(q_0)\}$}}] (q0) at (-3,2) {$q_0$};
\node[shape=circle,draw=black] [label={[xshift=1.1cm, yshift=-0.6cm]\textcolor{red}{$\{\psi(q_1)\}$}}] (q1) at (3,2) {$q_1$};
\node[shape=circle,draw=black] [label={[xshift=-0.0cm, yshift=0.1cm]\textcolor{red}{$\{\psi(z)\}$}}] (z) at (0,4) {$z$};
\node[shape=circle,draw=black] (w) at (0, 2.5) {$w$};
\node[shape=circle,draw=white] (K0) at (-3, 1) {$G^{P_0}$};
\node[shape=circle,draw=black] (v0) at (-1, 1.2) {$v_0$};
\node[shape=circle,draw=black] (v1) at (1, 1.2) {$v_1$};
\node[shape=circle,draw=white] (K1) at (3, 1) {$G^{P_1}$};
\node[shape=circle,draw=white] (NewH) at (1.0, 0.5) {$H$};
 \draw[-] (p1) to (un+);
 \draw[-, line width=1.8pt] (x0) to (q0) to (w) to (q1) to (x1);
\draw[-] (q1) to (p1);
\draw[-] (p0) to (u1+) to (x0) to (mid-) to (v) to (mid) to (x1) to (un+);
\draw[-] (p0) to (q0);
\draw[-] (q0) to (z) to (q1);
\draw[-] (z) to (w) to (v0) to (x0);
\draw[-] (v) to (v0) to (v1) to (v);
\draw[-] (v0) to (q0);
\draw[-] (w) to (v1) to (q1);
\draw[-] (x1) to (v1);
 \draw[-, line width=1.8pt, color=red] (x0) to (v0) to (v1) to (v);
\end{tikzpicture}\captionof{figure}{}\label{Neitherk0k1EdgeFiguMainSec}\end{center}

Let $S:=\Lambda_{G^{P_1}}(\bullet, \phi(q_1), \phi(p_1))$ and let $T:=L_{\psi}(w)$. By Lemma \ref{PartialPathColoringExtCL0} aplied to $G^{\tilde{P}}+z$, we have $|T|=2$. Let $\tilde{L}$ be a list-assignment for $V(G^{\tilde{P}})$ in which $x_0, q_0, q_1$ are precolored by $\psi$, and furthermore, $\tilde{L}(x_1)=\{\psi(x_1)\}\cup S$ and $\tilde{L}(z)=L(w)\setminus\{\psi(z)\}$, and otherwise $\tilde{L}=L$. Now, $|\tilde{L}(x_1)|=3$ and $G^{\tilde{P}}$ is not $\tilde{L}$-colorable. By Lemma \ref{PartialPathColoringExtCL0} applied to $G^{\tilde{P}}$, there is a $v_0\in V(G^{\tilde{P}}\setminus C^{\tilde{P}})$ adjacent to at least three vertices of $\tilde{P}-x_1$. Note that $N(v_0)\cap V(\tilde{P}-x_1)=\{x_0, q_0, w\}$ by our triangulation conditions. Precoloring $v_0$ with an appropriate color, another application of Lemma \ref{PartialPathColoringExtCL0} shows that there is a vertex $v_1$ of $G^{\tilde{P}}\setminus C^{\tilde{P}}$ adjacent to each of $v_0, w, q_1$. Let $v$ be the farthest neighbor from $v_1$ on the path $x_0(C\setminus\mathring{P})x_1$ and let $H:=G^{vv_1x_1}$, as illustrated in Figure \ref{Neitherk0k1EdgeFiguMainSec}. 

\vspace*{-8mm}
\begin{addmargin}[2em]{0em}
\begin{subclaim}\label{SubForV0VStar} $v_0v\in E(G)$ and $H$ is a broken wheel, where $|V(H)|$ is even. \end{subclaim}

\begin{claimproof} Let $H^*:=G^{x_0v_0v_1v}$ and . The 3-path $x_0v_0x_1x_1$ is indicated in bold red in Figure \ref{Neitherk0k1EdgeFiguMainSec}. By \ref{LabCrownNonEmpt3} of Theorem \ref{CombinedT1T4ThreePathFactListThm}, there is an $(x_0v_0v_1v, H^*)$-sufficient $L$-coloring $\sigma$ of $\{x_0, v\}$ with $\sigma(x_0)=\psi(x_0)$. Since $|L_{\psi\cup\sigma}(v_1)|\geq 3$ and $|S|\geq 2$, there is a $c\in L_{\psi\cup\sigma}(v_1)$ such that the $L$-coloring $(\sigma(v), c)$ of $vv_1$ extends to an $L$-coloring of $G^{vv_1x_1}$ which uses a color of $S$ on $v_1$, so $\psi\cup\sigma$ extends to an $L$-coloring $\psi'$ of $G^{P_0}\cup G^{P_1}\cup P\cup H$ with $\psi'(v_1)=c$. Since $\psi'$ does not extend to $L$-color $G$, it follows from our choice of $\sigma$ that it does not extend to $L$-color the edge $v_0w$, so $L_{\psi'}(v_0)$ and $|L_{\psi'}(w)$ are the same singleton. In particular, since $|L_{\psi'}(v_0)|=1$, we have $v_0v\in E(G)$. Now we show the second part of the subclaim. Since $|L_{\psi}(v_0)|\geq 3$, there is a $d\in L_{\psi}(v_0)\setminus T$, so $\psi$ extends to an $L$-coloring $\tau$ of $\textnormal{dom}(\psi)\cup V(H)$ using $d$ on $v_0$. Since $d\not\in T$ and $\tau$ does not extend to $L$-color $G$, it follows that there is no $L$-coloring of $H$ using $\tau(v)$ on $v$, a color of $L_{\tau}(v_1)$ on $v_1$, and a color of $S$ on $x_1$. Since $|L_{\tau}(v_1)|\geq 2$ and $|S|\geq 2$, it follows from \ref{PropCor2} of Proposition \ref{CorMainEitherBWheelAtM1ColCor} that $H$ is a broken wheel with an even number of vertices. \end{claimproof}\end{addmargin}

Let $u_{x_1}$ be the unique neighbor of $x_1$ on the path $x_0(C\setminus\mathring{P})x_1$. As $|V(H)|$ is even, $u_{x_1}\not\in N(v_0)$. Consider the 3-path $Q:=x_0v_0v_1u_{x_1}$. By \ref{LabCrownNonEmpt4} of Theorem \ref{CombinedT1T4ThreePathFactListThm}, there is a $\mathcal{G}^Q$-base-coloring $\pi$ of $\{x_0, u_{x_1}\}$ using $\psi(x_0)$ on $x_0$. Letting $\mathcal{F}$ be the set of extensions of $\pi$ to $L$-colorings of $V(Q)$ which do not extend to $L$-color $G^Q$, we have $|\mathcal{F}|\leq 2$. As $|S|\geq 2$, it follows that $\pi$ extends to an $L$-coloring $\pi'$ of $\{x_0, u_{x_1}, x_1\}$, where $\pi'(x_1)\in S$. Let $\mathcal{F}'$ be the set of $L$-colorings $\tau$ of $Q$ using $\psi(x_0), \pi(u_{x_1})$ on $x_0, u_{x_1}$ respectively, where $\tau$ uses a color of $L_{\psi\cup\pi'}(v_0)$ on $v_0$ and a color of $L_{\psi\cup\pi'}(v_1)$ on $v_1$. Since $\pi'(x_1)\in S$ but $\psi\cup\pi'$ does not extend to $L$-color $G$, it follows that, for any $\tau\in\mathcal{F}'\setminus\mathcal{F}$ we have $\{\tau(v_0), \tau(v_1)\}=T$, so $|\mathcal{F}'\setminus\mathcal{F}|\leq 2$. We also have $|\mathcal{F}'\cap\mathcal{F}|\leq 2$, so $|\mathcal{F}'|\leq 4$. As $u_{x_1}\not\in N(v_0)$, we have $|L_{\psi\cup\pi'}(v_0)|\geq 3$. We also have $|L_{\psi\cup\pi'}(v_1)|\geq 2$, so $|\mathcal{F}'|=4$, and we get $|L_{\psi\cup\pi'}(v_0)|=3$ and $|L_{\psi\cup\pi'}(v_1)|=2$, where $L_{\psi\cup\pi'}(v_1)\subseteq L_{\psi\cup\pi'}(v_0)$. Furthermore, $|\mathcal{F}'\setminus\mathcal{F}|=2$ and $\mathcal{F}'\cap\mathcal{F}=\mathcal{F}$, so $L_{\psi\cup\pi'}(v_0)=L_{\psi\cup\pi'}(v_1)=T$, and every element of $\mathcal{F}'\cap\mathcal{F}=\mathcal{F}$ uses the lone color of $L_{\psi\cup\pi'}(v_0)\setminus T$ on $v_0$. By definition of $\pi$, $H-x_1$ is a broken wheel with an even number of vertices, contradicting Subclaim \ref{SubForV0VStar}. This proves Claim \ref{NoCommToBothQ0Q1ExcZ}. \end{claimproof}

\subsection{Ruling out a common neighbor to $x_0, x_1$ in $G\setminus C$}\label{SubsecShowX0X1NoComm}

To prove that there is no such common neighbor of $x_0, x_1$, we first prove the following.

\begin{claim}\label{ForEachCTwoLRestrict} Both of the following hold.
\begin{enumerate}[label=\arabic*)]
\itemsep-0.1em
\item  For each $a\in L(p_1)$ and $b\in L(z)$, there is an $L$-colorings $\psi$ of $V(G^{P_0}\cup G^{P_1}\cup P)$ whose restriction to $V(P)$ lies in $\mathcal{X}(P)$, where $\psi(p_1)=a$ and $\psi(z)=b$; AND
\item There exist $\psi, \psi'\in\mathcal{X}(G^{P_0}\cup G^{P_1}\cup P)$ which do not restrict to the same $L$-coloring of $\{x_0, x_1\}$. 
\end{enumerate}
 \end{claim}

\begin{claimproof} 1) just follows from \ref{SeconLabFact} of Claim \ref{IfwAdj10zq1ThenAdjCAllInt}, together with the fact that $G$ is a counterexample. Now we prove 2). Suppose 2) does not hold. As $|L(p_1)|=3$, it follows from 1) that $G^{P_1}$ is not an edge, and, in particular, there is a fixed color $c\in L(x_1)$ such that, for each $a\in L(p_1)$, there is a $\phi\in\mathcal{X}(P)$ with $\phi(p_1)=a$ and $\Lambda_{G^{P_1}}(\bullet, \phi(q_1), \phi(p_1)))=\{c\}$, so $G^{P_1}$ is a broken wheel with principal path $P_1$ and we contradict \ref{BWheel1Lb} \ref{BWheel1C} of Theorem \ref{BWheelMainRevListThm2}. \end{claimproof}

\begin{claim}\label{X0X1NoCommonNbrG-C} $x_0, x_1$ have no common neighbor in $G\setminus C$. \end{claim}

\begin{claimproof} Let $Y:=N(x_0)\cap N(x_1)\cap V(G\setminus C)$. Note that $|Y|\leq 2$. Suppose toward a contradiction that $Y\neq\varnothing$. Thus, there is a $v\in Y$ such, letting $D$ be the 6-cycle $vx_0q_0zq_1x_1$, there are no vertices of $Y$ in $\textnormal{Int}(D)\setminus D$. Let $H:=G^{x_0vx_1}$ and, for any $L$-coloring $\psi$ of $V(G^{P_0}\cup G^{P_1}\cup P)$, we let $S_{\psi}:=\Lambda_H(\psi(x_0), \bullet, \psi(x_1))\cap L_{\psi}(v)$. We let $\mathcal{X}^{\textnormal{res}}$ denote the set of $L$-colorings of $V(G^{P_0}\cup G^{P_1}\cup P)$ whose restriction to $V(P)$ lies in $\mathcal{X}(P)$.

\vspace*{-8mm}
\begin{addmargin}[2em]{0em} 
\begin{subclaim} $D$ is an induced cycle. \end{subclaim}

\begin{claimproof} Suppose not. By \ref{SeconLabFact} of Claim \ref{IfwAdj10zq1ThenAdjCAllInt}, $P_{\pentagon}$ is an induced path in $G$, so all chords of $D$ lie in $\textnormal{Int}(D)$ and are incident to $v$.  By Claim \ref{NoCommToBothQ0Q1ExcZ}, $v$ is adjacent to at most one of $q_0, q_1$. As $G$ has no induced 4-cycles, there is precisely one chord of $D$, and this chord is $vq_k$ for some $k\in\{0,1\}$. Now, by 2) of Claim \ref{ForEachCTwoLRestrict}, there are $\psi, \psi'\in\mathcal{X}(G^{P_0}\cup G^{P_1}\cup P)$ which restrict different $L$-colorings of $\{x_0, x_1\}$. Note now that every $L$-coloring of $V(D)$ extends to $L$-color $V(\textnormal{Int}(D))$, or else Theorem \ref{BohmePaper5CycleCorList} applied to the 5-cycle $(D\setminus\{x_k\})+vq_k$ implies that $q_0, q_1$ have a common neighbor other than $z$, contradicting Claim \ref{NoCommToBothQ0Q1ExcZ}. Thus, neither $\psi$ nor $\psi'$ extends to $L$-color $V(\textnormal{Ext}(D))$, so $S_{\psi}=S_{\psi'}=\varnothing$. Since $v$ only has three neighbors in $V(G^{P_0}\cup G^{P_1}\cup P)$, it follows from \ref{PropCor3} of Proposition \ref{CorMainEitherBWheelAtM1ColCor} applied to $H$ that $|L_{\psi}(v)|=|L_{\psi'}(v)|=2$, and, in particular, $\psi\neq\psi$ and $\psi\neq\psi$, so, again by \ref{PropCor3} of Propostion \ref{CorMainEitherBWheelAtM1ColCor}, $\psi$ and $\psi'$ restrict to the same $L$-coloring of $\{x_0, x_1\}$, a contradiction.  \end{claimproof}\end{addmargin}

As $D$ is induced, we have $N(v)\cap V(D)=\{x_0, x_1\}$, so, for each $\psi\in\mathcal{X}^{\textnormal{res}}$, we have $S_{\psi}=\Lambda_H(\psi(x_0), \bullet, \psi(x_1))$, and $|L_{\psi}(v)|\geq 3$. Thus, it follows from \ref{PropCor3} of Proposition \ref{CorMainEitherBWheelAtM1ColCor} applied to $H$ that $S_{\psi}\neq\varnothing$ and furthermore, if $\psi$ uses the same color on $x_0, x_1$, then $|S_{\psi}|\geq 2$. The above implies that each $\psi\in\mathcal{X}^{\textnormal{res}}$ extends to an $L$-coloring of $V(\textnormal{Ext}(D))$ which does not extend to $L$-color $G$. Since $N(q_0)\cap N(q_1)=\{z\}$, and since $N(x_0)\cap N(x_1)\cap V(\textnormal{Int}(D))=\{v\}$  by our choice of $v$, it follows from Theorem \ref{BohmePaper5CycleCorList} that $\textnormal{Int}(D)\setminus D$ consists of an edge $y_0y_1$, where $N(y_k)=\{y_{1-k}, x_k, q_k, v\}$ for each $k\in\{0,1\}$. Furthermore, for each $\psi\in\mathcal{X}^{\textnormal{res}}$, we have:
\begin{equation}\label{StarEqIdtaustaus}\tag{$\star$} |L_{\psi}(y_0)|=|L_{\psi}(y_1)|=2,\ \textnormal{and}\ S_{\psi}\subseteq L_{\psi}(y_0)\cap L_{\psi}(y_1)\end{equation}
Note that (\ref{StarEqIdtaustaus})  implies that $1\leq |S_{\psi}|\leq 2$ for each $\psi\in\mathcal{X}^{\textnormal{res}}$, and that $L(z)=L(y^0)=L(y^1)$. 

\vspace*{-8mm}
\begin{addmargin}[2em]{0em} 
\begin{subclaim}\label{EveryPsiExist} For every $\psi\in\mathcal{X}^{\textnormal{res}}$, we have $\psi(x_0)=\psi(x_1)$. \end{subclaim}

\begin{claimproof} It suffices to show that there exists a $\psi^*\in\mathcal{X}^{\textnormal{res}}$ with $\psi^*(x_0)=\psi^*(x_1)$. To see this, suppose there is such a $\psi^*$, and let $\psi$ be an arbitrary element of $\mathcal{X}^{\textnormal{res}}$. Note that $|S_{\psi^*}|=2$, and, since $|L_{\psi^*}(v)|\geq 4$, \ref{PropCor3} of Proposition \ref{CorMainEitherBWheelAtM1ColCor} applied to $H$ implies that either $\psi(x_0)=\psi(x_1)$ or $S_{\psi}=L_{\psi}(v)$. In the latter case, $|S_{\psi}|\geq 3$, which is false, so $\psi(x_0)=\psi(x_1)$. Thus, we now show that there exists a $\psi^*\in\mathcal{X}^{\textnormal{res}}$ using the same color on $x_0, x_1$. Now, for each $r\in L(z)$, there is an element of $\mathcal{X}^{\textnormal{res}}$ using $r$ on $z$, so there exists a $c\in L(x_1)$ and distinct $\psi, \psi'\in\mathcal{X}^{\textnormal{res}}$ with $\psi(z)\neq\psi'(z)$ and $\psi(x_1)=\psi'(x_1)=c$. If either of $\psi, \psi'$ uses $c$ on $x_0$, then we are done, so suppose $c\not\in\{\psi(x_0), \psi'(x_1)\}$. Thus, (\ref{StarEqIdtaustaus})  implies that $\psi(q_0)=\psi'(q_0)=c$, and since $|L(p_0)|=1$, we have $\Lambda_{G^{P_0}}(\psi(p_0), \psi(q_0), \bullet)=\Lambda_{G^{P_0}}(\psi'(p_0), \psi'(q_0), \bullet)$. Since neither of the the restrictions of $\psi, \psi'$ to $V(P)$ extends to $L$-color $G$, we may suppose that $\psi(x_0)=\psi'(x_0)=d$ for some color $d\neq c$, and so $S_{\psi}=S_{\psi'}$, so (\ref{StarEqIdtaustaus}) also implies that $\psi(q_1)=\psi'(q_1)=d$ as well. It also implies that $S_{\psi}=S_{\psi'}=\{f\}$ for some color $f$, where $L(y_0)=L(y_1)=\{c,d,f, \psi(z), \psi'(z)\}$, and that $c\in L(z)$. Thus, there is a $\pi\in\mathcal{X}^{\textnormal{res}}$ with $\pi(z)=c$, and, again by (\ref{StarEqIdtaustaus}), $\pi(x_1)\neq c$, so $\pi$ does not restrict to the same $L$-coloring of $\{x_0, x_1\}$ as $\psi$. Since $S_{\psi}=\{f\}$, we have $L_{\psi}(v)\setminus S_{\psi}|\geq 2$, so, applying \ref{PropCor3} of Proposition \ref{CorMainEitherBWheelAtM1ColCor} again, either $\pi(x_0)=\pi(x_1)$ or $S_{\pi}=L_{\pi}(v)$. In the latter case, $|S_{\pi}|\geq 3$, which is false, so $\pi(x_0)=\pi(x_1)$ and we are done. \end{claimproof}\end{addmargin}

By Subclaim \ref{EveryPsiExist}, for each $\psi\in\mathcal{X}^{\textnormal{res}}$, we have $|S_{\psi}|=2$, and, by (\ref{StarEqIdtaustaus}), we get $\psi(z)\in L(y_1)\setminus (S_{\psi}\cup\{\psi(x_1)\})$. It follows that, for each $c\in L(x_1)$, the elements of $\{\psi\in\mathcal{X}^{\textnormal{res}}: \psi(x_1)=c\}$ use at most two of the colors of $z$. Thus, for each $c\in L(x_1)$, there is a $\psi\in\mathcal{X}^{\textnormal{res}}$ with $\psi(x_0)=\psi(x_1)=c$. For each such $\psi$, we have $\Lambda_{G^{P_0}}(\psi(p_0), \psi(q_0), \bullet)=\{c\}$, so $G^{P_0}$ is a broken wheel and we contradict \ref{BWheel1Lb} \ref{BWheel1C} of Theorem \ref{BWheelMainRevListThm2}. This proves Claim \ref{X0X1NoCommonNbrG-C}. \end{claimproof}

\subsection{Completing the Proof of Theorem \ref{MainHolepunchPaperResulThm}}

For each $i\in\{0,1\}$, we let $\textnormal{Ob}(x_i)$ be the set of $y\in V(G\setminus C)$ such that $N(y)\cap V(P_{\pentagon})=\{x_i, q_i, z\}$.

\begin{Claim}\label{EachObViNonempty} For each $i=0,1$, both of the following hold.
\begin{enumerate}[label=\arabic*)]
\itemsep-0.1em
\item There is a $\phi\in\mathcal{X}(G^{P_0}\cup G^{P_1}\cup P)$, where each vertex of $\textnormal{Ob}(x_i)$ has an $L_{\phi}$-list of size at least three; AND
\item  $\textnormal{Ob}(x_i)|=1$.
\end{enumerate} \end{Claim}

\begin{claimproof} Note that, for each $i\in\{0,1\}$, we have $|\textnormal{Ob}(x_i)|\leq 1$, as $G$ is $K_{2,3}$-free.

\vspace*{-8mm}
\begin{addmargin}[2em]{0em}
\begin{subclaim}\label{ObV0ObV1AllObstructions} For any $y\in V(G^{P_{\pentagon}}\setminus C^{P_{\pentagon}})$ with at least three neighbors in $P_{\pentagon}$, $w\in\textnormal{Ob}(x_0)\cup\textnormal{Ob}(x_1)$. \end{subclaim}

\begin{claimproof} Suppose that $y\not\in\textnormal{Ob}(x_0)\cup\textnormal{Ob}(x_1)$. By Claim \ref{NoCommToBothQ0Q1ExcZ}, there is a $k\in\{0,1\}$ with $y\not\in N(q_k)$. As $y\not\in\textnormal{Ob}(x_{1-k})$, either $N(y)\cap V(P_{\pentagon})=\{z, q_{1-k}, x_k\}$ or $N(y)\cap V(P_{\pentagon})=\{q_k, x_0, x_1\}$. In the former case, as $x_0\neq x_1$, we contradict \ref{MainCLFac} of Claim \ref{IfwAdj10zq1ThenAdjCAllInt}. In the latter case, we contradict Claim \ref{X0X1NoCommonNbrG-C}. \end{claimproof}\end{addmargin}

We now show that each $i\in\{0,1\}$ satisfies 1). Fix an $i\in\{0,1\}$ and suppose $i$ violates 1). Thus, it follows from 1) of Claim \ref{ForEachCTwoLRestrict} that $\textnormal{Ob}(x_i)\neq\varnothing$, so $\textnormal{Ob}(x_i)=\{y\}$ for some vertex $y$, and, by \ref{ThirdLabFact} of Claim \ref{IfwAdj10zq1ThenAdjCAllInt}, $G^{P_i}$ is not an edge. Now it also follows from 1) of Claim \ref{ForEachCTwoLRestrict} that $L(z)=L(y)$. We now fix a $\sigma\in\textnormal{End}(P_0, G^{P_0})$. We note now that $\sigma$ extends to an $L$-coloring $\sigma'$ of $\{p_0, x_0, z, p_1\}$ with $|L_{\sigma}(q_0)|\geq 3$ and $|L_{\sigma'}(y)|\geq 3$: This is immediate if $\sigma(x_0)\not\in L(y)$, as $|L_{\sigma}(z)|\geq 4$. Since $x_0x_1\not\in E(G)$, it follows from 2) of Claim \ref{PInducedNoPGSuff1} that $\sigma'$ extends to a $\phi\in\mathcal{X}(G^{P_0}\cup G^{P_1}\cup P)$, and $|L_{\phi}(y)|\geq 3$, contradicting our assumption on $i$. Thus, each $i\in\{0,1\}$ satisfies 1). Now suppose Claim \ref{EachObViNonempty} does not hold. Thus, there is a $k\in\{0,1\}$ with $\textnormal{Ob}(x_k)=\varnothing$. By Subclaim \ref{ObV0ObV1AllObstructions}, every vertex of $G^{P_{\pentagon}}\setminus C^{P_{\pentagon}}$ with at least three neighbors on $P_{\pentagon}$ lies in $\textnormal{Ob}(x_{1-k})$. By 1), there is a $\psi\in\mathcal{X}(G^{P_0}\cup G^{P_1}\cup P)$, where every vertex of $\textnormal{Ob}(x_{1-k})$ has an $L_{\psi}$-list of size at least three. In particular, $\psi$ does not extend to $L$-color $G^{P_{\pentagon}}$. Since $C^{P_{\pentagon}}$ is induced, it follows from Lemma \ref{PartialPathColoringExtCL0}, there is a $y\in V(G^{P_{\pentagon}}\setminus C^{P_{\pentagon}})$ with $|L_{\psi}(y)|<3$. Thus, $y\in\textnormal{Ob}(x_{1-k})$, a contradiction. \end{claimproof}

Applying Claim \ref{EachObViNonempty}, let $\textnormal{Ob}(x_i)=\{y_i\}$ for each $i=0,1$. This is illustrated in Figure \ref{Y0Y1ObstructionVertexFigure}, where $P_{\pentagon}$ is in bold.  

\begin{center}\begin{tikzpicture}
\node[shape=circle,draw=black] (p0) at (-4,0) {$p_0$};
\node[shape=circle,draw=white] (u1+) at (-2.5, 0) {$\ldots$};
\node[shape=circle,draw=black] (v0) at (-1, 0) {$x_0$};
\node[shape=circle,draw=white] (mid) at (0, 0) {$\ldots$};
\node[shape=circle,draw=black] (v1) at (1, 0) {$x_1$};
\node[shape=circle,draw=white] (un+) at (3, 0) {$\ldots$};
\node[shape=circle,draw=black] (p1) at (4, 0) {$p_1$};
\node[shape=circle,draw=black] (q0) at (-3,2) {$q_0$};
\node[shape=circle,draw=black] (q1) at (3,2) {$q_1$};
\node[shape=circle,draw=black] (z) at (0,4) {$z$};
\node[shape=circle,draw=black] (y0) at (-1.5,2) {$y_0$};
\node[shape=circle,draw=black] (y1) at (1.5,2) {$y_1$};
\node[shape=circle,draw=white] (K0) at (-2.8, 1) {$G^{P_0}$};
\node[shape=circle,draw=white] (K1) at (2.8, 1) {$G^{P_1}$};
 \draw[-] (p1) to (ut);
 \draw[-, line width=1.8pt] (v0) to (q0) to (z) to (q1) to (v1);
\draw[-] (q1) to (p1);
\draw[-] (p0) to (u1+) to (v0) to (mid) to (v1) to (un+) to (p1);
\draw[-] (p0) to (q0);
\draw[-] (q0) to (y0);
\draw[-] (q1) to (y1);
\draw[-] (z) to (y1) to (v1);
\draw[-] (z) to (y0) to (v0);
\end{tikzpicture}\captionof{figure}{}\label{Y0Y1ObstructionVertexFigure}\end{center}

As $G$ is $K_{2,3}$-free, neither $G^{P_0}$ nor $G^{P_1}$ is an edge. To finish the proof of Theorem  \ref{MainHolepunchPaperResulThm}, we first show that $y_0y_1\in E(G)$. By Claim \ref{EachObViNonempty}, there is a $\phi\in\mathcal{X}(G^{P_0}\cup G^{P_1}\cup P)$, where $|L_{\phi}(y_0)|\geq 3$. Now suppose $y_0y_1\not\in E(G)$. By \ref{MainCLFac} of Claim \ref{IfwAdj10zq1ThenAdjCAllInt}, $y_1$ has no neighbors in $x_0(C\setminus P)x_1$, except $x_1$. As $C^{P_{\pentagon}}$ is induced and $y_0y_1\not\in E(G)$, the outer cycle of $G^{P_{\pentagon}}-q_1$ is induced. Now, $\phi$ extends to an $L$-coloring $\phi^*$ of $\textnormal{dom}(\phi)\cup\{y_1\}$, and $|L_{\phi^*}(y_0)|\geq 3$. By  Lemma \ref{PartialPathColoringExtCL0}, there is a $w\in V(G^{P_{\pentagon}}-q_1)$ with at least three neighbors among $x_0, z, y_1, x_1$, where $w\neq y_0$. As $G$ is short-inseperable, we have $w\not\in N(z)$, so $\{x_0, x_1, y_1\}\subseteq N(w)$, contradicting Claim \ref{X0X1NoCommonNbrG-C}. Thus, we indeed have $y_0y_1\in E(G)$, and $\textnormal{deg}(z)=4$. Consider the 3-path $X:=x_0y_0y_1x_1$. By 2) of Claim \ref{ForEachCTwoLRestrict}, there are two $L$-colorings of $V(X)$, neither of which extends to $L$-color $G^X$, where these two $L$-colorings have non-equal restrictions to $\{x_0, x_1\}$. Thus, by \ref{LabCrownNonEmpt2} \ref{T2PartB} of Theorem \ref{CombinedT1T4ThreePathFactListThm}, $G^X$ is a wheel, which again contradicts Claim \ref{X0X1NoCommonNbrG-C}. This proves Theorem \ref{MainHolepunchPaperResulThm}.  \end{proof}

\section{Two 5-Path Analogues to Theorem \ref{MainHolepunchPaperResulThm}}\label{ConseqFinSec}

The remainder of this paper consists of the proof of Theorems \ref{ModifiedRes5ChordCaseDegen} and \ref{RainbowNonEqualEndpointColorThm} below. 

\begin{theorem}\label{ModifiedRes5ChordCaseDegen} Let $\mathcal{G}:=(G, C, P, L)$ be a rainbow, where $|E(P)|=5$ and each vertex of $\mathring{P}$ has a list of size at least five and at least one neighbor in $C\setminus\mathring{P}$. Suppose further that at least one endpoint of $P$ has a list of size at least three. Then, for each endpoint $y$ of the middle edge of $P$, there is a $\phi\in\textnormal{Crown}(P, G)$ with $y\in\textnormal{dom}(\phi)$. \end{theorem}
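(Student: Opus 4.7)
The plan is to reduce this 5-path claim to the already-established 4-path case (Theorem \ref{MainHolepunchPaperResulThm}) by splitting $G$ along a chord incident to $y_0$. Write $P = p_0 q_0 y_0 y_1 q_1 p_1$. By the obvious symmetry ($p_0 \leftrightarrow p_1$, $q_0 \leftrightarrow q_1$, $y_0 \leftrightarrow y_1$), we may assume $y = y_0$. By hypothesis, $y_0$ has at least one neighbor in $C \setminus \mathring{P}$; let $v$ be such a neighbor chosen as close to $p_1$ as possible along the path $C \setminus \mathring{P}$. Let $\{G_L, G_R\}$ be the natural $y_0 v$-partition of $G$, so $p_0, q_0 \in V(G_L)$ and $y_1, q_1, p_1 \in V(G_R)$, with outer cycles $C_L, C_R$. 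The degenerate subcases $v = p_0$ and $v = p_1$ are handled separately, as one of $G_L, G_R$ then collapses to a triangle and the argument simplifies.

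On the $G_R$ side, consider the 4-path $P_R := v y_0 y_1 q_1 p_1$, which lies on $C_R$. Its interior vertices $y_0, y_1, q_1$ inherit lists of size at least five; its endpoint $v$ has list of size at least three since $v \in V(C \setminus \mathring{P})$; and by the extremal choice of $v$, vertex $y_0$ has no neighbor in $C_R \setminus P_R$, so the interior endpoints $y_0, q_1$ of $P_R$ cannot share a common neighbor there. Thus Theorem \ref{MainHolepunchPaperResulThm} applies to $(G_R, C_R, P_R, L)$, yielding $\phi_R \in \textnormal{Crown}_L(P_R, G_R)$. Hence $\phi_R$ is a partial $L$-coloring of $V(C_R) \setminus \{y_0, q_1\}$ containing $v, y_1, p_1$ in its domain, with $|L_{\phi_R}(y_0)| \geq 3$ and $|L_{\phi_R}(q_1)| \geq 3$, and is $(P_R, G_R)$-sufficient. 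Now select a color $c \in L_{\phi_R}(y_0)$ to be the color of $y_0$ in the final coloring (with care in the case $y_0 q_1 \in E(G)$, as discussed below). On the $G_L$ side, build a partial coloring $\phi_L$ of $V(C_L) \setminus \{q_0\}$ with $\phi_L(v) = \phi_R(v)$, $\phi_L(y_0) = c$, $p_0 \in \textnormal{dom}(\phi_L)$, $|L_{\phi_L}(q_0)| \geq 3$, and such that $\phi_L$ is $(p_0 q_0 y_0, G_L)$-sufficient; this is constructed by applying Theorem \ref{SumTo4For2PathColorEnds} to the 2-path $p_0 q_0 y_0$ (using $|L(p_0)| + |L(y_0)| \geq 4$), or a suitable part of Theorem \ref{CombinedT1T4ThreePathFactListThm} applied to the 3-path $p_0 q_0 y_0 v$ so as to enforce the precoloring of $v$. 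Taking $\phi := \phi_L \cup \phi_R$ then yields an element of $\textnormal{Crown}_L(P, G)$ with $y_0 \in \textnormal{dom}(\phi)$.

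The main obstacle will be the careful coordination of the two constructions at the shared vertices $y_0$ and $v$, together with maintaining the list-size lower bounds at $q_0$ and $q_1$. In particular, when there is a chord $y_0 q_1 \in E(G)$, coloring $y_0 \mapsto c$ reduces $L_{\phi_R}(q_1)$ by one, so we must pick $c \in L_{\phi_R}(y_0) \setminus L_{\phi_R}(q_1)$; if this difference is empty, we need to either rerun the argument with $v$ chosen closest to $p_0$ on the opposite side (working with the analogous split and applying Theorem \ref{MainHolepunchPaperResulThm} to $G_L$ instead), or apply Theorem \ref{MainHolepunchPaperResulThm} to the alternative 4-path $p_0 q_0 y_0 q_1 p_1$ obtained by routing through the chord $y_0 q_1$ in $G - y_1$. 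Similar care is needed for additional chords from $q_0$ or from $y_0$ back into $G_L$ that might reduce $|L_\phi(q_0)|$ below 3. The boundary cases $v \in \{p_0, p_1\}$ require short separate arguments, but are strictly easier because one side of the partition becomes a triangle or degenerates, and Corollary \ref{CycleLen4CorToThom} or Theorem \ref{SumTo4For2PathColorEnds} suffices there.
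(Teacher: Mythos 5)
Your split-and-glue strategy is indeed close in spirit to part of the paper's argument, but it has a genuine gap centered on where you make the cut. You split along the chord $y_0 v$, where $v$ is $y_0$'s neighbor on $C\setminus\mathring{P}$ closest to $p_1$, apply Theorem~\ref{MainHolepunchPaperResulThm} to the $p_1$-side to get $\phi_R$ (which colors $v$, $y_1$, $p_1$), and then color $y_0$ and $p_0$ on the $p_0$-side. The problem: $q_0$ is adjacent to $p_0$ and $y_0$, and in the hard case it is \emph{also} adjacent to $v$. Then the union $\phi = \phi_L\cup\phi_R$ can remove three distinct colors from $L(q_0)$ (namely $\phi(p_0)$, $c$, and $\phi_R(v)$), leaving $|L_\phi(q_0)| = 2 < 3$, which violates Definition~\ref{GeneralAugCrownNotForLink}. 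You flag the analogous $q_1$ issue arising from a possible chord $y_0 q_1$, but that chord has both endpoints in $\mathring{P}$ and would be removed by a minimal-counterexample reduction; the $q_0$ issue, by contrast, arises from a chord $q_0 v$ with $v\in C\setminus\mathring{P}$, which is always permitted and which you do not address. Crucially, this is not an edge case: the paper's Claim~\ref{ComClaimUQKVK} shows that (for the index $k$ where the theorem threatens to fail) $q_k, v_k, v_{1-k}$ share a common neighbor $u\in V(C\setminus P)$, and a planarity argument (the chord $u v_{1-k}$ blocks any chord from $v_k$ to a vertex beyond $u$ toward $p_{1-k}$) forces your extremal $v$ to equal this $u$, which is adjacent to $q_0$. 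So your construction fails precisely in the case the paper's proof works hardest to handle.

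The paper avoids this by cutting at a different edge. In the no-common-neighbor subcase of Claim~\ref{ComClaimUQKVK}, the paper sets $R:= p_k q_k v_k v_{1-k} y$ and $Q:= y v_{1-k} q_{1-k} p_{1-k}$, where $y$ is $v_{1-k}$'s (not $v_k$'s) extremal neighbor, and applies Theorem~\ref{MainHolepunchPaperResulThm} to $G^R$ and \ref{LabCrownNonEmpt} of Theorem~\ref{CombinedT1T4ThreePathFactListThm} to $G^Q$. With this cut, the shared edge $v_{1-k}y$ has one endpoint ($v_{1-k}$) left uncolored on both sides and the other ($y$) colored consistently, so each of $q_k$, $v_{1-k}$, $q_{1-k}$ receives list-reducing constraints from only one of the two Crowns, and the combined leftover lists inherit the size-$\geq 3$ guarantee directly. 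Your cut has both endpoints $y_0$ and $v$ colored and both potentially adjacent to $q_0$, so the guarantees on $L_{\phi_L}(q_0)$ and on $L_{\phi_R}(v)$ interact in a way neither theorem controls. Additionally, the paper uses the split-and-glue only to derive a contradiction and force the structural conclusion of Claim~\ref{ComClaimUQKVK}; the final construction (after that claim) is different, pairing an element of $\textnormal{End}(p_k q_k u, G^{p_k q_k u})$ with a $\mathcal{G}^{Q_*}$-base coloring from \ref{LabCrownNonEmpt4} and exploiting $u\notin N(q_{1-k})$ and $v_{1-k}\notin N(p_{1-k})$ together with a slack argument $|L_{\phi\cup\psi}(v_k)|\geq 4$ to pick the color of $v_k$. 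Your proposal does not set up a minimal counterexample (which the paper uses to eliminate chords inside $\mathring{P}$ and to get short-inseparability) and does not have a mechanism to prescribe the colors of both $v$ and $y_0$ simultaneously on the $G_L$ side, so even the easy cases need more care than Theorem~\ref{SumTo4For2PathColorEnds} or \ref{LabCrownNonEmpt} directly provide.
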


\begin{proof} Suppose the theorem does not hold, and let $(G, C, P, L)$ be a vertex-minimal counterexample, where $P:=p_0q_0v_0v_1q_1p_1$. By removing colors from the lists of some vertices if necessary, we suppose that each vertex of $C\setminus P$ has an $L$-list of size precisely three and we suppose without loss of generality that $|L(p_0)|=1$ and $|L(p_1)|=3$, although we note that we cannot remove colors from the lists of $\mathring{P}$. Note that our conditions on $P$ imply that no there is no chord of $C$ with both endpoints in $\mathring{P}$. By minimality, together with Corollary \ref{CycleLen4CorToThom} and Theorem \ref{thomassen5ChooseThm}, $G$ is short-inseparable and every chord of $C$ has an endpoint in $C\setminus\mathring{P}$. 

\begin{claim}\label{TwoCrossEdgesRuleOutConfig1} For each $k\in\{0,1\}$, $q_{1-k}, v_{1-k}\not\in N(p_k)$. \end{claim}

\begin{claimproof} Let $k\in\{0,1\}$. Suppose first that $p_kq_{1-k}\in E(G)$. Thus, $N(v_0)\cap V(C\setminus\mathring{P})=N(v_1)\cap V(C\setminus\mathring{P})=\{p_k\}$. Let $G':=G\setminus\{q_k, v_k, v_{1-k}\}$. By Theorem \ref{SumTo4For2PathColorEnds}, there is a $(p_kq_{1-k}p_{1-k}, G')$-sufficient $L$-coloring $\psi$ of $\{p_0, p_1\}$. As $|L_{\phi}(v_{1-k})|\geq 4$, it follows that $\psi$ extends to an $L$-coloring $\phi$ of $\{p_0, v_0, v_1, p_1\}$ with $|L_{\phi}(q_{1-k})|\geq 3$, so $\phi\in\textnormal{Crown}(P,G)$, contradicting our assumption that $G$ is a counterexample. Thus, $q_{1-k}\not\in N(p_k)$. Now suppose that $v_{1-k}\in N(p_k)$. Thus, $N(v_k)\cap V(C\setminus\mathring{P})=\{p_k\}$ and $G\setminus\{q_k, v_k\}=G^R$. By \ref{LabCrownNonEmpt4} of Theorem \ref{CombinedT1T4ThreePathFactListThm}, since $|L(v_{1-k})|\geq 5$, there is an $(R, G^R)$-sufficient $L$-coloring $\phi$ of $\{p_k, v_{1-k}, p_{1-k}\}$. Furthermore, $|L_{\phi}(q_{1-k})|\geq 3$, since $q_{1-k}\not\in N(p_k)$. Choosing an arbitrary color of $L_{\phi}(q_k)$, we extend $\phi$ to an element of $\textnormal{Crown}(P,G)$ with both of $y_0, y_1$ in its domain, contradicting our assumption that $\mathcal{G}$ is a counterexample. \end{claimproof}

Since $G$ is a counterexample, there is a $k\in\{0,1\}$ such that no element of $\textnormal{Crown}(P,G)$ has $v_k$ in its domain. Now, let $y$ be the unique neighbor of $v_{1-k}$ which is closest to $p_{1-k}$ on the path $C\setminus\mathring{P}$. 

\begin{claim}\label{ComClaimUQKVK}  $q_k, v_k, v_{1-k}$ have a common neighbor in $C\setminus P$. Furthermore, letting  $u$ be this unique common neighbor to $q_k, v_k, v_{1-k}$, we have $u\not\in N(v_{1-k})$ and $v_{1-k}\in N(p_{1-k})$.  \end{claim}

\begin{claimproof} Suppose  $q_k, v_k, v_{1-k}$ have no common neighbor in $C\setminus P$. Let $R:=p_kq_kv_kv_{1-k}y$ and $Q:=yv_{1-k}q_{1-k}p_{1-k}$. By Claim \ref{TwoCrossEdgesRuleOutConfig1}, $y\neq p_k$. Suppose first that $Q$ is a triangle. In that case, $y=p_{1-k}$, and Theorem \ref{MainHolepunchPaperResulThm} implies that there is a $\phi\in\textnormal{Crown}(R, G^R)\neq\varnothing$. Furthermore, $N(q_{1-k})=\{v_{1-k}, p_{1-k}\}$ and $v_k\in\textnormal{dom}(\phi)$, so $\phi\in\textnormal{Crown}(P,G)$, contradicting our assumption on $k$. Thus, $Q$ is not a triangle, so $|L(y)|=3$. Since either $|L(p_k)|=1$ or $|L(p_k)|=3$, it follows from Theorem \ref{MainHolepunchPaperResulThm} that there is a family of $|L(p_k)|$ different elements of $\textnormal{Crown}(R, G^R)$, each using a different color on $y$. Thus, it follows from \ref{LabCrownNonEmpt} of Theorem \ref{CombinedT1T4ThreePathFactListThm} applied to $\mathcal{G}^Q$ that there is a $\psi\in\textnormal{Crown}(R, G^R)$ and a $\phi\in\textnormal{Crown}(Q, G^Q)$ with $\psi(y)=\phi(y)$. Note that $v_k\in\textnormal{dom}(\psi)$ by definition and $|L_{\psi\cup\phi}(v_{1-k})|=|L_{\psi}(y)|\geq 3$ by our choice of $y$. Likewise, since $v_{1-k}$ is left uncolored, each of $q_0, q_1$ has an $L_{\psi\cup\phi}$-list of size at least three, and $\psi\cup\phi$ is $(P,G)$-sufficient, so $\psi\cup\phi\in\textnormal{Crown}(P,G)$, contradicting our assumption on $k$. Thus, there is indeed a $u\in V(C\setminus P)$ adjacent to $q_k, v_k, v_{1-k}$. Now suppose $u\in N(v_{1-k})$. By two applications of Theorem \ref{SumTo4For2PathColorEnds} (to $G^{p_kq_ku}$ and $G^{uq_{1-k}p_{1-k}}$ respectively), there is a $(P,G)$-sufficient $L$-coloring $\phi$ of $\{p_k, u, p_{1-k}\}$. Since $|L_{\phi}(v_k)|\geq 4$, $\phi$ extends to an $L$-coloring $\phi'$ of $\textnormal{dom}(\phi)\cup\{v_k\}$ with $|L_{\phi'}(q_k)|\geq 3$, and each of $v_{1-k}, q_{1-k}$ also has an $L_{\phi'}$-list of size at least three, contradicting our assumption on $k$. Thus, $u\not\in N(q_{1-k})$. Now suppose $v_{1-k}\in N(p_{1-k})$. As above, two applications of Theorem \ref{SumTo4For2PathColorEnds} (to $G^{p_kq_ku}$ and $G^{uv_{1-k}p_{1-k}}$ respectively) show that  there is a $(P,G)$-sufficient $L$-coloring $\psi$ of $\{p_k, u, p_{1-k}\}$, and since $|L_{\psi}(v_k)|\geq 4$, $\psi$ extends to an $L$-coloring $\psi'$ of $\textnormal{dom}(\psi)\cup\{v_0, v_1\}$ with $|L_{\psi'}(q_k)|\geq 3$. Then $\psi'\in\textnormal{Crown}(P,G)$, contradicting our assumption on $k$. \end{claimproof}

Let $u$ be as in Claim \ref{ComClaimUQKVK} and let $Q_*:=uv_{1-k}q_{1-k}p_{1-k}$. As$|L(u)|=3$, it follows from Theorem \ref{SumTo4For2PathColorEnds} applied to $G^{p_kq_ku}$ and \ref{LabCrownNonEmpt4} of Theorem \ref{CombinedT1T4ThreePathFactListThm} applied to $G^Q$ that there is a $\phi\in\textnormal{End}(p_kq_ku, G^{p_kq_ku})$ and a $\mathcal{G}^Q$-base coloring $\psi$ with $\phi(u)=\psi(u)$. As above, $|L_{\phi\cup\psi}(v_k)|\geq 4$, so there is a $c\in L_{\phi\cup\psi}(v_k)$ with $|L_{\phi\cup\psi}(q_k)\setminus\{c\}|\geq 3$. As $p_{1-k}\not\in N(v_{1-k})$, we have $|L_{\phi\cup\psi}(v_{1-k})\setminus\{c\}|\geq 3$. Thus, by our choice of $\psi$, it follows that $\phi\cup\psi$ extends to a $(P,G)$-sufficient $L$-coloring $\tau$ of $\{p_0, u, p_1, v_0, v_1\}$ with $\tau(v_k)=c$. Now, since $u\not\in N(q_{1-k})$,  we have $|L_{\tau}(q_{1-k})|\geq 3$, so $\tau\in\textnormal{Crown}(P,G)$, contradicting our assumption on $k$. This proves Theorem \ref{ModifiedRes5ChordCaseDegen}. \end{proof}

If both endpoints of $P$  have lists of size at least three, then we can prove a stronger version of Theorem \ref{ModifiedRes5ChordCaseDegen}.

\begin{defn} \emph{Let $\mathcal{G}:=(G, C, P, L)$ be a rainbow, where $|E(P)|=5$. A \emph{$\mathcal{G}$-obstruction} is a vertex $x\in V(C)$ such that either $x$ is adjacent to all four vertices of $\mathring{P}$ or there is a subpath $pqv$ of $P$, where $p$ is an endpoint of $P$, such that $x$ is adjacent to $p$ and to both vertices of $\mathring{P}\setminus\{q, v\}$.} \end{defn}

\begin{theorem}\label{RainbowNonEqualEndpointColorThm} Let $\mathcal{G}:=(G, C, P, L)$ be a rainbow, where $P=p_0q_0v_0v_1q_1p_1$ and each vertex of $\mathring{P}$ has a list of size at least five and at least one neighbor in $C\setminus\mathring{P}$. Then either
\begin{enumerate}[label=\arabic*)]
\itemsep-0.1em
\item There is a $\mathcal{G}$-obstruction; OR
\item For for each $j\in\{0,1\}$, there is an $i\in\{0,1\}$ and an $a\in L(p_i)$ such that, for any $b\in L(p_{1-i})$, there is an element of $\textnormal{Crown}(P, G)$ which has $v_j$ in its domain and uses $a,b$ on $p_i, p_{1-i}$ respectively. 
\end{enumerate}
 \end{theorem}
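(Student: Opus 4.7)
The plan is a vertex-minimal counterexample argument in the spirit of the proofs of Theorems \ref{MainHolepunchPaperResulThm} and \ref{ModifiedRes5ChordCaseDegen}. Suppose $(G, C, P, L)$ is vertex-minimal subject to neither 1) nor 2) holding. Since both the hypothesis and the conclusion for a given $j$ are symmetric under the swap $(p_0, q_0, v_0) \leftrightarrow (p_1, q_1, v_1)$, I would fix $j=0$ and aim to show that the failure of 2) produces a $\mathcal{G}$-obstruction, contradicting the assumed failure of 1).

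Routine reductions come from minimality combined with Corollary \ref{CycleLen4CorToThom} and Theorem \ref{thomassen5ChooseThm}: $G$ is short-inseparable, and every chord of $C$ has an endpoint in $V(C) \setminus \mathring{P}$. Further chord restrictions come from applying Theorem \ref{SumTo4For2PathColorEnds} to 2-subpaths of $P$ and Theorem \ref{MainHolepunchPaperResulThm} to the 4-subpaths $p_0 q_0 v_0 v_1 q_1$ and $q_0 v_0 v_1 q_1 p_1$: any chord whose existence would allow one of these theorems to produce a Crown element contradicting the failure of 2) must be excluded. I would also add any edges to $\textnormal{Int}(C)$ whose insertion does not create a $\mathcal{G}$-obstruction, so as to triangulate the interior.

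The failure of 2) provides the following witness data: for each $i \in \{0,1\}$ and each $a \in L(p_i)$, there is a color $\beta(i,a) \in L(p_{1-i})$ such that no $\phi \in \textnormal{Crown}(P,G)$ using $a$ on $p_i$ and $\beta(i,a)$ on $p_{1-i}$ satisfies $v_0 \in \textnormal{dom}(\phi)$. The heart of the argument would be to analyze, for a fixed $a_0 \in L(p_0)$ and $b_0 := \beta(0, a_0)$, the residual rainbow obtained after precoloring $p_0, p_1$ with $a_0, b_0$. The residual outer boundary contains the 3-path $Q := q_0 v_0 v_1 q_1$, in which $q_0$ and $q_1$ each have residual list sizes at least four. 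Applying \ref{LabCrownNonEmpt} of Theorem \ref{CombinedT1T4ThreePathFactListThm} to this residual 3-path rainbow (with $q_0, q_1$ playing the role of endpoints) yields a sufficient $L$-coloring of $\{q_0, x_0^Q, x_1^Q, q_1\}$; after extending to color $v_0$ using its residual list of size at least three, this produces an element of $\textnormal{Crown}(P, G)$ containing $v_0$ in its domain and using $a_0, b_0$ on $p_0, p_1$. This contradicts the choice of $\beta(0, a_0)$, unless some structural obstacle blocks the extension.

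The main difficulty lies in the case analysis showing that every such structural obstacle is in fact a $\mathcal{G}$-obstruction of $G$. The obstacles come in three shapes: a common neighbor of $q_0, q_1$ in $V(C)\setminus P$ (yielding an obstruction of the first kind); a vertex adjacent to some endpoint $p_k$ together with both vertices of $\mathring{P}\setminus\{q_k, v_k\}$ (yielding an obstruction of the second kind); or a triangle-type or wheel-type obstruction in the sense of Definition \ref{BaseColoringDefn} arising in a 3-subpath of $P$. In each case I would invoke the appropriate part of Theorem \ref{CombinedT1T4ThreePathFactListThm} or Theorem \ref{MainHolepunchPaperResulThm} applied to subpaths of $P$ and Theorem \ref{ModifiedRes5ChordCaseDegen} to locate existing Crown elements, either salvaging a Crown element contradicting the failure of 2) or pinning down the obstructing vertex as one satisfying Definition of $\mathcal{G}$-obstruction. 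This chord and wheel analysis is by far the most delicate step, and I expect it to parallel closely the arguments in Sections \ref{PrFPart1T1T4}--\ref{CorColSecRes} and Section \ref{MainResHolepunchSec}.
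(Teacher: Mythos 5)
Your central reduction step has a type mismatch that breaks the argument. You fix $a_0 \in L(p_0)$, $b_0 := \beta(0, a_0)$, and propose to apply \ref{LabCrownNonEmpt} of Theorem \ref{CombinedT1T4ThreePathFactListThm} to the 3-path $Q := q_0v_0v_1q_1$ with $q_0, q_1$ as endpoints. But \ref{LabCrownNonEmpt} produces a $(Q,\cdot)$-sufficient coloring of $\{q_0, x_0^Q, x_1^Q, q_1\}$ -- a coloring that \emph{assigns colors to $q_0$ and $q_1$}. An element of $\textnormal{Crown}(P, G)$, on the other hand, is required by Definition \ref{GeneralAugCrownNotForLink} to be a partial coloring of $V(C)\setminus\{q_0, q_1\}$; in particular, $q_0, q_1$ are excluded from its domain and must be left with $L_\phi$-lists of size at least three. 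Extending the output of \ref{LabCrownNonEmpt} to additionally color $v_0$ cannot produce a Crown element, and simply restricting it to forget the colors on $q_0, q_1$ does not preserve sufficiency, since $(Q,\cdot)$-sufficiency is a statement about \emph{all} extensions to $V(Q)$ and those extensions need not agree with the discarded colors. So the proposed contradiction with the choice of $\beta(0, a_0)$ never materializes, and the rest of the case analysis is built on a foundation that does not exist. A secondary concern: you propose to ``add any edges to $\textnormal{Int}(C)$ whose insertion does not create a $\mathcal{G}$-obstruction, so as to triangulate the interior,'' but the paper does not triangulate; it takes $|E(G)|$ maximum among vertex-minimal counterexamples and invokes edge-maximality only at isolated points (e.g., Claim \ref{Y0Y1MeetMiddle}, to force $d(y_0, y_1) > 1$). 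Wholesale triangulation would also alter the lists via additional adjacencies and is harder to justify as preserving the counterexample property.

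The paper's actual approach is quite different and more geometric. It introduces, for each $i$, the far and near neighbors $y_i, y_i'$ of $v_i$ on $C\setminus\mathring{P}$, the path $Q_i := p_iq_iv_iy_i$, and the far neighbor $x_i$ of $q_i$, and builds explicit $(Q_i, G^{Q_i})$-sufficient colorings $\phi^i_a$ of $\{p_i, y_i, v_i\}$ that leave $|L_{\phi^i_a}(q_i)| \geq 3$. It then proves the key structural claim $y_0 = y_1$ (Claim \ref{Y0Y1MeetMiddle}), forces $y_i' = x_i$, collapses the two side regions $K_i$ to edges or triangles, and through several further claims pins the counterexample down to a concrete 8-vertex graph where the lists are fully determined, at which point a direct contradiction is found. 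If you want a roadmap to repair your attempt, the right replacement for the ``apply \ref{LabCrownNonEmpt} to $q_0v_0v_1q_1$'' step is to construct colorings with $q_0, q_1$ deliberately left uncolored -- which is exactly what the $\phi^i_a$ construction and the appeal to $\textnormal{End}$-sets in the paper accomplish.
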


\begin{proof} Suppose the theorem does not hold, and let $\mathcal{G}=(G, C, P, L)$ be a vertex-minimal counterexample in which $|E(G)|$ is maximized among all vertex-minimal counterexamples. By removing colors from the lists of some vertices if necessary, we suppose that each vertex of $C\setminus\mathring{P}$ has a list of size precisely three and each vertex of $G\setminus C$ has a list of size precisely five, although we note that we cannot remove colors from the lists of $\mathring{P}$. Our conditions on $P$ imply that no there is no chord of $C$ with both endpoints in $\mathring{P}$ and, since there is no $\mathcal{G}$-obstruction, $q_0, q_1$ have no common neighbor in $C$. Furthermore, the vertex-minimality of $G$, together with Corollary \ref{CycleLen4CorToThom} and Theorem \ref{thomassen5ChooseThm}, implies that $G$ is short-inseparable and that every chord of $C$ has an endpoint in $C\setminus\mathring{P}$. For each $i=0,1$, we define the following: Let $y_i, y_i'$ be the neighbors of $v_i$ which are respectively farthest and closest from $p_i$ on the path $C\setminus\mathring{P}$ and furthermore, $Q_i:=p_iq_iv_iy_i$. Let $x_i$ be the neighbor of $q_i$ which is farthest from $p_i$ on the path $C\setminus\mathring{P}$. For each $j=0,1$, let $\mathcal{D}_j$  denote the set of $L$-colorings $\sigma$ of $\{p_0, p_1\}$ such that $\sigma$ does not extend to an element of $\textnormal{Crown}(P,G)$ whose domain contains $v_j$. Let $\mathcal{D}$ denote the set of $L$-colorings $\sigma$ of $\{p_0, p_1\}$ such that $\sigma$ does not extend to an element of $\textnormal{Crown}(P,G)$ whose domain contains $\{v_0, v_1\}$. In particular, $\mathcal{D}_0\cup\mathcal{D}_1\subseteq\mathcal{D}$. Since there is no $\mathcal{G}$-obstruction and each of $v_0, v_1$ has a neighbor in $C\setminus\mathring{P}$, we have $p_0p_1\not\in E(G)$. Since $p_0p_1\not\in E(G)$, it follows that, for each $i\in\{0,1\}$ and $a\in L(p_i)$, there is a $\sigma\in\mathcal{D}$ using $a$ on $p_i$. Note that, for each $i\in\{0,1\}$ and $a\in L(y_i)$, there is a $(Q_i, G^{Q_i})$-sufficient $L$-coloring $\phi^i_a$ of $\{p_i, y_i, v_i\}$ with $|L_{\phi_i^a}(q_i)|\geq 3$ and $\phi_i^a(y_i)=a$. This is immediate if $y_i=p_i$ and otherwise, it follows from \ref{LabCrownNonEmpt4} of Theorem \ref{CombinedT1T4ThreePathFactListThm} (if $q_iy_i\not\in E(G)$ and Theorem \ref{SumTo4For2PathColorEnds} (if $q_iy_i\in E(G)$). 

\begin{claim}\label{Y0Y1MeetMiddle} $y_0=y_1$ \end{claim}

\begin{claimproof} Suppose not. Thus, letting $R:=y_0v_0v_1y_1$, $R$ is a 3-path. If $d(y_0, y_1)=1$, then, since $G$ is short-inseparable, $V(G^R)=V(R)$, and we can add at least one of the chords of $C^R$ to $G$ without creating a $\mathcal{G}$-obstruction, contradicting edge-maximality. Thus, $d(y_0, y_1)>1$. 

\vspace*{-8mm}
\begin{addmargin}[2em]{0em}
\begin{subclaim}\label{NoLoneVertSuff} For each $i\in\{0,1\}$, no $L$-coloring of $\{y_i\}$ is $(R, G^R)$-sufficient. \end{subclaim}

\begin{claimproof} Suppose not. Say without loss of generality that there is an $(R, G^R)$-sufficient $L$-coloring $\psi$ of $\{y_0\}$ . Let $a:=\psi(y_0)$. Now, there is a $\sigma\in\mathcal{D}$ with $\sigma(p_0)=\phi_0^a(p_0)$. By Theorem \ref{SumTo4For2PathColorEnds}, there is a $\phi'\in\textnormal{End}(p_1q_1v_1, G^{Q_1})$ with $\phi'(p_1)=\sigma(p_1)$ and $\phi'(v_1)\neq\phi_0^a(v_0)$, so $\phi_0^a\cup\phi'$ is a proper $L$-coloring of $\{p_0, y_0, v_0, v_1, p_1\}$ which is not $(P,G)$-sufficient. Thus, $\phi_0^a\cup\phi'$ extends to an $L$-coloring $\tau$ of $V(P)\cup\{y_0\}$ which does not extend to $L$-color $G$. Since $d(y_0, y_1)>1$, it follows from our choice of of $\phi_0^a, \phi'$ that $\tau$ extends to $L$-color $V(G^{Q_0}\cup G^{Q_1})$. But then, by our choice of $a$, $\tau$ extends to $L$-color $G$, a contradiction.  \end{claimproof}\end{addmargin}

Since $|L(y_0)|=|L(y_1)|=3$, It follows from Subclaim \ref{NoLoneVertSuff}, together with \ref{LabCrownNonEmpt2} \ref{T2PartB} of Theorem \ref{CombinedT1T4ThreePathFactListThm} that $G^R$ is a wheel. Let $w$ be the central vertex of the wheel and let $H:=G^{y_0wy_1}$. It also follows from Subclaim \ref{NoLoneVertSuff} that, for each $i\in\{0,1\}$, no color of $L(y_i)$ is $(y_0wy_1, H)$-universal. 

\vspace*{-8mm}
\begin{addmargin}[2em]{0em}
\begin{subclaim} $6\leq |V(G^R)|\leq 7$ and all the vertices of $H-w$ have the same 3-list. \end{subclaim}

\begin{claimproof} If $6\leq |V(G^R)|\leq 7$, then $|E(H-w)|\leq 3$ and it follows from Subclaim \ref{NoLoneVertSuff} that all the vertices of $H-w$ have the same 3-list, so suppose $|V(G^R)|>7$. Choose an arbitrary $i\in\{0,1\}$ and let $y_iz_1z_2z_3$ be the unique 3-path of $H-w$ with $y_i$ as an endpoint. By \ref{PropCor1} of Proposition \ref{CorMainEitherBWheelAtM1ColCor} applied to $H$, $L(y_i)=L(z_1)=L(z_2)$. Let $G^{\dagger}$ be a graph obtained from $G$ by deleting $z_1, z_2$ and replacing them with the edge $y_iz_3$, so that $G^{\dagger}$ has outer cycle $C^{\dagger}:=(C\setminus\{z_1, z_2\})+y_iz_3$. Possibly there is one (but not both) of the indices $i\in\{0,1\}$ such that $y_i=p_i$, but, in any case, letting $\mathcal{G}^{\dagger}$ be the rainbow $(G^{\dagger}, C^{\dagger}, P, L)$, there is no $\mathcal{G}^{\dagger}$-obstruction, as the induced cycle obtained by contradicting $C^R$ has length at least five. Thus, by minimality, there is an $L$-coloring $\phi$ of $V(G)\setminus\{z_1, z_2\}$ with $\phi(y_i)\neq\phi(z_3)$, where $\phi$ does not extend to $L$-color $G$, contradicting Observation \ref{MinCounterReUseObs}. \end{claimproof}\end{addmargin}

Let $T$ be the common list iof the vertices of $H-w$. By Subclaim \ref{NoLoneVertSuff}, $T\subseteq L(w)$, so $|L(v_0)\setminus (L(w)\setminus T)|\geq 3$. By Theorem \ref{SumTo4For2PathColorEnds}, there is a $\phi\in\textnormal{End}(p_0q_0v_0, G^{Q_0})$ with $\phi(v_0)\not\in L(w)\setminus T$. Let $a:=\phi(p_0)$ and $\sigma\in\mathcal{D}$ with $\sigma(p_0)=a$. Since $p_0v_1\not\in E(G)$, we have $|L_{\phi}(v_1)|\geq 4$. By Theorem \ref{SumTo4For2PathColorEnds}, there is a $\psi\in\textnormal{End}(p_1q_1v_1, G^{Q_1})$ with $\psi(v_1)\neq\phi(v_0)$ and $\psi(p_1)=\sigma(p_1)$. Now, $\phi\cup\psi$ is a proper $L$-coloring of $\{p_0, v_0, v_1, p_1\}$ which is not $(P,G)$-sufficient, so it extends to an $L$-coloring $\tau$ of $V(P)$ which does not extend to $L$-color $G$. But since $d(y_0, y_1)>1$, $\tau$ does extend to $L$-color $V(G^{Q_0}\cup G^{Q_1})$. Thus, $\tau$ extends to an $L$-coloring of $G-w$. Since all the vertices of $H-w$ are using colors of $T$, there is also a color left for $w$, so $\tau$ extends to $L$-color $G$, a contradiction. This proves Claim \ref{Y0Y1MeetMiddle}. \end{claimproof}

Let $y_0=y_1=y$.  For each $i=0,1$, let $K_i:=G^{y_i'v_iy}$ and $P_i:=p_iq_ix_i$. 

\begin{claim} For each $i=0,1$, $y_i'=x_i$.\end{claim}

\begin{claimproof} Suppose there is an $i\in\{0,1\}$ for which this does not hold, say $i=0$ without loss of generality. In particular, $y_0'\neq p_0$ and $q_1\not\in N(y_0')$. Let $R'$ be the 3-path $x_0q_0v_0y_0'$. If $d(x_0, y_0')=1$, thenthere is at least one chord of $C^{R'}$ that we can add to $G$ without creating a $\mathcal{G}$-obstruction, contradicting edge-maximality. Thus, $d(x_0, y_0')>1$.

\vspace*{-8mm}
\begin{addmargin}[2em]{0em}
\begin{subclaim}\label{NoLColEitherSideSuff} No $L$-coloring of $\{y_0'\}$ is $(R', G^{R'})$-sufficient. \end{subclaim}

\begin{claimproof} Suppose there is such an $L$-coloring $\psi$ of $\{y_0'\}$. Let $a:=\psi(x^*)$. Note that there is a $(y_0'v_0y_0, K_0)$-sufficient $L$-coloring $\rho$ of $\{y_0', y_0\}$ with $\rho(y_0')=a$. Let $b:=\rho(y_0')$. There is a $\sigma\in\mathcal{D}$ with $\sigma(p_1)=\phi^1_b(p_1)$. Since $d(p_0, y_0')>1$, the union $\sigma\cup\rho\cup\phi^1_b$ is a proper $L$-coloring of its domain and extends to an $L$-coloring $\tau$ of $\{y_0', y_0\}\cup (V(P)\setminus\{q_0, q_1\})$. As $y_0'\neq x_0$, each of $q_0, q_1$ has an $L_{\tau}$ list of size at least three. By our choice of $a$, and the fact that $d(x_0, y_0')>1$, $\tau$ is $(P,G)$-sufficient, contradicting our choice of $\sigma$.  \end{claimproof}\end{addmargin}

\vspace*{-8mm}
\begin{addmargin}[2em]{0em}
\begin{subclaim}\label{NoLColEitherSideSuff2} No $L$-coloring of $\{x_0\}$ is $(R', G^{R'})$-sufficient. \end{subclaim}

\begin{claimproof} Suppose there is an $(R', G^{R'})$-sufficient $L$-coloring $\psi$ of $\{x^*\}$. Let $a:=\psi(x^*)$. In this case, there is a $(P_0, G^{P_0})$-sufficient $L$-coloring $\rho$ of $\{p_0, x_0\}$ with $\rho(x_0)=a$. Now, there is a $j\in\{0,1\}$ and a $\sigma\in\mathcal{D}_j$ with $\sigma(p_0)=\rho(p_0)$. Let $b:=\sigma(p_1)$. Consider the following cases:

\textbf{Case 1:} $q_1\not\in N(y)$

In this case, by \ref{LabCrownNonEmpt4} of Theorem \ref{CombinedT1T4ThreePathFactListThm}, there is a $\mathcal{G}^Q$-base coloring $\psi$ of $\{y, p_1\}$ with $\psi(p_1)=b$. As $x_0\neq y_0'$, $\rho\cup\psi$ is a proper $L$-coloring of $\{p_0, x_0, y, p_1\}$ with $|L_{\rho\cup\psi}(v_0)|\geq 4$ and $|L_{\rho\cup\psi}(q_0)|\geq 3$. It follows from our choice of $\psi$ that $\rho\cup\psi$ extends to a $(P,G)$-sufficient $L$-coloring $\tau$ of $\{y\}\cup (V(P)\setminus\{q_0, q_1\})$ with $|L_{\tau}(q_0)|\geq 3$. Since $q_1\not\in N(y)$, $|L_{\tau}(q_1)|\geq 3$ as well, contradicting our choice of $\sigma$. 

\textbf{Case 2:} $q_1\in N(y)$

In this case, applying Theorem \ref{SumTo4For2PathColorEnds} to $G^{Q_1}-v_1$, there is a $(Q_1, G^{Q_1})$-sufficient $L$-coloring $\psi'$ of $\{p_1, y\}$ with $\psi'(p_1)=b$. Now, $\rho\cup\psi'$ is a $(P,G)$-sufficient $L$-coloring of $\{p_0, x_0, y, p_1\}$, where each of $q_0, q_1$ has an $L_{\rho\cup\psi'}$-list of size at least three and each of $v_0, v_1$ has an $L_{\rho\cup\psi'}$-list of size at least four. Thus, coloring $v_j$ appropriately, $\rho\cup\psi'$ extends to an element of $\textnormal{Crown}(P,G)$ with domain $\{p_0, x_0, y, v_j, p_1\}$, contradicting our choice of $j$. \end{claimproof}\end{addmargin}

It follows from Subclaim \ref{NoLColEitherSideSuff}, together with \ref{LabCrownNonEmpt2} \ref{T2PartA} of Theorem \ref{CombinedT1T4ThreePathFactListThm}, that $G^{R'}$ is a wheel. Let $w$ be the central vertex of this wheel and $u$ be the unique neighbor of $y_0'$ on the path $G^{R'}\setminus\{q_0, v_0, w\}$.

\vspace*{-8mm}
\begin{addmargin}[2em]{0em}
\begin{subclaim}\label{WheelOn6VertR'} $6\leq|V(G^{R'})|\leq 7$ and all the vertices of $G^{R'}\setminus\{q_0, v_0, w\}$ have the same 3-list.  \end{subclaim}

\begin{claimproof} If $|V(G^{R'})|\leq 7$, then it follows from Subclaims \ref{NoLColEitherSideSuff}-\ref{NoLColEitherSideSuff2} that all the vertices of $G^{R'}\setminus\{q_0, v_0, w\}$ have the same 3-list, so it suffices to prove that $6\leq |V(G^{R'})|\leq 7$. Suppose not. Thus, $|V(G^{R'})|>7$. Let $y_0'uu'u''$ be the terminal length-three subpath of $G^{R'}\setminus\{q_0, v_0, w\}$ with $y_0'$ as an endpoint. By \ref{PropCor1} of Proposition \ref{CorMainEitherBWheelAtM1ColCor}, $L(y_0')=L(u)=L(u')$. Let $G^{\dagger}$ be a graph obtained from $G$ by deleting $u, u'$ and replacing them with the edge $y_0'u''$, so that $G^{\dagger}$ has outer cycle $C^{\dagger}:=(C\setminus\{u, u'\})+y_0'u''$. Let $\mathcal{G}^{\dagger}:=(G^{\dagger}, C^{\dagger}, P, L)$. The cycle obtained from $C^{R'}$ by  replacing $u, u'$ with $y_0'u''$ has length at least five, so there is no $G^{\dagger}$-obstruction, even if $x_0=p_0$ and $y_0'=y$. Thus, as in Claim \ref{Y0Y1MeetMiddle}, we contradict the minimality of $\mathcal{G}$ by applying Observation \ref{MinCounterReUseObs}. \end{claimproof}\end{addmargin}

Let $T$ be a set equal to the list of each vertex of $G^{R'}\setminus\{q_0, v_0, w\}$. By Subclaim \ref{NoLColEitherSideSuff}, $T\subseteq L(w)$. Fix an $a\in L(y)$ and consider $\phi^a_1$. There is a $\sigma\in\mathcal{D}$ with $\sigma(p_1)=\phi^a_1(p_1)$. As $|L(v_0)\setminus (L(w)\setminus T)|\geq 3$, the union $\sigma\cup\phi^a_1$ extends to an $L$-coloring $\tau$ of $\{p_0, v_0, v_1, y, p_1\}$ with $\tau(v_0)\not\in L(w)\setminus T$. Each of $q_0, q_1$ has an $L_{\tau}$-list of size at least three, so $\tau$ is not $(P,G)$-sufficient. Thus, $\tau$ extends to an $L$-coloring $\tau'$ of $V(P)\cup\{y\}$ which does not extend to $L$-color $G$. On the other hand, since $d(x_0, y_0')>1$, $\tau'$ extends to an $L$-coloring of $G-w$ in which all the vertices of $G^{R'}\setminus\{q_0, v_0, w\}$ use colors of $T$. By our choice of $\tau(v_0)$, there is a color left for $w$, so $\tau'$ extends to $L$-color $G$, a contradiction. \end{claimproof}

\begin{claim}\label{ForEachY0SmallUniv} For each $i\in\{0,1\}$ either $K_i$ is a triangle with $L(x_i)=L(y)$,or $K_i$ is an edge. \end{claim}

\begin{claimproof} Say $i=0$ without loss of generality, and $x_0\neq y$. In particular, $d(p_0, y)>1$. Let $Q_0':=p_0q_0v_0x_0$. Possibly $Q_0'$ is a triangle. We first show that no color of $L(y)$ is $(x_0v_0y, K_0)$-universal. Suppose there is such an $a\in L(y)$. There is a $\sigma\in\mathcal{D}$ with $\sigma(p_1)=\phi_1^a(p_1)$. Now, $|L_{\phi_1^a}(v_0)|\geq 3$, so it follows from Theorem \ref{SumTo4For2PathColorEnds} that there is a $\psi\in\textnormal{End}(p_0q_0v_0, G^{Q_0'})$ with $\psi(p_0)=\sigma(p_0)$ and $\psi(v_0)\in L_{\phi_1^a}(v_0)$. Since $x_0\neq y$ and $p_0y\not\in E(G)$, $\psi\cup\phi_1^a$ is a proper $L$-coloring of $\{p_0, v_0, v_1, y, p_1\}$ and $|L_{\psi\cup\phi_1^a}(q_0)|\geq 3$. We have $|L_{\psi\cup\phi_1^a}(q_1)|\geq 3$ as well, and our choice of $a, \psi, \phi^a$ imply that $\psi\cup\phi_1^a$ is $(P,G)$-sufficient, so $\psi\cup\phi_1^a\in\textnormal{Crown}(P,G)$, contradicting our choice of $\sigma$. We conclude that no color of $L(y)$ is $(x_0v_0y, K_0)$-universal, so $K_0$ is a broken wheel with principal path $x_0v_0y$. Furthermore, letting $u$ be the unique neighbor of $y$ on the path $K_0-v_0$, we have $L(u)=L(y)$. To finish, it suffices to show that $|V(K_0)|=3$. Suppose $|V(K_0)|\geq 4$. Now, there is  $c\in L(v_0)$ with $c\not\in L(y)\cup L(u)$. By Theorem \ref{SumTo4For2PathColorEnds}, there is a $\phi\in\textnormal{End}(p_0q_0v_0, G^{Q_0'})$ with $\phi(v_0)=c$. Let $\sigma\in\mathcal{D}$ with $\sigma(p_0)=\phi(p_0)$, and let $b:=\sigma(p_1)$. By Theorem \ref{SumTo4For2PathColorEnds}, there is a $\psi\in\textnormal{End}(p_1q_1v_1, G^{Q_1})$ with $\psi(p_1)=b$ and $\psi(v_1)\neq c$, so $\phi\cup\psi$ is a proper $L$-coloring of $\{p_0, v_0, v_1, p_1\}$ which is not $(P,G)$-sufficient. Thus, $\phi\cup\psi$ extends to an $L$-coloring $\tau$ of $V(P)$ which does not extend to $L$-color $G$. Since $c\not\in L(y)$ and $d(x_0, y)>1$, it follows from our choice of $\phi, \psi$ that $\tau$ extends to an $L$-coloring of $V(G^{Q_0'}\cup G^{Q_1})$. Since $c\not\in L(u)$, it follows that $\tau$ extends to $L$-color $G$, a contradiction. This proves Claim \ref{ForEachY0SmallUniv}. \end{claimproof}

\begin{claim}\label{PrecOneWithi'Sub} There is precisely one $i\in\{0,1\}$ with $x_i\neq y$.  \end{claim}

\begin{claimproof} Since $q_0, q_1$ have no common neighbor in $C$, there is at least one $i\in\{0,1\}$ with $x_i\neq y$. Suppose that both of $x_0, x_1$ are distinct from $y$. Fix an arbitrary $\sigma\in\mathcal{D}$. Since $|L(v_k)\setminus L(y)|\geq 2$ for each $k=0,1$, $\sigma$ extends to an $L$-coloring $\psi$ of $\{p_0, v_0, v_1, p_1\}$ with $\psi(v_0), \psi(v_1)\not\in L(y)$, and $\psi$ is not $(P,G)$-sufficient, so it extends to an $L$-coloring $\tau$ of $V(P)$ which does not extend to $L$-color $G$. By Claim \ref{ForEachY0SmallUniv}, each of $K_0, K_1$ is a triangle and $L(x_0)=L(y)=L(x_1)$, so $\tau(v_0)\not\in L(x_0)$ and $\tau(v_1)\not\in L(x_1)$. Possibly there is an $i\in\{0,1\}$ with $x_i=p_i$, but, in any case, $\tau$ extends to $L$-color $G-y$ and there is now a color left for $y$, a contradiction. \end{claimproof}

Applying Claims \ref{ForEachY0SmallUniv}-\ref{PrecOneWithi'Sub}, we suppose without loss of generality that $x_0=y$ and $x_1\neq y$, so $K_1$ is a triangle with $L(x_1)=L(y)$, and we have the diagram in Figure \ref{FourVertMathringPCommNB}. As there is no $\mathcal{G}$-obstruction, neither $G^{P_0}$ nor $G^{P_1}$ is an edge.

\begin{center}\begin{tikzpicture}
\node[shape=circle,draw=black] (p1) at (0,0) {$p_0$};
\node[shape=circle,draw=black] (p2) at (0,2) {$q_0$};
\node[shape=circle,draw=black] (p2+) at (2,2) {$v_0$};
\node[shape=circle,draw=black] (p2++) at (4,2) {$v_1$};
\node[shape=circle,draw=black] (u1) at (2,0) {$y$};
\node[shape=circle,draw=black] (p4) at (6,0) {$p_1$};
\node[shape=circle,draw=black] (p3) at (6,2) {$q_1$};
\node[shape=circle,draw=black] (u2) at (4,0) {$x_1$};
\node[shape=circle,fill=white] (dot1) at (1,0) {$\cdots$};
\node[shape=circle,fill=white] (dot2) at (5,0) {$\cdots$};

 \draw[-] (p1) to (dot1) to (u1) to (u2) to (dot2) to (p4) to (p3) to (p2++) to (p2+) to (p2) to (p1);
 \draw[-] (p2) to (u1) to (p2+);
 \draw[-] (u1) to (p2++) to (u2);
 \draw[-] (u2) to (p3);
\end{tikzpicture}\captionof{figure}{}\label{FourVertMathringPCommNB}\end{center}

\begin{claim}\label{ForEachISideTriang} For each $i\in\{0,1\}$, $G^{P_i}$ is a triangle with $L(p_i)=L(x_i)$. \end{claim}

\begin{claimproof} We first show the following. 

\vspace*{-8mm}
\begin{addmargin}[2em]{0em}
\begin{subclaim}\label{NoEndPKUnivFirstCase} Let $k\in\{0,1\}$. Then no color of $L(p_k)$ is $(P_k, G^{P_k})$-universal. \end{subclaim}

\begin{claimproof} Suppose there is an $a\in L(p_k)$ which is $(P_k, G^{P_k})$-universal. Let $\sigma\in\mathcal{D}$ with $\sigma(p_k)=a$. Thus, $\sigma$ extends to an $L$-coloring $\tau$ of $\{p_0, v_0, v_1, p_1\}$ with $\tau(v_0)\not\in L(y)$ and $\tau(v_1)\not\in L(y)\cap L(x_1)$. Our choice of $a, \tau(v_0), \tau(v_1)$ implies that $\tau$ is $(P,G)$-sufficient, contradicting our choice of $\sigma$.  \end{claimproof}\end{addmargin}

It follows from Subclaim \ref{NoEndPKUnivFirstCase} that each of $G^{P_0}$ and $G^{P_1}$ is a broken wheel. 

\vspace*{-8mm}
\begin{addmargin}[2em]{0em}
\begin{subclaim}\label{EitherAOrBUniversality} Let $k\in\{0,1\}$ and $p_kz_1z_2$ be the terminal 2-path of $C-q_k$ containing $p_k$. Then $3\leq |V(G^{P_k})|\leq 4$ and $L(p_k)=L(z_1)$. Lastly, if $|V(G^{P_k})|=4$, then either $|V(G^{P_{1-k}})|=3$ or $L(p_k)\neq L(z_2)$. \end{subclaim}

\begin{claimproof} We have $L(p_k)=L(z_1)$ by Subclaim \ref{NoEndPKUnivFirstCase}. If $G^{P_k}$ is a triangle, then we are done, so suppose $|V(G^{P_k})|\geq 4$. If $L(p_k)\neq L(z_2)$, then, since no color of $L(p_k)$ is $(P_k, G^{P_k})$-universal, it follows from \ref{PropCor1} of Proposition \ref{CorMainEitherBWheelAtM1ColCor} that $|V(G^{P_k})|=4$, so we are done in that case. Now suppose that $L(p_k)=L(z_2)$. Let $M:=z_2q_kv_kv_{1-k}$, so $G^M=G\setminus\{p_k, z_1\}$. For any $\rho\in\textnormal{Crown}(M, G^M)$, $\rho(z_2)\in L(p_k)$, and the $L$-coloring of $\textnormal{dom}(\rho)\cup\{p_k\}$ obtained by coloring $p_k$ with $\rho(z_2)$ is an element of $\textnormal{Crown}(P, G)$. Thus, by the minimality of $G$, there is a $\mathcal{G}^M$ obstruction, so $x_k=z_2$ and $G^{P_{1-k}}$ is a triangle. In particular, $|V(G^{P_k})|=4$ and we are done. \end{claimproof}\end{addmargin}

Now suppose there is a $k\in\{0,1\}$ violating Claim \ref{ForEachISideTriang}. Thus, $|V(G^{P_k})|=4$. Let $p_kz_1z_2$ be as in Subclaim \ref{EitherAOrBUniversality}, where $z_2=x_k$. Consider the following cases.

\textbf{Case 1:} $L(p_k)\neq L(z_2)$

In this case, by \ref{BWheel3Lb} of Theorem \ref{BWheelMainRevListThm2}, there is an $a\in L(p_k)$ which is almost $(P^k, G^{P_k})$-universal. Let $\sigma\in\mathcal{D}$ with $\sigma(p_k)=a$. Since $K_1$ is a triangle, and $L(x_0)=L(x_1)$, $\sigma$ extends to an $L$-coloring $\tau$ of $\{p_0, v_0, v_1, p_1\}$ with $\tau(v_1)\not\in L(x_0)\cup L(x_1)$ and $\tau(v_0)\not\in L(x_0)$. But then $\tau$ is $(P,G)$-sufficient, contradicting our choice of $\sigma$. 

\textbf{Case 2:} $L(p_k)=L(z_2)$

In this case, $G^{P_{1-k}}$ is a triangle, and $C\setminus\mathring{P}$ is a path of length four in which all vertices have the same 3-list $T$. If $k=1$, then any $L$-coloring of $\{p_0, p_1\}$ extends to an element $\tau$ of $\textnormal{Crown}(P,G)$ where $\textnormal{dom}(\tau)=V(C)\setminus\{q_0, q_1, z_1\}$ and $\tau$ uses the same color on $p_1, z_2$. This contradicts our assumption that $\mathcal{G}$ is a counterexample, so $k=0$. If there is an $a\in T\setminus L(v_1)$, then, letting $\sigma\in\mathcal{D}$ with $\sigma(p_0)=a$, we again extend $\sigma$ to an element of $\textnormal{Crown}(P,G)$ with domain $V(C)\setminus\{q_0, q_1, z_1\}$ by coloring $z_2$ with $a$, contradicting our choice of $\sigma$. Thus, $T\subseteq L(v_1)$. Fix a $\sigma\in\mathcal{D}$. Let $\sigma(p_0)=a$ and $\sigma(p_1)=b$. If $a=b$, then, since $|L(v_1)\setminus T|\geq 2$, we can extend $\sigma$ to a $(P,G)$-sufficient $L$-coloring $\tau$ of $\{p_0, z_2, v_1, v_2, p_1\}$ with $\tau(z_2)=a$ and $\tau(v_1)\in L(v_1)\setminus T$, and $\tau\in\textnormal{Crown}(P,G)$, contradicting our choice of $\sigma$. Thus $a\neq b$, and we extend $\sigma$ to an $L$-coloring $\tau$ of $V(C)\setminus\{q_0, q_1, z_1\}$ with $\tau(z_2)=a$ and $\tau(v_1)=b$. Again, $\tau\in\textnormal{Crown}(P,G)$, contradicting our choice of $\sigma$. \end{claimproof}

As $L(y)=L(x_1)$, it follows from Claim \ref{ForEachISideTriang} that $|V(G)|=8$ and $C\setminus\mathring{P}=p_0x_0x_1p_1$, where all the vertices of $C\setminus\mathring{P}$ have a common $3$-list $T$. 

\begin{claim}\label{TContLV0LV1Last} $T\subseteq L(v_0)\cap L(v_1)$. \end{claim}

\begin{claimproof}  Suppose $T\not\subseteq L(v_1)$ and let $a\in T$ with $a\not\in L(v_1)$. Now, there is a $j\in\{0,1\}$ and a $\sigma\in\mathcal{D}_j$ with $\sigma(p_0)\neq a$. On the other hand, $\sigma$ extends to an $L$-coloring $\sigma'$ of $\{p_0, x_0, x_1, p_1\}$ with $\sigma'(x_0)=a$. Since $\sigma\in\mathcal{D}_j$, we have $|L_{\sigma'}(v_1)|=3$, so $a\in L(v_1)$, a contradiction. Thus, $T\subseteq L(v_1)$. Now suppose that $T\not\subseteq L(v_0)$ and let $b\in T\setminus L(v_0)$. Now, there is a $\sigma\in\mathcal{D}$ with $\sigma(p_1)=b$. Since $b\in L(v_1)\setminus L(v_0)$, it follows that $\sigma$ extends to an $L$-coloring $\tau$ of $V(C)\setminus\{q_0, q_1\}$ with $\tau(v_1)=b$ and $|L_{\tau}(q_0)|\geq 3$. We have $|L_{\tau}(q_1)|\geq 3$ as well, contradicting our choice of $\sigma$. \end{claimproof}

By Claim \ref{TContLV0LV1Last}, any $L$-coloring $\sigma$ of $\{p_0, p_1\}$ with $\sigma(p_0)\neq\sigma(p_1)$ extends to an $L$-coloring $\tau$ of $\{p_0, x_0, v_0, v_1, p_1\}$ with $\tau(v_0)=\sigma(p_0)$ and $\tau(x_0)=\sigma(p_1)$ and $\tau(v_1)\not\in T$, so $\tau$ is $(P,G)$-sufficient and $\tau\in\textnormal{Crown}(P,G)$. Thus, for each $\sigma\in\mathcal{D}_0\cup\mathcal{D}_1$, $\sigma(p_0)=\sigma(p_1)$. Let $j\in\{0,1\}$ with $\mathcal{D}_j\neq\varnothing$. Fix a$\sigma\in\mathcal{D}_j$ and let $a:=\sigma(p_0)$. Then $\sigma$ extends to an $L$-coloring $\tau$ of $V(C)\setminus\{q_0, q_1, v_{1-j}\}$ with $\tau(v_j)=a$. Each of $q_0, q_1, v_{1-j}$ has an $L_{\tau}$-list of size at least three, so $\sigma\not\in\mathcal{D}_j$, a contradiction.  \end{proof}

 \section*{Acknowledgements}

The author would like to thank Bruce Richter for his detailed proofreading (both for mathematical correctness and for typos) and formatting suggestions

\end{document}